\algrenewcommand{\algorithmiccomment}[1]{\hfill  \emph{#1}}
\def\@tocline#1#2#3#4#5#6#7{\relax
\ifnum #1>\c@tocdepth 
  \else 
    \par \addpenalty\@secpenalty\addvspace{#2}%
\begingroup \hyphenpenalty\@M
    \@ifempty{#4}{%
      \@tempdima\csname r@tocindent\number#1\endcsname\relax
 }{%
   \@tempdima#4\relax
 }%
 \parindent\z@ \leftskip#3\relax \advance\leftskip\@tempdima\relax
 \rightskip\@pnumwidth plus4em \parfillskip-\@pnumwidth
 #5\leavevmode\hskip-\@tempdima #6\nobreak\relax
 \ifnum#1<0\hfill\else\dotfill\fi\hbox to\@pnumwidth{\@tocpagenum{#7}}\par
 \nobreak
 \endgroup
  \fi}
\let\oldtocsection=\tocsection
\let\oldtocsubsection=\tocsubsection
\let\oldtocsubsubsection=\tocsubsubsection
\renewcommand{\tocsection}[2]{\hspace{0em}\oldtocsection{#1}{#2}}
\renewcommand{\tocsubsection}[2]{\hspace{1em}\oldtocsubsection{#1}{#2}}
\renewcommand{\tocsubsubsection}[2]{\hspace{2em}\oldtocsubsubsection{#1}{#2}}
\theoremstyle{plain}
\newtheorem{theorem}{Theorem}
\newtheorem{thm}{Theorem}[section]
\newtheorem{lemma}[thm]{Lemma}
\newtheorem{cor}[thm]{Corollary}
\newtheorem{prop}[thm]{Proposition}
\theoremstyle{definition}
\newtheorem{defn}[thm]{Definition}
\newtheorem{remark}[thm]{Remark}
\newtheorem{Example}[thm]{Example}
\newtheorem{Open questions}[thm]{Open questions}
\newtheorem{Open question}[thm]{Open question}
\newtheorem{Open problems}[thm]{Open problems}
\newtheorem{Open problem}[thm]{Open problem}
\newcommand{\tc}[2]{\textcolor{#1}{#2}}
\definecolor{magenta}{rgb}{.5,0,.5} 
\definecolor{dred}{rgb}{.5,0,0} 
\definecolor{green}{rgb}{0,.5,0} \newcommand{\green}[1]{\tc{green}{#1}}
\definecolor{blue}{rgb}{0,0,1} \newcommand{\blue}[1]{\tc{blue}{#1}}
 \definecolor{black}{rgb}{0,0,0} 
\definecolor{dgreen}{rgb}{0,.3,0} 
\definecolor{vdred}{rgb}{.3,0,0} 
\definecolor{red}{rgb}{1,0,0} 
\definecolor{gray}{rgb}{.5,.5,.5} \newcommand{\gray}[1]{\tc{gray}{#1}}
\definecolor{cerulean}{rgb}{0,.48,.65} 
\definecolor{gold}{rgb}{0.80,0.58,0.05} 
\definecolor{orange}{rgb}{1.00,0.50,0} 
\definecolor{pink}{rgb}{1.00,0.08,0.58}
\def\Bbb{\mathbb}
\DeclareMathOperator{\Rank}{rank}
\newcommand{\alg}[1]{\textbf{\texttt{\textup{#1}}}}
\def\Z{\Bbb{Z}}
\def\N{\Bbb{N}}
\def\N{\Bbb{N}}
\newcommand{\cyc}[1]{\langle #1 \rangle}
\def\ni{\noindent}
\def\Img{\hbox{\rm Img}}
\def\Area{\hbox{\rm Area}}
\def\Dist{\hbox{\rm Dist}}
\def\sgn{\hbox{\rm sgn}}
\def\F+L{\hbox{$\textup{F}\!_+\textup{L}$}}
\def\rank{\hbox{\rm rank}}
\def\ssm{\smallsetminus}
\def\Invalid{\textbf{invalid}}
\def\invalid{\textbf{invalid}}
\renewcommand{\H}{\mathcal{H}}
\def\ms{\medskip}
\def\onto{{\kern3pt\to\kern-8pt\to\kern3pt}}
\def\<{\langle}
\def\>{\rangle}
\def\|{{\ |\ }}
 \def\inv{^{-1}}
\def\inv{^{-1}}
\newcommand{\set}[1]{\left\{#1\right\}}
\newcommand{\abs}[1]{\left|#1\right|}
\renewcommand{\ni}{\noindent}
\renewcommand{\ms}{\medskip}
\def\*{^{\star}}
\renewcommand{\labelenumi}{\textup{(}\roman{enumi}\textup{)}}
\setlist[enumerate,1]{label=\arabic*.,ref=\arabic*}
\setlist[enumerate,2]{label=\arabic{enumi}.\arabic*.,ref=\arabic{enumi}.\arabic*}
\setlist[enumerate,3]{label=\arabic{enumi}.\arabic{enumii}.\arabic*.,ref=\arabic{enumi}.\arabic{enumii}.\arabic*}
\setlist[enumerate,4]{label=\arabic{enumi}.\arabic{enumii}.\arabic{enumiii}.\arabic*.,ref=\arabic{enumi}.\arabic{enumii}.\arabic{enumiii}.\arabic*}
\begin{document}

\title[Taming the hydra]{Taming the hydra: \\ the word problem and extreme integer compression}

\author{W.\ Dison, E.\ Einstein and T.R.\ Riley}

\thanks{We gratefully acknowledge partial  support from NSF grant DMS-1101651 (TR) and Simons Collaboration Grant 318301 (TR), and the hospitality of the Mathematical Institute, Oxford (EE \& TR),  and the Institute for Advanced Study, Princeton (TR)  during the writing of this article.  }

\begin{abstract}
For a finitely presented group, the word problem asks for an algorithm
which declares whether or not words on the generators represent the
identity.  The Dehn function is a complexity measure of a direct attack
on the word problem by applying the defining relations. Dison \& Riley showed that a ``hydra phenomenon'' gives rise to novel groups with extremely fast
growing (Ackermannian) Dehn functions.  Here we show that nevertheless, there are efficient (polynomial time) solutions to the word problems of these groups.  Our main innovation is a means of computing efficiently with enormous integers which are represented in compressed forms by  strings of Ackermann functions.

 \ms

\footnotesize{\ni \textbf{2010 Mathematics Subject
Classification:  20F10, 20F65, 68W32, 68Q17}  \\ \ni \emph{Key words and phrases:} Ackermann functions,  subgroup distortion, Dehn function, hydra, word problem, membership problem, polynomial time}
\end{abstract}

\date \today

\maketitle

\tableofcontents

\section{Introduction} \label{intro}

\subsection{Ackermann functions and compressed integers} \label{1.1}

Ackermann functions $A_i : \N \to \N$ are a family of increasingly fast-growing functions beginning $A_0: n \mapsto n+1$, $A_1: n \mapsto 2n$, and $A_2:n\mapsto 2^n$, and with subsequent $A_{i+1}$ defined recursively so that $A_{i+1}(n+1) =   A_iA_{i+1}(n)$ and $A_{i+1}(0)=1$. (More details follow in Section~\ref{Ackermann intro}.)

Starting with zero and successively applying a few such functions and their inverses  can produce an enormous integer.  For example, $$A_3 A_0 A_1^2 A_0(0) \ = \ A_3 A_0 A_1^2  (1) \ = \ A_3 A_0 A_1  (2) \ = \ A_3 A_0 (4) \ = \ A_3 (5) \ = \ 2^{65536}$$ because $$A_3(5)  \ = \  A_2^5A_3(0) \ = \  A_2^5 (1) \ = \  \mbox{\footnotesize{$2$}}^{\mbox{\footnotesize{$2$}}^{{\mbox{\footnotesize{$2$}}^{\mbox{\footnotesize{$2$}}^{\mbox{\footnotesize{$2$}}}}}}} \ = \ 2^{65536}.$$

In this way Ackermann functions provide highly compact representations for some very large numbers.  

In principle, we could compute with these representations  by evaluating the integers they represent and then using standard integer arithmetic, but this can be monumentally inefficient because of the   sizes of the integers.    We will explain how  to calculate efficiently  in a rudimentary way with such representations of integers:  

 \begin{theorem}  \label{Ackermann}
Fix an integer $k \geq 0$.  There is a polynomial-time algorithm, which on input a word $w$  on $A_0^{\pm1}, \ldots, A_k^{\pm1}$, declares whether or not $w(0)$ represents an integer, and if so whether  $w(0) <0$, $w(0)=0$ or $w(0)>0$.  
\end{theorem}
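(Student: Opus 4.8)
The plan is to maintain, as the algorithm scans the word $w = a_n^{\epsilon_n} \cdots a_1^{\epsilon_1}$ (with each $a_j \in \{A_0,\dots,A_k\}$ and $\epsilon_j = \pm 1$) from right to left, a ``compressed description'' of the integer $w_j(0)$ obtained after applying the last $j$ letters, where this description is itself a word on $A_0^{\pm 1},\dots,A_k^{\pm 1}$ in a suitable normal form whose length stays polynomially bounded in $|w|$. The key observation is that applying $A_i$ or $A_i^{-1}$ to an integer presented as a short word on the Ackermann symbols can be reduced, via the defining recursions $A_{i+1}(n+1) = A_i A_{i+1}(n)$, $A_{i+1}(0) = 1$, and their analogues for the low-index functions ($A_0 \colon n \mapsto n+1$, $A_1 \colon n \mapsto 2n$, $A_2 \colon n \mapsto 2^n$), to a bounded number of rewriting moves; the only danger is that naive rewriting could blow up the word length, so the normal form must be chosen to absorb cancellation. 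First I would pin down the normal form: I expect something like a word in which the indices are ``staircase decreasing'' — e.g.\ reading right to left one never increases the index by more than permitted, with explicit small-integer exponents on $A_0$ and $A_1$ at the bottom — together with a lemma that any word on the symbols either fails to define an integer (because at some point one tries to apply $A_i^{-1}$ to a value not in the image of $A_i$, or $A_0^{-1}$ to $0$) or can be driven to this normal form.

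The core of the argument is a rewriting/amortization lemma: appending one more symbol $a_{j+1}^{\epsilon_{j+1}}$ on the left of a normal-form word increases its length by only a bounded additive amount, or else triggers a detectable failure. The forward direction (applying $A_i$) is easy — it just prepends a symbol, possibly merging with an exponent. The inverse direction is where the work lies: to apply $A_{i+1}^{-1}$ I must recognize whether the current value lies in the image of $A_{i+1}$ and, if so, peel off the recursion $A_{i+1}(n+1) = A_i A_{i+1}(n)$, which means I need to test, on compressed integers, predicates like ``is this value $\geq 1$'', ``is it even'', ``is it a power of $2$'', and to compute discrete logarithms — but crucially only for integers presented in normal form, where these questions reduce to inspecting the top symbol and recursing. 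I would set this up as a mutual recursion on $k$: the sign/equality/validity test for words on $A_0,\dots,A_k$ invokes the same test for words on $A_0,\dots,A_{k-1}$ on strictly shorter data, so the whole thing terminates and, by bounding the recursion depth and the per-step length growth, runs in polynomial time. The sign and zero determination then falls out: once $w(0)$ is in normal form it is either manifestly $0$, manifestly a positive expression headed by some $A_i$ with $i \geq 1$ applied to something nonnegative (hence positive), or has been reduced to a small explicit integer whose sign is read off directly.

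The main obstacle I anticipate is controlling word-length growth through the inverse operations while simultaneously detecting invalidity. Applying $A_{i+1}^{-1}$ to a value that was built up by a long string of low-index operations could, if handled clumsily, require unfolding $A_{i+1}(n) = A_i^n(1)$ into $n$ copies of $A_i$ — and $n$ itself may be astronomically large — so the normal form must be arranged so that this $n$ is \emph{itself} carried in compressed form and never expanded. Making the bookkeeping consistent across all indices at once — ensuring that each of the finitely many rewriting rules both preserves the normal form and shortens or only boundedly lengthens the word, and that every ``stuck'' configuration corresponds exactly to $w(0)$ being undefined — is the delicate combinatorial heart of the proof. I would expect the write-up to isolate a single technical proposition stating that there is a polynomial-time procedure transforming any word on $A_0^{\pm 1},\dots,A_k^{\pm 1}$ either into a flag \textbf{invalid} or into a normal-form word representing the same integer, with the sign/zero test then being an immediate corollary.
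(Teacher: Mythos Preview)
Your proposal correctly identifies the central difficulty --- applying $A_i^{-1}$ to a compressed value without explosive unfolding --- but does not supply a mechanism to overcome it. You say the normal form ``must be arranged so that this $n$ is itself carried in compressed form and never expanded,'' and that you will test predicates such as ``is this value a power of $2$'' on compressed integers. But testing whether a compressed integer lies in $\Img A_i$ is essentially the problem you are trying to solve: you have pushed the difficulty into the predicate-testing black box without saying how that box works. The ``staircase decreasing'' normal form is not specified, and there is no argument that prepending $A_i^{-1}$ to such a form stays short; the amortization lemma is asserted, not proved.

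The paper's approach supplies exactly the missing idea, and it is structurally different from yours. Rather than scanning left and maintaining a normal form, it locates the \emph{rightmost} $A_r^{-1}$ (with $r\ge 1$) and pairs it with an $A_r$ to its right (manufacturing one via the recursion $A_{i+1} \sim A_i A_{i+1} A_0^{-1}$ if necessary), giving a suffix $A_r^{-1} u A_r v$ with $\Rank(u)<r$ and $\eta(v)=0$. A recursive ``pinch'' reduces $u$ to a power of $A_0$, and then the base case $A_r^{-1} A_0^l A_r v$ is handled by a \emph{sparseness} lemma: for $r\ge 2$ and $m\ne n$, $|A_r(m)-A_r(n)|\ge \tfrac12 A_r(n)$. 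Hence if $l\ne 0$ and the expression is valid, $A_r v(0)\le 2|l|$, so $v(0)$ and $A_r v(0)$ are small enough to compute explicitly. This gap property --- not any normal form --- is what breaks the circularity and controls length; each pinch shortens the word, so the whole process stays polynomial.
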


(The manner in which $w(0)$ might fail to represent an integer is that as it is  evaluated from right to left, an $A_i^{\pm 1}$ is applied to an integer outside its domain.  Details are in Section~\ref{Ackermann function preliminaries}.  In fact our algorithm halts in  time bounded above by a polynomial of degree $4+k$---see Section~\ref{Ackermann specs}.  We have not attempted to optimize the degrees of the polynomial bounds on time complexity  here or elsewhere in this article.)

\subsection{The word problem and Dehn functions}
 
Our interest in Theorem~\ref{Ackermann} originates in group theory.  Elements of  a group $\Gamma$ with a  generating set  $A$ can be represented by words---that is,  products of elements of $A$ and their inverses.  To work with $\Gamma$, it is useful to have an algorithm which, on input a word,  declares whether that word represents the identity element in $\Gamma$. After all, if we can recognize when a word represents the identity, then we can recognize when two words represent the the same group element, and thereby begin to compute in $\Gamma$.   
The issue of whether there is such an algorithm is known as the \emph{word problem} for $(\Gamma,A)$ and was first posed by Dehn \cite{Dehn2, Dehn} in 1912.  (He did not precisely ask for an algorithm, of course, rather  `\emph{eine Methode angeben, um mit einer endlichen Anzahl von Schritten zu entscheiden...}'---that is, `\emph{specify a method to decide in a finite number of steps...}.')

Suppose a group $\Gamma$ has a finite presentation 
$$\langle \, a_1, \ldots, a_m \mid r_1, \ldots, r_n \, \rangle.$$
The \emph{Dehn function} $\Area : \N \to \N$ quantifies the difficulty of  a \emph{direct attack} on the word problem: roughly speaking $\Area(n)$ is the minimal $N$ such that if a word of length at most $n$ represents the identity, then it does so `as a consequence of' at most $N$ defining relations.

Here is some notation that we will use to make this more precise.  Associated to a set  $\set {a_1, a_2, \ldots  }$ (an \emph{alphabet}) is the set of   inverse letters   $\set{a^{-1}_1, a^{-1}_2, \ldots  }$.  The inverse map is the involution defined on $\set{a_1^{\pm 1}, a_1^{\pm 2}, \ldots }$ that  maps $a_i \mapsto a_i^{-1}$ and $a_i^{-1} \mapsto a_i$ for all $i$.  Write $w= w(a_1, a_2, \ldots )$ when $w$ is a word on the letters $a_1^{\pm 1},  a_2^{\pm 1}, \ldots$.  The inverse map extends to  words by sending 
$w = x_1\cdots x_s \mapsto x_s^{-1} \cdots x_1^{-1} = w^{-1}$ when each $x_i \in \set {a_1^{\pm 1}, a_2^{\pm 1}, \ldots}$.    
 Words $u$ and $v$ are \emph{cyclic conjugates} when $u = \alpha \beta$ and  $v =  \beta \alpha$ for some subwords $\alpha$ and $\beta$.      \emph{Freely reducing}  a word means removing all $a_j^{\pm 1}a_j^{\mp 1}$ subwords.  
For $\Gamma$ presented as above,  \emph{applying a relation} to a word $w = w(a_1, \ldots, a_m)$ means  replacing some subword $\tau$ with another subword $\sigma$ such that some   \emph{cyclic conjugate} of $\tau \sigma^{-1}$ is one of  $r^{\pm 1}_1, \ldots, r^{\pm 1}_n$.   
 
For a word $w$ representing the identity in $\Gamma$,  $\Area(w)$ is the minimal $N \geq 0$ such that there is a sequence of \emph{freely reduced} words $w_0, \ldots, w_N$ with $w_0$ the freely reduced form of $w$, and $w_N$ is the empty word, such that for all $i$, $w_{i+1}$ can be obtained from $w_i$ by \emph{applying a relation} and then \emph{freely reducing}.  
	The \emph{Dehn function} $\Area : \N \to \N$  
 is  defined by   $$\Area(n)  \ := \ \max  \set{ \, \Area(w) \mid \textup{words } w \textup{ with } \ell(w) \leq n \textup{ and } w=1 \textup{ in } \Gamma \,  }.$$ 

 This is one of a number of equivalent definitions of the Dehn function.  While a Dehn function is defined for a particular finite presentation for a group, its growth type---quadratic, polynomial, exponential etc.---does not depend on this choice.  Dehn functions are important from a geometric point-of-view and have been studied extensively.  There are many places to find background, for example \cite{BRS, Bridson6, BrH, Dehn,  Gersten, Gromov, RileyDehn, Sapir}.

 If $\Area(n)$ is bounded above by a recursive function $f(n)$, then there is a `brute force' algorithm to solve the word problem: to tell whether or not a given word $w$ represents the identity,   search through all the possible  ways of applying at most $f(n)$ defining relations and see whether one reduces $w$ to the empty word.  (There are finitely presented groups for which there is no algorithm to solve the word problem \cite{Boone, Novikov}.)
Conversely, when a finitely presented group admits an algorithm to solve its word problem, $\Area(n)$ is bounded above by a recursive function (in fact $\Area(n)$ \emph{is} a recursive function) \cite{Gersten6}.

There are finitely presented groups for which an extrinsic algorithm is far more efficient than this intrinsic brute-force approach.  A simple example is $$\Z^2 \ = \  \langle \, a,b  \, \mid \, ab=ba \, \rangle$$ (which has Dehn function $\Area(n) \simeq n^2$).  Given a word made up of the letters  $a^{\pm 1}$ and $b^{\pm 1}$, the extrinsic approach amounts to  searching exhaustively through all the ways of shuffling letters $a^{\pm 1}$ past letters $b^{\pm 1}$ to see if there is one which brings each  $a^{\pm 1}$ together with an $a^{\mp 1}$ to be cancelled, and likewise each   $b^{\pm 1}$ together with a $b^{\mp 1}$.  It is much more efficient to read through the word and check that the number of $a$ is the same as the number of $a^{-1}$, and the   number of $b$ is the same as the number of $b^{-1}$.

There are more dramatic examples where  $\Area(n)$ is a fast growing recursive function  (so the `brute force' algorithm succeeds but is extremely inefficient), but there are efficient ways to solve the word problem.
Cohen, Madlener \& Otto built the first examples. in a series of  papers   \cite{CohenWisdom, CMO, MO}   where Dehn functions  were first defined.  They designed their groups in such a way that the `intrinsic'  method of solving the word problem involves running a very slow algorithm which has been suitably `embedded' in the presentation.  But running this algorithm is pointless as it is constructed to halt (eventually) on all inputs and so presents no obstacle to the word representing the identity.   Their examples all  admit algorithms to solve the word problem in running times that  are at most $n \mapsto \exp^{(\ell)}(n)$ for some $\ell$.  But for each  $k \in \N$ they have examples which have Dehn functions  growing like $n \mapsto A_k(n)$.  Indeed, better, they have examples with Dehn function growing like $n \mapsto A_n(n)$.

Recently, more extreme examples were constructed by  Kharlampovich, Miasnikov \& Sapir~\cite{KMS}.  By simulating Minsky machines in groups,  for every recursive function $f: \N \to \N$, they construct a finitely presented group (which also happens to be residually finite and solvable of class 3) with Dehn function growing faster than $f$, but with word problem solvable in polynomial time.  

There are also  `naturally arising' groups which have fast growing  Dehn function  but an efficient (that is, polynomial-time)  solution to  the word problem.  A first example  is $$\langle \, a,b \, \mid \, b^{-1} a b =a^2 \, \rangle.$$  Its Dehn function grows exponentially (see, for example, \cite{BRS}), but the group admits a faithful matrix representation  $$a \mapsto \left(\begin{array}{cc}1 & 1 \\0 & 1\end{array}\right), \qquad b \mapsto  \left(\begin{array}{cc}1/2 & 0 \\0 & 1\end{array}\right),$$ and so it is possible to check efficiently when a word on $a^{\pm 1}$ and $b^{\pm 1}$ represents the identity by multiplying out the corresponding string of matrices.
   
 A celebrated 1-relator group due to Baumslag  \cite{Baumslag2} provides a more dramatic example: 
 \begin{equation*} \label{Baumslag's group}
 \langle \  a,b  \ \mid \   (b^{-1}a^{-1} b)  \, a \, (b^{-1}a b) = a^{2} \  \rangle.
 \end{equation*}
 Platonov~\cite{Platonov} proved its Dehn function grows like $n \mapsto \stackrel{\lfloor \log_2 n \rfloor}{\overbrace{\exp_2(\, \exp_2 \cdots (\exp_2}(1))} \cdots)$, where $\exp_2(n):=2^n$.  
(Earlier results in this direction are in \cite{Bernasconi, Gersten6, Gersten}.)
   Nevertheless, Miasnikov,  Ushakov \& Won \cite{MUW1} solve its word problem in polynomial time.  (In unpublished work  I.~Kapovich and Schupp   showed it is solvable in exponential time  \cite{SchuppPersonal}.)

Higman's group 
$$\langle \  a, b, c, d  \ \mid \   b^{-1}a b = a^2,  \,   c^{-1}b c = b^2,  \,  d^{-1}c d = c^2,  \,  a^{-1} d a = d^2  \ \rangle$$
 from \cite{Higman} is another  example.    Diekert, Laun   \& Ushakov \cite{DLU} recently gave a polynomial time algorithm for  its word problem  and, citing a 2010 lecture of Bridson, claim it too has Dehn function growing like a tower of exponentials.  

The groups we focus on in this article are yet more extreme `natural examples'.   They arose in the study of \emph{hydra groups} by  Dison \& Riley~\cite{DR} .  Let $$\theta: F(a_1, \ldots, a_k) \to F(a_1, \ldots, a_k)$$ be the  automorphism of the free group of rank $k$ such that $\theta(a_1)=a_1$ and  $\theta(a_i) = a_i a_{i-1}$ for $i=2, \ldots, k$.   
  The family
 \begin{equation*}  
G_k \ := \ \langle \  a_1, \ldots, a_k,  t \  \mid \  t^{-1} {a_i} t=\theta(a_i) \ \, \forall i>1   \ \rangle,\end{equation*}
are called \emph{hydra groups}. Take \emph{HNN-extensions}
 $$ \Gamma_k \ := \  \langle \ a_1,\ldots,a_k,t,p \ \mid \ t\inv a_i t = \theta(a_i), \ [p,a_it]=1   \ \, \forall i>1 \ \rangle$$
of $G_k$  where the stable letter $p$ commutes with all elements of the  subgroup  $$H_k \ := \   \langle a_1t, \ldots, a_kt \rangle.$$  It is shown in \cite{DR} that for $k =1, 2, \ldots$,  the subgroup $H_k$ is free of rank $k$ and  $\Gamma_k$
 has   Dehn function growing like $n \mapsto A_k(n)$. Here we prove that nevertheless:
 
 \begin{theorem} \label{WP theorem}
For all $k$, the word problem of $\Gamma_k$ is solvable in polynomial time.   
\end{theorem}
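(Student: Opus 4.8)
The strategy is to reduce the word problem for $\Gamma_k$ to a sequence of problems about the free group $H_k$ and about manipulating the distortion data that records how $H_k$ sits inside $G_k$, and ultimately to reduce those to the compressed-integer computation of Theorem~\ref{Ackermann}. First I would run Britton's lemma for the HNN-extension $\Gamma_k = \langle G_k, p \mid p^{-1} h p = h \ \forall h \in H_k\rangle$: a word $w$ in $a_1^{\pm1},\ldots,a_k^{\pm1},t^{\pm1},p^{\pm1}$ represents the identity iff either it contains no $p^{\pm1}$ (and then it is a word problem instance in $G_k$, which is easy, being an iterated free-by-cyclic group built from the automorphism $\theta$), or it contains a pinch $p^{\pm1} u p^{\mp1}$ with $u$ a word in $a_1^{\pm1},\ldots,a_k^{\pm1},t^{\pm1}$ representing an element of $H_k$; in that case we may replace $p^{\pm1} u p^{\mp1}$ by $u$ and recurse. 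So the crux is: given a word $u$ over $a_1^{\pm1},\ldots,a_k^{\pm1},t^{\pm1}$, decide (efficiently) whether $u$ represents an element of $H_k = \langle a_1t,\ldots,a_kt\rangle$, and if so express it as a word in those generators, because the exponent sum of $t$ forces the word length of that expression and the bookkeeping has to be carried through the recursion.

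**The membership/rewriting engine.** The heart of the matter is an efficient membership test for $H_k \le G_k$ together with an efficient way to carry the (possibly enormous) coordinates of an element of $H_k$. The key structural fact from \cite{DR} is the ``hydra'' mechanism: $\theta$ is the automorphism $a_1 \mapsto a_1$, $a_i \mapsto a_i a_{i-1}$, and conjugation by $t$ (equivalently, reading a prefix $a_i t$) drives a chomping process on words in $a_1,\ldots,a_k$ whose step-count is governed by Ackermann-type functions — this is exactly why the Dehn function is $A_k$. I would make this algorithmic: maintain, as one scans $u$ from one end, a normal form for the current element of $G_k$ in the semidirect-product coordinates $F(a_1,\ldots,a_k) \rtimes_\theta \langle t\rangle$, but store the $F(a_1,\ldots,a_k)$-part not as an explicit word (it can have Ackermannian length) but in a \emph{compressed form}: as a straight-line program, or more precisely as a list of $\theta$-powers applied to short words, with the exponents held as words on $A_0^{\pm1},\ldots,A_k^{\pm1}$ in the sense of Theorem~\ref{Ackermann}. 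The operations needed — appending a generator, appending $t^{\pm1}$ (i.e.\ applying $\theta^{\pm1}$), testing whether the accumulated element lies in $H_k$, peeling off one generator $a_it$ — then become: comparisons and increments/decrements of these compressed integers, together with a bounded amount of free-group normalization on short pieces. Theorem~\ref{Ackermann} supplies precisely the primitive we need (deciding sign and zeroness of $w(0)$ for $w$ a word on $A_0^{\pm1},\ldots,A_k^{\pm1}$) to make each such operation run in time polynomial in the current description length, and a separate amortization argument bounds the total description length by a polynomial in $\ell(w)$.

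**Assembling the algorithm.** With the membership-plus-compressed-coordinates engine in hand, the top-level algorithm is: repeatedly locate an innermost $p^{\pm1}$-pinch in $w$, test membership of the enclosed $t/a_i$-word $u$ in $H_k$ using the engine, and if it passes, delete the two $p^{\pm1}$'s (keeping $u$) and continue; if no pinch passes, report $w \ne 1$; if all $p$'s are eliminated, finish with the (easy) word problem in $G_k$. One must check that this terminates after polynomially many pinch-removals — the number of $p^{\pm1}$ only decreases — and that the intermediate words stay polynomially bounded in length, which they do because removing a pinch only shortens the $p$-part and the $u$-parts are subwords of $w$ that are never lengthened by this outer loop. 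The main obstacle, and where essentially all the work lies, is the engine itself: proving that the compressed representation of an element of $H_k$ (hence of its $\theta$-orbit data) stays polynomially bounded and that the membership test and the peeling step can be done in polynomial time. This is where the combinatorics of the hydra chomping process must be converted into controlled arithmetic on strings of Ackermann functions, and it is exactly for this step that Theorem~\ref{Ackermann} was proved; everything else is Britton's-lemma bookkeeping.
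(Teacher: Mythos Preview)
Your outer reduction via Britton's Lemma is exactly what the paper does in Section~\ref{MP to WP}: iterate pinch-removal until either no pinch $p^{\pm1}up^{\mp1}$ with $u\in H_k$ exists (reject) or all $p^{\pm1}$ are gone (finish with the easy word problem in the automatic group $G_k$). That part is fine.

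The gap is in your membership engine, and it is twofold. First, a conceptual mix-up: after putting $u$ in the normal form $t^rv$ with $v\in F(a_1,\ldots,a_k)$, the free-group part $v$ is only \emph{polynomially} long (Lemma~\ref{Lem: automorphism growth}), not Ackermannian, and $|r|\le\ell(u)$. What blows up Ackermannially is the exponent of $t$ as one attempts to push $t^r$ rightwards through $v$, factoring off generators of $H_k$ on the left; that exponent is what must be held in compressed form. The paper encodes it as a word in the $\psi$-functions of Section~\ref{phi function preliminaries} (a variant of the $A_i$ tailored so that $t^ra_i\in H_kt^{\psi_i(r)}$; see Proposition~\ref{Prop: Hydra Game}), and the analogue of Theorem~\ref{Ackermann} for $\psi$-words (Proposition~\ref{Thm: Psi}) is what gets used, not Theorem~\ref{Ackermann} itself.

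Second, and more seriously, your ``peel off one generator $a_it$'' picture does not work letter-by-letter: there are words $t^rv$ that \emph{do} lie in $H_k$ but for which, at some intermediate letter, no single generator of $H_k$ can be peeled (equivalently, the current $t$-exponent leaves the domain of the relevant $\psi_i$). The paper's fix is to partition $v$ into \emph{pieces} $a_k^{\epsilon_1}u\,a_k^{-\epsilon_2}$ keyed on the top-rank letter and to establish the \emph{Piece Criterion} (Proposition~\ref{Prop: Piece Criterion}): a recursive characterization of when $t^r$ can be pushed through a whole piece and what $\psi$-word records the emerging exponent. Proving that criterion --- in particular the prefix/suffix constraints like ``$\theta^{r-1}(a_m)$ is a prefix of $\pi$'' --- occupies the long and delicate Section~\ref{Constraining cancellation}. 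Your proposal does not supply any substitute for this step, and without it one cannot certify that a failed letter-by-letter peel actually means $u\notin H_k$, nor bound the recursion.
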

 
(In fact, our algorithm halts within time bounded above by a polynomial of degree $3k^2+k+2$---see Section~\ref{Conclusion}.) 
 
 \subsection{The membership problem and subgroup distortion}

\emph{Distortion} is the root cause of the  Dehn function of $\Gamma_k$ growing like $n \mapsto A_k(n)$.  The massive gap between Dehn function and the time-complexity of the word problem for $\Gamma_k$ is attributable to a similarly massive gap between a \emph{distortion function} and the time-complexity of a \emph{membership problem}.  Here are more details.    

Suppose $H$ is a subgroup of a group $G$ and $G$ and $H$ have finite generating sets $S$ and $T$, respectively.   So $G$ has a \emph{word metric}  $d_S(g,h)$, the    length of a shortest word on $S^{\pm1}$ representing $g^{-1}h$, and   $H$ has a word metric $d_T$ similarly.  

The \emph{distortion} of $H$ in $G$ is  
\[\Dist^{G}_{H}(n) \ := \  \max\{ \, d_T(1,g) \  | \ g \in H \text{ with } d_S(1,g) \le n \,  \}.\] 
(Distortion is defined here with respect to specific  $S$ and $T$, but their choices do not affect the qualitative growth of $\Dist^{G}_{H}(n)$.)   A fast growing distortion function signifies that $H$ `folds back on itself' dramatically  as a metric subspace of $G$.  

The \emph{membership problem} for $H$ in $G$ is to find an algorithm which, on input of a word on $S^{\pm 1}$, declares whether or not it represents an element of $H$.   

If the word problem of $G$ is decidable (as it is for all $G_k$, because, for instance, they are free-by-cyclic) and   we have a recursive upper bound on $\Dist^{G}_{H}(n)$, then  there  is a  brute-force solution to the membership problem for $H$ in $G$.  
If the input word $w$ has length $n$, then search through all words on $T^{\pm 1}$ of length at most $\Dist^{G}_{H}(n)$ for one representing the same element as $w$.   This is, of course, likely to be extremely inefficient, and especially so for   $H_k$ in $G_k$ as the distortion $\Dist^{G_k}_{H_k}$ grows like  $n \mapsto A_k(n)$.  Nevertheless:

\begin{theorem} \label{MP theorem} For all $k$, the membership problem for $H_k$ in $G_k$ is solvable in polynomial time.  \end{theorem}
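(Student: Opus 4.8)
The plan is to take a correct-but-hopelessly-slow membership test and make it run in polynomial time by computing with compressed representations. Since $\theta$ is an automorphism of the free group $F:=F(a_1,\ldots,a_k)$, the group $G_k$ is the semidirect product $F\rtimes_\theta\Z$ with $\Z=\langle t\rangle$, so every $g\in G_k$ has a unique normal form $g=ut^m$ with $u\in F$ freely reduced and $m\in\Z$; here $m$ is read off immediately, but $u$ can be exponentially longer than a word representing $g$ and will only ever be handled in compressed form. Writing $b_i:=a_it$, one has $b_1^n=a_1^nt^n$ for all $n\in\Z$ (because $\theta$ fixes $a_1$, so $a_1$ and $t$ commute), whence $g\in H_k$ iff $gb_1^{-m}=ua_1^{-m}\in H_k\cap F$; thus it suffices to test membership in $H_k$ of elements of $F$. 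By \cite{DR}, $H_k$ is free of rank $k$ on $b_1,\ldots,b_k$, so $g\in H_k$ exactly when $g$ reduces to the identity by repeatedly stripping a single generator $b_i^{\pm1}$ off one end. On the normal form this is explicit: stripping $b_i$ off the back sends $ut^m$ to $u\,\theta^{1-m}(a_i)^{-1}\,t^{m-1}$, and stripping $b_i^{-1}$ sends it to $u\,\theta^{-m}(a_i)\,t^{m+1}$. This is already an algorithm, but a useless one: the number of strips equals the length of $g$ in the $b_i$, which by $\Dist^{G_k}_{H_k}\simeq A_k$ may be Ackermannian in the length $n$ of the input, and the words $u$ explode under iterating $\theta^{\pm1}$.

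Both obstacles are defeated by compression. First, at each step one strips off a whole compactly-described \emph{block} of the $H_k$-word rather than a single generator, exploiting the recursive, self-similar block structure (the ``hydra'' pattern) of the elements realising the distortion. Second, one carries the current element, and these blocks, in a compressed form: $u$ is maintained as a bounded-length product of \emph{pieces}, each piece a compactly encoded (and possibly Ackermann-long) word over $a_1^{\pm1},\ldots,a_k^{\pm1}$ whose numerical parameters --- chiefly $\theta$-exponents and $a_i$-exponents --- are stored as words on the Ackermann functions $A_0^{\pm1},\ldots,A_k^{\pm1}$, as in Theorem~\ref{Ackermann}. On such data, applying a power $\theta^{\pm N}$ merely shifts exponents, and stripping a block reduces to bounded-depth surgery on pieces plus a few numerical queries: \emph{how large is this block?}, \emph{do these two pieces cancel against each other?}, \emph{is this parameter negative, zero or positive?} Each query is arranged to amount to deciding whether $v(0)$ is a valid integer, and its sign, for an explicitly built word $v$ on $A_0^{\pm1},\ldots,A_k^{\pm1}$ --- precisely what Theorem~\ref{Ackermann} answers in polynomial time. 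Block sizes are themselves computed and returned as compressed integers, and never written out.

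Concretely the algorithm is a family of mutually recursive routines: $\Piece$ to normalise and splice pieces; $\Prefix$, $\Suffix$, $\Front$, $\Back$ to read and manipulate the compressed word $u$ and its initial and final segments; a structural step that, from the shape of $ut^m$, either names the leading or trailing syllable of the (unique) reduced $H_k$-word of $g$ or certifies $g\notin H_k$; $\PosSol$ to solve for the associated block sizes as compressed integers using the integer arithmetic of Theorem~\ref{Ackermann}; $\Word$ to assemble the resulting element of $H_k$; and the top-level $\Member$. Correctness is an induction on the reduced $H_k$-length: each strip replaces $g$ by a $G_k$-equal element with strictly smaller such length and equivalent $H_k$-membership, the process terminates, and it reaches the empty word precisely when $g\in H_k$. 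For the running time one proves, by induction on $k$, that throughout the run the compressed description of the current element stays of size polynomial in $n$, that each strip --- including its calls to $\PosSol$, hence to the algorithm of Theorem~\ref{Ackermann} --- costs polynomial time, and that only polynomially many strips occur; the polynomial degrees compound through the recursion in $k$, giving the bound of degree $3k^2+k+2$ recorded in Section~\ref{Conclusion}.

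The real work is in making those three running-time claims hold simultaneously. One must design the notion of \emph{piece}, and the data structure around it, so delicately that (i) the Ackermann-long words produced by the iterated $\theta^{\pm1}$'s always have polynomial-size compressed descriptions; (ii) the free cancellation at the seams between successive stripped blocks never sets off a super-polynomial cascade; and (iii) the block sizes can genuinely be \emph{solved for}, as closed-form Ackermann expressions handled by Theorem~\ref{Ackermann}, rather than found by an exponentially long search. Bounding the number of strips by a polynomial --- and pinning down the induction on $k$ that governs it --- is the crux, and is exactly where the combinatorics of the hydra recursion from \cite{DR} must be married to the compressed-integer algorithm of Theorem~\ref{Ackermann}.
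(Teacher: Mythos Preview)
Your proposal is a plan rather than a proof: the final paragraph explicitly concedes that the three running-time claims---polynomial-size compressed descriptions, polynomial-time strips, and polynomially many strips---are ``the real work'' and leaves them undone. As written, nothing is established.

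More importantly, the architecture you sketch diverges from the paper's in a way that makes your task harder, not easier. You propose to compress the free-group element $u$ itself into ``pieces'' with Ackermann-encoded parameters and to strip $H_k$-blocks off the ends. The paper does something much cleaner: it never compresses the free-group word. After converting $w$ to $t^r v$ with $v=v(a_1,\ldots,a_k)$, the word $v$ stays \emph{explicit} and of polynomial length throughout; only the single integer---the power of $t$ being pushed left-to-right through $v$---is compressed. The reason this works is Proposition~\ref{Prop: Hydra Game}: $t^r a_i\in H_k t^s$ iff $s=\psi_i(r)$, so advancing $t^r$ past a letter (or, with care, a piece) just composes one more $\psi$-function onto the $\psi$-word recording the exponent. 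That is why the paper introduces the $\psi$-functions (and the variant algorithm $\alg{Psi}$, Proposition~\ref{Thm: Psi}) rather than using the Ackermann functions of Theorem~\ref{Ackermann} directly. Your reduction to testing membership of an element of $F$ in $H_k$ discards precisely the structure that makes this simplification possible.

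The substantial technical content you do not identify is the Piece Criterion (Proposition~\ref{Prop: Piece Criterion}), which characterises exactly when $t^r\pi\in H_k t^s$ for a rank-$m$ piece $\pi$, and the prefix statement Proposition~\ref{Prop: set prefix} on which it rests; the latter in turn requires the lengthy cancellation analysis of Section~\ref{Constraining cancellation}. These are what make ``only polynomially many steps'' true---the number of pieces in $v$ is at most $\ell(v)$, and each is handled by one call to $\alg{Piece}_m$---and nothing in your proposal substitutes for them. Two minor points: your claim that ``applying a power $\theta^{\pm N}$ merely shifts exponents'' is true for the $t$-power in the paper's setup but false for a compressed description of $u$; and the degree $3k^2+k+2$ you quote is for the word problem of $\Gamma_k$, not the membership problem, which the paper bounds by $3k^2+k$.
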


(Our algorithm actually halts within time bounded above by a polynomial of degree $3k^2+k$---see Section~\ref{Conclusion}.)   We will use this   to prove Theorem~\ref{WP theorem}.

\subsection{The hydra phenomenon} \label{hydra phenomenon}

The reason   $G_k$  are named \emph{hydra groups} is that the extreme distortion of $H_k$ in $G_k$ stems from a string-rewriting phenomenon which is a reimagining of the battle between Hercules and the Lernean Hydra, a mythical beast which grew two new heads for every one Hercules severed. 
Think of a \emph{hydra} as a word $w$ on $a_1, a_2, a_3, \ldots$.
Hercules fights  $w$ as follows.  He  removes its first letter, then the remaining letters regenerate in that for all $i>1$,  each remaining $a_i$ becomes $a_ia_{i-1}$ (and each remaining $a_1$ is unchanged).  This repeats.  An induction on the highest index present shows that every hydra eventually becomes the empty word.  (Details are  in   \cite{DR}.)  Hercules is then declared victorious.   For example, the hydra $a_2a_3a_1$ is annihilated in $5$ steps:
$$a_2 a_3 a_1 \ \to \   a_3 a_2  a_1 \ \to \   a_2 a_1 a_1 \  \to \   a_1 a_1 \  \to \  a_1 \  \to \  \textit{empty word}.$$

Define $\H(w)$ to be the number of steps required to reduce a hydra $w$ to the trivial word (so $\H(a_3a_3a_1)=5$).  Then, for $k=1, 2, \ldots$, define functions $\H_k : \N \to \N$ by  $\H_k(n) = \H(a_k^n)$.  It is shown in \cite{DR} that $\H_k$ and $A_k$ grow at the same rate  for all $k=1, 2, \ldots$  since the two families exhibit  a similar recursion relation.

Here is an outline of the argument from \cite{DR} as to why $\Dist_{H_k}^{G_k}$  grows at least as fast as $n \mapsto \mathcal{H}_k(n)$ (and so as fast as  $n \mapsto A_k(n)$).  When $k \geq 2$ and $n \geq 1$, there is a reduced word $u_{k,n}$ on $\set{a_1t, \ldots, a_kt}^{\pm 1}$ of length  $\H_k(n)$ representing $a_k^n t^{\H_k(n)}$ in $G_k$ on account of the hydra phenomenon.  (For example, 
$u_{2,3} \ =  \ (a_2t)^2 (a_1t)(a_2 t) (a_1t)^3$  equals $a_2^3 t^7$ in $G_2$ since $a_2$, $a_2$, $a_1$, $a_2$, $a_1$, $a_1$, and $a_1$ are the $\H_2(3)=7$ initial letters removed by Hercules as he vanquishes the hydra $a_2^3$.)   
This can be used to show that in $G_k$  
$$a_k^{n} a_2 \  t a_1 \  a_2^{-1}  a_k^{-n} \ =  \  u_{k,n} \, (a_2t) \,  (a_1 t) \, ({a_2}t)^{-1} \, {u_{k,n}}^{-1}.$$
 The word on the left  is a product of length  $2n +4$ of the generators $\set{a_1, \ldots, a_n,t}^{\pm 1}$ of $G_k$ and that on the right is a product of length  $2 \H_k(n) +3$ of the generators $\set{a_1t, \ldots, a_kt}^{\pm 1}$ of $H_k$.  As $H_k$ is free of rank $k$ and this word is reduced, it is not equal to any  shorter   word on these generators.

\subsection{The organization of this article and an outline of our strategies}   
 We prove Theorem~\ref{Ackermann} in Section~\ref{Ackermann intro}.  
  Here is an outline of the algorithm we construct.  Given a word $w(A_0, \ldots, A_k)$ we attempt to    pass   to  successive new words $w'$ that are \emph{equivalent}  to $w$ in that $w'(0)$ represents an integer  if and only if $w(0)$ does, and when they both do, $w(0) = w'(0)$. These words are obtained by making substitutions  that, for instance, replace a letter $A_{i+1}$ in $w$   by a subword $A_i A_{i+1}A_0^{-1}$ (this substitution stems from the recursion defining Ackermann functions), or we delete a subword $A_i A_i^{-1}$ or $A_i^{-1} A_i$.  The aim of these changes is to eliminate all the letters $A_1\inv, \ldots, A_k\inv$ in $w$, as these present the greatest obstacle to checking whether such a word represents an integer. 
Once no $A_1\inv, \ldots, A_k\inv$ remain in $w'$, when calculating $w'(0)$ letter-by-letter starting from the right, only $A_0^{\pm 1}$ can trigger decreases in absolute value. So to determine the sign of $w'(0)$ it suffices to evaluate $w'(0)$ letter-by-letter from the right, stopping if the integer calculated ever exceeds the length of $w'$.  

In order to reach such a $w'$ we `cancel' away letters $A_i^{-1}$ with some $A_i$ somewhere further to the right in the word.  We do this by manipulating suffixes of the form $A_i^{-1} u A_iv$ such that $u=u(A_0, \ldots, A_{i-1})$. Such suffixes either admit substitutions to make  a similar suffix with the $A_i^{-1}$ and $A_i$ eliminated, or they can be recognized  not to evaluate to an integer because $u$ cannot carry the element  $A_iv(0) \in \text{Img }A_{i}$  to another element of  $\text{Img }A_{i}$ since the gaps between elements of $\text{Img }A_{i}$ are large.  

 A number of difficulties arise.  For instance, there are exceptional cases when replacing $A_{i+1}$  by $A_i A_{i+1}A_0^{-1}$ fails  to preserve validity.  Another issue is that we must ensure that the process terminates, and so we may, for example, have to introduce an $A_i$ `artificially' to cancel with some $A_i^{-1}$.  
 
 To show that our algorithm halts in polynomial time, we argue that the lengths of the successive words  remain bounded by a constant times $\ell(w)$ (the length of $w$), and integer arithmetic operations performed only ever involve integers of absolute value at most  $3\ell(w)$.  
 
The group theory in this paper (specifically Theorem~\ref{MP theorem}) actually requires  a variant  of Theorem~\ref{Ackermann} (specifically, Proposition~\ref{Thm: Psi}).   Accordingly, in Section~\ref{phi function preliminaries} we introduce a  family  of functions which we call $\psi$-functions, which are closely related to Ackermann functions, and  we adapt the earlier results and proofs to these.  (We believe Theorem~\ref{Ackermann}   is of intrinsic interest because Ackermann functions are well-known and  efficient computation with this form of highly compressed integers is novel.  This is why we do not present Proposition~\ref{Thm: Psi} only.)

We give a polynomial-time solution to the membership problem for $H_k$ in $G_k$    in Section~\ref{MP problem intro}, proving Theorem~\ref{MP theorem}.  Here is an outline of our algorithm.  Suppose $w(a_1, \ldots, a_k, t)$ is a word representing an element of $G_k$.  To tell whether  or not $w$ represents an element of $H_k$, first collect all the $t^{\pm 1}$  at the front by shuffling them to the left through the word, applying $\theta^{\pm 1}$ as appropriate to the intervening $a_i$ so that the element of $G_k$ represented does not change.  The result is a word $t^r v$ where $\abs{r}  \leq \ell(w)$ and $v=v(a_1, \ldots, a_k)$ has length at most a constant times  $\ell(w)^k$.  Then carry the $t^r$ back through $v$ working from left to right,  converting (if possible) what lies to the left of the power of $t$ to a word on the generators  $a_1t, \ldots, a_kt$ of $H_k$.  Some examples can be found in Section~\ref{alg examples}.  

 The power of $t$ being carried along will vary as this proceeds and, in fact, can get extremely large as a result of the hydra phenomenon.  So instead of keeping track of the power directly, we record it as a word on $\psi$-functions.  Very roughly speaking, checking whether this process ever gets stuck (in which case $w \notin H_k$) amounts to checking whether an associated $\psi$-word is valid.  If the end of the word is reached, we then have a word on    $a_1t, \ldots, a_kt$  times some power of $t$, where the power is represented by a  $\psi$-word.  We then determine whether  or not $w \in H_k$   by checking whether or not that $\psi$-word represents $0$.   Both tasks can be accomplished suitably efficiently thanks to Proposition~\ref{Thm: Psi}.  

A complication is that the power of $t$ is not carried through from left to right one letter at a time.  Rather, $v$ is partitioned into subwords which we call \emph{pieces}.  These pieces are determined by the locations of the $a_k$ and $a_k^{-1}$ in $v$.  Each contains at most one $a_k$ and at most one $a_k^{-1}$, and if the $a_k$ is present in a piece, it is the first letter of that piece, and it the $a_k^{-1}$ is present, it is the last letter.   The power of $t$ is, in fact, carried through one piece at a time.  Whether it can be carried through a piece $a_k^{\varepsilon_1} u a_k^{-\varepsilon_2}$ (here, $\varepsilon_1, \varepsilon_2 \in \set{0,1}$ and $u=u(a_1, \ldots, a_{k-1})$ is reduced)  depends on $u$ in a manner that can be recursively analyzed by decomposing $u$ into pieces with respect to the locations of the  $a_{k-1}^{\pm 1}$ it contains.   The main technical result behind the correctness of our algorithm is the `Piece Criterion' (Proposition~\ref{Prop: Piece Criterion}), which also serves to determine whether a  power  $t^r$ can pass through a piece $\pi$---that is, whether $t^r\pi = \sigma t^{s}$ for some $\sigma \in H_k$ and some $s \in \Z$---and, if it can, how to represent $s$ by an $\psi$-word.
  
Reducing Theorem~\ref{WP theorem}  to Theorem~\ref{MP theorem} is relatively straight-forward.  It requires little more than a standard result about HNN-extensions, as we will explain in Section~\ref{MP to WP}.

\subsection{Comparison with power circuits and straight-line programs}  

Our methods compare and contrast with those used   to solve the word problem for Baumslag's group in \cite{MUW1} and Higman's group  in \cite{DLU}, where  \emph{power circuits} are the key tool.  Power circuits provide concise  representations of integers.  Those of size $n$ represent (some) integers up to size a height-$n$ tower of powers of $2$.  There are efficient algorithms to perform  addition,  subtraction,   and multiplication and division by $2$ with power-circuit representations of integers, and to declare which of two  power circuits represents the larger integer.  
 
 We too  use concise representations of large integers, but in place of power circuits we use   strings of Ackermann functions.  These have the advantage that they may  represent much larger integers.  After all,  $A_3(n) = \exp_2^{(n-1)}(1)$ already produces a tower of exponents, and the higher rank Ackermann functions grow far faster.
However, we are aware of fewer efficient algorithms to perform operations with strings of Ackermann functions than are available for power circuits: we only have Theorem~\ref{Ackermann}. 

Our methods also bear comparison with the work of Lohrey, Schleimer and their coauthors \cite{HL,  HLM,  Lohrey2, Lohrey,  LohreyMono, LS, Schleimer} on efficient computation in groups and monoids where words are given in compressed forms using   \emph{straight-line programs}  and are compared and manipulated using   polynomial-time algorithms due to   Hagenah, Plandowski and Lohrey.  For instance Schleimer obtained polynomial-time algorithms solving the word problem for free-by-cyclic groups and automorphism groups of free groups and   the membership problem for the
handlebody subgroup of the mapping class group in \cite{Schleimer}.

\section{Efficient calculation with Ackermann-compressed integers} \label{Ackermann intro}

\subsection{Preliminaries} \label{Ackermann function preliminaries}  

Let $\N = \set{0, 1, 2,\ldots }$.
\emph{Ackermann functions}  $A_0, A_1 : \Z \to \Z$ and $A_i: \N \to \N$ for $i=2, 3, \ldots$ are  defined recursively  by 
\begin{enumerate}
\renewcommand{\labelenumi}{(\roman{enumi}) }
\item $A_0(n) = n+1$ for all $n \in \Z$,
\item $A_1(n) = 2n$  for all $n \in \Z$,
\item $A_i(0) = 1$  for all $i \geq 2$, and
\item $A_{i+1}(n+1) =   A_iA_{i+1}(n)$ for all $n \ge 0$ and all $i \geq 1$.  
\end{enumerate}
Our choices of $\Z$ as the domains for $A_0$ and $A_1$ and our definition of $A_0$ represent  small variations on the standard definitions of Ackermann functions,  reflecting  the definitions of the functions $\psi_i$ to come in Section~\ref{MP problem intro}.  The following table, showing some values of $A_i(n)$, can be constructed by first inserting the $i=0,1$ rows and then $n=0$ column, and then filling in the subsequent rows left-to-right according to the recurrence relation.   
$$\begin{array}{r | c c c c c c c  c }
                           & 0 & 1 & 2 & 3 & 4 &  \cdots  & n &  \cdots   \\ \hline
A_0                        & 1 &  2 &  3 &  4 &  5 &  \cdots  & n+1 &  \cdots \\ 
A_1                        & 0 & 2 & 4 & 6 & 8 &  \cdots  & 2n  &  \cdots \\  
A_2                       & 1 & 2 & 4 & 8 & 16   &  \cdots  &   2^{n} &  \cdots \\  
A_3                        & 1 & 2 & 4 & 16 &  65536  & \cdots  &  \left. \parbox{9mm}{${2^{\mbox{2}^{{\reflectbox{$\ddots$}}}}}^{\mbox{2}}$}  \right\} n &  \cdots  \\ 
A_4                        & 1 & 2 & 4 & 65536 &    \left. \parbox{9mm}{${2^{\mbox{2}^{{\reflectbox{$\ddots$}}}}}^{\mbox{2}}$}  \right\} 65536  & \cdots  &   &   \\  
\vdots                & \vdots & \vdots & \vdots & \vdots  & \vdots  &     &   &   \\   
\end{array} $$

For all  $i \geq 2$ and $n \geq 1$,   $A_i(n)  =   A_{i-1}^{n}(1)$ by repeatedly applying $(\textup{iv})$ and using $A_i(0)=1$.  So for  all $n \geq 0$, $A_2(n)   =   2^n$ and $A_3(n)$ is a $n$-fold iterated power of $2$, in other words, a tower of powers of $2$ of height $n$.    The recursion $(\textup{iv})$ causes the functions' extraordinarily fast growth.  Indeed, because of the increasing nesting of the recursion,  the $A_i$ represent the successive graduations in a hierarchy of all primitive recursive functions due to Grzegorczyk.  
 
The functions $A_i$ are all strictly increasing and hence injective (see Lemma \ref{Properties of A_k}).  So they have partial inverses:
\begin{itemize}
\item[(I)] $A_0\inv:\Z\to \Z$ mapping $n\mapsto n-1$,
\item[(II)] $A_1\inv:2\Z \to \Z$ mapping  $n\mapsto  n/2$, and
\item[(III)]  $A_i\inv:\text{Img }A_{i} \to \N$ for all $i>1$.
\end{itemize}

Parts (\ref{A_k of 1}--\ref{A_k9} ) of the following lemma are adapted from Lemma~2.1 of \cite{DR} with  modifications to account for the fact that $A_0$ is defined as $n\mapsto n+1$ here rather than $n\mapsto n+2$.  Part \eqref{A_i gap}  quantifies the spareness of the image of $A_2, A_3, \ldots$ in a way that will be vital to our proof of Theorem~\ref{Ackermann} (specifically, in our proof the correctness of the subroutine $\alg{BasePinch}$).  It will tell us  that  if $u =u(A_1,  \ldots, A_{k-1})$ and  $u A_k(n) \in \Img \, A_k$ but $u A_k(n)  \neq A_k(n)$,  then $\ell(u)$ must be relatively large.

\begin{lemma} \label{Properties of A_k}
 
   \begin{align}
    A_i(1) \ &= \ 2  && \forall i \geq 0, \label{A_k of 1} \\
    A_i(2) \ &= \ 4  && \forall i \ge 1, \label{A_k of 2} \\
    A_i(n) \ &\leq \ A_{i+1}(n)  && \forall i \geq 1; n \geq 0,  \label{A_k7} \\
    A_i(n) \ &< \ A_i(n+1)   && \forall i,n \geq 0, \label{A_k5} \\
    n \ &\leq \ A_i(n)  && \forall i,n \geq 0, \label{A_k2} \\
    \intertext{\textup{(}with equality   in \eqref{A_k2} if and only if $i=1$ and $n  = 0$\textup{)}}
    A_i(n) + A_i(m) \ &\leq \ A_i(n + m)  && \forall i,n,m \geq 1, \label{A_k3} \\
    A_i(n) + m \ &\leq \ A_i(n+m)  &&\forall i,n,m \geq 0, \label{A_k9}  \\
    |A_i(n)-A_i(m)|   \ & \ge \ \dfrac12 A_i(n) & &  \forall i\ge 2  \text{ and }  n\ne m. \label{A_i gap}
  \end{align}

\begin{proof}
  Equations \eqref{A_k of 1} and \eqref{A_k of 2} follow from $A_{i+1}(n+1) =   A_iA_{i+1}(n)$  by induction on $i$.    It is easy to check that \eqref{A_k7} holds if $i=1$ or if $n=0$ and that \eqref{A_k5} and \eqref{A_k2} hold if $i=0$, if $i=1$ or if $n=0$.   It is clear \eqref{A_k3} holds if $i=1$. 
 The inequality \eqref{A_k9} holds if $i=0$, $i=1$ or $m=0$.  The inductive arguments for the above inequalities are then identical to the corresponding ones in Lemma~2.1 of \cite{DR}.  For \eqref{A_i gap}, note that the result is true when $i=2$ as $A_2(n) = 2^n$ for all $n \in \N$ and, given how each of the successive rows is constructed from those   preceding them, it follows that it is true for all $i \geq 2$.  
\end{proof}
\end{lemma}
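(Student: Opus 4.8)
I would establish the eight relations in roughly the order listed, since each may invoke its predecessors, noting that the only ingredients genuinely new relative to Lemma~2.1 of \cite{DR} are the gap estimate \eqref{A_i gap} and the bookkeeping forced by the change $A_0\colon n\mapsto n+1$. In every case the pattern is the same: dispose of the degenerate cases $i\in\{0,1\}$ and $n=0$ (or $m=0$) using the explicit formulas $A_0(n)=n+1$, $A_1(n)=2n$, $A_i(0)=1$, and then induct via recursion~(iv). Concretely, \eqref{A_k of 1} and \eqref{A_k of 2} follow by induction on $i$ from $A_{i+1}(1)=A_iA_{i+1}(0)=A_i(1)$ and $A_{i+1}(2)=A_iA_{i+1}(1)=A_i(2)$. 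Next I would prove strict monotonicity \eqref{A_k5}: $A_0$ and $A_1$ are increasing by inspection and $A_i(0)=1<2=A_i(1)$, while for $i\ge 2$ one inducts on $n$, using that $A_{i-1}$ is already known increasing to get $A_i(n+1)=A_{i-1}(A_i(n))<A_{i-1}(A_i(n+1))=A_i(n+2)$; this must be run as a single induction on $i$ together with the bound $A_i(n)\ge 1$ that keeps arguments inside the relevant domains, and injectivity of all $A_i$ — hence the partial inverses (I)--(III) — is then a corollary. From here \eqref{A_k2} and its equality clause drop out (for $i\ge 2$, $A_i(0)=1>0$ and \eqref{A_k5} give $A_i(n)\ge n+1$, so $A_i(n)=n$ forces $i=1$, $n=0$), and \eqref{A_k7} follows from $A_{i+1}(n)\ge n+1$ and monotonicity of $A_i$.

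\textbf{Additive inequalities.} For \eqref{A_k3} and \eqref{A_k9} I would reproduce the inductions of \cite{DR}: the cases $i\le 1$ and $m=0$ are immediate (with slack when $i=1$), and the inductive step on $m$ uses \eqref{A_k of 2}, strict monotonicity, and integrality — in particular $A_i(p+1)\ge A_i(p)+1$ from \eqref{A_k5} — to climb from $A_i(p)$ to $A_i(p+1)$. The one point needing care is the base $m=1$ of \eqref{A_k3} for $i\ge 2$, i.e.\ $A_i(n+1)\ge A_i(n)+2$: write $A_i(n+1)=A_{i-1}(A_i(n))$ and apply \eqref{A_k9} to $A_{i-1}$ together with $A_i(n)\ge 2$.

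\textbf{The gap estimate, and the main obstacle.} The genuinely new content is \eqref{A_i gap}, and this is the step I expect to be most delicate; I would prove it by induction on $i\ge 2$. The base $i=2$ is the explicit estimate $|2^n-2^m|\ge 2^{\max(n,m)-1}\ge 2^{n-1}=\tfrac12 A_2(n)$ for $n\ne m$, verified separately for $m<n$ and $m>n$. For the step, exploit that the $(i{+}1)$-st row is generated by iterating the increasing map $A_i$, namely $A_{i+1}(n)=A_i(A_{i+1}(n-1))$ for $n\ge 1$. If $m>n\ge 1$, then $A_{i+1}(m)-A_{i+1}(n)\ge A_{i+1}(n+1)-A_{i+1}(n)=A_i(A_{i+1}(n))-A_i(A_{i+1}(n-1))$, which by the inductive hypothesis for $A_i$ and \eqref{A_k5} is at least $\tfrac12 A_i(A_{i+1}(n))=\tfrac12 A_{i+1}(n+1)\ge\tfrac12 A_{i+1}(n)$; the case $n=0$ is trivial since $A_{i+1}(0)=1$ and $A_{i+1}(m)\ge A_{i+1}(1)=2$. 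If $m<n$, then $A_{i+1}(m)\le A_{i+1}(n-1)$ while $A_{i+1}(n)=A_i(A_{i+1}(n-1))\ge 2A_{i+1}(n-1)$ — using $A_i(p)\ge 2p$ for $i\ge 2$, which is immediate from $A_2(p)=2^p$ and \eqref{A_k7} — so $A_{i+1}(m)\le\tfrac12 A_{i+1}(n)$ and the bound follows. The real subtlety is that \eqref{A_i gap} is asserted with $A_i(n)$ rather than $A_i(\max(n,m))$ on the right, which makes $m<n$ the binding case and forces the crude doubling bound; otherwise one only has to keep the inductions acyclic — monotonicity and the lower bound before the gap estimate, and \eqref{A_i gap} for $A_i$ before \eqref{A_i gap} for $A_{i+1}$.
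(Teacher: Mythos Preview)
Your proposal is correct and follows essentially the same strategy as the paper: verify the degenerate cases $i\in\{0,1\}$ and $n=0$ (or $m=0$) directly from the explicit formulas, then induct via the recursion $A_{i+1}(n+1)=A_iA_{i+1}(n)$, with the paper deferring the inductive steps for \eqref{A_k7}--\eqref{A_k9} to \cite{DR} while you spell them out. For \eqref{A_i gap} the paper only asserts that the $i=2$ case propagates ``given how each of the successive rows is constructed from those preceding them''; your explicit induction on $i$, splitting into $m>n$ (using the inductive hypothesis for $A_i$) and $m<n$ (using the doubling bound $A_i(p)\ge 2p$ from $A_2(p)=2^p$ and \eqref{A_k7}), is exactly the argument the paper leaves implicit.
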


When a word $w = w(A_0, \ldots, A_k)$ is non-empty, we let $\rank(w)$ denote  the maximum $i$ such that $A_i^{\pm 1}$ occurs in $w$ and $\eta(w)$ denote the number of $A_1\inv, \ldots, A_k\inv$ in $w$. 
For example, if $w= A_4\inv A_3 A_0\inv A_1\inv A_2$,  then $\rank(w)=4$ and $\eta(w) = 2$.

As we   said in Section~\ref{1.1}, strings of Ackermann functions offer a means of representing integers.
For $x_1, \ldots, x_n \in \{A_0^{\pm 1},\ldots,A_k^{\pm 1}\}$, we say the  word $w=x_nx_{n-1}\cdots x_1$ is \emph{valid} if    $x_mx_{m-1}\cdots x_1(0)$ is defined for all $0\le m \le n$.  That is,  if we evaluate  $w(0)$ by  proceeding through $w$ from right to left applying  successive  $x_i$, we never encounter the problem that we are trying to apply $x_i$ to an integer outside its domain, and so $w(0)$ is a well-defined integer. 

For example,  $w:=A_2\inv A_1 A_1 A_0$ is valid, and $w(0) = \log_2 (2\cdot 2\cdot (0+1)) = 2$. But $A_2 A_0\inv$ and $A_1 A_1\inv A_0$  are not valid because $A_0\inv(0) = -1$ is not in $\mathbb{N}$ (the domain of $A_2$) and because $A_0(0)=1$ is not in $2\Z$ (the domain of $A_1\inv$).   

  For $m \in \Z$, the \emph{sign} of $m$, denoted  $\sgn(m)$, is $-$, $0$, or $+$ depending on whether  $m<0$, $m=0$, or $m>0$, respectively.  So Theorem~\ref{Ackermann} states that there is a polynomial-time algorithm to test validity of $w(A_0, \ldots, A_k)$ and, when valid, to determine the sign of $w(0)$.

   We say $w(A_0, \ldots, A_k)$ and $w'(A_0, \ldots, A_k)$ are \emph{equivalent} and write $w \sim w'$ when $w$ and $w'$ are either both invalid, or are both  valid and $w(0)=w'(0)$.

\subsection{Examples and general strategy} \label{Ackermann Examples}

 We fix an integer $k \geq 0$ throughout the remainder of this article.

We will motivate and outline our design of our algorithm \alg{Ackermann} by means of some examples.  The   details of  \alg{Ackermann} and it subroutines (which we refer to parenthetically below)   follow in  Section~\ref{Ackermann specs}.  

First consider   the case where the word $w(A_0, \ldots, A_k)$ in question satisfies  $\eta(w)=0$---that is, contains no  $A_1\inv, \ldots, A_k\inv$.  Such $w$ are not hard to handle because, to check validity of $w$, we only need to make sure that no $A_i$ in $w$ with $i\ge 2$ takes a negative input when $w(0)$ is evaluated. (Such $w$ are handled by the subroutine \alg{Positive}.)  Here is an example.

\begin{Example}\label{Ex: Positive}
Let $w= A_0^{-6} A_1 A_0\inv A_5 A_0^{-4} A_2 A_1 A_2 A_0$, which is a word of length $17$ with $\eta(w)=0$.  We can evaluate directly working from right to left that, if valid, $w(0) =  A_0^{-6} A_1 A_0\inv A_5(12)$.  At this point we are reluctant to calculate $A_5(12)$ as it is enormous, and instead  recognize that $A_5(12)$ is larger than $\ell(w)=17$ ($\alg{Bounds}$), which as we will explain in a moment we can do suitably quickly.  We then deduce that $w$ is valid and $w(0) >0$, because $A_0\inv$ are the only letters further to the left which would lower the value, were the evaluation to continue, and there cannot be enough of them to reach $0$ or a negative number. 
\end{Example}

In general, if $\eta(w)=0$, our algorithm starts evaluating $w(0)$ working right to left. 
Let $w_j$ denote the length-$j$ suffix of $w$.  
The only letters in $w$ which could decrease absolute value are   $A_0^{\pm 1}$, so if $|w_j(0)|>\ell(w)$ for some $j$ and $w$ is valid, then $\sgn(w_j(0))=\sgn(w(0)).$
Moreover, if $\abs{w_j(0)} > \ell(w)$, then the only way $w$   fails to be valid is if $w_j(0) <0$ and the prefix of $w$ to the left of $w_j$ contains one of $A_2, A_3, \ldots$.  So after either exhausting $w$ or reaching such a $j$ and then scanning the remaining letters in $w$, the algorithm can halt and decide whether or not $w(0)$ is valid, and if so its sign.   
 
This technique adapts to compare $w(0)$ with a constant --

\begin{Example} \label{Ex: compare with constant}
Take $w$ as in Example \ref{Ex: Positive}.
We see that $w(0)>2$ by applying the same technique to find that $w(0)-2 = A_0^{-2} w(0)> 0$. Here, the size of $A_5(12)$ still dwarfs $\ell(A_0^{-2}w) = 19$, so the   computation carried out is essentially the same.
\end{Example}

So, how do we determine that $A_5(12)>17$ or, indeed, $A_5(12)>19$ for  Examples~\ref{Ex: Positive} and \ref{Ex: compare with constant}?
 The recursion   $A_{i+1}(n+1) = A_i A_{i+1}(n)$ implies that $\Img\, A_i \subseteq \Img\, A_2$ for all $i\ge 2$.  Suppose we wish to know whether $A_i(n)$ is less than some constant $c$.  The cases $i=0, 1$ are easy to handle as $A_0(n) = n+1$ and $A_1(n) = 2n$ for all $n$.  So are the cases $n=0,1,2$ as $A_i(0)=1$, $A_i(1)=2$, and $A_i(2)=4$ for all $i$. As for other values of $i$ and $n$, the recursion allows a subroutine (\alg{Bounds})  to list the $i\ge 2$ and $n\ge 3$ for which $A_i(n) < c$.

For instance, to  find the $i\ge 2$ and $n\ge 0$ for which $A_i(n) < 17$, first  calculate $A_2(n) = 2^n$ for all $n$ for which $A_2(n) <17$, filling in the first row of the following table.
\[ \begin{matrix}  & n=0 & n=1 & n=2 & n=3 & n=4 &  \\
							 A_2 & 1   &  2  & 4   & 8   &  16 &  \\
							 A_3 & 1  & 2 & 4 & 16 &  & \\
							 A_4 & 1 & 2 & 4 &  & & \\
 \end{matrix}\]
Now fill the  table one row at a time. We start with $A_3(0)=1$ and $A_3(1)=2$, and then  $A_3(2) = A_2A_3(0) = A_2(1) = 2$.  Then $A_3(2) = A_2A_3(1)$, which is $4$ because, as we already know, $A_3(1)=2$ and $A_2(2) = 4$. Similarly, $A_3(3) = 16$. 
And $A_3(4) = A_2A_3(3) =A_2(16)$, which must be greater than $16$ since  $A_2(16)$ is not in the table. 
We carry out the same process for $A_4$.  We discover that  $A_4(3) = A_3A_4(2) = A_3(4)$ is  at least $17$ since 
$ A_3(4)$ is not already in the table.  At this point we halt, reasoning that $A_j(3) \geq A_i(3) \geq  17$ for all $j>i$ (see Lemma~\ref{Properties of A_k}).  

  \alg{Ackermann}'s strategy, on input a word $w$,    is to reduce to the case $\eta(w)=0$ by progressing  through a sequence of equivalent words, facilitated  by:

\begin{lemma}\label{Lem: OneToZero Equiv}
Suppose $u=u(A_0, \ldots, A_k)$ and $v=v(A_0, \ldots, A_k)$. The following equivalences hold  if $v$ is invalid or if $v$ is valid and satisfies the further conditions indicated:
\begin{align*}
u A_{i+1} v  \ & \sim \ u A_i A_{i+1} A_0\inv v & v(0) >0 \text{ and } i\geq 1, \\ 
u A_{i+1}^{-1} v \ & \sim \ u A_0 A_{i+1}^{-1} A_{i}^{-1} v   & v(0) >1 \text{ and } i\geq 1,  \\ 
 u A_i^{-1} A_i  v \ &  \sim \  uv & v(0) \geq 0 \text{ and } i\geq 0.  
\end{align*}
\end{lemma}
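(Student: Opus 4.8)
The plan is to reduce all three equivalences to a purely local computation on the ``middle'' of the word, exploiting that $\sim$ depends only on the right-to-left evaluation. First I would dispose of the case where $v$ is invalid: every word appearing in the lemma ends in $v$, so if some suffix of $v$ applied to $0$ is undefined, that same suffix is a suffix of each of these words, hence all of them are invalid and thus pairwise equivalent. For the remainder I would assume $v$ is valid, set $n:=v(0)$, and assume the indicated inequality on $n$ (respectively $n>0$, $n>1$, $n\ge 0$).

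For each line I would write the two sides as $uXv$ and $uYv$ for the appropriate strings $X,Y$ of letters $A_j^{\pm 1}$. Since $v$ is valid with value $n$ and $u$ is a common prefix, it suffices to show that $X$ applied to $n$ is defined if and only if $Y$ applied to $n$ is defined, and that the two values agree whenever both are defined; then $uXv$ and $uYv$ perform identical evaluations from the moment the running value reaches the left end of $v$, so they are simultaneously valid with equal values, or simultaneously invalid. For the first line, with $i\ge 1$ and $n>0$: $A_{i+1}(n)$ is defined since $n\in\N$, and $A_iA_{i+1}A_0\inv$ applied to $n$ equals $A_iA_{i+1}(n-1)$, which is defined since $n-1\ge 0$; the recursion (iv) gives $A_{i+1}(n)=A_iA_{i+1}(n-1)$, so both are defined and equal. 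For the third line, with $i\ge 0$ and $n\ge 0$: $n$ lies in the domain of $A_i$ (all of $\Z$ when $i\le 1$, and $\N$ when $i\ge 2$), so $A_i(n)$ is defined, $A_i(n)\in\Img A_i$, and hence $A_i\inv A_i$ applied to $n$ is defined and equals $n$ by injectivity of $A_i$ (Lemma~\ref{Properties of A_k}); the right-hand side also just returns $n$.

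The main work is the second line. Here, with $i\ge 1$ and $n>1$, the left side $A_{i+1}\inv(n)$ is defined exactly when $n\in\Img A_{i+1}$, while $A_0A_{i+1}\inv A_i\inv$ applied to $n$ is defined exactly when $n\in\Img A_i$ and $A_i\inv(n)\in\Img A_{i+1}$. I would show these two conditions are equivalent for $n\ge 2$: if $n=A_{i+1}(m)$ then, since $A_{i+1}(0)=1<2\le n=A_{i+1}(m)$ and $A_{i+1}$ is strictly increasing with $A_{i+1}(1)=2$ (Lemma~\ref{Properties of A_k}), we get $m\ge 1$, so $n=A_{i+1}(m)=A_iA_{i+1}(m-1)$ by (iv), giving $n\in\Img A_i$ and $A_i\inv(n)=A_{i+1}(m-1)\in\Img A_{i+1}$; conversely if $n=A_i(p)$ with $p=A_{i+1}(q)$ then $n=A_iA_{i+1}(q)=A_{i+1}(q+1)$ by (iv), so $n\in\Img A_{i+1}$. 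When both sides are defined, the same identity $n=A_{i+1}(m)$ with $m\ge 1$ shows $A_{i+1}\inv(A_i\inv(n))=m-1$, so the right side evaluates to $A_0(m-1)=m=A_{i+1}\inv(n)$, as needed. I expect this equivalence of domain conditions to be the only point requiring care; everything else is a direct appeal to the defining recursion (iv), the monotonicity and injectivity recorded in Lemma~\ref{Properties of A_k}, and routine bookkeeping about the domains of $A_0^{\pm 1}, A_1^{\pm 1}$ versus those of $A_i^{\pm 1}$ for $i\ge 2$.
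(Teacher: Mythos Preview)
Your proposal is correct and follows essentially the same approach as the paper: dispose of the invalid-$v$ case by observing $v$ is a common suffix, then reduce each equivalence to a local computation on the middle string applied to $n=v(0)$, invoking the defining recursion $A_{i+1}(n+1)=A_iA_{i+1}(n)$ and the monotonicity of $A_{i+1}$ to handle the second line. Your treatment is in fact slightly more careful than the paper's (you spell out why $m\ge 1$ and why $q\ge 0$ in the converse direction, and you distinguish the $i\le 1$ versus $i\ge 2$ domains in the third line), but the argument is the same.
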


\begin{proof} 
If $v$ is invalid, then any word  with suffix $v$ is invalid, so $uA_{i+1}v \sim uA_iA_{i+1}A_0\inv v$ and $uA_{i+1}\inv v \sim u A_0 A_{i+1}\inv A_i v$. 

Assume $v$ is valid. If $v(0)>0$, then $A_0\inv v(0)\ge 0$ so that $A_{i+1}v$ and $A_iA_{i+1} A_0\inv v$ are valid words and by the recursion defining the functions, 
\[ A_{i+1}v(0) \ =  \ A_i A_{i+1}(v(0)-1)  \ = \  A_iA_{i+1}A_0\inv v(0). \]
Thus $uA_{i+1}v \sim uA_iA_{i+1}A_0\inv v$ since their validity is  equivalent to the validity of $u$ on   input   $A_{i+1}v(0)$. 

Suppose $v(0)>1$.
If $v(0) = A_{i+1}(c)$ for some $c\in\Z$, then $c>0$ because $i\ge 1$, so $v(0) = A_{i}A_{i+1}(c-1)$.
Conversely, $v(0) = A_iA_{i+1}(c-1)$ implies $c\ge 1$. Thus
\[A_0 A_{i+1}\inv A_i\inv v(0) \ = \  c \  =  \ A_{i+1}\inv v(0),\]
and $uA_0 A_{i+1}\inv A_i\inv v  \sim u A_{i+1}\inv v$ because their validity is equivalent to validity of $u$ on input $A_{i+1}\inv v (0)$. 

That $u A_i^{-1} A_i  v \sim uv$ under the given assumptions is apparent because  the condition $v(0) \geq 0$ ensures $v(0)$ is in the domain of $A_i$, given that $i \geq 2$.
\end{proof} 

We will  frequently make tacit  use of this fact, which is immediate from the definitions:

\begin{lemma} 
If  $w(A_0, \ldots, A_k)$ and $w'(A_0, \ldots, A_k)$  can be expressed as $w=uv$ and $w'=uv'$ for some equivalent suffixes $v \sim v'$, then  $w \sim w'$
\end{lemma}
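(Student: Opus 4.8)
The plan is a direct unwinding of the definitions of \emph{validity} and of $\sim$; there is essentially nothing to prove beyond bookkeeping. Write $u = y_r\cdots y_1$ with each $y_j \in \{A_0^{\pm 1},\ldots,A_k^{\pm 1}\}$. When $w = uv$ is evaluated from right to left, the first $\ell(v)$ letters processed are exactly those of $v$, so that initial stretch of the computation \emph{is} the evaluation of $v(0)$; thereafter one applies $y_1,\ldots,y_r$ in turn, starting from the integer $v(0)$ (assuming $v$ is valid). The single observation I would isolate is therefore: $w$ is valid if and only if $v$ is valid \emph{and} the partial evaluations $y_m\cdots y_1(v(0))$ are all defined for $0\le m\le r$ --- call this latter condition ``$u$ accepts $v(0)$'' --- and, in that case, $w(0) = y_r\cdots y_1(v(0))$. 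The identical statement holds with $w,v$ replaced by $w',v'$, using the \emph{same} word $u$. (This is the decomposition principle already used repeatedly in the proof of Lemma~\ref{Lem: OneToZero Equiv}.)

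Given this, I would argue by cases on $v$. If $v$ is invalid, then $v'$ is invalid as well: were $v'$ valid, $v\sim v'$ would force $v$ valid, a contradiction. Any word with an invalid suffix is itself invalid, since an undefined partial evaluation inside the suffix is also an undefined partial evaluation of the whole word; hence $w$ and $w'$ are both invalid and $w\sim w'$. If instead $v$ is valid, then $v'$ is valid and $v(0) = v'(0) =: m$. By the observation above, $w$ is valid $\iff$ $u$ accepts $m$ $\iff$ $w'$ is valid, and when they are valid $w(0) = y_r\cdots y_1(m) = w'(0)$. Either way $w\sim w'$, which is what is claimed.

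The ``hard part'' is really just a point of care rather than an obstacle: $v$ and $v'$ need not have the same length, so one must be sure the argument uses nothing about these suffixes except the pair (validity, value) --- that is, only their $\sim$-class --- which is exactly what the decomposition observation guarantees.
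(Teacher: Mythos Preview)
Your argument is correct. The paper itself does not give a proof of this lemma: it simply states that the fact is ``immediate from the definitions'' and leaves it at that. Your write-up is precisely the careful unwinding of the definitions of validity and of $\sim$ that the paper omits, so there is nothing to compare --- you have supplied the details the authors deemed unnecessary to include.
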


Here is an outline  of what  \alg{Ackermann} does on input a valid word $w$.  A description of how \alg{Ackermann} checks the hypotheses of Lemma~\ref{Lem: OneToZero Equiv}   and what   it does  when they fail is postponed until the end of the outline. \begin{enumerate}
\renewcommand{\labelenumi}{\arabic{enumi}. }
\item   \label{Ackermann outline step 1} 
Locate the rightmost $A_r\inv$ in $w$ for which $r\ge 1$.  We aim to eliminate this letter,  to get a word $w'$ with $\eta(w') < \eta(w)$ and $w \sim w'$ by `cancelling' it with an $A_{r}$ that lies somewhere to its right and with no higher rank letters in between.   However there may be no such $A_r$, in which case we  manufacture one.  Accordingly ---
\begin{enumerate} 
\item  If every letter to the right of $A_r\inv$ is of rank less than $r$, then  append either $A_0\inv A_r$ if $r>1$ or $A_1$ if $r=1$ to create an equivalent word ending in  $A_r$ .  
\item \label{1b}  Locate the first letter $A_{r'}$  that lies to the right of our $A_r\inv$ and has  $r' \geq r$. If $r'>r$,  substitute $A_{r'-1} A_{r'} A_0^{-1}$ for this $A_{r'}$, then $A_{r'-2} A_{r'-1} A_0^{-1}$ for the resulting $A_{r'-1}$, and so on, as per Lemma~\ref{Lem: OneToZero Equiv} until we have created an $A_r$ (\alg{Whole}). 
\end{enumerate}
 Thereby, obtain a word equivalent to $w$ which has suffix  $s = A_r\inv  u A_r v$ for some  $u$ and $v$ with $\eta(u)=\eta(v)=0$ and $\Rank(u)<r$.  (\alg{Reduce}.)

\item We now invoke a subroutine  ($\alg{Pinch}_r$) which will either declare $s$ (and so $w$) invalid, or will convert $s$ to an equivalent word $A_0^l v$ for some  $l \in \Z$. \label{Pinch Step in Ackermann Outline}   

Suppose first that $ \rank(u) =r-1 >0$.  We will  explain how to eliminate an $A_{r-1}$ from $u$.  On repetition, this will give a word  $A_0^m A_r\inv \tilde{u} A_r v \sim s$ such that $\Rank(\tilde{u}) \leq r-2$.  ($\alg{CutRank}_r$.)  
\begin{enumerate}
\item \label{CutRank Outline Step 1} Find the leftmost $A_{r-1}$ in $s$ and write  $$s  \ = \  A_{r}\inv u' A_{r-1} u'' A_r v$$ where $\Rank(u')< {r-1}$ and $\Rank(u'')\le {r-1}$. 
Substitute $A_0 A^{-1}_r A^{-1}_{r-1}$ for $A_r^{-1}$ as per Lemma~\ref{Lem: OneToZero Equiv} to give 
$$A_0 A^{-1}_r A^{-1}_{r-1} u' A_{r-1} u'' A_r v \ \sim \ s.$$
\item Apply $\alg{Pinch}_{r-1}$  to the suffix $A^{-1}_{r-1} u' A_{r-1} u'' A_r v$ to give an equivalent word $A_0^{l'}u'' A_r v$ for some $l' \in \Z$.  Thereby get $$A_0 A^{-1}_rA_0^{l'}u'' A_r v \ \sim \ s.$$

\item Likewise eliminate an $A_{r-1}$ from $u''$ in  $A^{-1}_rA_0^{l'}u'' A_r v$, and so on, until we arrive at   
 $$A_0^m A_r\inv \tilde{u} A_r v \ \sim \ s$$  such that $m \in \Z$ and  $\Rank(\tilde{u}) \leq r-2$.
\end{enumerate}

To reduce the rank of  the subword between the  $A_r\inv$ and the $A_r$  further we manufacture an $A_{r-1}\inv$ and an  $A_{r-1}$ and then proceed recursively.  Accordingly ---

\begin{enumerate}
\addtocounter{enumii}{3}
\item \label{the FinalPinch Step} Substitute for $A_r\inv$ and $A_r$ as per Lemma~\ref{Lem: OneToZero Equiv}  to get 
$$A_0^{m} \, (A_0 A_r\inv A_{r-1}\inv) \, \tilde{u} \, (A_{r-1} A_r A_0\inv) \, v \ \sim \ s.$$

\item Call $\alg{Pinch}_{r-1}$ on the suffix $A_{r-1}\inv  \tilde{u}  A_{r-1} A_r A_0\inv    v$ to obtain  
$$A_0^{m +1}  A_r\inv   A_0^{l''}    A_r A_0\inv    v \ \sim \ s$$
for some $l'' \in \Z$ 
($\alg{FinalPinch}_r$).
\end{enumerate}
\item 
Eliminate $A_r\inv$ and $A_r$ from the suffix $A_r\inv   A_0^{l''}    A_r A_0\inv    v$ using a method we will shortly explain via Example~\ref{Ex: Pinch0} to give  an equivalent suffix  $A_0^{l'''}   A_0\inv    v$   for some $l''' \in \Z$  ($\alg{BasePinch}$).    Thereby, if $w'$ is the word obtained from $w$ by substituting the suffix beginning with the final  $A_r\inv$ with  $A_0^{m +1}  A_0^{l'''}   A_0\inv    v$, then $w \sim w'$ and $\eta(w') < \eta(w)$, as required.      \label{Ackermann outline step 4}
\item Repeat steps \ref{Ackermann outline step 1}--\ref{Ackermann outline step 4} until we have an equivalent word with no $A_1\inv, \ldots, A_k\inv$.  
\item Use the strategy (\alg{Positive}) from Example~\ref{Ex: Positive} above. 
\end{enumerate}

To make legitimate substitutions as per Lemma~\ref{Lem: OneToZero Equiv} in Steps~\ref{1b}, \ref{CutRank Outline Step 1}, and \ref{the FinalPinch Step}, we  have to examine certain suffixes. 
In every instance   we are:
\begin{enumerate}
\item \label{1sub}  either substituting  $A_i A_{i+1} A_0^{-1}$ for an $A_{i+1}$, in which case we have to check that  the suffix $v$ (which has $\eta(v) =0$)  after that $A_{i+1}$ has  $v(0) >0$,
\item \label{2sub}  or substituting $A_0 A_{i+1}^{-1} A_i^{-1}$ for an $A_{i+1}^{-1}$, in which case we have to check that  the suffix $v$ after that  $A_{i+1}^{-1}$ (which again has $\eta(v) =0$) has   $v(0) >1$.
\end{enumerate}
So validity of $v$ and the hypothesis $v(0) >0$ or $v(0)>1$ (and indeed whether $v(0) < 0$, whether $v(0)=1$, or whether $v(0) \leq 0$,  which we will soon also  need) can be checked in the manner of Examples~\ref{Ex: Positive} and \ref{Ex: compare with constant}, and if  $v$ is invalid, then  $w$ is invalid.

Suppose, then,  we are in Case~i, $v$ is valid, but $v(0) \leq 0$.   
\begin{itemize}
\item If  $i > 0$ and  $v(0) < 0$, then $A_{i+1}v$, and so $w$, is invalid.  
\item  If $i >1$ and $v(0)=0$, then  $A_{i+1}v(0)=1$ and so, instead of making the planned substitution, the suffix  $A_{i+1}v$ can be replaced by the equivalent $A_iv$.  
\item If $i=1$ and  $v(0)=0$, then we  have a suffix $A_2v$ which we replace by the equivalent $A_0A_1(v)$.  
\item When $i=0$, no substitution is necessary because $A_1\inv u A_1 v$ is valid if and only if $u(0)$ is even.
If so $u=A_0^l$ for some even $l$ and $A_1\inv u A_1 v$ can be replaced by the equivalent $A_0^{l/2} v$.  
\end{itemize}

 Suppose, on the other hand, that  we are in Case~ii, $v$ is valid, but $v$ is valid and $v(0) \leq1$. The algorithm actually only tries to make substitutions for $A_{i+1}\inv$ when the input word has suffix $A_{i+1}\inv u A_{i+1} v_0$  for some subwords $u$ and $v_0$ such that $\eta(u)= \eta(v_0)=0$ and $\Rank(u)<i+1$ (and $v \equiv u A_{i+1} v_0$).
It proceeds as follows:    
\begin{itemize}
\item  If $v(0)=1$ and $i>0$, output the equivalent $A_0^{-v_0(0)}v_0$.
\item If $i=0$ use the fact that $A_1\inv u A_1 v_0$ is valid if and only if $u(0)$ is even. If $u(0)$ is even, $u=A_0^l$ for some even integer $l$ replace the suffix $A_1\inv u A_1 v_0$  by the  equivalent  $A_0^{l/2}v_0$.
\item If $v(0)\le 0$, then $A_{i+1}\inv v$ is invalid.
\end{itemize}
(In Case~ii, it is not obvious that outputting $A_0^{-v_0(0)} v_0$ is better than simply returning the empty word to represent zero. 
However, the inductive construction of the algorithm requires that the output word retain a suffix $v_0$.)

\begin{Example}\label{Ex: Pinch}
Let $w= A_0A_2\inv A_1A_0^2A_2A_0$.  A quick direct calculation shows $w$ is valid and $w(0)=4$, but here is how our    \alg{Ackermann}  handles it.  
\begin{enumerate}
\renewcommand{\labelenumi}{\arabic{enumi}. }
\item First aim to eliminate the $A_2\inv$ (the subroutine \alg{Reduce}).  Look to the right of the $A_2\inv$ for the first subsequent letter (if any) of rank at least $2$, namely  the $A_2$.   
\item Try to  `cancel' the $A_2\inv$ with the $A_2$ ($\alg{Pinch}_2$)  ---
\begin{enumerate}
\item Reduce the rank of the subword $ A_1A_0^2$ between $A_2\inv$ and $A_2$ as follows ($\alg{CutRank}_2$).
\begin{enumerate}
\item Use the technique of Example~\ref{Ex: Positive} (\alg{Positive}) to check that the suffix $A_1A_0^2A_2A_0$  is valid and $A_1A_0^2A_2A_0(0) > 1$.  So, by by Lemma \ref{Lem: OneToZero Equiv}, we can legitimately substitute $A_0^2 A_2\inv A_1\inv$ for $A_2\inv$   to obtain   $$A_0 A_2\inv A_1\inv A_1A_0^2A_2A_0 \ \sim \ w.$$
\item Cancel the  $A_1\inv A_1$ (strictly speaking, this is done by calling $\alg{CutRank}_2$ on $A_2\inv A_1\inv A_1A_0^2A_2A_0$, and then $\alg{Pinch}_2$) to give $$ A_0^2A_2\inv A_0^2A_2A_0 \ \sim \ w.$$
\end{enumerate}
\item Next follow Step~\ref{the FinalPinch Step} from the outline above. 
Seek to replace the subword $A_2\inv A_0^2 A_2$ by an appropriate power of $A_0$ (by calling $\alg{FinalPinch}_2$ on the suffix $s:=A_2\inv A_0^2A_2A_0$) as follows. 
\begin{enumerate}
\item Check $A_0(0)\ne 0$ and $A_0^2 A_2 A_0(0) \ne 1$, so we can   substitute  $A_0 A_2\inv A_1\inv$ for $A_2\inv$  and $A_1A_2 A_0\inv$ for  $A_2$ in $s$ (as per Lemma \ref{Lem: OneToZero Equiv}) to get  $$A_0A_2\inv A_1\inv A_0^2 A_1 A_2 A_0\inv A_0 \  \sim \  s.$$ 
\item    
Convert the subword $A_1\inv A_0^2  A_1$ to a power of $A_0$ (by calling $\alg{Pinch}_1$ on $A_1\inv A_0^2 A_1 A_2 A_0\inv A_0$, which calls $\alg{BasePinch}_1$ since the subword between the $A_1\inv$ and the $A_1$ is a power of $A_0$).    It replaces  $A_1\inv A_0^2 A_1$ by $A_0$ (which is appropriate because $(2x+2)/2 = x+1$) to give $$s' \ := \ A_0 A_2\inv A_0 A_2 A_0\inv A_0 \  \sim \  s.$$
\item  The exponent sum of the $A_0$ between $A_2\inv$ and $A_2$ in $s'$ is $1$. 
(Were it  non-zero and less than half of $A_2A_0\inv A_0(0)=1$,  then  $A_2A_0\inv A_0(0)$ would be too far from another  integer  in the image of $A_2(n)$  for $s'$ to be valid.)  But, in this case,   we evaluate $A_2\inv A_0A_2 A_0\inv A_0(0)$ by   computing that it is $2$ directly from right to left, and then  evaluating $A_2\inv(2) = 1$ (by calling $\alg{Bounds}(2\ell(w))$).  So $A_2\inv  A_0A_2 A_0\inv A_0(0) = 1$, and we can conclude that $$s' \ \sim \ A_0^2 A_0\inv A_0.$$
(Preserving the suffix $A_0\inv A_0$  appears unnecessary here,  but it reflects the recursive design of the algorithm.)
\end{enumerate}
So  $$w'  \ :=  \ A_0^4   A_0\inv A_0 \ \sim \  w.$$  

\end{enumerate}
\item Now $\eta(w') = 0$.  So  evaluate $w'$ from right-to-left  in the manner of Example~\ref{Ex: Positive}  (\alg{Positive}) and declare that $w$ is valid and $w(0)>0$.
\end{enumerate}
\end{Example}

In our next example,  the input word has the form $A_r\inv u A_{r'} v$ with $\eta(u)=\eta(v)=0$ and $\Rank(u)<r < r'$. As there is no $A_r$ with which we can `cancel' the $A_r\inv$, we manufacture one by using Lemma~\ref{Lem: OneToZero Equiv} to create an $A_r$ to the left of the $A_{r'}$ and thereby reduce to a situation similar to the preceding example.
This   example also serves to explain how we resolve the special case $A_r\inv A_0^l A_r v$ which is crucial for avoiding explicit computation of large numbers.

\begin{Example}\label{Ex: Pinch0}
Set $w=A_2\inv A_0^{-2} A_3A_0^{100}$. 
\begin{enumerate}
\renewcommand{\labelenumi}{\arabic{enumi}. }
\item Identify the rightmost $A_i\inv$ with $i\ge 1$, namely the  $A_2\inv$. Scanning to the right of $A_2\inv$, the first $A_i$ we encounter with $i \geq 2$ is the $A_3$. (Send $w$ to \alg{Reduce}, which calls \alg{Whole}.)
\item Use techniques from Example~\ref{Ex: Positive} (\alg{Positive}) to check that $A_0^{100}(0)>0$.  So   we can substitute $A_2A_3A_0\inv$ for $A_3$, as per  Lemma~\ref{Lem: OneToZero Equiv}, to obtain   \[w_0  \ :=  \  A_2\inv A_0^{-2} A_2 A_3 A_0\inv A_0^{100} \ \sim \ w.\]
\item We check we can make substitutions as in Lemma~\ref{Lem: OneToZero Equiv} for $A_2\inv$ and $A_2$ 
to give  $$w_1 \ :=  \ (A_0A_2\inv A_1\inv ) \, A_0^{-2}  \, (A_1A_2 A_0\inv) \,  A_3A_0\inv A_0^{100} \ \sim \  w.$$
  (Run $\alg{CutRank}_2$ on $w_0$  which does nothing as $\Rank(u) < 1$, and then start running $\alg{FinalPinch}_2(w_0)$.)  
\item We now want to reduce the rank of the subword between the $A_2\inv$ and $A_2$ to zero ($\alg{Pinch}_2$), and so we ($\alg{BasePinch}_1$) process the suffix $$A_1\inv A_0^{-2} A_1A_2A_0\inv A_3 A_0\inv A_0^{100}$$   to replace  
$A_1\inv A_0^{-2} A_1$ by $A_0\inv$    
giving  $$w_2 \ := \ A_0A_2\inv A_0^{-1} A_2 A_0\inv A_3A_0\inv A_0^{100} \ \sim \ w$$ (the equivalence being because $(2x-2)/2=x-1$). 
\item  
Now the subword of $w_2$ between $A_2\inv$ and $A_2$ has rank $0$   (which causes $\alg{Pinch}_2$  to end and we return to $\alg{FinalPinch}_2$, which in turn invokes $\alg{BasePinch}_2$).   As $A_2$ is the function $\N \to \N$ mapping $n \mapsto 2^n$,  if $A_0^{z} A_2 A_0\inv A_3A_0\inv A_0^{100}(0)$ is in the domain of $A_2\inv$ for some $z\in\Z \ssm \{0\}$, then the   large gaps between powers of $2$ ensure that $2|z| \geq  A_2 A_0\inv A_3A_0\inv A_0^{100}(0)$. In the case of $w_2$, we have $z=-1$ and so 
we see that $w_2$  is invalid by checking that $ A_2 A_0\inv A_3A_0\inv A_0^{100}(0) >2$.    
We can do this  efficiently in the manner of Example~\ref{Ex: compare with constant} by noting that $A_3A_0\inv A_0^{100}(0)$ exceeds the threshold  $ \ell(A_2 A_0\inv A_3A_0\inv A_0^{100}) + 2 = 106$.  So we declare $w$ invalid.
\end{enumerate}
\end{Example} 

A major reason \alg{Ackermann} halts in polynomial time, is that as it manipulates  words, it does not  substantially increase their lengths.  One subroutine it employs, \alg{Bounds},  takes an integer as its input.  All   others input  a word $w$ and output  an equivalent word $w'$ and in every case but two,  $\ell(w') \leq  \ell(w)$.  The exceptions are the subroutines  $\alg{Whole}$  and $\alg{Reduce}$, where   $\ell(w')\leq  \ell(w) +2k$.  But they  are each called at most $\eta(w) \leq \ell(w)$ times when \alg{Ackermann} is run on input $w$, so they do not cause length to blow up.   The way this control on length is achieved is that while  length is increased  by making substitutions as per Lemma~\ref{Lem: OneToZero Equiv}, those increases are offset by a process of replacing a  suffix of the form $A_r\inv u A_r v$ (with $\eta(u)=\eta(v)=0$ and $\Rank(u)<r$) by an equivalent suffix of the form $A_0^l v$ with $\abs{l} \leq \ell(u)$.  

The technique of exploiting the large gaps between powers of $2$ to sidestep direct calculation applies to all words of the form $A_r\inv A_0^z A_r v$ where $r \geq 2$ and $z\neq 0$, after all the gaps in the range of $A_r$ grow even faster when $r >2$.  
In Lemma~\ref{Properties of A_k}~\eqref{A_i gap}, we showed that if $l\in\Z$ is non-zero and $A_r\inv A_0^lA_rv$ is valid, then $2|l|\ge A_r v(0)$. 
This condition can be efficiently checked if $\eta(v)=0$. If $2|l|\ge A_rv(0)$, direct computation of the value of $A_r\inv A_0^l A_rv(0)$ (using $\alg{Bounds}(2|l|)$) becomes efficient relative to $\ell(w)$ since $|l|\le \ell(w)$.

Our final example is a circumstance where we are unable to make substitutions because a hypothesis of Lemma \ref{Lem: OneToZero Equiv} fails.
\begin{Example}\label{OneToZero Example}
Let $w=A_3\inv A_0\inv A_3A_0$. Direct calculation shows that $w$ is valid and $w(0)=0$, but here is how our algorithm proceeds.
\renewcommand{\labelenumi}{\arabic{enumi}. }
\begin{enumerate}
\item As before, we identify the  $A_3\inv$, the subsequent $A_3$, and the subword  $A_0\inv$ that separates them. (Call $\alg{Pinch}_3$ on $A_3\inv u  A_3 v$ where $u= A_0\inv$ and $v=A_0$.) 
\item First we check that $A_0$ is valid and $A_0 (0) \geq 0$ and so is in the domain of $A_3$.  
Then we check that  $A_0\inv A_3A_0$ is valid (a necessary condition for validity of $w$) and $A_0\inv A_3A_0(0)\ge 0$ (a necessary condition to be in the domain of $A_3\inv$).  (In both cases we use $\alg{Positive}$.)
\item We notice that there are no  $A_1^{\pm 1}$ or $A_2^{\pm 1}$ between $A_3\inv$ and $A_3$ to remove.  ($\alg{Pinch}_3$ runs  $\alg{CutRank}_3(w)$, which does not change $w$.)    
\item We seek  to substitute $A_0 A_3\inv A_2\inv$ for $A_3\inv$  and $A_2A_3 A_0\inv$ for  $A_3$. ($\alg{Pinch}_3$ calls $\alg{FinalPinch}_3$.)  But,  by calculating  that $A_0\inv A_0\inv A_3 A_0(0)=0$  (which is done by calling $\alg{Positive}(A_0\inv A_0\inv A_3A_0)$), we discover that $A_0\inv A_3 A_0(0) = 1$, violating a hypothesis of Lemma \ref{Lem: OneToZero Equiv}.  

\item Invoke a subroutine ($\alg{OneToZero}$) for this special case. 
We calculate  the integer $m = v(0)$ by testing whether $A_0^{-m} v(0)=0$ starting with $m=1$ and incrementing $m$ by $1$ until we obtain a string equal to zero.   In this example $v=A_0$, and so $m=1$.   We  return $A_0^{-m}v = A_0\inv A_0$ where $A_0^{-m}v (0) = 0 = A_r\inv(1) = A_r\inv v(0)$. It would be simpler to return the empty word, but the recursive structure of \alg{Pinch} requires the output of an equivalent word whose suffix is $v$.
\item $\eta(A_0\inv A_0) = 0$, so the algorithm explicitly affirms validity,  finds the sign of $A_0\inv A_0(0)$, and returns $0$. (\alg{Positive}.)
\end{enumerate}
\end{Example}

\subsection{Our algorithm} \label{Ackermann specs}

We continue to have an integer $k \geq 0$ fixed and work with words on the alphabet $A_0^{\pm 1}, \ldots, A_k^{\pm 1}$.  The   polynomial time bounds we establish in this section all depend on $k$.

 \begin{figure}[ht]
    \psfrag{I}{\small{\textbf{input} $w$}}
    \psfrag{A}{\small{\alg{Ackermann}}}
        \psfrag{B}{\small{\alg{Bounds}}}
    \psfrag{R}{\small{\alg{Reduce}}}
    \psfrag{P}{\small{\alg{Positive}}}
        \psfrag{q}{\small{$\cdots$}}
    \psfrag{D}{\small{\parbox{28mm}{\textbf{declare} whether or not $w$ is valid and, if so, whether $w(0)<0$, $w(0)=0$, or $w(0) >0$}}}
        \psfrag{k}{\small{$\alg{Pinch}_k$}} \psfrag{a}{\small{$\alg{Pinch}_{k-1}$}} \psfrag{2}{\small{$\alg{Pinch}_{2}$}}   \psfrag{1}{\small{\parbox{3cm}{$\alg{Pinch}_{1}  \\ {} \qquad = \alg{BasePinch}$}}}
   \psfrag{l}{\small{$\alg{CutRank}_k$}} \psfrag{b}{\small{$\alg{CutRank}_{k-1}$}} \psfrag{c}{\small{$\alg{CutRank}_{2}$}}   \psfrag{d}{\small{$\alg{Pinch}_{1}$}}
   \psfrag{m}{\small{$\alg{FinalPinch}_k$}} \psfrag{g}{\small{$\alg{FinalPinch}_{k-1}$}} \psfrag{e}{\small{$\alg{FinalPinch}_{2}$}}   \psfrag{f}{\small{$\alg{FinalPinch}_{1}$}}
       \psfrag{y}{\tiny{\gray{$\eta(w) \neq 0$}}}
       \psfrag{z}{\tiny{\gray{$\eta(w) = 0$}}}
        \psfrag{x}{\tiny{\gray{$\eta(w)$ lowered}}}
      \centerline{\epsfig{file=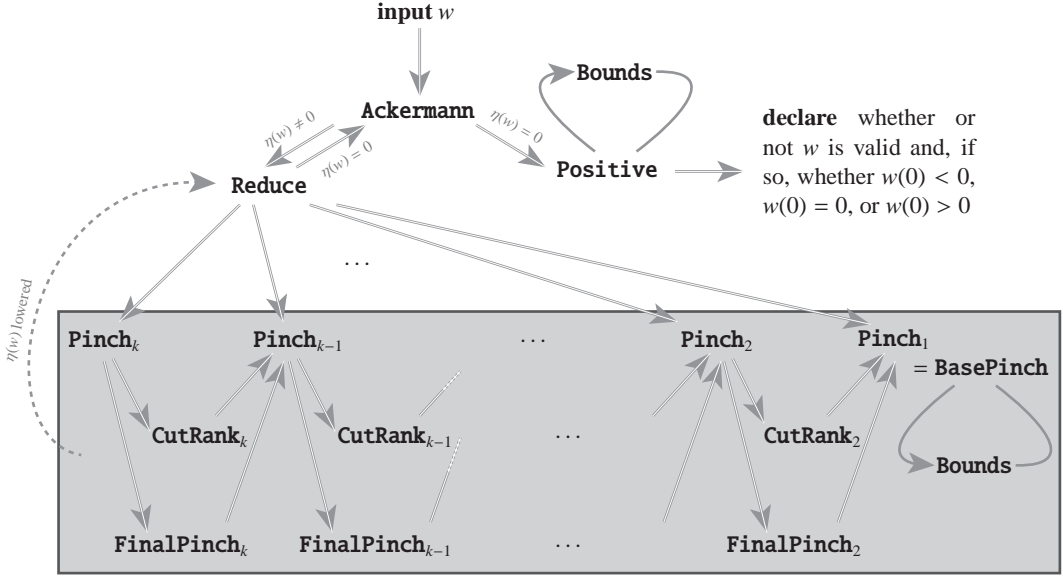}} \caption{An outline of the design of \alg{Ackermann}, indicating which routines call which other routines.  Any routine may   declare $w$ invalid and halt the algorithm.  From $\alg{Reduce}$, the algorithm progresses to $\alg{Pinch}_r$, where $r$ is the subscript of the rightmost of $A_1\inv, \ldots, A_k\inv$ to remain in $w$.  The progression through the  $\alg{Pinch}_i$,  $\alg{CutRank}_i$, and $\alg{FinalPinch}_i$ (shown boxed) is involved (and not apparent from the diagram) but ultimately decreases $\eta(w)$ by one.    A further routine  $\alg{OneToZero}$ (which handles certain special cases)   does not appear, but is called by a number of the routines shown.  $\alg{Positive}$ also serves as a routine, but only its role in providing the final step in the algorithm is indicated in the figure.} 
\label{flowchart}
\end{figure}
 
Our first  subroutine follows the procedure explained in Section~\ref{Ackermann Examples}, so we only sketch it   here.  

 \begin{algorithm}[h]
    \caption{ --- \alg{Bounds}. \newline 
    $\circ$ \  Input $\ell\in \N$  (expressed in binary). \newline
   $\circ$ \ Return a list of all the (at most $(\log_2\ell)^2$) triples of integers $(r,n,A_r(n))$ such that $r \geq 2$, $n \geq 3$, and $A_r(n) \leq \ell$.  \newline
  $\circ$ \ Halt  in  time $O(\ell)$. }
    \begin{algorithmic}[3]
     		\State list all values of $A_2(n)=2^n$ for which $2\le n \le \lfloor \log_2 \ell \rfloor$ \label{recall something}  
		\State recall (from Lemma~\ref{Properties of A_k}) that   $A_i(2)=4$ for all $i\ge 2$
		\State use the recursion $A_{i+1}(n+1) = A_i A_{i+1}(n)$ to calculate all $A_r(n)\le \ell$ for $r\ge 3$ and $n\ge 3$, halting when $A_r(3)> \ell$ 
    \end{algorithmic}
  \end{algorithm}

\begin{proof}[Correctness of $\alg{Bounds}$.]
$\alg{Bounds}$ generates its list of triples by first listing the at most  $\lfloor \log_2(\ell)\rfloor$ triples $(2,n,A_2(n))$  such that $n \geq 3$ and $A_2(n) = 2^n \leq \ell$, which it can do in time  $O((\log_2 \ell)^2)$   since $\ell$ is expressed in binary. It then reads  through
this list and uses the recurrence relation (and the fact that $A_3(2)=4$) to list all the $(3,n,A_3(n))$
for which $n \geq 3$ and $A_3(n) \leq \ell$.   It then uses those to list the $(4,n,A_4(n))$ similarly, and so on.    For all $r\ge 3$, $A_r(3) = A_{r-1}(4) \ge 2A_{r-1}(3)$, and so $A_r(3)\ge 2^r$.   So the triples $(r,n,A_r(n))$ outputted by \alg{Bounds} all have $r \leq  \lfloor \log_2 \ell \rfloor$.  As $r$ increases, there are fewer $n$ such that   $A_r(n) \leq \ell$.  So the complete list  \alg{Bounds} outputs comprises at most $(\log_2\ell)^2$  triples of binary numbers each recorded by a binary string of length at most $\log_2\ell$, and it is generated in time $O(\ell)$. 
\end{proof}
 
(In fact, $\alg{Bounds}$ halts in time polynomial in  $\log_2 \ell$, but we are content with the  $O(\ell)$ bound because other terms will dominate our cost-analyses of the routines that call $\alg{Bounds}$.)

\begin{remark}  $\alg{Bounds}$ does not give any $(r,n,A_r(n))$ for which  $A_r(n) \geq \ell$ but $r \leq 1$ or $n \leq 2$.  Nevertheless, such triples  require negligible computation to identify.  After all,  $A_r(0)=1$,  $A_r(1)=2$ and $A_r(2)=4$ for all $r \geq 1$ and $A_0(n) = n+1$ and  $A_1(n) = 2n$  for all $n \in \Z$. 
\end{remark}

 \begin{algorithm}[h]
    \caption{--- \alg{Positive}. \newline 
  $\circ$ \  Input  a word $w= x_nx_{n-1}\cdots x_1$   where  $x_1, \ldots, x_n \in \set{A_0^{\pm 1}, A_1, \ldots, A_k}$.  \newline  
  $\circ$ \  Return $\Invalid$ when $w$ is invalid and $\sgn(w(0))$ when $w$ is valid.   \newline 
  $\circ$ \  Halt  in  time  $O(\ell(w)^3)$.}
    \label{Alg: Positive}
    \begin{algorithmic}[3]
		\State \textbf{run} $\alg{Bounds}(n)$ 
		\State  evaluate   $x_1(0)$, then $x_2x_1(0)$, and so on until \\
		 \qquad \textbullet \ either $w(0)$ has been evaluated \\
		 \qquad \textbullet \ or some $x_i...x_1(0) > n$ (checked by consulting the output of $\alg{Bounds}(n)$) \\ 
		 \qquad \textbullet \ or some $x_i...x_1(0) <-n$   (that is, $x_i \neq A_0^{\pm 1}$ and $x_i...x_1(0) < 0$)    \\ 	
		 \qquad \textbullet \ or some $x_i...x_1$ is found to be invalid (that is, $x_i \neq A_0^{\pm 1}$ and $x_i...x_1(0) < 0$)    	
		 \State then, respectively, \textbf{return} \\
		 \qquad \textbullet  \ $\textbf{sgn}(w(0))$  \\
		 \qquad \textbullet  \  $\textbf{sgn}(w(0)) = +$ \\
		 \qquad \textbullet \ if   $x_{i+1}, \ldots, x_n  \notin \set{A_2,  \ldots, A_k}$,  then $\textbf{sgn}(w(0)) = -$, else  \textbf{invalid} \\
		 \qquad \textbullet \ \textbf{invalid}   	
    \end{algorithmic}
  \end{algorithm}

\begin{proof}[Correctness of  \alg{Positive}]
As $w$ is a word  on  $A_0^{\pm 1}, A_1, \ldots, A_k$ (that is, $\eta(w)=0$), 
   decreases in absolute value only occur in increments of $1$ as $w(0)$ is evaluated from right to left.   
The domains of $A_0$, $A_0^{-1}$ and $A_1$ are $\Z$,  and of $A_2, A_3, \ldots$ are $\N$,  so $w$ is invalid only  when some $A_i$ with $i \geq 1$ meets a negative input. 
If the threshold, $+n$, is exceeded, then $w$ must be valid and  $w(0)>0$, as subsequent  letter-by-letter evaluation could never reach a negative value.  
If  $x_i...x_1(0) <-n$ for some $i$ (which is easily tested as it can only first happen when $x_i$ is $A_0^{-1}$ or $A_1$), then $w$ is valid if and only if none of the subsequent letters are $A_2,  \ldots, A_k$; moreover, if $w$ is valid, then $w(0)<0$.   If $w$ is exhausted, then the algorithm has fully calculated $w(0)$ (and $|w(0)|<n$) and has confirmed $w$ as valid.

\alg{Positive}  calls  \alg{Bounds} once with input $n =\ell(w)$, which produces its list of at most $(\log_2n)^2$ triples in time $O(n)$.   
The thresholds employed in \alg{Positive} ensure that it performs arithmetic operations (adding one, doubling, comparing absolute values) with integers of absolute value at most $n$.  Each such operation takes   time $O(n^2)$,  so they and  the necessary  searches of the output of  \alg{Bounds}  take time $O(n^3)$.   
\end{proof}
 
 Our next subroutine is  the  $\Rank(u)=0$ case of $\alg{Pinch}_r$, to come.

 \begin{algorithm}[h]
\caption{ --- $\alg{BasePinch}$.   
 \newline $\circ$ \ Input a word  $w=A_r\inv u  A_r v$ with $r \geq 1$, $u=u(A_0)$, $v = v(A_0, \ldots, A_k)$ and $\eta(v) = 0$.
  \newline $\circ$ \ Either return  that $w$ is invalid,  or return  a valid word $w'=A_0^{l'}v \sim w$ such that $\ell(w')\le \ell(w)-2$. 
   \newline $\circ$ \ Halt  in time $O(\ell(w)^4)$.}
\label{Alg: Pinch_0}
\begin{algorithmic}[3]
	\State \textbf{set} $l := u(0)$   (so $A_0^l$ is $u$ with all $A_0^{\pm 1} A_0^{\mp 1}$ subwords removed and $A_r\inv A_0^l A_r v \sim w$) 
	\State{\textbf{if} $\alg{Positive}(A_rv)=\Invalid$, \textbf{halt} and \textbf{return} \textbf{invalid} }  
	\State  \textbf{if} $r\ge 2$ and $v(0)< 0$ (checked using $\alg{Positive}$), \textbf{halt} and \textbf{return invalid} \label{cleanup v check}
	\State{\textbf{if}  {$l=0$},   \textbf{halt} and  \textbf{return} $w': = v$} \label{l=0 line}	
	\State{\textbf{if}  {$r=1$},   \textbf{halt} and  \textbf{return} $w': = A_0^{l/2} v$  or \textbf{invalid} depending on whether $l$ is even or odd} \label{r=1 line}		

	\State
	\State we now have $l \neq 0$ and $r>1$

		\State \textbf{run} $\alg{Positive}(A_0^{l}A_rv)$   to determine if $A_0^l A_rv(0) \leq 0$ (so outside the domain of $A_r^{-1}$) \label{first test line}
		\State \qquad \textbf{if} so, \textbf{halt} and \textbf{return} \textbf{invalid} 
		\State \textbf{run} $\alg{Positive}(A_0^{-2|l|} A_rv)$ to determine whether $A_rv(0)> 2|l|$ \label{second test line}
		\State \qquad \textbf{if} so, \textbf{halt}  and \textbf{return}  \label{sign line} 
			\State
			\State{we now have that $0 \leq v(0) \leq \abs{l}$ and $0 < A_r v(0)  \leq  2|l| $ and $ A_r v(0)  + l  \leq  3|l|$ } \label{3l bound line}
				\State calculate   $v(0)$ 
				by running $\alg{Positive}(A_0^{-i}v)$ for $i=0, 1, \ldots, \abs{l}$     \label{0- line}     \label{Line: Pinch0 2Pos}
				\State \textbf{run} $\alg{Bounds}(3\abs{l})$ 
				\State search the output of  $\alg{Bounds}(3\abs{l})$ to find $A_r v(0)$  
				\State \textbf{set}  $m := A_rv(0)+l$   
				\State search the output of $\alg{Bounds}(3\abs{l})$ for $c$ with $A_r(c) =m$  (so $c= A_r\inv A_0^l  A_r v(0) = w(0)$)  \label{the search line}
				\State \qquad \textbf{if} such a $c$ exists, \textbf{halt} and \textbf{return} $w' := A_0^{c- v(0)}v$ \label{readback line}
				\State \qquad \textbf{else}  \textbf{halt} and \textbf{return} \textbf{invalid}
\end{algorithmic}
\end{algorithm}

\begin{proof}[Correctness of  $\alg{BasePinch}$]
The idea is that when $w$ is valid, either $l=0$ or the sparseness of the image of $A_r$ implies that $l$ is large enough that $w(0)$ can be calculated efficiently.  Here is why the algorithm runs as claimed.

\begin{itemize}
\item[\ref{cleanup v check}:]  If $v(0)<0$, then $w$ is invalid.
\item[\ref{l=0 line}:]  If $r \geq 2$, then  $A_r\inv A_r v\sim v$ by Lemma~\ref{Lem: OneToZero Equiv}.
\item[\ref{r=1 line}:]  Since $A_1$ is the function $n \mapsto 2n$, the parity of $A_0^l  A_r v(0)$ is the parity of $l$ when $r=1$, and determines the validity of $w$.  

\item[\ref{first test line}, \ref{second test line}:]  We know $A_0^{l}A_rv$ and $A_0^{-2|l|}A_rv$ are valid at these points because $A_rv$ is valid.

\item[\ref{sign line}:]   Let $q=v(0)$.  
	For all $p\ne q$ we have  $|A_r(q)-A_r(p)|\ge \frac12A_r(m)$ by Lemma~\ref{Properties of A_k}~\eqref{A_i gap}, and so $|A_r(q)-A_r(p)| > |l|$. If 
	$A_r\inv A_0^l A_r v$ is valid, then there exists $p\in \N$  such that
	$A_r(p) = A_0^lA_rv(0)= l + A_r(q)$, but then $|A_r(p)-A_r(q)|=|l|$ for some $p \neq q$ (since $l \neq 0$), contradicting  $|A_r(q)-A_r(p)|>l$. 
	Thus $w$ is invalid. 

\item[\ref{3l bound line}]   The reason  $0 < A_r v(0)$ is that $r >1$ and so   $\Img A_r$ contains only positive integers.  And   $A_r v(0) \leq 2 \abs{l}$ because of lines \ref{second test line} and \ref{sign line}.   It follows that $v(0) \leq \abs{l}$ because $2v(0) = A_1 v(0) \leq A_r v(0) \leq 2 \abs{l}$.  And $v(0) \geq 0$ since $v(0)$ is in the domain of $A_r$, which is $\N$ when $r >1$.     We have $A_0^lA_r v(0) \leq 3|l|$ here because $A_r v(0)  \leq 2\abs{l}$ and  so  $A_0^l A_r v(0)  \leq l +  2\abs{l}$.  
 
\item[\ref{the search line}:] 	If $m =  A_rv(0)  + l =  A_0^l A_rv(0)$ is in the domain of $A_r\inv$,  then $m >0$.  And, from line~\ref{3l bound line}, we know $m \leq 3 \abs{l}$, so this will find $c$ if it exists.  If no such $c$ exists, $w$ is invalid.

\item[\ref{readback line}:]   $A_0^{c-v(0)}v(0)  =   c   =  A_r\inv (l+A_rv(0))   =   A_r\inv A_0^l A_rv(0)$.
	
\end{itemize}

 We must  show that $\ell(w')\le \ell(w)-2$.  In the cases of lines~\ref{l=0 line} and \ref{r=1 line}, this is immediate, so suppose $r\ge 2$.  
As for line~\ref{readback line}, we will show that  $|c-v(0)|\le |l|$, from which the result will immediately follow.   

First suppose $l\ge 0$. By Lemma~\ref{Properties of A_k} and the fact that $v(0) \geq 0$, we have $A_r(v(0) + l)\ge A_r(v(0)) + l$.  
 So $v(0)+l\ge A_r\inv (A_r v(0) + l)=c$.
So $c-v(0)  \leq l = |l|$.  And $0 \leq c-v(0)$ because $A_r(c) = A_r(v(0))+l \ge A_r(v(0))$. So $|c-v(0)|\le |l|$, as required. 
   
Suppose, on the other hand, $l <0$. Then $$c \ = \ A_r\inv A_o^l A_rv(0) \ \leq \ A_r\inv A_r v(0) \ = \ v(0)$$ and so $\abs{c - v(0)}   =  v(0) -c$. But then  $\abs{c - v(0)} \leq v(0)$ because $v(0),c \geq 0$.  
So if $v(0) + l \leq 0$, then $\abs{c - v(0)} \leq - l = \abs{l}$, as required.   Suppose instead that $v(0) + l > 0$.
We have  that $A_r(v(0)+l) \le A_r(v(0)) + l$  because  $A_r(p-m) \leq A_r(p) -m$ by Lemma \ref{Properties of A_k}~\eqref{A_k9} for all $p \ge m \geq 0$.   So $v(0)+l \leq A_r\inv (A_r(v(0)) + l) = c$.  So $l \le c-v(0)$.  And $c-v(0) < 0$ because $  A_r(c) = A_rv(0) +l  < A_rv(0)$. So $|c-v(0)| \le |l|$, again as required.

Next we explain why the integer calculations performed by the algorithm involve integers of absolute value at most $3 \ell(w)$.  
	The algorithm calls  \alg{Positive} on words of length at most $3\ell(w)$, and so (by the properties of \alg{Positive}  established), each time it is called, $\alg{Positive}$  calculates with integers no larger than $3\ell(w)$.   On input $3\abs{l} \leq 3 \ell(w)$, $\alg{Bounds}$ calculates with integers    of absolute value at most $3 \ell(w)$.   
	The only remaining integer manipulations concern $m, l, 2\abs{l}, A_r v(0)$, all of which have absolute value at most $3 \ell(w)$.

Finally, that  $\alg{BasePinch}$ halts in time $O(\ell(w)^4)$ is straightforward given the previously established cubic and linear halting times for  $\alg{Positive}$ and  \alg{Bounds}, respectively, and the following facts. 
It may add a pair of  positive binary  numbers each at most $2\ell(w)$, may  determine the parity of a number of absolute value at most $\ell(w)$, and may  halve  an even positive  number less than $\ell(w)$.
It calls $\alg{Positive}$ at most $\abs{l} +3 \leq \ell(w)+3$ times, each time on input a word of length at most $2\ell(w)$. It calls  \alg{Bounds}  at most once---in that event the input to \alg{Bounds} is a non-negative integer that is at most $3\ell(w)$ and the output of \alg{Bounds} is searched at most twice and has  size  $O((\log_2 \ell(w))^2)$. 
\end{proof}

\begin{algorithm}[H]
\caption{ --- $\alg{OneToZero}$. 
 \newline $\circ$ \ Input a valid word $w=A_r\inv uA_r v$ with $\eta(u)=\eta(v)=0$, $u\ne \epsilon$, $uA_r v(0) = 1$ and $r \geq 2$.
 \newline $\circ$ \ Return a  word $A_0^{-v(0)}v \sim w$ of length at most $\ell(w)-2$. 
  \newline $\circ$ \ Halt  in time $O(\ell(w)^4)$. }
\label{Alg: OneToZero}
\begin{algorithmic}[3]
\State \textbf{run} $\alg{Positive}(A_0^{-m}v)$ for $m=0, 1, ...$ \textbf{until} it declares that $A_0^{-m}v=0$  \label{the until}
\State \textbf{halt} and  \textbf{output} $A_0^{-m} v$  \label{onetozero endline}
\end{algorithmic}
\end{algorithm}

\begin{proof}[Correctness of  $\alg{OneToZero}$]
{ \quad} \\ \vspace*{-6mm} 
\begin{itemize}
\item[\ref{the until}:] As $w$ is valid, $v(0)$ is in the domain of $A_r$, which is $\N$ as $r \geq 2$.  So    $m = v(0)$ will eventually be found.  
\item[\ref{onetozero endline}:] $w(0) = A_r\inv (1)  =0$ and so $A_0^{-m} v \sim w$ as required, since $A_0^{-m} v(0)=0$.
\end{itemize}
  Since $\eta(u)=0$, the only letter $u$ may contain which decreases the value in the course of evaluating $uA_rv(0)$  is $A_0\inv$.  So, as $uA_rv(0)=1$ and $A_rv(0)\ge v(0)+1$, there must be at least $v(0)$ letters  $A_0\inv$ in $u$.   So 
$\ell(u) \geq  v(0)$. So  $\ell(A_0^{-v(0)}v)  \le \ell(w)-2$, as required.  

$\alg{OneToZero}$ calls $\alg{Positive}$ $m=v(0) \leq \ell(u) \leq \ell(w)$  times, each time on input of length at most $\ell(w)$. So, by the established properties of $\alg{Positive}$,  it halts in time $O(\ell(w)^4)$.   
\end{proof}

The input $w$ to $\alg{OneToZero}$ necessarily has $w(0)=0$, so it would seem it should just  output the empty word rather than   $A_0^{-v(0)}v$.  However, $\alg{OneToZero}$ is used by  $\alg{Pinch}_r$, which we will describe next and whose inductive construction requires the suffix $v$.

$\alg{Pinch}_r$ for $r \geq 1$ is a family of subroutines which we will construct alongside further families  $\alg{CutRank}_r$ and $\alg{FinalPinch}_r$ for $r \geq 2$. $\alg{Pinch}_{r-1}$ is a subroutine of  $\alg{CutRank}_r$  and of  $\alg{FinalPinch}_r$.  $\alg{CutRank}_r$ and $\alg{FinalPinch}_r$ are subroutines of  $\alg{Pinch}_r$.  
It may appear that we could discard $\alg{CutRank}_r$ and use  $\alg{FinalPinch}_r$ instead, by expanding $\alg{FinalPinch}_r$  to allow inputs with $\rank(u) = r-1$ and expanding $\alg{Pinch}_r$ to allow inputs where $\rank(u)=r$.  But this  would cause problems with maintaining the suffix $v$.

 		\begin{algorithm}[h]
\caption{ --- $\alg{Pinch}_r$ for $r \geq 1$.  
 \newline $\circ$ \ Input a word $w=A_r\inv u A_r v$ with $\eta(u)=\eta(v)=0$ and $\Rank(u)\le r-1$.
 \newline $\circ$ \ Either return that $w$ is invalid,  or return a valid word $w'=A_0^{l'}v \sim w$ such that $\ell(w')\le \ell(w)-2$. 
  \newline $\circ$ \ Halt in $O(\ell(w)^{4+(r-1)})$  time.}
\label{Alg: Pinch}

\begin{algorithmic}[3]
  \State \textbf{if} $r=1$  \textbf{run} $\alg{BasePinch}(w)$ and then \textbf{halt}  \label{r=1 case}
	\State \textbf{run} $\alg{Positive}(v)$ to determine whether $v$ is invalid  or $v(0)<0$
	\State  \qquad \textbf{if} so \textbf{halt} and  \textbf{return} \invalid  \label{negative halt}
	\State \textbf{run} $\alg{Positive}(uA_rv)$  to determine whether $uA_rv$ is valid or $uA_rv(0) \leq 0$    \label{pinch validity check}
	\State  \qquad \textbf{if} so \textbf{halt} and  \textbf{return} \invalid  \label{next invalid check} 
	\State \textbf{run} $\alg{CutRank}_r(w)$ 
	\State  \qquad it either declares $w$ invalid, in which case \textbf{halt} and  \textbf{return} \invalid 
	\State \qquad  or it returns a word $w' = A_0^i A_r\inv u' A_r v$  such that 
	\State \qquad   $w' \sim w$,  $\ell(w') \leq \ell(w)$, $\eta(u')=0$, $\green{u'\ne \epsilon}$ and $\Rank(u')< r-1$ 
	\State \textbf{run} $\alg{FinalPinch}_r(A_r\inv u' A_r v)$ \label{call fp}
	\State  \qquad \textbf{if} it declares $A_r\inv u' A_r v$ invalid,   \textbf{halt} and  \textbf{return} \invalid \label{final pinch says invalid}
	\State \qquad \textbf{else} it outputs $A_0^l v$ for some $l$, in which case \textbf{set} $w'' := A_0^{i+l}v$  \label{final pinch equiv}
	\State \textbf{run} $\alg{Positive}(w'')$  \label{run positive}
	\State  \qquad \textbf{if} it declares $w''$ invalid,   \textbf{halt} and  \textbf{return} \invalid 
	\State \qquad \textbf{else} \textbf{return} $w''$  \label{final output validity check} 
 \end{algorithmic}
\end{algorithm}

\begin{algorithm}[h]
\caption{--- $\alg{CutRank}_r$ for $r \geq 2$.   \newline
$\circ$ \ Input a word $w=A_r\inv u A_r v$ with $\eta(u)=\eta(v)=0$ and $\Rank(u) \leq r-1$. \newline
 $\circ$ \  Either declare $w$ invalid,  or return $w'=A_0^lv$ where $\ell(w')\le \ell(w)-2$, or return $w' = A_0^iA_r\inv u' A_rv \sim w$ where $\Rank(u') \leq r-2$, $\eta(u') = 0$, and $\ell(w') \leq \ell(w)$.   \newline
$\circ$ \  Halt in time $O(\ell(w)^{4+(r-1)})$.}  
\label{Alg: CutRank}
\begin{algorithmic}[3]
\State  \textbf{set} $i=0$ and re-express $w$ as $A_0^i A_r\inv u A_r v$
\State \textbf{if} $v(0)<0$ (checked using $\alg{Positive}$), \textbf{halt} and \textbf{return invalid}\label{empty word invalid check}
\State  \textbf{if} {$u$ is the empty word}, \textbf{halt} and \textbf{return} $v$   \label{unnecessary empty word line}
\While{$\Rank(u) = r-1$}  \label{CutRank while}
		\State{\textbf{run} $\alg{Positive}(A_0\inv u A_r v)$ to test whether $uA_rv(0) = 1$} 
			\State \qquad \textbf{if} so \textbf{halt} and \textbf{return} the output $w'=A_0^lv$ of $\alg{OneToZero}(w)$  \label{switch to OneToZero}
		\State \textbf{run} $\alg{Positive}( u A_r v)$ to test whether  $uA_rv(0) \le 0$ 
		\State \qquad \textbf{if} so, \textbf{halt} and \textbf{return} $\Invalid$ \label{CutRank fault}
		\State \textbf{express} $u$ as $u' A_{r-1} u''$ where $\Rank(u') <r-1$ (i.e.\ locate the  leftmost $A_{r-1}$ in $u$) 
		\State \textbf{increment} $i$ by $1$
		\State \textbf{set} $w :=  A^{i}_0A_r\inv A_{r-1}\inv u' A_{r-1} u'' A_r v$ (i.e.\ substitute $A_0A_r\inv A_{r-1}$ for $A_r\inv$ in $w$)   \label{CutRank substitution}
		\State \textbf{run} $\alg{Pinch}_{r-1}(A_{r-1}\inv u' A_{r-1} u'' A_r v)$
		\State \qquad \textbf{if} it returns invalid \textbf{halt},  \textbf{return} $\Invalid$
		\State \qquad \textbf{else}   let $w_0:=A_0^s u'' A_r v$ be the (valid) word returned  
		\State \qquad   \textbf{set}   $w := A^{i}_0A_r\inv w_0$ \label{new w}
		\State \qquad   \textbf{set} $u := A_0^s u''$ so that $w = A_0^i A_r^{-1} u A_r v$  \label{CutRank induction}
 \EndWhile \label{CutRank endwhile}
\State \textbf{return} $w$ \label{return w}
\end{algorithmic}
\end{algorithm}

\begin{algorithm}[h]
\caption{ --- $\alg{FinalPinch}_r$  for $r \geq 2$.    \newline
$\circ$ \ Input a word $w = A_r\inv u A_rv$ with $\eta(u)=\eta(v) = 0$, $u\ne\epsilon$ and $\rank(u)<r-1$.  \newline
$\circ$ \ Either declare $w$ invalid or return a word $A_0^l v \sim w$ of length at most $\ell(w)-2$. \newline
$\circ$ \ Halt in $O(\ell(w)^{4+(r-2)})$ time.}    
\label{Alg: FinalPinch}

\begin{algorithmic}[3]
\State \textbf{run} $\alg{Positive}(A_0\inv u A_r v)=0$ to decide among the following cases  
\State \qquad \textbf{if}  $A_0\inv u A_r v$ is invalid or $u A_r v(0) <1$,  \textbf{halt} and \textbf{return} \textbf{invalid} \label{<1 case}
\State \qquad \textbf{if}  $u A_r v(0) =1$,  \textbf{halt} and \textbf{return} $\alg{OneToZero}_r(w)$ \label{finalpinch switch to onetozero}
\State we now have that $u A_r v$ is valid and $u A_r v(0) >1$ \label{fghj}
\State
\State \textbf{run} $\alg{Positive}(v)$ to determine whether  $v(0)<0$, $v(0)=0$, or $v(0)>0$
\State	
\State  \textbf{if} $v(0)<0$,   \textbf{halt} and \textbf{return} \textbf{invalid}  \label{v<0 case}  \label{input neg case} 
\State	
	\State{\textbf{if} $v(0)=0$}  \label{input zero case} 
		\State \qquad  \textbf{if}  $r=2$,  \textbf{run} $\alg{BasePinch}(A_r\inv u A_r v)$  \label{FinalPinch r=2 Case}
	\State \qquad  \qquad \textbf{if}  it returns \textbf{invalid}, \textbf{halt} and do likewise
	\State \qquad  \qquad  \textbf{else} \textbf{halt} and return its result $A_0^{l'}v$, which will satisfy $\ell(A_0^{l'} v) \le \ell(w)-2$ \label{output of basepinch ref}
	\State \qquad  \textbf{if} $r>2$,  \textbf{run} $\alg{Pinch}_{r-1}(A_{r-1}\inv u A_{r-1} v)$
	\State \qquad  \qquad  \textbf{if}  it returns \textbf{invalid}, \textbf{halt} and do likewise 
	\State \qquad  \qquad  \textbf{else}  it  returns  $A_0^l v$ for some  $\abs{l} \leq \ell(u)$  \label{invalid returned}
	\State{\qquad  \textbf{if}  $l \leq 0$,  \textbf{halt} and \textbf{return} $\Invalid$ }\label{outside domain}
	\State \qquad  \textbf{run}  $\alg{BasePinch}(A_r\inv A_0^{l-1} A_r v)$
	\State \qquad  \qquad \textbf{if}   it returns \textbf{invalid}, \textbf{halt} and do likewise \label{finalpinch zero pinch0}
	\State \qquad  \qquad  \textbf{else}  it returns $A_0^{l'}v$ for some   $\abs{l'} \leq \abs{l-1} = l -1 $, \label{abs un}
	\State \qquad  \qquad \qquad  in which case \textbf{halt} and \textbf{return} $A_0^{l'+1}v$  \label{next one}
\State	
\State{ \textbf{if} $v(0) > 0$}  \label{not zero case} 
	\State{\qquad \textbf{run} $\alg{Pinch}_{r-1}(A_{r-1}\inv u A_{r-1}A_r A_0\inv v)$ } \label{finalpinch induction}
	\State \qquad \qquad  \textbf{if} it returns \textbf{invalid}, \textbf{halt} and do likewise \label{invalid returned3}

	\State \qquad \qquad \textbf{else} it  returns  $A_0^lA_r A_0\inv v$ for some $\abs{l} \leq \ell(u)$ \label{return something}
	\State \qquad \textbf{run} $\alg{BasePinch}(A_r\inv  A_0^{l} A_r A_0\inv v)$ \label{finalpinch pinch0}
	\State \qquad  \qquad  \textbf{if}  it returns \textbf{invalid}, \textbf{halt} and do likewise \label{invalid returned4} 
	\State \qquad  \qquad \textbf{else} it  returns  $A_0^{l''} A_0\inv v$ for some $\abs{l''} \leq \abs{l}$,
	\State \qquad  \qquad \qquad in which case    \textbf{halt} and \textbf{return} $A_0^{l''}  v$ \label{final return}  
 \end{algorithmic}
\end{algorithm}

\emph{Correctness of  $\alg{Pinch}_{r-1}$   implies the correctness of  $\alg{CutRank}_r$ for all $r \geq 2$.}
The idea of $\alg{CutRank}_r$ is that each pass around the while loop eliminates one $A_{r-1}$ from $u$. 
 So in the output,  $\Rank(u)  < r-1$.

\begin{itemize}
\item[\ref{empty word invalid check}:] If $r\ge 2$, then the domain of $A_r$ is $\N$, and so $w$ is invalid when $v(0) < 0$. 
\item[\ref{unnecessary empty word line}:]  Since $v(0)\ge 0$ now, Lemma \ref{Lem: OneToZero Equiv} applies.
\item[\ref{switch to OneToZero}:] $\ell(w') \leq \ell(w) -2$ by the specifications of  $\alg{OneToZero}$.
\item[\ref{CutRank fault}:]  If $uA_rv(0) \leq 0$, it is outside the domain of $A_r\inv$ (as $r \geq 2$),  so the algorithm's input is invalid.

\item[\ref{CutRank substitution}:] 
 Substituting gives an equivalent word here by Lemma~\ref{Lem: OneToZero Equiv},  since $uA_rv(0)\ge 1$. At this point, $\ell(w)$ is at most $2$ more than its initial length.  

\item[\ref{CutRank induction}:]  Now $w$  is no longer than it was at the start of the \textbf{while} loop because  $\alg{Pinch}_{r-1}$ (assuming it does not halt) trims at least $2$ letters, offsetting the gain at line~\ref{CutRank substitution}.
  The word $w$ here at the end of the \textbf{while} loop is equivalent to the $w$ at the start because of our remark on line~\ref{CutRank substitution} and because we are replacing a suffix $A_{r-1}\inv u' A_{r-1} u'' A_r v$ by an equivalent word produced by $\alg{Pinch}_{r-1}$.

\item[\ref{return w}:]  It follows from our remarks on lines~\ref{CutRank substitution} and \ref{CutRank induction} that $\ell(w)$ here   is at most the length of the $w$ originally inputted.  
\end{itemize}

The while loop is traversed at most $\ell(w)$ times.  Each time,   \alg{Positive} (twice), $\alg{OneToZero}$ and $\alg{Pinch}_{r-1}$  may be called, and by the remarks above, their inputs are always of length at most $\ell(w)$.  So, as each of these subroutines halt in time $O(\ell(w)^{4+(r-2)})$, $\alg{CutRank}_r$  halts in $O(\ell(w)^{4+(r-1)})$ time.
\qed

\emph{Correctness of $\alg{Pinch}_{r-1}$  implies correctness of  $\alg{FinalPinch}_r$ for $r \geq 2$.} { \quad} \\ \vspace*{-6mm} 
\begin{itemize}

\item[\ref{<1 case}:] If $u A_r v(0) <1$, then it is outside the domain of $A_r^{-1}$.  

\item[\ref{fghj}:] $uA_rv$ is valid if and only if $A_0\inv uA_rv$ is valid.

\item[\ref{v<0 case}:] In this case $v(0)$ is outside the domain of $A_r$.

\item[\ref{FinalPinch r=2 Case}:] If $r=2$, the rank of $u$ is zero, so   $\alg{BasePinch}$ applies.
\item[\ref{output of basepinch ref}:]   $\ell(A_0^{l'} v) \le \ell(w)-2$ by properties of $\alg{BasePinch}$. 

\item[\ref{invalid returned}:]      
$w \ \sim \ A_0 A_r\inv A_{r-1}\inv u A_{r-1} v$ when $r>2$ and $v(0)=0$,
because  $A_rv \sim A_{r-1}v$ and we can substitute $A_0 A_r\inv A_{r-1}\inv$ for $A_r\inv$ as per Lemma~\ref{Lem: OneToZero Equiv}, given that $uA_rv(0)>1$.   So if   $A_{r-1}\inv u A_{r-1} v$ is invalid, then so is $w$.  And if $\alg{Pinch}_{r-1}$ gives us that $A_{r-1}\inv u A_{r-1} v \sim A_0^l v$, then $w \sim  A_0 A_r\inv  A_0^l v$.

\item[\ref{outside domain}:]     If $l \leq 0$, then $w$ is invalid because   $A_0^{l}v(0)\le 0$ and lies outside of the domain of $A_r\inv$ (since $r \geq 2$).  

\item[\ref{finalpinch zero pinch0}:]   Next, working from $w \sim A_0A_r\inv A_0^{l}  v$ established in our comment above on line~\ref{invalid returned}, we get that  $w \sim A_0A_r\inv A_0^{l-1} A_r v$ because $A_0^{-1} A_rv \sim v$, given that $r \geq 2$ and $v(0)=0$.   So, if $\alg{BasePinch}$ tells us that $A_r\inv A_0^{l-1} A_r v$ is invalid, then so is $w$.

\item[\ref{abs un}:] $\abs{l-1} = l -1$ here  because $l >0$ here.  

\item[\ref{next one}:]    Similarly, if $A_r\inv A_0^{l-1} A_r v \sim A_0^{l'} v$, then $w \sim A_0^{l'+1} v$.  
Now, $\abs{l'+1} \leq \abs{l'}+1 \leq l$ by line~\ref{abs un}, and $l \leq \ell(u)$ in the case $r>2$ of line~\ref{invalid returned}.   So  $\ell(A_0^{l' +1}v ) \leq \ell(w) - 2$, as required.

\item[\ref{not zero case}:] $w \sim  A_0 A_r\inv A_{r-1}\inv u A_{r-1}A_rA_0\inv v$ because Lemma~\ref{Lem: OneToZero Equiv} tells  us that substituting $A_{r-1}A_rA_0\inv$ for $A_r$ and  $A_0 A_r\inv A_{r-1}\inv$ for $A_r\inv$ in $w$ gives an equivalent word as $v(0) >0$ and $u A_{r-1} v(0) >1$.  This word is longer than $w$ by $2$. 

\item[\ref{invalid returned3}:] So, if the suffix $A_{r-1}\inv u A_{r-1}A_rA_0\inv v$  is invalid, then so is $w$.  

\item[\ref{return something}:] Similarly, if  the suffix $A_{r-1}\inv u A_{r-1}A_rA_0\inv v \sim   A_0^{l} A_r A_0\inv v$, then $w \sim  A_0 A_r\inv  A_0^{l} A_r A_0\inv v$.

\item[\ref{invalid returned4}:] If the suffix $A_r\inv  A_0^{l} A_r A_0\inv v$  is invalid, then so is $w$.

\item[\ref{final return}:] If the suffix $A_r\inv  A_0^{l} A_r A_0\inv v \sim  A_0^{l''} A_0\inv v$, then $w \sim A_0 A_0^{l''} A_0\inv v \sim A_0^{l''} v$ and has length at most $\ell(w)-2$ since $\abs{l''} \leq \abs{l}$ and (from line~\ref{return something})  $\abs{l} \leq \ell(u)$ (or to put it another way, we have taken $A_0 A_r\inv A_{r-1}\inv u A_{0} A_{1} v$ (see the comment on line~\ref{not zero case}) which is four letters longer than $w$, and  $\alg{Pinch}_{r-1}$ and $\alg{BasePinch}$ have each shortened it by two).  
\end{itemize}

\alg{FinalPinch}$_r$   halts in $O(\ell(w)^{4+(r-2)})$ time  because it makes at most four calls on subroutines ($\alg{Positive}$, $\alg{OneToZero}$, $\alg{Pinch}_{r-1}$ or $\alg{BasePinch}$) and, each time, the subroutine has input of length at most $\ell(w)+2$  and  halts in   $O(\ell(w)^{4+(r-2)})$ time.
\qed

\emph{Correctness of  $\alg{CutRank}_r$ and $\alg{FinalPinch}_r$ implies correctness of  $\alg{Pinch}_r$ for $r \geq 2$.}
\begin{itemize}

\item[\ref{negative halt}:] If $v$ is invalid, then so is $w$.  If $v(0)<0$, then   $v(0)$ is outside the domain of $A_r$ (as $r\geq 2$) and so $w$ is invalid.

\item[\ref{next invalid check}:]   If $u A_r v$ is invalid, then so is $w$.  If $uA_r v(0) \leq 0$, then   $v(0)$ is outside the domain of $A_r\inv$ (as $r\geq 2$)  and so $w$ is invalid.

\item[\ref{call fp}:] $\ell(A_r^{-1} u' A_r v) \leq \ell(w') \leq \ell(w)$, the second inequality  being by an established property of $\alg{CutRank}_r$.

\item[\ref{final pinch says invalid}:]  If the suffix $A_r\inv u' A_r v$ of $w'$ is invalid, then so is $w'$, and hence so is $w$.  

\item[\ref{final pinch equiv}:] $w'' \sim w$ because it is obtained by replacing the suffix $A_r\inv u' A_r v$ of $w'$ by an equivalent word.

\item[\ref{run positive}:] $\eta(w'') =0$, so we can use $\alg{Positive}$ to determine   validity of $w''$. Also, $\ell(w'') \leq i + \ell(A_0^lv) \leq i + \ell(A_r^{-1} u' A_r v) -2 = \ell(w') -2 < \ell(w)$, the second and final inequalities follow from established properties of $\alg{FinalPinch}_r$ and $\alg{CutRank}_r$, respectively. 

\end{itemize}
That $\alg{Pinch}_r$ runs in $O(\ell(w)^{4+(k-1)})$ time follows directly from the time bounds for the subroutines $\alg{Positive}$, $\alg{CutRank}_r$, $\alg{BasePinch}$ and $\alg{FinalPinch}_{r}$ as it calls these   at most six times in total and on each occasion, the input has length at most $\ell(w)$---see the comments above on lines~\ref{call fp} and \ref{run positive}. \qed

\emph{Correctness of  $\alg{Pinch}_r$ for $r \geq 1$ and of  $\alg{CutRank}_r$ and $\alg{FinalPinch}_r$  for $r \geq 2$.} For $r=1$, the correctness of $\alg{Pinch}_1$ follows from that of $\alg{BasePinch}$.   As explained above, for $r \geq 2$,  the correctness of $\alg{CutRank}_r$ and $\alg{FinalPinch}_r$ implies that of $\alg{Pinch}_r$, and the correctness of $\alg{Pinch}_{r-1}$ implies that of $\alg{CutRank}_r$ and $\alg{FinalPinch}_r$.  So, by induction on $r$,  $\alg{Pinch}_r$ is correct for all $r \geq 1$. 
\qed

 \begin{algorithm}[h]
\caption{ --- \alg{Reduce}.    \newline $\circ$ \  Input  a word $w$ with $\eta(w)>0$.  \newline
$\circ$ \ Either return that $w$ is invalid, or return a  word $w' \sim w$ with $\ell(w') \leq  \ell(w) + 2k$ and $\eta(w') = \eta(w) -1$.   \newline 
$\circ$ \ Halt in $O(\ell(w)^{4+(k-1)})$ time. }

\begin{algorithmic}[3]
	\State express $w$ as $w_1 A_r\inv w_2$ where $r \geq 1$ and $\eta(w_2) =0$ 
	\State (i.e.\ locate  rightmost   $A_1\inv,A_2\inv, \ldots, A_k\inv$ in $w$) 
	\State
	\State \textbf{if} $\Rank(w_2)< r$ and $r\ge 2$,  \textbf{run}  $\alg{Pinch}_r(A_r\inv w_2 A_0\inv A_r)$ \label{unchanged eval}
 		\State \qquad  \textbf{if} it declares $A_r\inv w_2 A_0\inv A_r$ invalid, \textbf{halt} and \textbf{return} $\Invalid$
		\State \qquad   \textbf{else} it returns  $A_0^l$ for some $\abs{l} \leq \ell(w_2)+1$, in which case \textbf{return} $w':= w_1A_0^l$ \label{equiv by pinch}
		\State 
	\State \textbf{if} $\rank(w_2)=0$ and $r=1$, \textbf{run} $\alg{Pinch}_1(A_1\inv w_2 A_1)$   \label{Reduce case r=1}
	\State \qquad  \textbf{if} it declares $A_1\inv w_2 A_1$ invalid, \textbf{halt} and \textbf{return} $\Invalid$ 
		\State \qquad   \textbf{else} it returns  $A_0^l$ for some $\abs{l} \leq \ell(w_2)$, in which case \textbf{return} $w':= w_1A_0^l$  \label{equiv by pinch base}
	\State
	\State \textbf{if} $\Rank(w_2) \geq r$ 
	        	\State \qquad express $w_2$ as $w_3 A_s w_4$ where $r\leq s$ and $\Rank(w_3) < r$ \label{w3andw4}
							\State \qquad \textbf{run} $\alg{Positive}(w_4)$ to decide among the following cases
								\State \qquad \qquad if $r=s=1$, set $w'' = \alg{Pinch}_1(A_r\inv w_3 A_s w_4)$ \label{r=s=1 case}
	\State \qquad \qquad \textbf{else if}  $w_4$ is invalid  or $v(0)<0$, \textbf{halt} and  \textbf{return} \invalid  \label{v<0 invalid}
	\State  \qquad \qquad \textbf{else if}  $w_4(0) =0$, $r=1$ and $s>r$, set $w'' = \alg{Pinch}_{r}(A_{r}\inv u A_0 A_{r} v)$ \label{whole zero special}
	\State \qquad \qquad \textbf{else if}  $w_4(0) =0$ and $r>1$, set $w''=\alg{Pinch}_{r}(A_{r}\inv w_3 A_{r} w_4)$ \label{whole zero} 
	\State \qquad \qquad \textbf{else}  $ w_4(0) >0$, so set $w'' = \alg{Pinch}_{r}(A_{r}\inv w_3 A_{r}  \   A_{r+1}A_0\inv  \   A_{r+2} A_0\inv   \   \cdots   A_{s}A_0\inv \  w_4)$  \label{whole sub}
 		\State \qquad  \textbf{if} $w''=\Invalid$, \textbf{halt} and \textbf{return} $\Invalid$ \label{another invalid}
		\State \qquad \textbf{else} \textbf{return} $w': = w_1 w''$ \label{happy returns}
\end{algorithmic}
\end{algorithm}

\begin{proof}[Correctness of  $\alg{Reduce}$]
  The idea is to eliminate the rightmost $A_r\inv$ with $1 \le r\le k$ from $w$ by either using $\alg{Pinch}_r$ directly on a suffix of $w$ or by manipulating $w$ into an equivalent word with a suffix that can be input into $\alg{Pinch}_r$.   

\begin{itemize}
\item[\ref{unchanged eval}:]  $A_0\inv A_r(0)=0$ (since $r \geq 2$), so $w_2A_0\inv A_r \sim w_2$.

\item[\ref{equiv by pinch}:]  $A_0^l \sim A_r\inv w_2 A_0\inv A_r$ and so $w' \sim w$.  Evidently, $\eta(w')=\eta(w)-1$.  And $\ell(w') = \ell(w_1) + \abs{l}  \leq  \ell(w_1) +\ell(w_2)+1 =  \ell(w)  \leq \ell(w) + 2k$, as required. 

\item[\ref{Reduce case r=1}:]  $A_1(0)=0$, so $w_2A_1\sim w_2$.   

\item[\ref{equiv by pinch base}:]  $A_0^l \sim A_1\inv w_2 A_1 \sim A_1\inv w_2$ and so $w'\sim w$, as required.  Also, evidently,  $\eta(w')=\eta(w)-1$, and $\ell(w') \leq  \ell(w) + 2k$, as required. 
\item[\ref{w3andw4}:] Moreover, $\eta(w_3)=\eta(w_4)=0$ because $\eta(w_2)=0$, as will be required in line~\ref{r=s=1 case}.  
\item[\ref{r=s=1 case}:] The length of $w''$ is at most $\ell(w)-\ell(w_1) -2$ by properties of $\alg{Pinch}_r$. 
\item[\ref{v<0 invalid}:] If $w_3(0) < 0$, then $w$ is invalid because $s\ge 2$
\item[\ref{whole zero special}:] In this case  $A_{r}\inv w_3 A_0A_{r} w_4 \sim A_{r}\inv w_3 A_{s} w_4$  since $A_0A_{r}(0) = A_{s}(0)$.   As required, if $w'' \ne \Invalid$, it has length at most $\ell(A_{r}\inv u A_0 A_{r} v) = \ell(w)-\ell(w_1) +1  < \ell(w)-\ell(w_1) + 2k$ and contains no $A_1\inv, \ldots, A_k\inv$ by the properties established for $\alg{Pinch}_{r}$.
\item[\ref{whole zero}:]  Similarly, in this case   $A_{r}\inv w_3 A_{r} w_4 \sim A_{r}\inv w_3 A_{s} w_4$  since $A_{r}(0) = A_{s}(0)$, and the output has the required properties. 

\item[\ref{whole sub}:]If $w_4(0)> 0$, then $A_{r}\inv w_3 A_{s} w_4$   and  $A_{r}\inv w_3 A_{s-1}A_{s}A_0\inv w_4$  are equivalent by Lemma~\ref{Lem: OneToZero Equiv}.   As $v(0)-1\ge 0$, and so is in the domain of $A_s$, the word   $A_{s}A_0\inv v$ is valid. And, as $A_{s}A_0\inv v(0) = A_{s}(v(0)-1) >0$, we may replace the $A_{s-1}$ by $A_{s-2}A_{s-1}A_{0}\inv$ to get another equivalent word.  Indeed, we may repeat this process $s-r \le k$ times, to yield  an equivalent word 
\[ A_{r}\inv w_3 A_{r} \  A_{r+1}A_0\inv  \  A_{r+2} A_0\inv  \   \cdots   A_{s}A_0\inv  \ w_4\]
of length $\ell(w)-\ell(w_1) + 2(s-r)$.  Applying  $\alg{Pinch}_r$ then returns (if valid) an equivalent word 
\[w''  \ = \  A_0^l    \ A_{r+1}A_0\inv \   A_{r+2} A_0\inv   \  \cdots   A_{s}A_0\inv \  w_4\]
whose length is at most $\ell(w) -\ell(w_1) + 2(s-r)-2$. 

\item[\ref{another invalid}:] If the suffix $A_r\inv w_3 A_s w_4$ of $w$ is invalid, then $w$ is invalid. 

\item[\ref{happy returns}:] By the above $\ell(w'') \le \ell(w)-\ell(w_1)+2(s-r)$, we have that  $w'' \sim A_r\inv w_3 A_s w_4$,  $\eta(w'')=0$ and $\ell(w'') \leq \ell(A_r\inv w_3 A_s w_4) + 2r = 1 + \ell(w_2) +2r$.  It follows that $w \sim w_1w''$ and $\ell(w_1w'') = \ell(w_1) + \ell(w'') \leq \ell(w_1) +   1+ \ell(w_2) + 2r \leq \ell(w) +2k$, as required.  Also, again evidently, $\eta(w')=\eta(w)-1$.
\end{itemize}

\alg{Reduce} halts in $O(\ell(w)^{4+(k-1)})$ time since $\alg{Pinch}_{r}$ and $\alg{Positive}$ do and they are each called at most once  and only on words of length at most $\ell(w)+2k$, and otherwise $\alg{Reduce}$ scans $w$   and compares non-negative integers that are at most   $k$. 
\end{proof}

 \begin{proof}[Proof of Theorem~\ref{Ackermann}]
Here is our algorithm \alg{Ackermann} satisfying the requirements of Theorem~\ref{Ackermann}: it declares, in polynomial time in $\ell(w)$, whether or not a word $w(A_0, \ldots, A_k)$ is valid, and if so, it  gives $\sgn(w)$. 

   \begin{algorithm}[h]
    \caption{--- \alg{Ackermann}.   \newline
    $\circ$ \ Input a word $w$.  \newline
    $\circ$ \ Return  whether $w$ is valid and if it is, return  $\sgn(w(0))$.  \newline 
    $\circ$ \ Halt  in $O(\ell(w)^{4+k})$  time. }
    \label{Alg: Ackermann solution}
    \begin{algorithmic}
    \State{\textbf{if} $\eta(w)>0$, \textbf{run} \alg{Reduce} successively until 
    \State   \qquad it either returns that $w$ is $\Invalid$, 
    \State   \qquad or it returns some $w' \sim w$ with $\eta(w')=0$} 
      \State{\textbf{run} $\alg{Positive}(w')$}
    \end{algorithmic}
  \end{algorithm}

After at most $\eta(w)\le \ell(w)$ iterations of $\alg{Reduce}$, we have a word $w'$ with $\eta(w')=0$ such that $w'(0)=w(0)$. We then apply  \alg{Positive} to $w'$ to obtain the result.

The correctness of \alg{Ackermann} is immediate from the correctness of  \alg{Reduce} and $\alg{Positive}$.

\alg{Reduce} is called at most $\ell(w)$ times as it decreases $\eta(w)$ by one each time.  Each time it is run, it adds at most $2k$ to the length of the word. 
So the lengths of the words inputted into \alg{Reduce} or $\alg{Positive}$ are at most $\ell(w)+2k\ell(w)$.   
So, as $\alg{Reduce}$ and $\alg{Positive}$ run in $O(\ell(w)^{4 + (k-1)})$ time in the lengths of their inputs,  \alg{Ackermann} halts in $O(\ell(w)^{4+k})$ time. 
\end{proof}

\section{Efficient calculation with $\psi$-compressed integers}  \label{phi function preliminaries}

\subsection{$\psi$-functions and $\psi$-words}

Similarly to Ackermann functions in Section~\ref{Ackermann function preliminaries}, we define \emph{$\psi$-functions} by   
\begin{align*} 	
  \psi_1 & : \Z \to \Z  \qquad n    \mapsto  n-1    \\
\psi_2 & : \Z \to \Z  \qquad  n   \mapsto 2n - 1      \\ 
 \psi_i & : -\N \to -\N   & \forall i \geq 3 \\ 
& \qquad \psi_i(0)   \   :=  \ -1 &  \forall i \geq 1 \\  
& \qquad \psi_{i+1}(n)   \   := \   \psi_{i}\psi_{i+1}(n+1) -1  &  \forall n \in -\N, \forall i \geq 2. \\
\end{align*}

Having entered  the $i=1$ row and $n=0$ column as per the definition,   a table of values of  $\psi_i(n)$ can be completed by determining each row from right-to-left from the preceding one using the recurrence relation:   
$$\begin{array}{c c c c c c c c | l }
\cdots & n & \cdots & -4 & -3 & -2 & -1 & 0 &    \\ \hline
\cdots & n-1 & \cdots & -5 & -4 & -3 & -2 & -1 & \psi_1 \\
\cdots & 2n-1 & \cdots & -9 & -7 & -5 & -3 & -1 &\psi_ 2 \\ \vspace{-2mm}
\cdots & 2- 3 \cdot 2^{-n}  & \cdots & -46 & -22 & -10 & -4 & -1 & \psi_3 \\ \vspace{-2mm}
& \vdots  &  & \vdots & 1-3 \cdot 2^{95} & -95 & -5 & -1 & \psi_4 \\ \vspace{-2mm}
& &  &  & \vdots & \vdots & \vdots & \vdots & \vdots  \\ \vspace{-2mm}
& &  & & & & -i-1 & -1 & \psi_i \\ \vspace{-2mm}
& &  & & & & \vdots & \vdots & \vdots \\
\end{array} $$

 The following proposition explains why we defined $\psi$-functions with the given domains.   It details the key property of $\psi$-functions, which is that they govern whether and how  a power of $t$ \emph{pushes past} an  $a_i$ on its right, to leave an element of $H_k$ times a new power of $t$ without changing the element of  $G_k$ represented.

\begin{prop}\label{Prop: Hydra Game}
Suppose $r,\,i$ and $k$ are integers such that $1 \leq i \leq k$.  Then $t^r a_i \in H_k t^s$  in $G_k$  if and only if  $r$ is in the domain of $\psi_i$ and $s = \psi_i(r)$. 
\end{prop}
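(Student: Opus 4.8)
The plan is to prove both directions simultaneously by induction on $i$. Throughout I use that $G_k$ is the (internal) semidirect product $F(a_1,\dots,a_k)\rtimes\langle t\rangle$ with $t$ acting as $\theta$, so that every element has a unique normal form $ft^n$ with $f\in F(a_1,\dots,a_k)$ and $n\in\Z$, and $t^na_it^{-n}=\theta^{-n}(a_i)$, whence
\[
 t^r a_i \ =\ \theta^{-r}(a_i)\,t^r .
\]
Thus $t^r a_i\in H_kt^s$ iff $\theta^{-r}(a_i)\,t^{r-s}\in H_k$, and the exponent $s$, if it exists, is unique: from $h_1t^{s_1}=h_2t^{s_2}$ with $h_1,h_2\in H_k$ one gets $t^{s_2-s_1}=h_2^{-1}h_1\in H_k\cap\langle t\rangle=\{1\}$, the last equality being part of the description of $H_k$ in \cite{DR}. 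So the whole content is to determine exactly when a suitable $s$ exists and to identify it with $\psi_i(r)$. I also record the elementary consequences of the defining relations that drive everything: $a_1$ commutes with $t$; for $i\ge 2$ one has $ta_i=a_ita_{i-1}^{-1}$ and $t^{-1}a_i=a_ia_{i-1}t^{-1}$; iterating the first gives $t^ra_i=a_i(ta_{i-1}^{-1})^r$ for $r\ge 0$; and $a_j=(a_jt)t^{-1}\in H_kt^{-1}$ for every $j$.

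For the base cases: $t^ra_1=a_1t^r=(a_1t)t^{r-1}\in H_kt^{r-1}$ with $\psi_1(r)=r-1$; and a short induction on $\lvert r\rvert$ using $ta_2=a_2ta_1^{-1}$ and centrality of $a_1$ gives the closed form $t^ra_2=(a_2t)(a_1t)^{-r}t^{2r-1}\in H_kt^{2r-1}$ with $\psi_2(r)=2r-1$. Since $\mathrm{dom}\,\psi_1=\mathrm{dom}\,\psi_2=\Z$, this (together with uniqueness of $s$) settles $i\in\{1,2\}$ completely. Now let $i\ge 3$, so $\mathrm{dom}\,\psi_i=-\N$, $\psi_i(0)=-1$, $\psi_i(n)=\psi_{i-1}(\psi_i(n+1))-1$ for $n\le-1$, and — a point the argument needs — $\psi_i$ is $-\N$-valued, so $\psi_i(n+1)$ always lies in $\mathrm{dom}\,\psi_{i-1}$. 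The ``if'' direction for $r\le 0$ then goes by downward induction on $r$: the case $r=0$ is $a_i=(a_it)t^{-1}\in H_kt^{-1}=H_kt^{\psi_i(0)}$; and for $r\le-1$, writing $t^ra_i=(t^{r+1}a_i)\,a_{i-1}\,t^{-1}$ via $t^{-1}a_i=a_ia_{i-1}t^{-1}$, the inner induction gives $t^{r+1}a_i=ht^p$ with $h\in H_k$ and $p:=\psi_i(r+1)\in-\N$, the outer induction (legitimately, since $p\in\mathrm{dom}\,\psi_{i-1}$) gives $t^pa_{i-1}=h't^{\psi_{i-1}(p)}$, and multiplying out,
\[
 t^ra_i \ =\ h\,(t^pa_{i-1})\,t^{-1} \ =\ hh'\,t^{\psi_{i-1}(p)-1}\ =\ hh'\,t^{\psi_i(r)}\ \in\ H_kt^{\psi_i(r)} .
\]

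The remaining, and main, difficulty is the ``only if'' direction for $i\ge 3$: that $t^ra_i\notin H_k\langle t\rangle$ for every $r\ge 1$. The case $r=1$ is clean: from $ta_i=(a_it)a_{i-1}^{-1}$, a hypothetical $ta_i=ht^s$ forces $a_{i-1}^{-1}t^{-s}=(a_it)^{-1}h\in H_k$, hence $t^sa_{i-1}\in H_k=H_kt^0$, hence (induction on $i$) $\psi_{i-1}(s)=0$ — impossible, since $\psi_{i-1}$ never takes the value $0$ ($\psi_2$ is odd-valued, and $\psi_j$ takes only negative values for $j\ge3$). For $r\ge2$ I would take a minimal counterexample $r$: using $t^ra_i=(t^{r-1}a_i)\,t\,a_{i-1}^{-1}$ one rewrites $t^{r-1}a_i=h\,(t^sa_{i-1})\,t^{-1}$, and since left-multiplication by an element of $H_k$ and right-multiplication by a power of $t$ preserve the set $H_k\langle t\rangle$, minimality (i.e.\ $t^{r-1}a_i\notin H_k\langle t\rangle$) gives $t^sa_{i-1}\notin H_k\langle t\rangle$, whence $s\notin\mathrm{dom}\,\psi_{i-1}$. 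When $i=3$ this is already contradictory, as $\mathrm{dom}\,\psi_2=\Z$. When $i\ge4$ it only yields $s\ge1$, and more is needed.

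I expect closing the $i\ge4$ case to be the real work, and to require the combinatorial structure of reduced words in $H_k$ from \cite{DR} (the same analysis that underlies the later Piece Criterion). Concretely, I would isolate a lemma identifying exactly which $ft^m$ with $f=\theta^{-r}(a_i)$ lie in $H_k$, and then check that for $r\ge1$, $i\ge3$ no such $m$ exists — because, writing $\theta^{-r}(a_i)=a_i\,\theta^{-1}(a_{i-1})^{-1}\theta^{-2}(a_{i-1})^{-1}\cdots\theta^{-r}(a_{i-1})^{-1}$, this reduced word has a trailing inverse letter of rank $i-1$ that the generators $a_1t,\dots,a_kt$ cannot absorb. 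With that non-membership lemma in hand, the ``only if'' direction (and with it the Proposition, the value $s=\psi_i(r)$ following from existence plus uniqueness of $s$) is complete.
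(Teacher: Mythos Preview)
Your ``if'' direction is essentially the paper's: the same induction, the same base cases, and the same step $t^ra_i=t^{r+1}a_ia_{i-1}t^{-1}$ combined with the recursion for $\psi_i$. No issues there.

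For the ``only if'' direction you diverge from the paper, and the gap you flag is real. The paper does not attempt the kind of minimal-counterexample argument you set up; instead it observes that $t^ra_i\in H_kt^s$ is equivalent to $\theta^{-r}(a_i)\in H_kt^{s-r}$ and then simply cites Lemma~7.3 of \cite{DR}, which characterizes exactly when $\theta^{-r}(a_i)$ lies in $H_k\langle t\rangle$ (namely, always for $i=1,2$, and only for $r\le0$ when $i\ge3$). Once $r$ is known to be in the domain of $\psi_i$, the ``if'' direction together with uniqueness of $s$ finishes. So the paper's proof of the hard direction is a one-line citation.

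Your direct attack is more ambitious and gets surprisingly far: the $r=1$ case is clean, and your minimal-counterexample reduction genuinely disposes of $i=3$ for all $r\ge1$. But for $i\ge4$ the reduction only yields $s\ge1$, which is not a contradiction, and the lemma you propose isolating at the end --- identifying which $\theta^{-r}(a_i)\,t^m$ lie in $H_k$ --- is precisely Lemma~7.3 of \cite{DR}. So rather than rederive it, the most efficient fix is to cite it as the paper does; your inductive machinery for the ``only if'' direction can then be dropped entirely.
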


\begin{proof}
First we prove the `if' direction by inducting on pairs $(i,r)$, ordered lexicographically.  We start with the cases $i=1$ and $i=2$.
As $a_1t\in H_k$ and $t^{-1} a_1 t = a_1$,  
\[t^r a_1 \ = \ a_1 t \  t^{r-1} \in H_kt^{r-1} \ = \ H_kt^{\psi_1(r)} \] for all $r \in \Z$.
And as, $a_2t \in H_k$ and $t^{-1} a_2 t =a_2 a_1$ also,
$$t^r a_2 \ = \ t^r a_2 t^{-r} t^r \ = \ a_2 a_1^{-r} t^r \ = \ a_2t \ (a_1t)^{-r} \ t^{2r-1}$$ 
for all $r \in \Z$.  Next  the case where $r=0$ and $1 \leq i \leq k$:  $$t^r a_i  \ =  \ a_i  \  = \  a_i t \  t\inv  \ \in \  H_kt\inv  \ =  \  H_kt^{\psi_i(0)},$$   since $a_it\in H_k$ and $\psi_i(0)=-1$.  Finally,   induction gives us that  
\[t^r a_i \ = \ t^{r+1} a_i a_{i-1}t\inv \ \in  \ H_k t^{\psi_i(r+1)} a_{i-1}t^{-1} \ = \ H_k t^{\psi_{i-1}\psi_i(r+1)-1} \ = \  H_k t^{\psi_i(r)}\]
for all $i \geq 2$ and $r \le 0$, as required.   

For the `only if' direction suppose  $t^r a_i \in H_k t^s$  for some $s \in \Z$.  Then $$t^r a_i t^{-r} \ = \   \theta^{-r}(a_i)  \ \in \  H_k t^{s-r}$$  for some $s \in \Z$.     Lemma~7.3  in \cite{DR} tells us that in the cases $i=1,2$ this occurs  when $r \in \Z$,   and in the cases $i \geq 3$ it occurs  when $r \in - \N$.   In other words, it occurs when $r$ is in the domain of $\psi_i$.   Now,  given that $r$ is in the domain of $\psi_i$, we have that $t^r a_i \in H_k t^{\psi_i(r)}$ from the calculations earlier in our proof, and so $H_k t^{\psi_i(r)} = H_k t^s$, but this implies that $s = \psi_i(r)$ by Lemma 6.1 in \cite{DR}.  
\end{proof}

For example, painful calculation can show that
\[t^{-2} a_3 a_1\ = \ (a_3 t)(a_2 t)(a_1 t) (a_2 t) (a_1 t)^5 t^{-11} \in H_3t^{-11},\]
but  Proposition~\ref{Prop: Hydra Game} immediately gives:
\[t^{-2}a_3a_1 \in H_3 t^{\psi_1\psi_3(-2)} \ = \ H_3 t^{-11}.\]

The following criterion for whether and how  a power of $t$ pushes past an  $a\inv_i$ on its right, to leave an element of $H_k$ times a new power of $t$ can   be derived from Proposition~\ref{Prop: Hydra Game}.   

\begin{cor} \label{pushing cor}
Suppose $i$ and $k$ are integers such that  $1 \leq i \leq k$.  Then $t^s a_i\inv \in H_k t^r$  in  $G_k$  if and only if  $r$ is in the domain of $\psi_i$  and $s = \psi_i(r)$. 
\end{cor}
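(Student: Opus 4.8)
The plan is to derive Corollary~\ref{pushing cor} directly from Proposition~\ref{Prop: Hydra Game} by a formal manipulation: the statement $t^s a_i^{-1} \in H_k t^r$ is, after inverting and rearranging, equivalent to the statement $t^r a_i \in H_k t^s$, which Proposition~\ref{Prop: Hydra Game} characterizes.

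More precisely, first I would observe that $t^s a_i^{-1} \in H_k t^r$ holds in $G_k$ if and only if $t^s a_i^{-1} t^{-r} \in H_k$, which (taking inverses, and using that $H_k$ is a subgroup so closed under inversion) holds if and only if $t^r a_i t^{-s} \in H_k$, i.e.\ if and only if $t^r a_i \in H_k t^s$. So the two membership conditions $t^s a_i^{-1} \in H_k t^r$ and $t^r a_i \in H_k t^s$ are literally equivalent in $G_k$. Now apply Proposition~\ref{Prop: Hydra Game} (with the roles of $r$ and $s$ in its statement played by $r$ and $s$ here): $t^r a_i \in H_k t^s$ holds if and only if $r$ is in the domain of $\psi_i$ and $s = \psi_i(r)$. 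Chaining the two equivalences gives exactly the assertion of the corollary.

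There is essentially one subtlety to check, and I expect it to be the only real content beyond bookkeeping: one must make sure the "if and only if" has the right quantifier structure. Proposition~\ref{Prop: Hydra Game} as stated says $t^r a_i \in H_k t^s$ iff ($r \in \operatorname{dom}\psi_i$ and $s = \psi_i(r)$). Unwinding: if $t^s a_i^{-1} \in H_k t^r$ for the given $r,s$, then $t^r a_i \in H_k t^s$, so by the proposition $r \in \operatorname{dom}\psi_i$ and $s = \psi_i(r)$; conversely if $r \in \operatorname{dom}\psi_i$ and $s = \psi_i(r)$, then the proposition gives $t^r a_i \in H_k t^s$, hence $t^s a_i^{-1} \in H_k t^r$. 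So no genuine obstacle arises; the "hard part," such as it is, is merely being careful that inverting $t^s a_i^{-1} t^{-r} \in H_k$ to $t^r a_i t^{-s} \in H_k$ is valid, which is immediate since a subgroup is closed under taking inverses and $(t^s a_i^{-1} t^{-r})^{-1} = t^r a_i t^{-s}$.

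I would therefore write the proof in three short sentences: (1) note $t^s a_i^{-1} \in H_k t^r \iff t^s a_i^{-1} t^{-r} \in H_k \iff t^r a_i t^{-s} \in H_k \iff t^r a_i \in H_k t^s$; (2) invoke Proposition~\ref{Prop: Hydra Game} to rewrite the last condition as "$r$ is in the domain of $\psi_i$ and $s = \psi_i(r)$"; (3) conclude. No induction or computation with the $\psi_i$ themselves is needed, since all of that work is already packaged in Proposition~\ref{Prop: Hydra Game}.
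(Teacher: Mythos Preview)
Your proposal is correct and is essentially identical to the paper's proof, which consists of the single line ``$t^s a_i\inv \in H_k t^r$ if and only if $t^r a_i \in H_k t^s$'' followed by an implicit appeal to Proposition~\ref{Prop: Hydra Game}. You have simply unpacked that equivalence via $t^s a_i^{-1} t^{-r} \in H_k \iff t^r a_i t^{-s} \in H_k$, which is exactly the intended argument.
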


\begin{proof}
$t^s a_i\inv \in H_k t^r$ if and only if $t^r a_i \in H_k t^s$.
\end{proof}

The connection between $\psi$-functions and hydra groups is also apparent in that they relate to the functions $\phi_i$ of \cite{DR} by the identity $\psi_i(n)   = n-\phi_i(-n)$  for all $n \in -\N$ and all $i \geq 1$.  We will not use this fact  here, so we omit a proof, except to say that the recurrence $\phi_{i+1}(n)  =   \phi_{i+1}(n-1) + \phi_{i}(\phi_{i+1}(n-1)+n-1)$ for all $i \ge 1$  and $n\ge 1$ of Lemma 3.1 in \cite{DR} translates to the defining recurrence of $\psi$-functions.

\begin{lemma} \label{Properties of bphi}
\begin{align}
  \psi_2(n) \ & = \  2n-1 & &   \forall n \le 0,  \label{psi2 case} \\
  \psi_3(n) \ & = \  2-3\cdot 2^{-n} & &  \forall n \le 0, \label{psi3 case}  \\
    \psi_i(-1) \ &= \ -i-1  && \forall i \geq 1,  \label{bphi of 1} \\
    \psi_i(n) \ &\geq \ \psi_{i+1}(n)  && \forall i \geq 1, n \le 0,  \label{bphi7} \\
    \psi_i(n) \ &> \ \psi_i(n-1)  && \forall i\ge  1,n \leq 0, \label{bphi8} \\
    n \ & > \ \psi_i(n)  && \forall i\ge  1,n \le 0, \label{bphi9} \\
		\psi_i(m) +\psi_i(n) \ &\ge \ \psi_i(m+n)  && \forall n,m\le -2,\, i\ge 2, \label{bphi12}  \\
	 |\psi_i(m)-\psi_i(n)| \ & \ge \  \frac12 |\psi_i(n)| & & 	\forall i \geq 3, m \neq n.   \label{Spacing for bphi} 
  \end{align}
\end{lemma}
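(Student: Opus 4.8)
The plan is to follow the template used for Lemma~\ref{Properties of A_k} and, behind it, Lemma~2.1 of \cite{DR}: settle the low-index and boundary instances directly from the explicit formulas for $\psi_1,\psi_2,\psi_3$, and then obtain the remaining inequalities by a double induction on the pair $(i,-n)$, ordered lexicographically, driven by the recurrence $\psi_{i+1}(n)=\psi_i\psi_{i+1}(n+1)-1$. A recurring point is that for $n\le 0$ every argument handed to $\psi_i$ by this recurrence lies in the negative part of its domain: once monotonicity~\eqref{bphi8} and $\psi_i(0)=-1$ are known, $\psi_{i+1}(n+1)\le\psi_{i+1}(0)=-1$, so no domain violations occur.

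First the ``free'' identities. Equation~\eqref{psi2 case} is just the definition of $\psi_2$ restricted to $n\le 0$. For~\eqref{psi3 case}, induct on $-n$: $\psi_3(0)=-1=2-3\cdot 2^{0}$, and if $\psi_3(n+1)=2-3\cdot 2^{-n-1}$ then $\psi_3(n)=\psi_2\psi_3(n+1)-1=2(2-3\cdot 2^{-n-1})-1-1=2-3\cdot 2^{-n}$, using~\eqref{psi2 case}. For~\eqref{bphi of 1}, induct on $i$: the cases $i=1,2$ are immediate from $\psi_1(n)=n-1$ and $\psi_2(n)=2n-1$, and $\psi_{i+1}(-1)=\psi_i\psi_{i+1}(0)-1=\psi_i(-1)-1=(-i-1)-1$.

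Next establish~\eqref{bphi8}, then~\eqref{bphi9}, then~\eqref{bphi7}, in that order. Each is trivial at the boundary: for~\eqref{bphi8} and~\eqref{bphi9} the cases $i\le 2$ are direct checks using $\psi_1(n)=n-1$, $\psi_2(n)=2n-1$, and the case $n=0$ uses~\eqref{bphi of 1}; for~\eqref{bphi7}, $i=1$ is the inequality $n-1\ge 2n-1$, that is, $n\le 0$, and $n=0$ gives equality. For the inductive step of~\eqref{bphi8} ($i\ge 3$, $n\le -1$): $\psi_i(n)-\psi_i(n-1)=\psi_{i-1}\psi_i(n+1)-\psi_{i-1}\psi_i(n)>0$, since $\psi_i(n+1)>\psi_i(n)$ (previous step of the $-n$ induction) and $\psi_{i-1}$ is strictly increasing on the non-positive integers (the $i$ induction). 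For~\eqref{bphi9}: with $q=\psi_i(n+1)\le-1$ we have $\psi_{i-1}(q)<q$ by~\eqref{bphi9} for $\psi_{i-1}$, and $\psi_i(n+1)<n+1$ by the previous step, so $\psi_i(n)=\psi_{i-1}(q)-1<\psi_i(n+1)-1<n$. For~\eqref{bphi7} ($i\ge 2$, $n\le-1$): put $q=\psi_{i+1}(n+1)$; then $q\le n$ by~\eqref{bphi9} applied to $\psi_{i+1}$, hence $\psi_i(q)\le\psi_i(n)$ by~\eqref{bphi8}, and therefore $\psi_{i+1}(n)=\psi_i(q)-1\le\psi_i(n)-1<\psi_i(n)$ — note this case needs no induction internal to~\eqref{bphi7}. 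Finally,~\eqref{bphi12} is handled exactly as its analogue~\eqref{A_k3} in Lemma~\ref{Properties of A_k} and \cite{DR}: the base instances reduce, via~\eqref{psi2 case}--\eqref{psi3 case}, to the elementary inequality $2^{a}+2^{b}\le 2^{a+b}$ for $a,b\ge 2$ (with $a=-m$, $b=-n$), and the inductive step on $i$ is the one given there.

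The one genuinely new item is~\eqref{Spacing for bphi}, and it is the part I would write out carefully. Its crux is a halving estimate: for $i\ge 3$ and $n\le 0$, $|\psi_i(n-1)|\ge 2|\psi_i(n)|$. Indeed $\psi_i(n-1)=\psi_{i-1}\psi_i(n)-1$, and writing $q=\psi_i(n)\le-1$ (by~\eqref{bphi8}) and using $i-1\ge 2$, repeated application of~\eqref{bphi7} gives $\psi_{i-1}(q)\le\psi_2(q)=2q-1$, so $\psi_i(n-1)\le 2\psi_i(n)-2<0$; taking absolute values gives the estimate. Iterating it, $|\psi_i(a)|\ge 2^{b-a}|\psi_i(b)|$ for all $a\le b\le 0$. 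Now fix $i\ge 3$ and $m\ne n$ (both automatically $\le 0$). Since $\psi_i$ is strictly increasing with all values negative, if $m<n$ then $|\psi_i(m)-\psi_i(n)|=|\psi_i(m)|-|\psi_i(n)|\ge(2^{n-m}-1)|\psi_i(n)|\ge|\psi_i(n)|$, and if $m>n$ then $|\psi_i(m)-\psi_i(n)|=|\psi_i(n)|-|\psi_i(m)|\ge(1-2^{-(m-n)})|\psi_i(n)|\ge\frac12|\psi_i(n)|$; either way the bound holds. The main obstacle here is not depth but bookkeeping — keeping the direction of each inequality straight when every quantity in sight is negative, and confirming at each use of the recurrence that the arguments stay in the correct domain, which the observation in the first paragraph handles.
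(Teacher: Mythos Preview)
Your argument is correct, and for \eqref{psi2 case}--\eqref{bphi9} and \eqref{bphi12} it carries out explicitly the double induction on $(i,-n)$ that the paper only gestures at (``evident from the table \ldots formal induction proofs could be given as for Lemma~\ref{Properties of A_k}''); your ordering (first \eqref{bphi8}, then \eqref{bphi9}, then \eqref{bphi7}) and your observation that \eqref{bphi7} needs no internal induction once the other two are in place are nice touches the paper does not spell out.

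For \eqref{Spacing for bphi} you take a genuinely different route. The paper verifies the $i=3$ case directly from the closed form $\psi_3(n)=2-3\cdot 2^{-n}$ and then appeals, without details, to the row-by-row construction for $i>3$. You instead prove a uniform halving estimate $|\psi_i(n-1)|\ge 2|\psi_i(n)|$ valid for all $i\ge 3$ simultaneously, by chaining \eqref{bphi7} down to $\psi_2$, and read off \eqref{Spacing for bphi} from the resulting geometric growth of $|\psi_i|$. This buys a self-contained argument that dispenses with the separate $i=3$ computation and with the unwritten inductive step the paper leaves implicit; the paper's approach, on the other hand, makes the constant $\tfrac12$ visible already at the anchor $i=3$.

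One small caveat on \eqref{bphi12}: your claim that the base instances reduce via \eqref{psi2 case}--\eqref{psi3 case} to $2^a+2^b\le 2^{a+b}$ is fine for $i=3$, but the $i=2$ case of \eqref{bphi12} as stated is in fact false (e.g.\ $\psi_2(-2)+\psi_2(-2)=-10<-9=\psi_2(-4)$). This is a defect of the lemma's statement rather than of your method---the paper's proof is equally silent on it and \eqref{bphi12} is not used elsewhere---but your sentence should not suggest that \eqref{psi2 case} delivers a base case here.
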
 

\begin{proof}
  (\ref{psi2 case}--\ref{bphi12})  are evident from the manner in which the table of values of $\psi_i(n)$ above is constructed. Formal induction proofs could be given  as  for Lemma~\ref{Properties of A_k}.  

For \eqref{Spacing for bphi}, when $m > n$ (so that $|n|>|m|$),   
\begin{align*} |\psi_3(m)-\psi_3(n)| & \ = \  |3\cdot 2^{-m} -3\cdot 2^{-n}|  \ \ge \ |3\cdot 2^{-n} - 3\cdot 2^{-n-1}| \ = \ \frac12 \cdot 3\cdot 2^{-n}  \\
  & \ \ge \ \frac12 \cdot 3\cdot 2^{-n} -1  \ = \ \frac12 (3\cdot 2^{-n}-2) \ = \ \frac12 \ |\psi_3(n)|, 
\end{align*}
and when  $m < n$ (so that $|n|<|m|$), by the preceding 
\[|\psi_3(m)-\psi_3(n)| \ = \ |\psi_3(n)-\psi_3(m)|\ \ge \ \frac12 |\psi_3(m)|\ \ge \ \frac12 \ |\psi_3(n)|,\]
using \eqref{bphi8} for the last inequality.  So the result holds for $i=3$.   That it also holds for all $i > 3$ then follows.   We omit the details.  
\end{proof}

   By    \eqref{bphi8},  $\psi$-functions are injective and so have  inverses $\psi_i\inv$ defined on the images of $\psi_i$:
 $$\begin{array}{rll} 
 \psi_1\inv \!\!\! & : \Z \to \Z \qquad & n \mapsto n+1,  \\
 \psi_2\inv \!\!\! & :  2\Z+1 \to \Z \qquad  & n \mapsto (n+1)/2, \\ 
 \psi_i\inv \!\!\! & :  \textup{Img} \psi_i \to - \mathbb{N}  \qquad  & n \mapsto \psi_i\inv(n).
 \end{array}$$
 
    So, like Ackermann functions, they can  specify integers.
A \emph{$\psi$-word} is a word  $f = f_n f_{n-1}\cdots f_1$ where each $f_i\in \{\psi_1^{\pm 1},\psi_2^{\pm 1},\ldots\}$.   We let    $$\eta(f)  \ := \ \#\{ i \mid \,  1\le i\le n,  f_i = \psi_j\inv \textup{ for some }   j \ge 2\}.$$ If   $f_{j-1}\cdots f_1(0)$ is in the domain of $f_j$ for all $2\le j\le n$, then $f$ is   \emph{valid} and \emph{represents} the integer $f(0)$.  
  When  $f$ is non-empty,  $\Rank(f)$ denotes the highest $i$ such that $\psi_i^{\pm 1}$ is a letter of $f$.
  We define an equivalence relation $\sim$ on words  as in Section~\ref{Ackermann function preliminaries}.

Proposition~\ref{Prop: Hydra Game} and Corollary~\ref{pushing cor} combine to tell us, for example, that:
\[t^{-3}a_2\inv a_1 \ \in \ H_2 t^{\psi_1\psi_2\inv(-3)} \]
if $-3 \in \Img \psi_2$ and  $\psi_2\inv(-3)$ is in the domain of $\psi_1$---in other words, if $\psi_1\psi_2\inv \psi_1^3$ is valid.  In fact these provisos are met: $\psi_2\inv(-3) = -1$ and $\psi_1(-1)=-2$, so $t^{-3}a_2\inv a_1 \ \in \ H_2 t^2$.  And, given that $H_k t^r = H_k$ if and only if $r=0$ by Lemma~6.1  in \cite{DR}, determining whether $t^{-3}a_2\inv a_1  \in   H_2$ amounts to determining whether $\psi_1\psi_2\inv \psi_1^3(0)=0$.  (In fact it equals $2$, as we just saw, so $t^{-3}a_2\inv a_1 \notin H_2$.)  
This suggests that efficiently testing validity of $\psi$-words and when valid, determining whether a $\psi$-word represents zero, will be a step towards a polynomial time algorithm solving the membership problem for $H_k$ in $G_k$.  (Had $\psi_1\psi_2\inv \psi_1^3$  been invalid,  we could not have immediately concluded that that $t^{-3}a_2\inv a_1   \notin   H_2$ or indeed that  $t^{-3}a_2\inv a_1   \notin  \bigcup_{r \in \Z}  H_2 t^r$. We will address this delicate issue in  Section~\ref{Sec: Piece Criterion}.)  
So we will work towards proving this analogue to Theorem~\ref{Ackermann}:
\begin{prop}\label{Thm: Psi}
There exists an algorithm $\alg{Psi}$ that takes as input a $\psi$-word $f = f(\psi_1, \ldots, \psi_k)$ and determines in time $O(\ell(f)^{4+k})$   whether or not $f$ is valid and if so,  whether $f(0)$ is positive, negative or zero. 
\end{prop}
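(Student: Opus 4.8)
The plan is to mirror, essentially verbatim, the architecture developed for \alg{Ackermann} in Section~\ref{Ackermann specs}, substituting $\psi$-functions for Ackermann functions throughout. The structural parallels are exact: both families are strictly increasing (hence injective) on their stated domains; the low-rank functions $\psi_1, \psi_2$ play the roles of $A_0, A_1$ (affine maps on $\Z$ with $\psi_2$ doubling-like, so the parity obstruction survives); the recursion $\psi_{i+1}(n) = \psi_i\psi_{i+1}(n+1)-1$ replaces $A_{i+1}(n+1) = A_i A_{i+1}(n)$; and $\psi_3$ has exponential growth (in $|n|$) with the crucial gap estimate \eqref{Spacing for bphi} taking the place of \eqref{A_i gap}. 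One cosmetic difference is that $\psi$-functions for $i\ge 3$ have domain $-\N$ and take negative values, so ``large'' means large in absolute value and all the sign/threshold comparisons get a uniform sign flip; I would set this up once and then proceed.

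First I would establish the $\psi$-analogue of Lemma~\ref{Lem: OneToZero Equiv}: for suffixes $v = v(\psi_1,\ldots,\psi_k)$, under the appropriate validity/sign hypotheses on $v(0)$ one has $u\psi_{i+1}v \sim u\psi_i\psi_{i+1}\psi_1^{-1}v$ (here $\psi_1^{-1}$ adds $1$, playing the role of $A_0^{-1}$... wait---$\psi_1: n\mapsto n-1$, so $\psi_1$ itself decrements; the substitution coming from the recursion is $u\psi_{i+1}v \sim u\psi_i\psi_{i+1}\psi_1 v$, since $\psi_{i+1}(n) = \psi_i\psi_{i+1}(n+1)-1 = \psi_1\psi_i\psi_{i+1}(n+1) = \psi_1\psi_i\psi_{i+1}\psi_1^{-1}(n)$), together with the inverse substitution and the free-cancellation $u\psi_i^{-1}\psi_i v \sim uv$. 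The precise side conditions ($v(0)$ in the right range, $i\ge 2$, etc.) are read off from the domains exactly as in the Ackermann case. Next I would define the subroutine \alg{PsiBounds} which, given $\ell$, lists all triples $(r,n,\psi_r(n))$ with $r\ge 3$, $|n|$ large enough, and $|\psi_r(n)| \le \ell$; correctness and the $O(\ell)$ time bound follow the \alg{Bounds} argument, using that $\psi_r(-n) \le -2^{\,\Theta(r)}$ for $n\ge 1$ grows with $r$. Then \alg{PsiPositive} handles the $\eta(f)=0$ case: evaluate right-to-left, stopping once $|f_m\cdots f_1(0)|$ exceeds the length threshold (only $\psi_1^{\pm1}$ can move the value toward $0$ in unit steps once no $\psi_j^{-1}$, $j\ge 2$, remains), exactly as \alg{Positive}.

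With those primitives in place I would reproduce, step for step, \alg{BasePinch}, \alg{OneToZero}, \alg{Pinch}$_r$, \alg{CutRank}$_r$, \alg{FinalPinch}$_r$, and \alg{Reduce}, with identical control flow and the same inductive dependency (correctness of \alg{PsiPinch}$_{r-1}$ implies that of \alg{PsiCutRank}$_r$ and \alg{PsiFinalPinch}$_r$, which imply that of \alg{PsiPinch}$_r$; base case \alg{BasePinch}), and the same length-control invariants (words grow by at most $2k$ per \alg{Reduce} call, arithmetic stays within $O(\ell(f))$), yielding the $O(\ell(f)^{4+k})$ bound. The only place real care is needed---and the step I expect to be the main obstacle---is the $\psi_2$ analogue of the $A_1$ handling inside \alg{BasePinch}: since $\psi_2(n) = 2n-1$, a word $\psi_2^{-1}\psi_1^l\psi_2 v$ is valid iff $\psi_1^l\psi_2 v(0) = 2(\psi_2 v(0)) - 1 + l$... no: $\psi_1^l$ subtracts $l$, so we need $\psi_2 v(0) - l$ to lie in $\Img\psi_2 = 2\Z+1$, i.e.\ (since $\psi_2 v(0)$ is already odd) we need $l$ even, and then $\psi_2^{-1}$ maps it to $(\psi_2 v(0) - l + 1)/2 = v(0) - l/2$, so the suffix collapses to $\psi_1^{l/2} v$. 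This is the exact parity bookkeeping of the $r=1$ line of \alg{BasePinch}, just shifted; I would verify the arithmetic once and then it slots in. The rank-$\ge 3$ gap argument (the heart of \alg{BasePinch}: if $l\ne 0$ and $\psi_r^{-1}\psi_1^l\psi_r v$ is valid then $2|l| \ge |\psi_r v(0)|$) goes through verbatim from \eqref{Spacing for bphi}, as do the ``$v(0)=0$ special case'' and ``off-by-one'' manipulations in \alg{FinalPinch}. Finally, \alg{Psi} runs \alg{Reduce} at most $\eta(f)\le\ell(f)$ times and then \alg{PsiPositive}, and the composite time bound is $O(\ell(f)^{4+k})$ as claimed; translating ``sign of $f(0)$'' to ``positive, negative, or zero'' is immediate from what \alg{PsiPositive} returns.
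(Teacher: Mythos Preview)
Your approach is essentially the paper's: construct $\psi$-analogues of every subroutine of \alg{Ackermann} (the paper calls them \alg{BoundsII}, \alg{PositiveII}, \alg{BasePinchII}, \alg{OneToZeroII}, $\alg{PinchII}_r$, $\alg{CutRankII}_r$, $\alg{FinalPinchII}_r$, \alg{ReduceII}), with the role of $A_i$ played by $\psi_{i+1}$ and with the sign/domain conventions flipped. Your derivation $\psi_{i+1}(n)=\psi_1\psi_i\psi_{i+1}\psi_1^{-1}(n)$ is exactly the paper's Lemma~\ref{Lem: OneToZero Equiv bphi}, and your $\psi_2$ parity computation and the $r\ge 3$ gap argument via \eqref{Spacing for bphi} match the paper's \alg{BasePinchII}.

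There is one point you gloss over that the paper treats as the main new difficulty, and your claim of ``the same length-control invariants (words grow by at most $2k$ per \alg{Reduce} call)'' is not quite right. The substitution $\psi_{i+1}\mapsto \psi_1\psi_i\psi_{i+1}\psi_1^{-1}$ is four letters for one (net $+3$), not three for one as in the Ackermann case. Consequently \alg{ReduceII} grows the word by up to $3k$, not $2k$; more importantly, inside $\alg{CutRankII}_r$ and $\alg{FinalPinchII}_r$ each substitution now adds $3$ letters (and in \alg{FinalPinchII}$_r$ the pair of substitutions adds $6$), so the $-2$ guarantee that \alg{Pinch} and \alg{BasePinch} provide in the Ackermann case is no longer enough to keep word lengths bounded through the recursion. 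The paper compensates by proving \emph{stricter} length decreases: $\alg{BasePinchII}$ and $\alg{PinchII}_r$ return words shorter by at least $3$ (and by at least $4$ when $r>2$ and $u$ is nonempty). The key extra ingredient is that for $r\ge 3$ the minimum gap in $\Img\psi_r$ is at least $3$ (indeed $|\psi_r(n)-\psi_r(n-1)|\ge 3$), so when $\psi_r^{-1}\psi_1^l\psi_r$ collapses to $\psi_1^{l'}$ one gets $|l'|\le |l|/3\le |l|-2$, yielding the required $-4$. Without this sharpening the induction on length does not close. Once you supply these tighter bounds, the rest of your outline goes through and the $O(\ell(f)^{4+k})$ time bound follows as in the Ackermann case.
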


 Expressing the recursion relation in terms of  $\psi$-words will  be key. So, analogously to Lemma~\ref{Lem: OneToZero Equiv}, we have:
\begin{lemma}\label{Lem: OneToZero Equiv bphi}
Suppose $u, v $ are $\psi$-words. The following equivalences hold if $v$ is invalid or if $v$ is valid and satisfies the further  conditions indicated:
$$\begin{array}{rll}
u \psi_{i+1} v \!\!\!  & \sim \ u \psi_1 \psi_i \psi_{i+1} \psi_1\inv v   & v(0)<0 \text{ and } i\ge 2 \\
u \psi_{i+1}^{-1} v \!\!\! & \sim \ u \psi_1 \psi_{i+1}^{-1} \psi_{i}^{-1} \psi_1\inv v   \qquad \qquad & v(0)<-1 \text{ and } i\ge 1 \\
u \psi_i\inv \psi_i v \!\!\! & \sim uv  & v(0)\ge 0 \text{ and } i\ge 1.
\end{array}$$
\end{lemma}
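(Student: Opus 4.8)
The plan is to imitate the proof of Lemma~\ref{Lem: OneToZero Equiv} essentially line for line, reading each of the three asserted identities as an equivalence of \emph{suffixes}: by the evident $\psi$-word analogue of the lemma stated just after Lemma~\ref{Lem: OneToZero Equiv}, it suffices to show that the two words flanking $\sim$, once the common prefix $u$ is deleted, are either both invalid or both valid with the same value, since then $u$ is fed the same input on either side. If $v$ is invalid, every word with suffix $v$ is invalid, so from now on I assume $v$ valid and the indicated inequality on $v(0)$. The only arithmetic I need is that the defining recursion rearranges, as an identity of partial functions, to
\[ \psi_{i+1} \ = \ \psi_1\,\psi_i\,\psi_{i+1}\,\psi_1\inv \qquad \text{and hence} \qquad \psi_{i+1}\inv \ = \ \psi_1\,\psi_{i+1}\inv\,\psi_i\inv\,\psi_1\inv; \]
this is immediate from the definition for $i\ge 2$, and for $i=1$ it follows from $\psi_1(n)=n-1$ and $\psi_2(n)=2n-1$ (one checks $\psi_1\psi_2(n+1)-1 = 2n-1$).

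First I would dispose of the first and third identities, which require no case analysis. For the first, with $v(0)<0$ and $i\ge 2$, both $v(0)\le -1$ and $v(0)+1\le 0$ lie in $-\N$, the domain of $\psi_{i+1}$; scanning $\psi_1\psi_i\psi_{i+1}\psi_1\inv v$ from right to left, each letter meets a legal input ($\psi_1\inv v(0)=v(0)+1\in -\N$, then $\psi_{i+1}(v(0)+1)\in -\N$, which lies in the domain of $\psi_i$, then $\psi_1$ is total), so the word is valid whenever $v$ is, and by the rearranged recursion it evaluates to $\psi_{i+1}(v(0))$, exactly the value of $\psi_{i+1}v$. For the third, with $v(0)\ge 0$ and $i\ge 1$, the input $v(0)$ lies in the domain of $\psi_i$ (which is $\Z$ for $i\le 2$; for $i\ge 3$ the hypothesis together with the domain $-\N$ forces $v(0)=0$), so $\psi_i v$ is valid, $\psi_i(v(0))\in\Img\psi_i$, and $\psi_i\inv\psi_i v\sim v$; prepending $u$ completes this case.

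The real work, and where I expect the main obstacle, is the middle identity --- just as the middle equivalence of Lemma~\ref{Lem: OneToZero Equiv} needed a two-directional argument rather than a single substitution. Here I would take $v(0)\le -2$ and $i\ge 1$ and show that the suffixes $\psi_{i+1}\inv v$ and $\psi_1\psi_{i+1}\inv\psi_i\inv\psi_1\inv v$ are simultaneously valid, with equal value when valid. Given $v$ valid, validity of the first reduces to $v(0)\in\Img\psi_{i+1}$, and validity of the second to $v(0)+1\in\Img\psi_i$ together with $\psi_i\inv(v(0)+1)\in\Img\psi_{i+1}$. For the forward direction, write $v(0)=\psi_{i+1}(c)$; then $c<0$, because $c=0$ gives $\psi_{i+1}(0)=-1\ne v(0)$ and $c>0$ is possible only when $i=1$, where $\psi_2(c)=2c-1\ge 1\ne v(0)$. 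So $c+1\le 0$ lies in the domain of $\psi_{i+1}$, the rearranged recursion gives $v(0)+1=\psi_i\psi_{i+1}(c+1)$, hence $v(0)+1\in\Img\psi_i$, $\psi_i\inv(v(0)+1)=\psi_{i+1}(c+1)\in\Img\psi_{i+1}$, and the right-hand suffix evaluates to $\psi_1(c+1)=c=\psi_{i+1}\inv v(0)$.

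For the converse I would suppose $v(0)+1=\psi_i(d)$ with $d$ in the domain of $\psi_i$ and $d=\psi_{i+1}(e)$ with $e$ in the domain of $\psi_{i+1}$, so that the right-hand suffix evaluates to $\psi_1(e)=e-1$. Since $\psi_i(d)=v(0)+1\le -1$, either the codomain $-\N$ of $\psi_i\inv$ (when $i\ge 3$) or the explicit formula (when $i\le 2$, where $d-1\le -1$ or $2d-1\le -1$) forces $d\le 0$, and the same considerations applied to $d=\psi_{i+1}(e)\le 0$ force $e\le 0$; hence $e-1$ lies in the domain of $\psi_{i+1}$, and the rearranged recursion yields $\psi_{i+1}(e-1)=\psi_i\psi_{i+1}(e)-1=\psi_i(d)-1=v(0)$. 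Thus $v(0)\in\Img\psi_{i+1}$ with $\psi_{i+1}\inv v(0)=e-1$, matching the right-hand value. In every case the two suffixes are simultaneously invalid or simultaneously valid with equal value, so prepending $u$ gives the stated equivalence. The one thing to watch is the bookkeeping in the $i=1,2$ cases --- the parity condition defining $\psi_2\inv$ and the image conditions defining $\psi_i\inv$ --- but these enter identically on the two sides and so present no real difficulty.
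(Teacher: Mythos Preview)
Your approach is exactly what the paper intends: it gives no proof of this lemma beyond the phrase ``analogously to Lemma~\ref{Lem: OneToZero Equiv}'', and your line-by-line translation of that earlier argument is correct and complete for the first two equivalences, including the careful two-directional treatment of the middle one (checking, in both directions, that the relevant preimages under $\psi_i$ and $\psi_{i+1}$ exist and land in the right domains).

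For the third equivalence there is a snag, though it originates in the paper's statement rather than in your method. The side-condition $v(0)\ge 0$ is almost certainly a typo for $v(0)\le 0$, the sign-reversed analogue of the Ackermann condition in Lemma~\ref{Lem: OneToZero Equiv}. With $i\ge 3$ and $v(0)>0$ the left side $u\psi_i\inv\psi_i v$ is invalid (since $v(0)\notin -\N=\mathrm{dom}\,\psi_i$) while $uv$ can perfectly well be valid, so the equivalence fails as written. Your parenthetical ``the hypothesis together with the domain $-\N$ forces $v(0)=0$'' quietly assumes that $v(0)$ already lies in $-\N$, which the stated hypothesis does not give you; that is the gap. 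Under the corrected condition $v(0)\le 0$ your argument goes through verbatim: then $v(0)$ lies in the domain of every $\psi_i$, so $\psi_i v$ is valid, $\psi_i(v(0))\in\Img\psi_i$, and $\psi_i\inv\psi_i v\sim v$.
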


\subsection{An example} \label{psi word example}

 Let $$f \ = \  \psi_3\inv \psi_2\inv \psi_1^2\psi_2^2\psi_3  (\psi_2\psi_3)^2 \psi_1 \psi_1\inv.$$  Here is how $\alg{Psi}$  checks its validity and determines the sign of $f(0)$.
\begin{enumerate}
\renewcommand{\labelenumi}{\arabic{enumi}. }
\item First we locate  the rightmost $\psi_i\inv$ in $f$ with $i \geq 2$, namely the  $\psi_2\inv$, and look to `cancel' it with the first $\psi_2$ to its right.   In short, this  is possible because $$((2x-1)-2-1)/2 \ = \  x-1,$$ allowing us to replace $\psi_2\inv \psi_1^2 \psi_2$ with $\psi_1$ to give $$\psi_3\inv \psi_1 \psi_2 \psi_3  (\psi_2\psi_3)^2 \psi_1 \psi_1\inv \ \sim \ f.$$ 
\item Next we identify the   new rightmost $\psi_i\inv$  with $i \geq 2$, namely the $\psi_3\inv$  and we look to `cancel' it with the $\psi_3$ to its right.  To this end we first  reduce the rank of the subword between the $\psi_3\inv$ and $\psi_3$ (like $\alg{CutRank}$). We check  by direct calculation that $$\psi_1\psi_2\psi_3 (\psi_2\psi_3)^2 \psi_1 \psi_1\inv(0) \ < \ -1$$ (like   $\alg{Positive}$), so   the substitution $\psi_1\psi_3\inv \psi_2\inv \psi_1\inv$ for $\psi_3\inv$ is legitimate by Lemma~\ref{Lem: OneToZero Equiv bphi} and   
\[ \psi_1\psi_3\inv \psi_2\inv \psi_1\inv \psi_1\psi_2\psi_3 (\psi_2\psi_3)^2 \psi_1 \psi_1\inv \ \sim \ f.\]
By Lemma~\ref{Lem: OneToZero Equiv bphi}, cancelation of the $\psi_1^{- 1}$ with $\psi_1$, $\psi_2^{- 1}$ with $\psi_2$, and then $\psi_3^{- 1}$ with $\psi_3$  then gives
\[\psi_1(\psi_2\psi_3)^2 \psi_1 \psi_1\inv \ \sim \ f. \]
\item 
This contains no $\psi_2\inv, \ldots, \psi_k\inv$ and direct evaluation from right to left (like   \alg{Positive})  tells us that $\psi_1(\psi_2\psi_3)^2 \psi_1 \psi_1\inv$ is valid and represents a negative integer. 
\end{enumerate}
 
\subsection{Our algorithm in detail} \label{Psi in detail}
Fix an integer $k \geq 1$.  

Subroutines of $\alg{Psi}$ correspond to  subroutines of $\alg{Ackermann}$.  We first have an   analogue of $\alg{Bounds}$,  to calculate relatively small evaluations of the $\psi_k$.
 \begin{algorithm}[h]
    \caption{ --- \alg{BoundsII}. \newline $\circ$ \ Input $\ell\in\N$. \newline $\circ$ Return a list of all the (at most $(\log_2\ell)^2$) triples of integers $(r,n,\psi_r(n))$ such that $r \geq 3$, $n \le -2$, and $|\psi_r(n)| \le \ell$.
		\newline $\circ$ Halt in  time $O(\ell)$.}
  \end{algorithm}

With these minor changes, it works exactly like $\alg{Bounds}$: replace $A_i$ by $\psi_{i+1}$, calculate values of $\psi_r(n)$ for $n\le -2$, and use the recursive relation for $\psi$-functions.
The correctness argument for $\alg{BoundsII}$ is virtually identical to that for $\alg{Bounds}$.

Similarly to $\alg{Ackermann}$, $\alg{Psi}$ works right-to-left through a $\psi$-word eliminating letters $\psi_r\inv$ for $r\ge 2$, which like (the $A_r\inv$ for $r\ge 1$) greatly decrease absolute value when  evaluating the integer represented by a valid $\psi$-word.    Once all have been eliminated, giving a $\psi$-word $f$ with $\eta(f)=0$,    a subroutine \alg{PositiveII} determines the validity of $f$.   

 \begin{algorithm}[h]
    \caption{ --- \alg{PositiveII}. \newline $\circ$ Input a $\psi$-word $f$ with $\eta(f)=0$. \newline $\circ$ Either return that $f$ is $\invalid$, or that $f$ is valid and declare whether $f(0)>0$, $f(0)=0$, or $f(0)<0$. \newline $\circ$ Halt in  time $O(\ell(f)^3)$.}
	  \end{algorithm}
	  
$\alg{PositiveII}$ can be constructed analogously to $\alg{Positive}$ with the following changes:
\begin{enumerate}
\item The role of $\psi_i$ corresponds to the role of $A_{i-1}$.
\item Unlike Ackermann functions, $\psi_i:-\N\to -\N$, so appropriate signs and inequalities need to be altered. 
\item We still evaluate letter-by-letter. However, in place of  using $\alg{Bounds}$ to check whether an evaluation by $A_i$ is above some (positive) threshold, we use $\alg{BoundsII}$ to check that $\psi_k$ evaluated on a negative number is below some (negative) threshold.
\item Similarly, the case where a partial letter-by-letter evaluation is negative should be replaced by a case where the partial letter-by-letter evaluation is positive. 
\end{enumerate}
Then \alg{PositiveII} can be justified  similarly to  $\alg{Positive}$.

Next \alg{BasePinchII} processes words of the form $\psi_k^{-1}   \psi_1^l  \psi_k  v$. We make one major change: we have a stricter bound that \alg{BasePinch} on the length of the returned word $f'$. 
The substitution suggested by Lemma \ref{Lem: OneToZero Equiv bphi} requires a substitution of $4$ letters for $1$ rather than the $3$ for $1$ substitution suggested by Lemma \ref{Lem: OneToZero Equiv} for the Ackermann case. 
Here and in $\alg{PinchII}$, stricter bounds on the length of the output compensate for the longer substitution and thus prevent the length of words processed by recursive calls to $\alg{PinchII}$ from growing too large.  

 \begin{algorithm}[H]
    \caption{ --- \alg{BasePinchII}.
		\newline $\circ$ Input  a word  $f=\psi_r\inv u \psi_r v$ with  $k \geq 1$, $\rank(u) \le 1$,   $v$ a $\psi$-word, and $\eta(v) = 0$.  
\newline $\circ$ Either return $\Invalid$ when $f$ is invalid or return a word $f'=\psi_1^{l'}v \sim f$ such that $\ell(f')\le \ell(f)-2$ if $u$ is empty, $\ell(f')\le \ell(f)-4$ if $r>2$, and otherwise, $\ell(f')\le \ell(f)-3$. 
\newline $\circ$ Halt in  time $O(\ell(f)^4)$.}
		  \end{algorithm}

Construct $\alg{BasePinchII}$ like $\alg{BasePinch}$ with the following changes:
\begin{enumerate}
\item Replace all called subroutines by their $\psi$-versions.
\item $\psi_{i+1}$ replaces $A_i$ for all $i\ge 0$.
\item Signs and inequalities are adjusted to reflect that $\psi_{i+1}:-\N\to -\N$ and that $\psi_1(n)=n-1$ (in contrast to  $A_0(n)=n+1$). 
\item For the case $r=2$, whenever $\psi_2v(0)$ is valid, it is odd (since $\psi_2(n)=2n-1$) and hence the parity of $l$ determines the parity of $u\psi_2v(0)$.
 For validity, we need $u\psi_1v(0)$ to be odd, and this is sufficient since $\psi_2\inv(n) = (n+1)/2.$
When $l$ is even, return the equivalent word $f' := \psi_1^{l/2}v$. Otherwise $f$ is invalid. 
The restrictions on the length of $l$ follow directly from the fact that $|l/2|\le |l|-1$ if $l=0$. Henceforth, assume that $r\ge 3$. 
\item The inequality 
\[ |\psi_r(m)-\psi_r(p)| \ \ge \  \frac12|\psi_r(m)|\]
 which holds for all $r\ge 3$ and $m\ne p$ takes the place of the analogous inequality for Ackermann functions:
\[|A_r(p)-A_r(n)| \ \ge \   \frac12 A_r(n)\] 
which holds for all $r\ge 2$ and $m\ne p$. 
Following similar arguments for $\alg{BasePinch}$, we instead need $0\ge \psi_r v(0)\ge -2|l|$ to account for the fact that the $\psi_i$ are functions $-\N\to -\N$. 
\item If the algorithm outputs $f' \sim f$ with  $f'(0) = c\in\Z$, then  $f'=\psi_1^{v(0)-c}v$. 
\end{enumerate}

\begin{proof}[Correctness of $\alg{BasePinchII}$]
The   argument is essentially the same as that for $\alg{BasePinch}$ except that we need to verify the stronger assertions on $\ell(f')$.
If $l=0$, the algorithm eliminates $\psi_r\inv$ and $\psi_r$, reducing   length by $2$.

	For the case $l\ne 0$, consider the following: we claim that $$|\psi_r(n)-\psi_r(n-1)| \ \ge \  |\psi_3(0)-\psi_3(-1)| \ =  \ 3.$$
	Explicitly, for $r=3$, we have:
	\[|\psi_r(n) - \psi_r(n-1)| \ = \ 3\cdot 2^{-n} - 3 \cdot 2^{-(n-1)} \ = \ 3\cdot 2^{-n}\ge 3 \cdot 2^0 \ = \ 3\]
	because $n\le 0$.
	For $r>3$, assume the result holds for all ranks less than $r$. We have:
	\begin{align*}|\psi_r(n)-\psi_r(n-1)| & \ = \ |\psi_{r-1}(\psi_r(n)) - \psi_{r-1}\psi_r(n-1)| \\ & \ \ge \ |\psi_{r-1}\psi_r(n) - \psi_{r-1}(\psi_r(n)-1)| \ \ge \  |\psi_3(0)-\psi_3(-1)| \end{align*}
	where the final two inequalities follow from the fact that $\psi_{r-1}$ is non-decreasing and the inductive hypothesis, respectively.
	
	By extending this argument inductively and using  that $\psi_{r}$ is non-decreasing:
	\[|\psi_r(n) -\psi_r(n+m)| \ \ge \ 3m.\]
	So, for $r>3$ and $l\ne 0$ where $f' = \psi_0^{c-v(0)}v(0)$, we have that $\psi_r(c) - \psi_r(v(0)) = l$ implies that $|c-v(0)|\le \frac13 |l|$.
	In particular, if $l\ne 0$, then $|l|\ge 3$. 
	Therefore, 
	\[\ell(f')  \ = \  |c-v(0)| +\ell(v) \  \le \  \frac13 |l| +\ell(v)  \ \le  \  |l| - 2 +\ell(v) \  =   \  \ell(f)-4\]
	since $|l|-2 \ge \frac13|l|$ if $|l|\ge 3$. 
	Thus we have verified the assertions concerning  $\ell(f')$. 
\end{proof}

 $\alg{OneToZeroII}$ is essentially the same as  \alg{OneToZero} with  $A_0$ replaced by $\psi_1$.

\begin{algorithm}[H]\label{Alg: OneToZeroII}
\caption{ --- $\alg{OneToZeroII}$. 
 \newline $\circ$ \ Input a valid word word of the form $f=\psi_r\inv u\psi_r v$ with $r\ge 3$, $u$ not the empty word, and $\eta(u)=\eta(v)=0$ such that $u\psi_rv(0)=-1$.
 \newline $\circ$ \ Return an equivalent word of the form $f'=\psi_1^{v(0)}v$ with $\ell(f')\le \ell(f)-3$. 
  \newline $\circ$ \ Halt  in time $O(\ell(f)^4)$. }
\end{algorithm}

\begin{proof}[Proof that $\ell(f') \leq \ell(f) -3$ in $\alg{OneToZeroII}$.]  
Now $v(0) \leq 0$ since $v(0)$ is in the domain of $\psi_r$ and $r \geq 3$.  Consider first the case $v(0) \leq -1$.
First observe that $\psi_r (x) \leq x -3$ when $x \leq -1$ and $r \geq 3$.    
Since $\eta(u)=0$, $\psi_1\inv$ is the only letter it can contain which decreases the absolute value as $f(0)$ is evaluated.   So, given that $u\psi_r v(0)=-1$, $u$ must contain $\psi_1\inv$ at least $\abs{v(0)  - 3}-1 = \abs{v(0)} +2$ times.    So $\ell(u) \geq  \abs{v(0)} +2$ and therefore
$$\ell(f) - \ell(f')  \ = \  2 + \ell(u) - \abs{v(0)}  \ \geq \  4,$$ and so   $\ell(f') < \ell(f) -3$
as required.  

If $v(0)=0$, \alg{OneToZeroII} returns $f'=v$. Since $u$ is not the empty word, $\ell(f') \le \ell(f)-3$ as required. 
  \end{proof}


$\alg{PinchII}_r$ is an analogue to \alg{Pinch}$_r$. As in the previous situation, the proof is by induction and uses \alg{BasePinchII} as its base case.  As in \alg{BasePinchII}, there are now stronger restrictions on the length of a returned equivalent word.

\begin{algorithm}[h]
\caption{ --- $\alg{PinchII}_r$ for $r \geq 2$.  
 \newline $\circ$ \ Input  a word  $f=\psi_r\inv u \psi_r v$ with  $r \geq 2$, $\rank(u) \leq r-1$,   $v$ a $\psi$-word, and $\eta(v) = 0$.
 \newline $\circ$ \ Either return that $f$ is invalid, or return a word $f'=\psi_1^{l'}v$ equivalent to $f$ such that $\ell(f')\le \ell(f)-2$ if $u$ is empty, $\ell(f')\le \ell(f)-4$ if $r>2$ and $\rank(u)=1$, and otherwise, $\ell(f')\le \ell(f)-3$. 
  \newline $\circ$ \ Halt in $O(\ell(f)^{4+(k-1)})$  time.}
\label{Alg: PinchII}
\end{algorithm}

The construction of \alg{PinchII}$_r$ is the same as $\alg{Pinch}_r$ except that:
  \begin{enumerate}
\item    We replace $A_r$ by $\psi_{r+1}$ for $r\ge 0$. 
\item We replace all called subroutines by their $\psi$-word versions.
\item  In line \ref{pinch validity check}, when $\alg{PositiveII}$ checks the value of $u\psi_r v$, declare the word invalid if the result was invalid, positive or $0$.  Otherwise, run $\alg{CutRankII}_r$(w) followed by \alg{FinalPinchII}$_r$ when the result of $\alg{CutRankII}_r$ is not invalid. 
\end{enumerate}

Before discussing the correctness of \alg{PinchII}$_r$, we construct and analyze its subroutines $\alg{CutRankII}_r$ and $\alg{FinalPinchII}_r$.

\begin{algorithm}[H]
\caption{ --- $\alg{CutRankII}_r$ for $r \geq 2$.
 \newline $\circ$ \ Input a $\psi$-word of the form $f:=\psi_r\inv u\psi_r v$ with $\eta(u)=\eta(v)=0$ and $\Rank(u)\le r-1$.
 \newline $\circ$ \ Either declare that $f$ is invalid, or \textbf{halt} and  return $f':= \psi_1^lv\sim f$, or return $f' := \psi_r\inv u' \psi_r v \sim f$ where $\rank(u') \le r-2$.  In all cases $\ell(f') \leq \ell(f)$ and if $f':=\psi_1^lv$, then $\ell(f')\le \ell(f)-3$.   
  \newline $\circ$ \ Halt in $O(\ell(f)^{4+(k-1)})$  time.}
\end{algorithm}

The construction of \alg{CutRankII}$_r$ is the same as $\alg{CutRank}_r$ except that:  
\begin{enumerate}
\item We replace $A_r$ by $\psi_{r+1}$ for $r>0$, $A_0$ by $\psi_1\inv$. We replace all called subroutines by their $\psi$-word versions.
 \item In line  \ref{switch to OneToZero},  check whether $u\psi_rv(0) =-1$. If so, run and return the result of \alg{OneToZeroII}(w).
 \item In line \ref{CutRank substitution}, instead of the substitution $A_r = A_{r-1}A_rA_0\inv$ which encodes the defining recursion relation for Ackermann functions, use Lemma~\ref{Lem: OneToZero Equiv bphi} and make the substitution $\psi_r\inv = \psi_1\psi_{r}\inv\psi_{r-1}\inv \psi_1\inv$ to convert $w$ to $\psi_1\psi_{r}\inv\psi_{r-1}\inv \psi_1\inv u' \psi_{r-1} u'' \psi_r v$ where $\eta(u)=\eta(u')=\eta(u'')=0$ and $u'$ has rank strictly less than $r-1$. 
\end{enumerate}

\begin{proof}[Correctness of $\alg{CutRankII}_r$ assuming correctness of $\alg{PinchII}_{r-1}$.]
In the case $\alg{OneToZeroII}$ is used, all claims follow from the specifications of that algorithm.

We show  $\ell(f') \leq \ell(f)$.
The only changes from $\alg{CutRank}_r$ occur in the \textbf{while} loop used to remove successive $\psi_{r-1}$. As for $\alg{CutRank}_r$, it suffices to check that each iteration of this loop has  output no longer than its input. 

\alg{CutRankII}$_r$ returns $f' = f$ if $u$ has rank less than $r-1$, so assume $\psi_{r-1}$ appears in $u$ so $\rank(u)=r-1$.
If $u\psi_rv(0) = -1$, then  as we show for  $\alg{CutRank}_r$, after each iteration of the loop, there is no increase in length.
If $u\psi_rv(0) \neq -1$,  express $f$ as $\psi_r\inv u' \psi_{r-1} u''\psi_r v$ where $\eta(u')=\eta(u'')=0$,  $\rank(u')<k-1$ and $\rank(u'') \leq  k-1$. 
Substituting  $\psi_1 \psi_{r-1} \psi_{r} \psi_1\inv$  for $\psi_r$ adds $3$ letters. 
There is at least one letter between $\psi_{r-1}\inv$ and $\psi_{r-1}$, so applying $\alg{PinchII}_{r-1}$ then decreases length by at least $3$. 
Hence when $\alg{CutRankII}_r$ does not encounter any special cases in the \textbf{while} loop,  $\ell(f') \leq \ell(f)$.  

\end{proof}

To adapt \alg{FinalPinchII}$_r$ to give \alg{FinalPinch}$_r$:
\begin{enumerate}
\item In line~\ref{finalpinch switch to onetozero}, check whether $u\psi_rv(0) =-1$ and,  if so, run and return the result of $\alg{OneToZeroII}(f)$.
\item In line~\ref{finalpinch induction}, use Lemma~\ref{Lem: OneToZero Equiv bphi} instead of Lemma~\ref{Lem: OneToZero Equiv} to make the analogous substitutions, $\psi_r\inv = \psi_1\psi_{r}\inv\psi_{r-1}\inv \psi_1\inv$ and $\psi_r = \psi_1\psi_{r-1}\psi_{r} \psi_1\inv$.
\end{enumerate}

\begin{algorithm}[H]
\caption{ --- $\alg{FinalPinchII}_r$ for $r \geq 2$.  
 \newline $\circ$ \ Input  a word of the form $\psi_r\inv u \psi_r v$ with $\eta(u)=\eta(v)=0$ and $\rank(u') < r-1$.
 \newline $\circ$ \ Either return $\Invalid$ or return an equivalent word of the form $\psi_1^l v$.
  \newline $\circ$ \ Halt in $O(\ell(f)^{4+(r-2)})$  time.}
\end{algorithm}

\begin{proof}[Correctness of $\alg{FinalPinchII}_r$ assuming correctness of $\alg{PinchII}_r$.]  Consider the special cases:
\begin{itemize}
\item \textbf{$u$ is the empty word}: the argument is similar to the case where $u$ is the empty word in the main routine.  
\item \textbf{$u\psi_rv(0) = -1$ and $u$ is not the empty word}:  the argument is similar to the case where $u$ is the empty word in $\alg{PinchII}_r$.
\item \textbf{$v(0)=0$}: substituting $\psi_1 \psi_r\inv \psi_{r-1}\inv \psi_1\inv$ for $\psi_r\inv$ adds $3$ letters. Substituting for $\psi_r$ by $\psi_{r-1}$ results in no increase in length in this case.
As in $\alg{CutRankII}_r$, the substitution for $\psi_r\inv$ ensures that there is at least one letter between $\psi_{r-1}\inv$ and $\psi_{r-1}$, so if $\alg{PinchII}_r$ returns an equivalent word, that word is at least $4$ letters shorter than the input word by the induction hypothesis.
\item  $u\psi_rv(0)<-1$ and $v(0)<0$: substituting $\psi_1 \psi_{r-1} \psi_r \psi_1\inv$ and $\psi_1 \psi_r\inv \psi_{r-1}\inv \psi_1\inv$ for $\psi_r$ and $\psi_r\inv$, respectively, adds $6$ letters. 
Applying $\alg{PinchII}_{r-1}$ to $$\psi_{r-1}\inv\psi_1\inv u \psi_1\psi_{r-1}\psi_r\psi_1\inv \psi_1\inv v,$$ whose length is at most $\ell(f)+6$.
There are non-trivial letters between $\psi_{r-1}\inv,\psi_{r-1}$. So the equivalent word returned by $\alg{PinchII}_{r-1}$ is at least three letters shorter. 
Therefore, the result is of the form
\[\psi_1 \psi_r\inv \psi_1^l \psi_r \psi_1\inv v \] 
for some $l\in\Z$ and has length at most $\ell(f)+3$.
If $l=0$, running $\alg{BasePinchII}$ triggers a trivial case where $f'=v$ is returned and $\ell(v)\le  \ell(f)-3$ since $u$ is non-empty.
Otherwise, applying $\alg{BasePinchII}$ to $\psi_r\inv \psi_1^l \psi_r \psi_1\inv v$, if an equivalent word of the form $\psi_1^{l'}\psi_1\inv v$ is returned, its length is $4$ letters shorter than the input to $\alg{BasePinchII}$.
Hence we have a word equivalent to $f$ of the form 
\[\psi_1 \psi_1^{l'} \psi_1\inv v\]
whose length is at most $\ell(f)-1$, and the word is equivalent to:
\[\psi_1^{l'} v\]
yielding an equivalent word whose length is at most $\ell(f)-3$.\qedhere
\end{itemize}
\end{proof}
	
	\begin{proof}[Correctness of $\alg{PinchII}_r$ assuming the correctness of $\alg{PinchII}_{r-1}$.]
Correctness can be proved by mimicking our proof of correctness for \alg{Pinch}$_r$. However, the substitution $A_r = A_{r-1}A_r A_0\inv$ for Ackermann functions increases the length of the word by $2$ letters, but the substitution $\psi_r^{\pm 1} = (\psi_1\psi_{r-1}\psi_{r}\inv \psi_1\inv)^{\pm 1}$ increases length by $3$ letters, so we will need to  account carefully for this difference.

When $r=2$, the bound on $\ell(f')$ comes directly from the bound for $\alg{BasePinchII}$. 
 
Let $r\ge 3$.  The calls to $\alg{PositiveII}$ in the main routine are on words no longer than $f$.
We also have the special case where $u$ is the empty word, where the algorithm halts and returns $v$ which has length $\ell(f)-2$.
If $u\psi_kv(0) = -1$ and $u$ is not the empty word, by part of the justification for $\alg{BasePinchII}$, $\psi_kv(0)\le v(0)-3$. 
Since $\eta(u)=0$, the only letter in $u$  that decreases  absolute value  when evaluating $f(0)$ letter-by-letter from right to left is $\psi_1\inv$.
If $u\psi_rv(0) = -1$, then $\psi_rv(0)\le v(0)-3$ by the specifications of $\alg{OneToZeroII}$. 
So  $u$ must contain $\psi_1\inv$ at least $|v(0)|+2$.
Therefore, the   $\ell( \psi_r\inv u \psi_r)  \geq \abs{v(0)}+4$.
Thus   $f'=\psi_1^{v(0)}v$ has $\ell(f') \leq \ell(f)-4$ as required. 
\end{proof}

Correctness and construction of \alg{ReduceII} are nearly immediate by following those of $\alg{Reduce}$, replacing $A_i$ by $\psi_{i+1}$ and changing the subroutines to the $\psi$-word versions. The bound $\ell(f') \leq \ell(f) + 3k$ contrasts with the bound $\ell(w') \leq \ell(w) + 2k$ of  $\alg{Reduce}$ because Lemma~\ref{Lem: OneToZero Equiv bphi} requires a substitution that results in a gain of $3$ letters rather than the gain of $2$ required by Lemma~\ref{Lem: OneToZero Equiv}.

\begin{algorithm}[h]
\caption{ --- $\alg{ReduceII}$.
\newline   
$\circ$ \ Input a $\psi$-word $f$ with $\eta(f)>0$.
 \newline $\circ$ \ Either declare  that $f$ is invalid or  return an equivalent word of the form $f'$ with $\ell(f')\le \ell(f)+3k$ and $\eta(f')=\eta(f)-1$.
  \newline $\circ$ \ Halt in $O(\ell(f)^{4+(k-1)})$  time.}
\label{Alg: ReduceII}
\end{algorithm}

Finally, \alg{Psi} can be constructed similarly to $\alg{Ackermann}$ by replacing all $A_i$ by $\psi_{i+1}$ and replacing subroutines by their counterparts.  The proof of its correctness then essentially follows that of $\alg{Ackermann}$.  (The special case $k=1$ is trivial; we  distinguish it   to make an estimate at the end of Section~\ref{Member construction} cleaner.)

\begin{algorithm}[h]
\caption{ --- \alg{Psi}.
\newline $\circ$ \  Input a $\psi$-word $f$.
 \newline $\circ$ \ Either return that $f$ is invalid,  or return that it is valid and declare whether $f(0) >0$, $f(0) =0$, or $f(0) <0$. 
  \newline $\circ$ \ Halt in $O(\ell(f)^{4+k})$  time when $k>1$ and $O(\ell(f))$  time when $k=1$.}
\end{algorithm}

\section{An efficient solution to  the membership problem for hydra groups}   \label{MP section}

\subsection{Our algorithm in outline} \label{MP algorithm outline} \label{MP problem intro}

Our aim is  to give a polynomial-time algorithm $\alg{Member}_k$ which, given a word $w = w(a_1, \ldots, w_k,t)$ on the generators  of  the hydra group $$G_k \ = \  \cyc{a_1,\ldots,a_k,t  \mid  t\inv a_i t = \theta(a_i)},$$ where $\theta(a_i) = a_i a_{i-1}$ for all $i>1$ and $\theta(a_1)=a_1$, will tell us whether or not $w$ represents an element of  $H_k = \cyc{a_1t,\ldots,a_kt}$.    

The first step is to convert $w$ into a normal form: we use the defining relations for $G_k$ to collect all the $t^{\pm 1}$ at the front, and then we freely reduce, to give  $t^rv$ where  $r$ is an integer with $\abs{r} \leq \ell(w)$  and   $v = v(a_1, \ldots, a_m)$ is reduced.  Pushing a $t^{\pm 1}$ past an $a_i$ has the effect of applying $\theta^{\pm 1}$ to   $a_i$, so it follows from the lemma below that  $$\ell(v)  \ \leq \  \ell(w) (\ell(w) +1)^{k-1}$$ and that   $t^rv$ can be produced in time $O(\ell(w)^k)$.  

\begin{lemma}\label{Lem: automorphism growth}
For all $k =1, 2, \ldots$ and all $n \in \Z$,  $$\ell(\theta^n(a_k))  \  \leq \   (\abs{n}+1)^{k-1}.$$  
\end{lemma}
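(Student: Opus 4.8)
The plan is to induct on $k$. When $k=1$ there is nothing to prove, since $\theta$ fixes $a_1$ and hence $\theta^n(a_1)=a_1$ has length $1=(|n|+1)^0$ for every $n\in\Z$. For $k\ge 2$ I would reduce the bound for $\theta^n(a_k)$ to the outer inductive bound for $\theta^{\bullet}(a_{k-1})$ by means of a one-step recurrence, running a secondary induction on $|n|$; the cases $n\ge 0$ and $n<0$ need slightly different recurrences but the same idea, and both rely only on subadditivity of reduced word length in $F(a_1,\ldots,a_k)$, i.e.\ $\ell(gh)\le\ell(g)+\ell(h)$.

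For $n\ge 0$: since $\theta$ is a homomorphism and $\theta(a_k)=a_ka_{k-1}$, we have $\theta^n(a_k)=\theta^{n-1}(a_k)\,\theta^{n-1}(a_{k-1})$, so $\ell(\theta^n(a_k))\le \ell(\theta^{n-1}(a_k))+\ell(\theta^{n-1}(a_{k-1}))$. The base case $n=0$ is $\ell(a_k)=1$. For $n\ge 1$, the secondary hypothesis gives $\ell(\theta^{n-1}(a_k))\le n^{k-1}$ and the outer hypothesis gives $\ell(\theta^{n-1}(a_{k-1}))\le n^{k-2}$, whence
\[ \ell(\theta^n(a_k))\ \le\ n^{k-1}+n^{k-2}\ =\ n^{k-2}(n+1)\ \le\ (n+1)^{k-2}(n+1)\ =\ (n+1)^{k-1},\]
using $k-2\ge 0$ in the middle step.

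For $n<0$, write $n=-m$ with $m\ge 1$. Solving $\theta(a_k)=a_ka_{k-1}$ for $\theta^{-1}(a_k)$ gives $\theta^{-1}(a_k)=a_k\,\theta^{-1}(a_{k-1})^{-1}$, and applying $\theta^{-(m-1)}$ yields $\theta^{-m}(a_k)=\theta^{-(m-1)}(a_k)\,\theta^{-m}(a_{k-1})^{-1}$. As inversion preserves length, subadditivity together with the secondary hypothesis $\ell(\theta^{-(m-1)}(a_k))\le m^{k-1}$ and the outer hypothesis $\ell(\theta^{-m}(a_{k-1}))\le (m+1)^{k-2}$ gives $\ell(\theta^{-m}(a_k))\le m^{k-1}+(m+1)^{k-2}$, and it remains to check the scalar inequality $m^{k-1}+(m+1)^{k-2}\le (m+1)^{k-1}$. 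This follows by comparing binomial expansions: $(m+1)^{k-1}-m^{k-1}=\sum_{i=0}^{k-2}\binom{k-1}{i}m^i\ \ge\ \sum_{i=0}^{k-2}\binom{k-2}{i}m^i=(m+1)^{k-2}$, since $\binom{k-1}{i}\ge\binom{k-2}{i}$ by Pascal's rule. With the base case $m=0$ this closes the secondary induction, and hence the outer induction on $k$.

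I do not expect any real obstacle here; the only care needed is keeping the two secondary inductions (on $n\ge 0$ and on $m=-n\ge 0$) and the two elementary inequalities straight. As an alternative that sidesteps the inequality $m^{k-1}+(m+1)^{k-2}\le(m+1)^{k-1}$, one can observe that the $n<0$ recurrence, with the same Pascal's-rule step, in fact proves the sharper bound $\ell(\theta^{-m}(a_k))\le\binom{m+k-1}{k-1}$ directly, after which $\binom{m+k-1}{k-1}=\prod_{j=1}^{k-1}\frac{m+j}{j}\le(m+1)^{k-1}$ because $m+j\le j(m+1)$ for all $j\ge 1$ and $m\ge 0$; and analogously for $n\ge0$ the recurrence gives $\ell(\theta^n(a_k))\le\sum_{i=0}^{k-1}\binom{n}{i}\le(n+1)^{k-1}$, the last step by the map sending a size-$\le(k-1)$ subset of $\{1,\ldots,n\}$ to the tuple of its elements in increasing order, padded with a blank.
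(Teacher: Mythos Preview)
Your proof is correct and follows essentially the same approach as the paper: induct on $k$, use the recurrences $\theta^n(a_k)=\theta^{n-1}(a_k)\theta^{n-1}(a_{k-1})$ and $\theta^{-m}(a_k)=\theta^{-(m-1)}(a_k)\theta^{-m}(a_{k-1})^{-1}$, and bound. The only cosmetic difference is that the paper telescopes each recurrence to a sum $1+\sum_j (j+1)^{k-2}$ and bounds every term by $(|n|+1)^{k-2}$, whereas you run a secondary induction and close with the one-step inequalities $n^{k-1}+n^{k-2}\le(n+1)^{k-1}$ and $m^{k-1}+(m+1)^{k-2}\le(m+1)^{k-1}$.
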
 

\begin{proof}
For $n \in \N$ define $f(n,k):= \ell(\theta^n(a_k))$ and $g(n,k) = \ell(\theta^{-n}(a_k))$.  To establish the lemma we will show by induction on $k$ that $f(n,k)$ and $g(n,k)$ are each at most $(n+1)^{k-1}.$

For the case $k=1$, note that $f(n,1) = g(n,1) = 1$ because $\theta^n(a_1)=a_1$ for all $n \in \Z$. 

For the induction step, consider $k>1$. As  $\theta^{n}(a_k) =  \theta^{n-1}(\theta(a_k)) =  \theta^{n-1}(a_k)\theta^{n-1}(a_{k-1})$, we have
\begin{align*}
f(n,k) & \ = \ f(n-1,k) + f(n-1,k-1) \\
    & \ = \ f(0,k) + f(0,k-1) +  \cdots + f(n-1, k-1)  \\ 
    &  \ \leq \ 1 +   1^{k-2} + \cdots + n^{k-2}  \\
    &  \ \leq \   (n+1)^{k-1} 
\end{align*}
where the first inequality uses   $f(0,k) = \ell( \theta^{0}(a_k)) = \ell(a_k) = 1$ and the induction hypothesis, and the second that each of the $n+1$ terms in the previous line is at most $(n+1)^{k-2}$.

Next, note that $\theta\inv(a_{k}) = a_k \theta\inv(a_{k-1}\inv)$ because $\theta(a_{k}) = a_k a_{k-1}$. So, for all $n \in \Z$ $$\theta^{-n}(a_k) \ = \  \theta^{-(n-1)}\theta\inv(a_k) \ = \  \theta^{-(n-1)}(a_k \theta\inv(a_{k-1}\inv)) \ = \  \theta^{-(n-1)}(a_k) \theta^{-n}(a_{k-1}\inv)$$
and therefore
\[ \ell(\theta^{-n}(a_k)) \ = \ \ell(\theta^{-(n-1)}(a_k)) + \ell(\theta^{-n}(a_{k-1}\inv)) \ = \ \ell(\theta^{-(n-1)}(a_k))+\ell(\theta^{-n}(a_{k-1})).\]
So for all  $n >0$      
\begin{align*}
g(n,k) & \ \le \  g(n-1,k) + g(n, k-1) \\
 & \ \le \ g(0,k) + g(1,k-1) + \cdots +g(n,k-1)   \\  
& \ \leq \ 1 +   1^{k-2} + \cdots + (n+1)^{k-2}   \\
& \ \leq \  (n+1)^{k-1} 
\end{align*}
since $g(0,k) = 1$ and $1+1^{k-2}$ and  each of the other $n$ terms in the penultimate line is at most  $(n+1)^{k-2}$. 
\end{proof}

Next $\alg{Member}_k$  calls a subroutine $\alg{Push}_k$  which `pushes' the power of $t$ back through $v$ from the left to the right (the power varying in the process), leaving the prefix to its left as a word on $a_1t,\ldots,a_kt$.  
 The powers of $t$ that occur as this proceeds are recorded by $\psi$-words,  as they may 
 be too large to record explicitly in polynomial time.

Here are some more details on how we `push the power of $t$  through $v$.'
We do not try to progress the power of $t$ past one $a_i^{\pm 1}$ at a time.  (There are words representing elements of $H_k$ for which that is  impossible.)  Instead, we first consider  the locations of the $a_k^{\pm 1}$, then the  $a_{k-1}^{\pm 1}$, and so on.  
Following \cite{DR}, we define the \emph{rank-$k$ decomposition  of  $v$  into  pieces} as  the (unique) way of expressing $v$ as a concatenation  $\pi_1 \cdots \pi_p$ of the minimal number of    subwords (`\emph{pieces}') $\pi_i$ of the form $a_k^{\epsilon_1}u a_k^{-\epsilon_2}$ where $\Rank(u) \leq k-1$ and $\epsilon_1,  \epsilon_2 \in \{0,1\}$.    For example,   the rank-5 decomposition of    
$$a_5a_3a_5\inv a_2 a_5 a_1 a_5\inv a_1 a_5\inv$$ is $$(a_5a_3a_5\inv) (a_2) (a_5 a_1a_5\inv) (a_1a_5\inv).$$ 

We use pieces because $t^r v \in H_k t^s$ for some $s \in \Z$ if and only if it is possible to advance the power of $t^r$ through $v$ one piece at a time, leaving behind an element of $H_k$.  More precisely, 
 $t^r v\in H_kt^s$ if and only if there exists a sequence $r=r_0,\ldots,r_p=s$ such that $t^{r_i}\pi_{i+1} \in H_kt^{r_{i+1}}$ (Lemma 6.2 of \cite{DR}). 

Let   $f_0 := \psi_1^{-r}$, so $f_0(0)=r$.  Then, for each successive $i$, we determine,  using a subroutine $\alg{Piece}_k$, whether or not there exists  $r_{i} \in \Z$ (unique if it exists) such that 
$$t^{f_{i-1}(0)} \pi_i \  \in \  H_k t^{r_{i}}$$  
 and if so, it gives a  $\psi$-word $f_i$ such that $f_i(0) =r_i$.  $\alg{Piece}_k$ expresses $\pi_i$ as $a_k^{\epsilon_1} u a_k^{-\epsilon_2}$ where $\epsilon_1, \epsilon_2 \in \{0, 1\}$. It operates in accordance with  Proposition~\ref{Prop: Piece Criterion} which is a technical result that we call `The Piece Criterion.'  $\alg{Piece}_k$ has two subroutines.  The first,  $\alg{Front}_k$, reduces the problem of whether $r_i$ exists 
 to determining whether, for a certain  $\psi$-word $f_{i-1}'$  and a   certain rank-$k$  piece $\pi_i'$ which does not have $a_m$ as its first letter,  there exists  $r_i'\in \Z$ such that $t^{f_{i-1}'(0)} \pi_i' \in H_{k-1} t^{r_i'}$. 
 Then the second,  $\alg{Back}_k$, makes a similar reduction to a situation when there is no $a_m\inv$ at the end.  It then   inductively calls $\alg{Push}_{k-1}$ on the modified piece (which is now a word of rank less than $k$)  to find a $\psi$-word $f_i'$ representing $r_i'$, and then modifies $f_i'$ to get $f_i$.  It detects that  the $r_i$ fails to exist  by recognizing (using \alg{Psi}) an emerging   $\psi$-word not  being valid,  or noticing that $\pi_i$ fails to have a suffix or prefix of a particular form. 
 
This inductive construction has base cases $\alg{Push}_1$ and $\alg{Piece}_2$, which use elementary  direct manipulations.
 
If  $r_1, \ldots, r_p$ all exist, then \alg{Psi}  determines whether or not $f_p(0)=0$, and  concludes that $w$  does or does not represent an element of $H_k$, accordingly.

\subsection{Examples} \label{alg examples}

The algorithms and subroutines named here are those we will construct in Section~\ref{Member construction}.

\begin{Example} \label{that follows}

 Let $w   =  a_3^4a_2 t a_1 a_2\inv a_3^{-4}$.  As we saw in Section~\ref{hydra phenomenon},  $w = u_{3,4} \, (a_2t) \,  (a_1 t) \, ({a_2}t)^{-1} \, {u_{3,4}}^{-1}$   in $G_3$ which has length  $2 \H_3(4) +3 =   2^{47} \cdot 3 -1$ as a word on the generators $a_1t$, $a_2t$, $a_3t$ of $H_3$.   Here is how our algorithm $\alg{Member}_k$ discovers that $w$ represents an element of $H_3$ without working with this prohibitively long word.
\begin{enumerate}
\renewcommand{\labelenumi}{\arabic{enumi}. }
\item   Convert    $w$ to a word $tv$ representing the same element of $G_3$ by using that $a_it = t\theta(a_i)$ in $G_3$ for all $i$ to shuffle the $t$ to the front.  This produces  
$$v \ = \  \theta(a_3)^4 \theta(a_2) a_1 a_2\inv a_3^{-4} \ = \ (a_3a_2)^4 a_2  a^2_1 a_2\inv a_3^{-4}.$$
\item Define $f_0:=\psi_1\inv$, to express the power $f(0)=1$ of $t$ here.
\item The rank-3 decomposition of $v$ into pieces is: $$v \  = \  (a_3a_2)(a_3a_2)(a_3a_2)(a_3 a_2^2 a^2_1 a_2\inv a_3\inv)(a_3\inv)(a_3\inv)(a_3\inv).$$
Accordingly, define $$\pi_1 := \pi_2 := \pi_3 := a_3 a_2, \qquad \pi_4 := a_3 a_2^2 a^2_1 a_2\inv a_3\inv, \qquad \pi_5 := \pi_6 := \pi_7 := a_3\inv.$$
A subroutine $\alg{Push}_3$ now aims to find  $\psi$-words $f_1$, \ldots, $f_7$ such that $t^{f_{i-1}(0)} \pi_i  \in H_3 t^{f_i(0)}$ for $i=1, \ldots, 7$, by `pushing the power of $t$ through successive pieces.'   
\item So first a subroutine $\alg{Piece}_3$ is called to try  to pass $t^{f_0(0)}$ through $\pi_1$. 
   The subroutine $\alg{Front}_k$ calls a further subroutine $\alg{Prefix}_3$ to find the longest prefix (if one exists) of $\pi_1$ of the form  $\theta^{i-1}(a_3)a_2$ for some $i \geq 1$.  $\alg{Prefix}_3$ does so  by  generating $\theta^{0}(a_3)a_2$, $\theta^{1}(a_3)a_2$, and so on, and  comparing, until the length of $\pi_1$ is exceeded.   In this instance $\alg{Prefix}_3$ returns $i=1$. It follows from the Piece Criterion that 
   $t^{f_0(0)} \pi_1 =  a_3 t \in H_3 t^0 = H_3 t^{\psi_1 \psi_1\inv(0)}$.  Accordingly define $f_1:=\psi_1 \psi_1\inv$. 
\item   $\alg{Piece}_3$ next looks to pass $t^{f_1(0)} = t^0$ through  $\pi_2$.
   $\alg{Front}_k$ uses \alg{Psi} to check that $f_1(0) =0 \leq 0$.  By the  Piece Criterion, it then follows from the fact that there are no inverse letters in $\pi_2$  that $t  a_3 a_2 \in Ht^{\psi_2\psi_3 (0)}$.  So define  $f_2 := \psi_2\psi_3\psi_1 \psi_1\inv$.   
 \item Next  $\alg{Piece}_3$ tries to pass $t^{f_2(0)}$ through  $\pi_3 = a_3a_2$.  Likewise this is possible as $f_2(0) \leq 0$,  and it defines  $f_3 := (\psi_2\psi_3)^2 \psi_1 \psi_1\inv$.  
\item Next,  $\alg{Piece}_3$ tries to pass $t^{f_3(0)}$ through $\pi_4$.
\begin{enumerate}
\item  $\alg{Front}_3$ uses  \alg{Psi} to check that $f_3(0)\le 0$. It follows that $t^{f_3(0)} a_3 \in H_3 t^{\psi_3f_3(0)}$
and  the problem is reduced (by the Piece Criterion) to finding an $s\in \Z$ (if one exists) such that $$t^{\psi_3 f_3(0)} a_2^2a_1^2 a_2\inv a_3\inv \in  H_3 t^s.$$  This will represent progress as (unlike $\pi_4$)   $a_2^2a_1^2 a_2\inv a_3\inv$ is a piece without an $a_m$ at the front.  
\item  Then the subroutine $\alg{Back}_3$  recursively calls   $\alg{Piece}_2$ 
to find the $s \in \Z$ (if there is one) such that $t^{\psi_3 f_3(0)} a_2^2a_1^2 a_2\inv \in H_3t^s$.  It returns   $\psi_2\inv (\psi_1)^2\psi_2^2\psi_3f_3$. (We omit the steps $\alg{Piece}_2$ goes through.)  \alg{Back}$_3$   then uses $\alg{Psi}$ to test whether $f_4 := \psi_3\inv \psi_2\inv (\psi_1)^2\psi_2^2\psi_3f_3$ is  valid, which  it is: we examined it in Section~\ref{psi word example}.  Also $\alg{Psi}$ declares that $f_4(0) \leq 0$.  It follows (using the Piece Criterion) that $t^{f_3(0)} \pi_4 \in H_3 t^{f_4(0)}$. 
\end{enumerate}
\item Next $\alg{Piece}_3$ tries to pass  $t^{f_4(0)}$ through $\pi_5$.  This is done by  $\alg{Back}_3$.  
By the Piece Criterion, it suffices to check that  $f_5 := \psi_3\inv f_4$ is valid, which is done using $\alg{Psi}$.  
\item $\alg{Piece}_3$  likewise passes   $t^{f_5(0)}$ through $\pi_6$ giving  $f_6 := \psi_3^{-2} f_4$, and then $t^{f_6(0)}$ through $\pi_7$ giving  $f_7 := \psi_3^{-3} f_4$.  
\item Finally, let $g:= f_7$.  We have that $w = t  v \in H_3 t^{g(0)}$.  So use $\alg{Psi}$ to check that $g(0)=0$.   On success, declare  that $w\in H_3$. 
\end{enumerate} 
\end{Example}

In the example above $f_i(0) \leq 0$ for all $i$---we never looked to push a positive power of $t$ through a piece.  
 Next we will see an example of $\alg{Member}_k$ handling such a situation.
 
\begin{Example}
Let $w = t a_3 a_2 t^2 a_1\inv a_2^{-2} a_3\inv a_1^2 t\inv a_3\inv$.  We will show how $\alg{Member}_k$ discovers that $w \in H_3$.  

\begin{enumerate}
\renewcommand{\labelenumi}{\arabic{enumi}. }
\item Shuffle the $t^{\pm 1}$ in $w$ to the front, applying $\theta^{\pm 1}$ to letters they pass, so as to convert  $w$ to the word  $t^2 v$ representing the same element of $G_3$, where $v=a_3a_2^2a_1^2a_2\inv a_3\inv a_1^2 a_3\inv$. 
 Let $f=\psi_1^{-2}$ so that $f(0)=2$ records the power of $t$. 
\item Express $v$ as its the rank-$3$ decomposition  into pieces: $v=\pi_1\pi_2$ where $$\pi_1 := a_3a_2^2a_1^2 a_2\inv a_3\inv, \qquad \pi_2 := a_1^2a_3\inv.$$ 
 Set $f_0:=f$.  $\alg{Push}_3$ now looks for valid  $\psi$-words $f_1$ and $f_2$ such that $t^{f_{0}(0)} \pi_1  \in H_3 t^{f_1(0)}$ and $t^{f_{1}(0)} \pi_2  \in H_3 t^{f_2(0)}$, by twice calling its subroutine  $\alg{Piece}_3$.
 \item $\alg{Piece}_3$ calls $\alg{Front}_3$ to `try to move $t^{f_0(0)}$ past  $\pi_1$.'
 As $a_3$ is the first letter of $\pi_1$,  $\alg{Front}_3$ calls $\alg{Psi}$ to   determine the sign of $f_0(0)$, which is positive.   The Piece Criterion then says that to pass $t^{2}$ past $a_3$ requires that $\pi_1$ has a prefix  $\theta^{i-1}(a_3)a_{2}$ for some $i$ which is `approximately' $\theta^2(a_3) = a_3a_2^2a_1$. The subroutine $\alg{Prefix}_3$ looks for this prefix by generating $\theta^{0}(a_3)a_{2} = a_3 a_2$, then $\theta^{1}(a_3)a_{2}=a_3 a_2^2$, then $\theta^{2}(a_3)a_{2} = a_3a_2^2a_1a_2$, and so on, until the length of $\pi$ is exceeded, and comparing with the start of $\pi_1$.  
Here, $a_3 a_2$ and $a_3 a_2^2$ are prefixes of $\pi_1$, but $a_3a_2^2a_1a_2$ is not, and $\alg{Prefix}_3$ returns $i=2$.
\item Call $\alg{Psi}$ to check that $i$ is at least $f_0(0)=2$. 
\item Intuitively speaking, as this prefix $a_3 a_2^2$ is `approximately' $\theta^2(a_3)$,   the length of the `correction' $a_1 a_1\inv$ that has to be made for the discrepancy between $\theta^2(a_3)$ and the prefix $a_3 a_2^2$ is minimal compared to the length of the prefix that the power of $t$ advances  past.  In this instance:
 $$t^2 \pi_1 \ = \ t^2  \theta^2(a_3)  a_1 a_1\inv    a_1a_2\inv a_3\inv  \ = \  (a_3 t) t   a_1a_2\inv a_3\inv.$$ \label{prefix trick}
and have reduced the problem  to pushing $t$ past $a_1a_2\inv a_3\inv$. The power of $t$ being advanced through the word is now $t^1$, and this is recorded by 
$\psi_1 f_0$, as $\psi_1 f_0(0)=1$.
\item Next $\alg{Piece}_3$ calls $\alg{Back}_3$ on input $a_1a_2\inv a_3\inv$ and $\psi_1 f$  to try to advance  $t$ past $a_1a_2\inv a_3\inv$.  

\item First, it searches for an $s\le 0$ such that $t a_1a_2\inv a_3\inv \in H_k t^s$.  It calls $\alg{Push}_2$, which calls $\alg{Piece}_2$ to attempt to push $t$  through   $a_1a_2\inv$.   $\alg{Piece}_2$ calls $\Psi$ to find out whether $\psi_2\inv\psi_1\psi_1f$ is  valid.
It is not, and it follows from the Piece Criterion that there is no $s \leq 0$ such that $t a_1a_2\inv a_3\inv \in H_k t^s$. 
\item So, instead $\alg{Piece}_3$   searches for an $s > 0$ such that $t a_1a_2\inv a_3\inv \in H_k t^s$ or, equivalently, 
 $t^s a_3 a_2 a_1\inv \in H_3 t$. 
\item  We check for $s=1,2,\ldots$  whether we can move $t^s$ past  $a_3a_2a_1\inv$. Use the same approach that we used for the prefix in Step~\ref{prefix trick}.    First try $s=1$.  Detect the prefix $a_3a_2$ of $a_3a_2a_1\inv$ and as, $ta_3a_2 = t\theta(a_3) = (a_3 t)\in H_3$,   the problem reduces to determining whether 
$t^0 a_1\inv \in H_3t$ or, equivalently,   $t a_1 \in H_3 t^0$.  This shown to be the case by $\alg{Push}_2$ which finds that $t  a_1 = (a_1 t) \in H_3$ and returns  $\psi_1\psi_1 f$, which satisfies $\psi_1\psi_1 f(0)=0$, to indicate the coset $H_3t^0$ of $H_3$.  Finally, $\alg{Back}_3$ checks that 
$H_3t^0 = H_3 t^{\psi_1 \psi_1 f_0}$ by calling $\alg{psi}$ on  $\psi_1^{0} \psi_1 \psi_1 f_0(0)=0$, and returns $f_1: = \psi_1\inv \psi_1^2 f_0$ (which satisfies $f_1(0) =1$) to indicate that  $\pi_1\in H_3t^{f_1(0)}$.

(In this instance, we were successful with $s=1$, but in general, we may have to repeat the process   for $s =2, 3, \ldots$.  This does not continue indefinitely: we can stop when $s$ exceeds the length of of the word inputted into $\alg{Back}_3$ because the prefixes we check for must be no longer than that word.)

\item We now seek to pass $t^{f_1(0)}$ through $\pi_2$ by another call on $\alg{Piece}_3$. Recall $\pi_2=a_1^2a_3\inv$ and $f_1: = \psi_1\inv \psi_1^2 f_0$, and $f_1(0) =1$.

\item $\alg{Piece}_3$ first calls $\alg{Front}_3$ but the first letter of $\pi_2$ is not $a_3$, so $\alg{Front}_3$ does nothing.  

\item $\alg{Piece}_3$ then calls $\alg{Back}_3$.  It first looks  for $s \leq 0$ such that $t^{f_1(0)} \pi_2\in H_kt^s$, which it succeeds in finding as follows. 
\begin{enumerate}
\item $\alg{Push}_2$ tries to pass $t^{f_1(0)}$ through $a_1^2$, which is elementary  since $a_1$ commutes with $t$: $ta_1^2   =    (a_1 t)(  a_1t)    t^{-1}$ and so  $\alg{Push}_2$ returns $\psi_1^2 f_1$, representing $\psi_1^2 f_1(0)= -1$. 
\item Call $\alg{Psi}$ to check that $\psi_1^2f_1$ is valid.  Then  to pass $t$ through $a_3\inv$,  call $\alg{Psi}$ to check that $\psi_3\inv \psi_1^2 f_1$ is valid.   Return $f_2:=\psi_3\inv \psi_1^2 f_1$ to indicate that  $t^{f_1(0)} \pi_2\in H_3 t^{f_2(0)}$.
\end{enumerate}
\item $\alg{Member}_3$  checks that $f_2(0)=0$ and  declares that $w\in H_3.$
\end{enumerate}
\end{Example}
These   examples illustrate the tests  $\alg{Member}_k$ uses and give  a  sense of how it works in general.  But, it is difficult to show that these tests amount to  the \emph{only} conditions under which a word $t^r v$ is in $Ht^s$ for some $s \in \Z$.  A result we call the `Piece Criterion' is at the heart of that and presentation and  proof of is involved and will occupy the next two sections.

\subsection{Constraining cancellation} \label{Constraining cancellation}

This section contains   preliminaries toward Proposition~\ref{Prop: Piece Criterion}  (The Piece Criterion), which will be  the subject of the next section.  

When discussing words representing elements of $F(a_1, \ldots, a_m)$, we use $\theta^r(a_m^{\pm 1})$, for $m \geq 1$ and $r\in \Z$, to refer to the freely reduced word on $a_1,\ldots, a_m$ equal to $\theta^r(a_m^{\pm 1})$. The following lemma will be useful for calculating with iterations of $\theta$.
\begin{lemma}\label{Lem: Theta breakdown}
If $r>0$ and $m>1$, then
\begin{equation}
 \theta^r(a_m) \ = \ a_m \theta^0(a_{m-1})\theta^1(a_{m-1}) \theta^2(a_{m-1}) \cdots \theta^{r-1}(a_{m-1})  \label{expand theta1}
 \end{equation}
as words.  Moreover, if $r < m$, then the final letter of $\theta^r(a_m)$ is $a_{m-r}$, and if $r \geq m$, then $\theta^{r-m+1}(a_1)=a_1$, $\theta^{r-m+2}(a_{2})$, \ldots, $\theta^{r-1}(a_{m-1})$ are all suffixes of $\theta^r(a_m)$.

If $r< 0$ and $m>1$, then 
\begin{equation}
\theta^r(a_m)  \ = \  a_m \theta\inv(a_{m-1}\inv)\theta^{-2}(a_{m-1}\inv) \cdots \theta^{r}(a_{m-1}\inv), \label{expand theta3}
\end{equation}
as words, and its first letter is $a_m$ and its final letter  is   $a_{m-1}\inv$.
\end{lemma}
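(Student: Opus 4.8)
The plan is to prove the three assertions by separate inductions, all resting on the defining relations $\theta(a_1)=a_1$ and $\theta(a_i)=a_ia_{i-1}$ for $i>1$, together with the fact that $\theta$ is a homomorphism, so that $\theta^r$ distributes over concatenation.

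For \eqref{expand theta1} I would first record the elementary fact that $\theta^r(a_m)$ is a \emph{positive} word (contains no inverse letters) for every $r\ge 0$ and every $m$: $\theta$ sends each generator to a positive word, hence sends positive words to positive words, so this follows by induction on $r$. I then prove \eqref{expand theta1} by induction on $r\ge 1$. The base case $r=1$ is just $\theta(a_m)=a_ma_{m-1}=a_m\theta^0(a_{m-1})$. For the inductive step, apply $\theta$ to $\theta^r(a_m)=a_m\theta^0(a_{m-1})\cdots\theta^{r-1}(a_{m-1})$; since $\theta$ is a homomorphism and $\theta(a_m)=a_m\theta^0(a_{m-1})$, this yields the formula for $r+1$. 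Because every factor on the right-hand side is a positive word, the concatenation is automatically freely reduced, so the identity holds \emph{as words}.

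For the statements on suffixes I argue directly from \eqref{expand theta1}. When $0\le r<m$, an induction on $r$ gives that the last letter of $\theta^r(a_m)$ is $a_{m-r}$: the case $r=0$ is immediate, and for $r\ge 1$ the last letter of $\theta^r(a_m)$ equals the last letter of $\theta^{r-1}(a_{m-1})$ by \eqref{expand theta1}, which is $a_{(m-1)-(r-1)}=a_{m-r}$ by the inductive hypothesis (valid since $0\le r-1<m-1$, hence $m\ge 2$). When $r\ge m$, I iterate \eqref{expand theta1}: its last block shows $\theta^{r-1}(a_{m-1})$ is a suffix of $\theta^r(a_m)$; applying \eqref{expand theta1} again to $\theta^{r-1}(a_{m-1})$ shows $\theta^{r-2}(a_{m-2})$ is a suffix of it, hence of $\theta^r(a_m)$; after $m-1$ steps this produces the suffixes $\theta^{r-1}(a_{m-1}),\theta^{r-2}(a_{m-2}),\dots,\theta^{r-m+1}(a_1)$, and $\theta^{r-m+1}(a_1)=a_1$ since $\theta$ fixes $a_1$. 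At each step the exponent stays positive and the subscript stays above $1$, so \eqref{expand theta1} applies, because the step index $i$ satisfies $1\le i\le m-1<m\le r$.

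For \eqref{expand theta3} with $r<0$, I start from the identity $\theta^{-1}(a_m)=a_m\theta^{-1}(a_{m-1}^{-1})$, obtained by inverting $\theta(a_m)=a_ma_{m-1}$ (already used in the proof of Lemma~\ref{Lem: automorphism growth}). The displayed identity in $F(a_1,\dots,a_m)$ then follows by induction on $n=-r\ge 1$, verbatim as in the positive case but with this relation playing the role of $\theta(a_m)=a_ma_{m-1}$. The hard part is the claim that \eqref{expand theta3} holds \emph{as words}, since positivity is no longer available; I would handle it by a simultaneous induction on the rank $m$, proving that $\theta^{-n}(a_m)$ begins with $a_m$ and, for $m\ge 2$, ends with $a_{m-1}^{-1}$ (the rank-$1$ case being $\theta^{-n}(a_1)=a_1$). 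Granting this for rank $m-1$: each factor $\theta^{-j}(a_{m-1}^{-1})=(\theta^{-j}(a_{m-1}))^{-1}$ is a reduced word whose first letter is the inverse of the last letter of $\theta^{-j}(a_{m-1})$ and whose last letter is $a_{m-1}^{-1}$; a short check of consecutive factors, and of the leading $a_m$ against the first factor, shows these meeting letters are never mutually inverse, so the concatenation in \eqref{expand theta3} is already reduced. Reading off its extreme letters then gives that $\theta^r(a_m)$ begins with $a_m$ and ends with $a_{m-1}^{-1}$, completing the proof.
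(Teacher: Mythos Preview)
Your proof is correct and follows essentially the same route as the paper's: induction on $r$ for \eqref{expand theta1} using positivity to conclude word equality, the same descent on rank for the suffix/final-letter claims, and induction on $m$ for \eqref{expand theta3} using the first/last-letter information from the previous rank to verify that the concatenation is reduced. The one notable difference is that for the free-group identity in \eqref{expand theta3} the paper simply cites Lemma~7.1 of \cite{DR}, whereas you derive it directly from $\theta^{-1}(a_m)=a_m\theta^{-1}(a_{m-1}^{-1})$ by induction on $-r$; your version is more self-contained.
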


\begin{proof}
For \eqref{expand theta1}, observe that the identity $\theta^r(a_m) = \theta^{r-1}(a_m)\theta^{r-1}(a_{m-1})$  and   inducting on $r$ gives that the words are equal in the free group.  The words are identical because that on the right is positive (that is, contains no inverse letters) and so is freely reduced.
If $r<m$, the same identity shows that the final letter of $\theta^r(a_m)$, is the same as that of $\theta^{r-1}(a_{m-1})$, and so   the same as that of $\theta^{r-2}(a_{m-2})$, \ldots, and of $\theta^{r-r}(a_{m-r})=a_{m-r}$.  If, on the other hand, $r \geq m$, then \eqref{expand theta1} shows that $\theta^{r-1}(a_{m-1})$ is a suffix of $\theta^r(a_m)$, and therefore, so are  
 $\theta^{r-2}(a_{m-2})$, $\theta^{r-3}(a_{m-3})$, \ldots,   $\theta^{r-m+1}(a_{1})$.

   Lemma 7.1 in \cite{DR} tells us that the two words in \eqref{expand theta3} are freely equal.  Induct on $m$ as follows to establish the remaining claims.  
In the case $m=2$ we have $$\theta^r(a_2) \ = \ a_2 \theta^{-1}(a_1^{-1})  \theta^{-2}(a_1^{-1})  \cdots \theta^{r}(a_1^{-1}) \ = \ a_2 a_1^{r},$$ and the result holds. For $m>2$,  the induction hypothesis tells us that the first letter of each subword $\theta^{-i}(a_{m-1}^{-1})$ is    $a_{m-2}$ and  the final letter is $a_{m-1}\inv$, and it follows that the  word on the right of  \eqref{expand theta3}    is freely reduced.  It is then  evident that its first letter is $a_m$ and its final letter  is   $a_{m-1}\inv$.
\end{proof}

The remainder of this section concerns words $w$ expressed as 
$$w \ = \  \theta^{e_0}(a_{i_0}^{\epsilon_0})  \theta^{e_1}(a_{i_1}^{\epsilon_1})  \cdots   \theta^{e_{l+1}}(a_{i_{l+1}}^{\epsilon_{l+1}})$$
where   $\epsilon_{x}\in\{\pm 1\}$ for  $x=0, \ldots, l+1$,  and $a^{\epsilon_x}_{i_x} \neq a^{-\epsilon_{x+1}}_{i_{x+1}}$  and
\begin{equation} e_{x+1} = \begin{cases}
e_x & \text{if } \epsilon_x = -\epsilon_{x+1} \\
e_x-1 & \text{if } \epsilon_x = \epsilon_{x+1} = 1 \\
e_x+1 & \text{if } \epsilon_x = \epsilon_{x+1} = -1 \\
\end{cases} \label{condition on e_i}
\end{equation} 
for  $x= 0, \ldots, l$.  We refer to the $a_{i_0}^{\epsilon_0},  \ldots, a_{i_{l+1}}^{\epsilon_{l+1}}$ in the subwords $\theta^{e_0}(a_{i_0}^{\epsilon_0})$,  $\theta^{e_1}(a_{i_1}^{\epsilon_1})$,  \ldots,   $\theta^{e_{l+1}}(a_{i_{l+1}}^{\epsilon_{l+1}})$ of $w$ as the \emph{principal letters} of $w$. 

\begin{lemma}\label{Lem: H cancellation}
 If $w$ (as above) freely equals the empty word, then   $a_{i_x} = a_{i_{x+1}}$ and $\epsilon_{i_x} = -\epsilon_{x+1}$ for some $0 \leq x < l+1$.
\end{lemma}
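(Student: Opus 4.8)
The plan is to prove the contrapositive: assuming that no two consecutive principal letters of $w$ cancel --- that is, for every $x$ it is \emph{not} the case that both $a_{i_x}=a_{i_{x+1}}$ and $\epsilon_x=-\epsilon_{x+1}$ --- I will show that $w$ is not freely trivial. The argument goes by induction on the pair $(M,l)$ ordered lexicographically, where $M:=\max_x i_x$ and $l+2$ is the number of principal letters. The base case $M=1$ is immediate: then every $i_x=1$, so $\theta^{e_x}(a_1^{\epsilon_x})=a_1^{\epsilon_x}$, the no-cancellation hypothesis forces all the $\epsilon_x$ to be equal, and $w=a_1^{\pm(l+2)}$ is a non-empty freely reduced word.

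For the inductive step the key structural fact is that $\theta^e(a_m)=a_m g$ for some freely reduced word $g$ on $a_1,\dots,a_{m-1}$: this is trivial for $e=0$ and follows from Lemma~\ref{Lem: Theta breakdown} for $e\neq0$. Consequently each block $\theta^{e_x}(a_{i_x}^{\epsilon_x})$ contains \emph{exactly one} occurrence of $a_{i_x}^{\pm1}$, namely $a_{i_x}^{\epsilon_x}$, at the front of the block if $\epsilon_x=+1$ and at the back if $\epsilon_x=-1$. In particular, writing $J:=\{x:i_x=M\}$, the word $w$ has exactly $|J|$ occurrences of $a_M^{\pm1}$ (one per block of $J$) and every block outside $J$ has all indices $<M$. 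If $|J|\le1$ that single (or absent) $a_M^{\pm1}$ cannot be cancelled, so $w$ is not freely trivial and we are done; so suppose $|J|\ge2$ and, for contradiction, that $w$ is freely trivial. Then the occurrences of $a_M^{\pm1}$ must cancel off in pairs, and, as always under free reduction, in a non-crossing fashion; pick a matched pair arising from blocks $x<y$ with $y-x$ minimal. Minimality and non-crossing force $x,y$ to be consecutive in $J$ (no $a_M^{\pm1}$ lies between them in $w$) and $\epsilon_x=-\epsilon_y$ (one of the two letters is $a_M$, the other $a_M^{-1}$); moreover the subword of $w$ lying strictly between these two letters must freely reduce to the empty word.

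Three cases arise. If $y=x+1$, then $a_{i_x}=a_{i_{x+1}}=a_M$ and $\epsilon_x=-\epsilon_{x+1}$: this is exactly the conclusion of the lemma, contradicting our assumption. If $y>x+1$ and $\epsilon_x=-1$, $\epsilon_y=+1$, then $a_M^{-1}$ is the last letter of block $x$ and $a_M$ is the first letter of block $y$, so the subword between them is precisely the concatenation of blocks $x+1,\dots,y-1$; this is a non-empty word of the same shape --- the sign and $e$-conditions are inherited --- with all indices $<M$, so the inductive hypothesis supplies two consecutive cancelling principal letters, which are consecutive in $w$: contradiction. If $y>x+1$ and $\epsilon_x=+1$, $\epsilon_y=-1$, write $\theta^{e_x}(a_M)=a_M v_x$ and $\theta^{e_y}(a_M)=a_M v_y$; the subword between the two letters is $v_x\cdot(\text{blocks }x{+}1,\dots,y{-}1)\cdot v_y^{-1}$, and since re-inserting the cancelling $a_M,a_M^{-1}$ changes nothing freely, the subword $w'$ consisting of blocks $x,\dots,y$ is itself freely trivial and of the required shape; if $w'$ is a proper subword of $w$ (i.e.\ $(x,y)\neq(0,l+1)$) it has the same maximal index but strictly fewer principal letters, so the inductive hypothesis applies and yields the contradiction.

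The one remaining case --- $(x,y)=(0,l+1)$ with $\epsilon_0=+1$, $\epsilon_{l+1}=-1$, whence $J=\{0,l+1\}$ --- is the main obstacle, since here the internal word $u:=\theta^{e_1}(a_{i_1}^{\epsilon_1})\cdots\theta^{e_l}(a_{i_l}^{\epsilon_l})$ (all indices $\le M-1$) is freely equal not to the empty word but to $v_0^{-1}v_{l+1}$, a definite freely reduced word. To handle it I would first normalise $e_0=0$ by applying the automorphism $\theta^{-e_0}$, which preserves every hypothesis; then $u$ is freely equal to $v_{l+1}$, the part of $\theta^{e_{l+1}}(a_M)$ after its leading $a_M$, which Lemma~\ref{Lem: Theta breakdown} describes explicitly (it is $\theta^0(a_{M-1})\theta^1(a_{M-1})\cdots\theta^{e_{l+1}-1}(a_{M-1})$ when $e_{l+1}>0$, an analogous product of $\theta$-powers of $a_{M-1}^{-1}$ when $e_{l+1}<0$, and empty when $e_{l+1}=0$ --- in the last case $u$ is freely trivial with indices $<M$ and the inductive hypothesis finishes). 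When $e_{l+1}\neq0$ one compares the occurrences of $a_{M-1}^{\pm1}$ in $u$ with those forced by $v_{l+1}$: each index-$(M-1)$ block of $u$ contributes exactly one such letter, the surplus cancels off non-crossingly, and an innermost cancelling pair of these letters is either adjacent (giving the conclusion), or encloses a proper freely trivial subword (finishing by the inductive hypothesis), or would have to leave $v_{l+1}$ free of any $a_{M-1}^{\pm1}$, contradicting that $v_{l+1}$ contains one. Making this last comparison airtight --- tracking precisely how the rigid target word $v_{l+1}$ constrains the internal cancellations of the blocks of $u$ --- is the delicate part; everything else is the bookkeeping with Lemma~\ref{Lem: Theta breakdown} indicated above.
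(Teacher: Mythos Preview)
Your approach is genuinely different from the paper's, and the paper's route is far shorter. The paper does not attempt a combinatorial induction at all: the whole point of the recursion \eqref{condition on e_i} is that it is exactly what you get when you push all the $t^{\pm1}$ to the right in
\[
t^{-e_0}(a_{i_0}t)^{\epsilon_0}(a_{i_1}t)^{\epsilon_1}\cdots(a_{i_{l+1}}t)^{\epsilon_{l+1}}
\]
(with a shift by one when $\epsilon_0=-1$), discarding the emerging power of~$t$. So $w=1$ in the free group means $(a_{i_0}t)^{\epsilon_0}\cdots(a_{i_{l+1}}t)^{\epsilon_{l+1}}$ lies in $H_k\cap\langle t\rangle=\{1\}$ (Lemma~6.1 of \cite{DR}); since $H_k$ is free on $a_1t,\dots,a_kt$ (Proposition~4.1 of \cite{DR}), that word must contain a cancelling pair $(a_{i_x}t)^{\epsilon_x}(a_{i_{x+1}}t)^{-\epsilon_x}$, which is the conclusion. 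Three lines, and the $e_i$-condition is used in one stroke rather than carried through an induction.

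Your inductive argument is correct and cleanly organised up to the residual case $(x,y)=(0,l+1)$ with $\epsilon_0=+1$, $\epsilon_{l+1}=-1$, but that case is a genuine gap, not just bookkeeping. Your trichotomy there assumes an innermost \emph{cancelling} pair of $a_{M-1}^{\pm1}$'s in $u$ exists; it does not address the possibility that \emph{none} of the $a_{M-1}^{\pm1}$ in $u$ cancel internally and instead all survive to match the $a_{M-1}^{\pm1}$'s of $v_{l+1}$ one-for-one. In that situation you are forced to compare, segment by segment, the rank-$<M{-}1$ filler in $u$ between consecutive index-$(M{-}1)$ blocks against the very specific fillers dictated by $v_{l+1}$ (the tails of $\theta^j(a_{M-1})$), and this only comes out right because of the numerical constraint \eqref{condition on e_i} --- not merely because it is ``inherited'', but because it pins down the exponents $e_j$ in a way that rules out spurious matches. (For a concrete illustration with $M=3$, $e_0=0$, $e_{l+1}=1$: one can arrange $u$ to freely equal $a_2$ with no internal $a_2$-cancellation, and what prevents this from surviving the no-adjacent-cancellation hypothesis is a contradiction between the value of $e_{l+1}$ forced by \eqref{condition on e_i} and the value $1$ you started with.) Carrying this through in general is tantamount to re-proving that $H_k$ is free on the $a_it$ by bare hands --- which is exactly the fact the paper imports.
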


\begin{proof}
The point of the hypotheses is that  $w$ is  the word obtained by shuffling all $t^{\pm 1}$
rightwards in 
$$\begin{cases}
 t^{-e_0} (a_{i_0}t)^{\epsilon_0}\cdots (a_{i_{l+1} }t)^{\epsilon_{j+1}} & \text{if }\epsilon_0=1 \\
t^{-e_0+1} (a_{i_0}t)^{\epsilon_0}\cdots (a_{i_{l+1} }t)^{\epsilon_n} & \text{if }\epsilon_0=-1, \end{cases}$$
and then discarding the power of $t$  that emerges on the right.  

Now $(a_{i_0}t)^{\epsilon_1}\cdots (a_{i_{l+1} }t)^{\epsilon_{l+1}} = 1$ in $H_k$ because  $w =1$ in $G_k$ 
and   $H_k \cap \langle t \rangle = \set{1}$ (Lemma~6.1   in \cite{DR}).  The result then follows from the fact that  $H_k$ is free on $a_1t, \ldots, a_kt$  (Proposition~4.1 in \cite{DR}).  
\end{proof}

The following definition and Proposition~\ref{types prop} concerning it are for analyzing free reduction of  $w$.  They will be used in our proof of Proposition~\ref{Prop: set prefix}, where we will subdivide a word such as $w$ into  subwords of certain types and argue that all free reduction is contained within them.   There are two ideas behind the definitions of these types.  One is that the rank-1 and rank-2 letters are the most awkward for understanding free reduction, but in these subwords such letters are \emph{controlled} by being buttressed by higher rank words.  The other idea concerns where new letters appear when $\theta^{\pm 1}$ is applied to some $a^{\pm 1}_n$.  It is evident from the definition of $\theta$ that when $i \geq 0$,  the lower rank letters produced by applying $\theta^i$ to $a_n$ or $a_n\inv$ appear to the right of $a_n$ and to the left of $a_n\inv$.  The same is true when $i <0$ --- see Lemma 7.1 of \cite{DR}.

\begin{defn} \label{five types}
  We will define various \emph{types} a subword   $$z \ = \ \theta^{e_p}(a^{\epsilon_p}_{i_p})   \cdots \theta^{e_q}(a^{\epsilon_q}_{i_q})$$ of $w$ may take, and will denote the freely reduced form of $z$ by $z'$.  To the left, below, are the conditions that define the types.  To the right are facts established in the proposition that follows:   what $z'$  is in cases~ii and ii${}^{\inv}$,  and   prefixes and suffixes  it has in  cases~\emph{i}--\emph{iv}.  When it appears below, $u$ denotes a (possibly empty) subword $\theta^{e_x}(a^{\epsilon_x}_{i_x})  \cdots \theta^{e_y}(a^{\epsilon_y}_{i_y})$ such that $i_x, \ldots, i_y \leq 2$.     

$\begin{array}{rlll}
\text{(\emph{i})} & \parbox{46mm}{$\epsilon_p =1, \  \  \epsilon_q =-1$}  &   z & \hspace*{-3mm}  =  \theta^{e_p}(a_{i_p}) u \theta^{e_q}(a_{i_q}\inv) \\
\rule{8mm}{0mm} & i_p,i_q\ge 3, \  \ i_{p+1}, \ldots, i_{q-1} \leq 2 &  z' & \hspace*{-3mm} =  \theta^{e_{p}-1}(a_{i_p})  \rule[1.5pt]{25pt}{0.5pt} \, a_{i_q}\inv   \ \  \textup{if } e_p >0   \\   
&  e_p,e_q\ge 0  &  &  \hspace*{-3mm}  =  a_{i_p} \rule[1.5pt]{25pt}{0.5pt} \, a_{i_q}\inv  \ \  \textup{for } e_p \geq 0  \\
\end{array}$ 

$\begin{array}{rlll}
\text{(\emph{ii})} &  \parbox{46mm}{$\epsilon_p, \ldots, \epsilon_q = 1$} &  z & \hspace*{-3mm}  = \theta^{e_p}(a_{i_p}) \cdots \theta^{e_q}(a_{i_q})   \\
\rule{8mm}{0mm} 
&  i_p \ge 3, \, i_q\ge 2      &  z' & \hspace*{-3mm} =     \theta^{e_p+1}(a_{i_p}) \theta^{e_q}(a_{i_q -1}\inv) \\ 
&   i_j = i_{j+1} + 1 \text{ for } j=p, \ldots, q-1     &  &  \hspace*{-3mm}  =  a_{i_p} \rule[1.5pt]{25pt}{0.5pt} \, a_{i_q -1}\inv  \\ 
& e_p <0    \\ 
&  (\text{so } e_{p+1}, \ldots, e_{q} <0  \text{ by \eqref{condition on e_i}})  & \\
\end{array}$ 

$\begin{array}{rlll}
\text{(\emph{ii}${}^{-1}$)} &  \parbox{46mm}{$\epsilon_p, \ldots, \epsilon_q = -1$} & z & \hspace*{-3mm}  = \theta^{e_p}(a_{i_p}\inv) \cdots \theta^{e_q}(a_{i_q}\inv) \\
\rule{8mm}{0mm} &   i_q\ge 3,\, i_p\ge 2   &  z' & \hspace*{-3mm} =  \theta^{e_p}(a_{i_p -1}) \theta^{e_q+1}(a_{i_q}\inv)  \\ 
\rule{8mm}{0mm} &  i_{j} = i_{j-1} + 1 \text{ for } j=p+1, \ldots, q  & & \hspace*{-3mm}  =  a_{i_p-1} \rule[1.5pt]{25pt}{0.5pt} \, a_{i_q}\inv  \\ 
& e_q <0  \\  
& (\text{so } e_{p}, \ldots, e_{q-1} <0  \text{ by \eqref{condition on e_i}})   & \\
\end{array}$

$\begin{array}{rlll}
\text{(\emph{iii})} &  \parbox{46mm}{$p <    q' \leq q $}  &  z & \hspace*{-3mm} = \theta^{e_p}(a_{i_p}) u \theta^{e_{q'}}(a_{i_{q'}}\inv) \cdots\theta^{e_q}(a_{i_q}\inv)   \\
\rule{8mm}{0mm} & \epsilon_p =1, \ \ \epsilon_{q'}, \ldots, \epsilon_q = -1   &  z' &  \hspace*{-3mm} = \theta^{e_{p}-1}(a_{i_p}) \rule[1.5pt]{25pt}{0.5pt} \, a_{i_q}\inv   \ \  \textup{if } e_p > 0   \\ 
& i_{p}, i_{q'}, \ldots, i_{q} \geq 3, & &   \hspace*{-3mm}  =  a_{i_p} \rule[1.5pt]{25pt}{0.5pt} \, a_{i_q}\inv  \ \  \textup{for } e_p \geq 0 \\ 
&    i_{p+1}, \ldots, i_{q'-1} < 3    \\
&  i_j = i_{j-1} + 1 \text{ for } j=q'+1,\ldots,q    & \\
&  e_p \geq 0, \ \ e_q <0  \ \  \\ &  (\text{so } e_{q'}, \ldots, e_{q-1} <0  \text{ by \eqref{condition on e_i}})  & \\
\end{array}$ 

$\begin{array}{rlll}
\text{(\emph{iii}${}^{-1}$)} &  \parbox{46mm}{$ p \leq    p' < q $} &  z & \hspace*{-3mm} = \theta^{e_{p}}(a_{i_{p}}) \cdots\theta^{e_{p'}}(a_{i_{p'}})  u \theta^{e_q}(a_{i_q}\inv)  \\
\rule{8mm}{0mm} &    \epsilon_{p}, \ldots, \epsilon_{p'} =-1, \ \  \epsilon_q =1   &  z' & \hspace*{-3mm} =   a_{i_p} \rule[1.5pt]{25pt}{0.5pt} \, a_{i_q}\inv \\  
& i_{p}, \ldots, i_{p'}, i_q \ge 3  & &     \\
&  i_j = i_{j+1} + 1 \text{ for } j=p,\ldots,p'-1     & \\
&  e_p < 0, \ \ e_q  \geq 0    \ \ \\  
& (\text{so } e_{p+1}, \ldots, e_{p'} <0 \text{ by \eqref{condition on e_i}})  & \\ 
\end{array}$ 

$\begin{array}{rlll}
\text{(\emph{iv})} &  \parbox{46mm}{$ p \leq    p' < q' \leq q  $} & z & \hspace*{-3mm} =  \theta^{e_p}(a_{i_p})\cdots\theta^{e_{p'}}(a_{i_{p'}})  u \theta^{e_{q'}}(a_{i_{q'}}\inv) \cdots \theta^{e_q}(a_{i_q}\inv)  \\
\rule{8mm}{0mm} & \epsilon_{p}, \ldots, \epsilon_{p'} =1, \ \ \epsilon_{q'}, \ldots, \epsilon_q = -1  &  z' & \hspace*{-3mm} =  a_{i_p} \rule[1.5pt]{25pt}{0.5pt} \, a_{i_q}\inv    \\ 
&  i_p, \ldots, i_{p'},i_{q'}, \ldots, i_q \ge 3 & \\
&     i_{p'+1}, \ldots, i_{q'-1} < 3 & \\
&  {i_j = i_{j+1} + 1 \text{ for } j=p,\ldots,p'-1  }   & \\
&   {i_j = i_{j-1} + 1 \text{ for } j=q'+1,\ldots,q  }  & \\
&  e_p,  e_q < 0  \ \    & \\
&  (\text{so } e_{p+1}, \ldots, e_{p'} <0  \\
&  \text{and } e_{q'}, \ldots, e_{q-1}  <0 \text{ by \eqref{condition on e_i}})  & \\
\end{array}$

$\begin{array}{rlll}
\text{(\emph{v})} &  \parbox{46mm}{\text{For no } $0 \leq p' <  q' \leq l+1$ } &  z & \hspace*{-3mm} = \theta^{e_{p}}(a_{i_p}^{\epsilon_{p}}) \cdots \theta^{e_{q}}(a_{i_q}^{\epsilon_{q}}) \\
\rule{8mm}{0mm} &  \text{with } p \leq q' \leq q   &  z' & \hspace*{-3mm} =   \theta^{e_{p}-1}(a_{i_p}) \rule[1.5pt]{25pt}{0.5pt}   \ \  \text{ if }  \epsilon_p=1, i_p\ge 3 \text{ and } e_p >0     \\
\rule{8mm}{0mm} &  \text{is }   \theta^{e_{p'}}(a_{i_{p'}}^{\epsilon_{p'}}) \cdots \theta^{e_{q'}}(a_{i_{q'}}^{\epsilon_{q'}}) \\   
\rule{8mm}{0mm} & \text{one of the above types.} & \\ 

\end{array}$
\end{defn}

\begin{prop} \label{types prop} 
In types \textit{i}, \textit{ii}${}^{\pm1}$, \textit{iii}${}^{\pm1}$, \textit{iv} and \textit{v}  the form of $z'$ is as indicated in Definition~\ref{five types}.  In type \textit{v},    no letter of rank $3$ or higher in $z$ cancels away  on free reduction to $z'$.
\end{prop}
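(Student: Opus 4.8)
The plan is to reduce everything to three elementary building blocks and then argue by peeling principal letters off the ends of $z$. \textbf{(B1) End letters:} from Lemma~\ref{Lem: Theta breakdown}, for $e\ge 0$ and $m\ge 2$ the word $\theta^e(a_m)$ begins with $a_m$ and, when $0\le e<m$, ends with $a_{m-e}$; for $e<0$ and $m\ge 2$ it begins with $a_m$ and ends with $a_{m-1}\inv$; inverting gives the corresponding statements for $a_m\inv$. \textbf{(B2) One-sidedness:} by the definition of $\theta$ and Lemma~7.1 of \cite{DR} (the fact already invoked just before Definition~\ref{five types}), inside $\theta^e(a_n^{\epsilon})$ every letter other than the displayed principal letter $a_n^{\epsilon}$ has rank $<n$, and all such letters lie to the right of $a_n$ when $\epsilon=1$ and to the left of $a_n\inv$ when $\epsilon=-1$. \textbf{(B3) Telescoping:} from $\theta^{e+1}(a_m)=\theta^{e}(a_m)\theta^{e}(a_{m-1})$ and $\theta^{e}(a_{m-1})=\theta^{e-1}(a_{m-1})\theta^{e-1}(a_{m-2})$ one obtains the free-group identity
\[ \theta^{e}(a_m)\,\theta^{e-1}(a_{m-1})\cdots\theta^{e-j}(a_{m-j}) \ = \ \theta^{e+1}(a_m)\,\theta^{e-j}(a_{m-j-1})\inv \]
whenever $m-j\ge 1$, together with its mirror version for descending products of inverse letters.

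\textbf{Types ii and ii${}^{\,-1}$.} In type ii the recurrence \eqref{condition on e_i} (all $\epsilon_x=1$) forces $i_x=i_p-(x-p)$ and $e_x=e_p-(x-p)$, so $z$ is precisely the left-hand side of (B3) with $j=q-p$; hence $z=\theta^{e_p+1}(a_{i_p})\theta^{e_q}(a_{i_q-1}\inv)$ in the free group. To see this is already freely reduced, apply (B1): since $e_p<0$ the last letter of $\theta^{e_p+1}(a_{i_p})$ is the positive letter $a_{i_p-e_p-1}$, of rank $i_p-e_p-1=i_q-e_q-1\ge 2$, while the first letter of $\theta^{e_q}(a_{i_q-1}\inv)$ is $a_{i_q-2}$ (if $i_q\ge 3$) or $a_1\inv$ (if $i_q=2$); neither is the inverse of $a_{i_p-e_p-1}$, so the concatenation stands. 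Thus $z'=\theta^{e_p+1}(a_{i_p})\theta^{e_q}(a_{i_q-1}\inv)$, beginning with $a_{i_p}$ and ending with $a_{i_q-1}\inv$, as stated; type ii${}^{-1}$ follows by inverting $z$, applying the above, and inverting back.

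\textbf{Types i, iii, iii${}^{\,-1}$, iv and the survival argument for type v.} Write the general (type iv) word as $z=L\,u\,R$ with $L=\theta^{e_p}(a_{i_p})\cdots\theta^{e_{p'}}(a_{i_{p'}})$ a maximal block of positive principal letters, $u$ the rank-$\le 2$ middle, and $R=\theta^{e_{q'}}(a_{i_{q'}}\inv)\cdots\theta^{e_q}(a_{i_q}\inv)$ a maximal block of inverse ones (types i, iii, iii${}^{-1}$ being the cases where $L$ and/or $R$ is a single factor), and reduce $L$ and $R$ internally exactly as in the type ii/ii${}^{-1}$ computation. What remains in \emph{all} of types i--iv is to show that $\theta^{e_p-1}(a_{i_p})$ (when $e_p>0$), respectively $a_{i_p}$ (when $e_p\ge 0$), survives as a prefix of $z'$, and symmetrically that $a_{i_q}\inv$ survives as a suffix; in type v the analogous prefix claim holds and, additionally, \emph{no} letter of rank $\ge 3$ cancels. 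The common mechanism: the front letter $a_{i_p}$ has rank $\ge 3$, so by (B2) it can only be removed by free reduction against some $a_{i_p}\inv$ further right; following the cancellation from the left as in the proof of Lemma~\ref{Lem: H cancellation}, such a cancellation would annihilate, up to the matching point, the word $(a_{i_p}t)(a_{i_{p+1}}t)^{\epsilon_{p+1}}\cdots$ in $H_k$, which is impossible since $H_k$ is free on $a_1t,\dots,a_kt$ (Proposition~4.1 of \cite{DR}) and consecutive principal letters do not cancel. Iterating this over the successive principal letters that $\theta^{e_p}(a_{i_p})$ contributes before its $\theta^{e_p-1}(a_{i_p-1})$-tail (identified via (B1)) yields the prefix claim, and its mirror the suffix claim. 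For type v one runs the contrapositive: if some rank-$\ge 3$ letter were cancelled, pick the leftmost such $\ell$, of rank $m\ge 3$, and its partner $\ell'$ to the right, also of rank $m$; by (B1)--(B2) these sit in the $\theta$-expansions of principal letters $a_{i_x}^{\epsilon_x}$, $a_{i_y}^{\epsilon_y}$ with $x<y$, $i_x,i_y\ge m$, and (B2) forces $\epsilon_x=1$, $\epsilon_y=-1$; then (B1) together with \eqref{condition on e_i} pins down the signs, indices and exponents of all intervening principal letters to be exactly a descending positive chain out of $a_{i_x}$, an ascending negative chain into $a_{i_y}\inv$, and only rank-$\le 2$ principal letters between — i.e.\ one of the patterns i, iii, iii${}^{-1}$, iv, contradicting type v.

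\textbf{Main obstacle.} The genuine work is the bookkeeping in the last paragraph: using Lemma~\ref{Lem: Theta breakdown} to say precisely which rank-$\ge 3$ letters each principal letter contributes and where, and then verifying that any matched pair of cancelling rank-$\ge 3$ letters, read together with the principal letters strictly between them and the exponent constraint \eqref{condition on e_i}, reconstitutes one of the six patterns of Definition~\ref{five types}. I expect the cleanest organization is an induction on the number of principal letters, peeling off one end with the telescoping identity (B3) and invoking the freeness of $H_k$ (as in Lemma~\ref{Lem: H cancellation}) to forbid cancellation of a high-rank principal letter; once that is set up, the verifications that the displayed $z'$ in types ii and ii${}^{-1}$ are freely reduced, and the prefix/suffix readings in the remaining types, are the routine end-letter checks of (B1).
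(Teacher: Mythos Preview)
Your telescoping computation for types \emph{ii} and \emph{ii}${}^{-1}$ is essentially the paper's argument, though your end-letter check has a slip: when $e_p<-1$, Lemma~\ref{Lem: Theta breakdown} gives that $\theta^{e_p+1}(a_{i_p})$ ends in $a_{i_p-1}\inv$, not the positive letter $a_{i_p-e_p-1}$. This is easily repaired.

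The real gap is in types \emph{i}, \emph{iii}${}^{\pm1}$, \emph{iv}. Your survival argument says: if the front letter $a_{i_p}$ cancels with some later $a_{i_p}\inv$, then the prefix of $z$ up to that point freely equals the empty word, and ``as in Lemma~\ref{Lem: H cancellation}'' this would force a reduction in the $(a_it)$-word in $H_k$. But Lemma~\ref{Lem: H cancellation} applies only to words that are products of \emph{whole} factors $\theta^{e_x}(a_{i_x}^{\epsilon_x})$ satisfying \eqref{condition on e_i}. The matching $a_{i_p}\inv$ typically sits \emph{inside} $\theta^{e_q}(a_{i_q}\inv)$ (or, in the harder types, inside one of several such factors), and the prefix ending there does not decompose into factors satisfying \eqref{condition on e_i}; the natural recursive decompositions of $\theta^{e_q}(a_{i_q}\inv)$ violate the exponent rule. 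So the freeness of $H_k$ gives you nothing here. The paper's proof cannot be shortcut this way: it tracks exactly which $a_2^{\pm1}$ and $a_3^{\pm1}$ could cancel through the rank-$\le 2$ middle $u$, and derives arithmetic obstructions (parity of $i_p$, relations like $i_p+6=2i_q$, the contradiction $\mu=(i_p-2)/2\notin\Z$ when $i_p=3$) case by case. The stronger prefix claim --- that $\theta^{e_p-1}(a_{i_p})$ survives, not just its first letter --- is finer still and depends on exactly this parity analysis; your ``iterating over successive principal letters'' does not address it, since in type \emph{i} there is only one principal letter on the left.

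For type \emph{v} you have the right skeleton (choose an innermost cancelling rank-$\ge 3$ pair and exhibit a forbidden type \emph{i}--\emph{iv} subword), but the assertion ``$(B2)$ forces $\epsilon_x=1$, $\epsilon_y=-1$'' is unjustified: the paper has to treat all four sign combinations $(\epsilon_{p'},\epsilon_{q'})$, and two of them are disposed of by separate short arguments rather than by recognising a type \emph{i}--\emph{iv} pattern. Your claim that the intervening principal letters form ``a descending positive chain out of $a_{i_x}$ and an ascending negative chain into $a_{i_y}\inv$'' is also off: once the pair is innermost, everything strictly between $\theta^{e_{p'}}(\cdot)$ and $\theta^{e_{q'}}(\cdot)$ has rank $\le 2$, and the chains (if any) extend \emph{outward}, which is what produces types \emph{iii}${}^{\pm1}$ and \emph{iv}.
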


\begin{proof}[Proof of Proposition~\ref{types prop} in type i]

We have $$z \ = \ \theta^{e_p}(a_{i_p}) u \theta^{e_q}(a_{i_q}\inv)$$ where $i_p,i_q \geq 3$, and $e_p,e_q \geq 0$, and $u$ is a subword of $w$ of rank at most $2$.   
By definition 
\begin{equation}
u \ = \ \theta^{e_{p+1}}(a^{\epsilon_{p+1}}_{i_{p+1}})   \cdots \theta^{e_{q-1}}(a^{\epsilon_{q-1}}_{i_{q-1}}), \label{u}
\end{equation}
and by Lemma~\ref{Lem: H cancellation}, no $a_2$ and $a_2\inv$ can cancel in the process of freely reducing $u$.    
We aim to show that the first and last letters of the freely reduced form $z'$ of $z$ are $a_{i_p}$ and $a_{i_q}\inv$, respectively, and that if  $e_p >0$, then  $\theta^{e_{p}-1}(a_{i_p}) a_{i_{p}-1}$ is a prefix of $z'$.  We will also show that if $e_q >0$, then $a_{i_{q}-1}\inv \theta^{e_q-1}(a_{i_q}\inv)$ is a suffix of $z'$.  This is more than claimed in the proposition, but having a conclusion that is `symmetric' with respect to inverting  $z'$ will expedite   our proof.   

 We   organize our proof by cases.

\renewcommand{\labelenumi}{\arabic{enumi}.  }
\renewcommand{\labelenumii}{\arabic{enumi}.\arabic{enumii}. }
\renewcommand{\labelenumiii}{\arabic{enumi}.\arabic{enumii}.\arabic{enumiii}. }
\renewcommand{\labelenumiv}{\arabic{enumi}.\arabic{enumii}.\arabic{enumiii}.\arabic{enumiv}. }

\begin{enumerate}
 
\item \emph{Case:  $u$ freely equals the empty word.}  In this case $u$ is empty else Lemma~\ref{Lem: H cancellation} (applied to $u$ rather than to $w$) would be contradicted.  So $z = \theta^{e_p}(a_{i_p})\theta^{e_q}(a_{i_q}\inv)$ and by \eqref{condition on e_i}, $e_p=e_q$. Now $\theta^{e_p}(a_{i_p})$  contains an $a_2$ if and only if $i_p-2\le e_p$, and in that event $\theta^{e_p-i_p+2}(a_{2}) =a_2 a_1^{e_p-i_p+2}$ is a suffix of $\theta^{e_p}(a_{i_p})$.    
Similarly, $\theta^{e_q}(a_{i_q}\inv)$ contains an $a_2\inv$  if and only if $i_q-2\le e_q$, and in that event $\theta^{e_q-i_q+2}(a_{2}) = a_1^{-(e_q -i_q +2)} a_2\inv$ is a prefix of $\theta^{e_q}(a_{i_q}\inv)$.   If $i_p -2 > e_p$, then $i_p > e_p$, and so the final letter of $\theta^{e_p}(a_{i_p})$ is $a_{i_p-e_p}$.    Likewise,  if $i_q -2 > e_q$, then    $a_{i_q-e_q}\inv$ is the first letter of  $\theta^{e_q}(a_{i_q}\inv)$.

\begin{enumerate}
\item \emph{Case:   cancellation occurs between some letters  $a_2^{\pm 1}, \ldots, a_k^{\pm 1}$  when $z$ is freely reduced to $z'$.}  If  $i_p-2\le e_p$, then the final $a_2$   in     $\theta^{e_p}(a_{i_p})$ must cancel with the first   $a_2\inv$ in  $\theta^{e_q}(a_{i_q}\inv)$.  So $i_q-2\le e_q $,  and  the whole suffix $a_2 a_1^{e_p-i_p+2}$ of $\theta^{e_p}(a_{i_p})$ cancels with the whole prefix  $a_1^{-(e_q -i_q +2)} a_2\inv$ of $\theta^{e_q}(a_{i_q}\inv)$.  But that implies that $i_p =i_q$ (since $e_p=e_q$), which is a contradiction.  If, on the other hand, $i_p-2 > e_p$, then $i_q -2 > e_q$, and  the last and first letters  $a_{i_p-e_p}$  and  $a_{i_q-e_q}\inv$  of $\theta^{e_p}(a_{i_p})$ and  $\theta^{e_q}(a_{i_q}\inv)$, respectively, must be mutual inverses, and so again  we get the contradiction $i_p=i_q$.   

\item \emph{Case: no cancellation occurs between letters $a_2^{\pm 1}, \ldots, a_k^{\pm 1}$   when $z$ is freely reduced to $z'$.}  
If $i_p -2 > e_p$ or $i_q -2 > e_q$, then the last letter of  $\theta^{e_p}(a_{i_p})$ or the first letter of  $\theta^{e_q}(a_{i_q}\inv)$, respectively, has rank greater than $2$ and so is not cancelled away, and therefore $z'=z$.   
If $i_p -2 \leq e_p$ and $i_q -2 \leq e_q$, then there is only cancellation between some of the $a_1^{e_p-i_p+2}$ at the end of $\theta^{e_p}(a_{i_p})$  and  some of the $a_1^{-(e_q -i_q +2)}$ at the start of $\theta^{e_q}(a_{i_q}\inv)$ (but not all as $i_p \neq i_q$).  In either event  the first and last letters of $z'$  are $a_{i_p}$ and $a_{i_q}\inv$, respectively.  Moreover, if $e_p >0$, then  $\theta^{e_{p}-1}(a_{i_p}) a_{i_{p}-1}$ is a prefix of $z'$ as $a_{i_{p}-1}$ has rank at least $2$ and so is not cancelled away.  Likewise,  if $e_q >0$,   then $a_{i_{q}-1}\inv \theta^{e_q-1}(a_{i_q}\inv)$ is a suffix of $z'$.
\end{enumerate}
 
\item \emph{Case:    $u$ does not freely equal the empty word.}  

\begin{enumerate}

\item \emph{Case:  no letter  $a_3^{\pm 1}, \ldots, a_k^{\pm 1}$ in $z$ is cancelled away when  $z$ is freely reduced to give $z'$.}  The first and last letters, $a_{i_p}$ and $a_{i_q}\inv$, of $z$ are also  the first and last letters of $z'$, because $i_p,i_q \geq 3$.    
Here is why the prefix $\theta^{e_{p}-1}(a_{i_p}) a_{i_{p}-1}$ of $z$ survives in $z'$ when $e_p>0$.   If $i_p \geq 4$, then its final letter   $a_{i_{p}-1}$ has rank at least $3$ and so is not cancelled away.   Suppose then that $i_p=3$,  so that the prefix $$\theta^{e_p}(a_{i_p})  \ = \  \theta^{e_p}(a_3) \ = \  \theta^{e_p-1}(a_3) \theta^{e_p-1}(a_2) \ = \  \theta^{e_p-1}(a_3)  a_2 a_1^{e_p-1}.$$ We must show that the $a_2$ of  $\theta^{e_p-1}(a_3)  a_2$ is not cancelled away when $z$ is freely reduced to $z'$.  Suppose it is cancelled away.  Then $u$ must have a prefix freely equal to $a_1^{-(e_p-1)}a_2\inv$ (since no $a_2$ and $a_2\inv$ can cancel when $u$ freely reduces).  But $u$ has the form \eqref{u}, and by a calculation we will see in a more extended form in \eqref{u revealed},   $a_1^{-e_p + 2m_1} a_2\inv$  freely equals a prefix of $u$ for some integer $m_1$. But then $-(e_p-1) = -e_p + 2m_1$, contradicting $m_1$ being an integer.   Conclude that $\theta^{e_p-1}(a_3)  a_2$  is a prefix of $z'$ as required.  Likewise, if $e_q >0$,   then $a_{i_{q}-1}\inv \theta^{e_q-1}(a_{i_q}\inv)$ is a suffix of $z'$.

\item \emph{Case:  some letter  $a_3^{\pm 1}, \ldots, a_k^{\pm 1}$ in $z$ is cancelled away when  $z$ is freely reduced to give $z'$.} The prefix $\theta^{e_p}(a_{i_p})$ of $z$ is a positive word and the suffix $\theta^{e_q}(a_{i_q}\inv)$  is a negative word since $e_p, e_q \geq 0$. 

There is an $a_3$ in  $\theta^{e_p}(a_{i_p})$ if and only if $e_p - i_p +3 \geq 0$. Likewise there is an $a_3\inv$ in $\theta^{e_q}(a_{i_q}\inv)$ if and only if $e_q - i_q +3 \geq 0$.  

\begin{enumerate}

\item \emph{Case:  $e_p - i_p +3 < 0$.} The last letter of  $\theta^{e_p}(a_{i_p})$ (a positive word) has rank greater than $3$ and so must cancel.  So    $e_q - i_q +3 < 0$ also, as otherwise $\theta^{e_q}(a_{i_q}\inv)$ (a negative word) the leftmost letter in  $\theta^{e_q}(a_{i_q}\inv)$ with rank at least $3$ would be an $a_3\inv$, which would block any cancelation of other letters $a_3^{\pm 1}, \ldots, a_k^{\pm 1}$ in $z$.   So, in fact, the last letter of $\theta^{e_p}(a_{i_p})$ must cancel with the first letter of $\theta^{e_q}(a_{i_q}\inv)$, and so  $u$ must equal freely the identity, which is a case addressed above.  

\item \emph{Case:   $e_q - i_q +3 < 0$.}  Likewise, this reduces to the earlier case.  

The remaining possibility is:

\item \emph{Case:  $e_p - i_p +3 \geq 0$ and $e_q - i_q +3 \geq 0$.}  So $\theta^{e_p}(a_{i_p})$ has suffix
$$\theta^{e_p - i_p +3}(a_3)  \ = \  a_3a_2 \, a_2a_1 \, a_2a_1^2 \, \cdots a_2 a_1^{e_p - i_p +2}$$
and  $\theta^{e_q}(a_{i_q}\inv)$ has  prefix
$$\theta^{e_q - i_q +3}(a_3\inv)  \ =  \    a_1^{-(e_q - i_q +2)} a_2\inv \, \cdots \, a_1^{-2} a_2\inv \, a_1\inv a_2\inv \, a_2\inv a_3\inv $$   
 and the    subword 
\begin{equation}
\theta^{e_p - i_p +3}(a_3)   u  \theta^{e_q - i_q +3}(a_3\inv) \label{disappearing subword}
\end{equation} of $z$  freely equals the identity.  Now    $u$ has rank at most $2$,    so
$$u \  = \   a_1^{f_1} a_2\inv a_1^{f_2} a_2\inv \cdots a_1^{f_{\lambda}} a_2\inv a_1^{\xi} a_2 a_1^{g_{\mu}}  \cdots a_2 a_1^{g_2} a_2 a_1^{g_1}$$ 
for some $\lambda, \mu \geq 0$, some $\xi  \in \Z$, some   $f_1, \ldots, f_{\lambda}  \leq 0$, and some $g_1, \ldots, g_{\mu} \geq 0.$ 
And because of   cancellations that must occur,
$$\begin{array}{rlrl}
f_1    & =  \ -(e_p - i_p +2)  & g_1  & =  \ e_q - i_q +2  \\
f_2     & =  \ -(e_p - i_p +1)  & g_2  & =  \ e_q - i_q +1  \\
  &  \ \vdots & & \ \vdots  \\ 
f_{\lambda}     & =  \ -(e_p - i_p +3 - \lambda)  \qquad & g_{\mu}  & =  \ e_q - i_q +3 - \mu.
     \end{array}$$
These cancellations reduce  $\theta^{e_p - i_p +3}(a_3)   u  \theta^{-(e_q - i_q +3)}(a_3)$
to   $$a_3a_2 \, a_2a_1 \, a_2a_1^2 \, \cdots  a_2 a_1^{e_p - i_p +2 -\lambda} \  a_1^{\xi} \   a_1^{-(e_q - i_q +2 - \mu)} a_2\inv  \cdots  \, a_1^{-2} a_2\inv  \, a_1\inv a_2\inv  \, a_2\inv a_3\inv.$$

 As this   freely equals the identity, the exponent sum of the $a_2^{\pm 1}$ is zero, and so
\begin{equation}\label{balancer eqn2}
e_p-i_p+3 -  \lambda  \ = \ e_q - i_q+3 -  \mu.
\end{equation}
Also, as the $a_1^{\pm 1}$ between the rightmost $a_2$ and the leftmost $a_2\inv$  cancel, 
\begin{equation}\label{balancer eqn}
 e_p-i_p+2 + \mu + \xi \ = \ e_q-i_q +2  + \lambda.
\end{equation}
Together \eqref{balancer eqn2} and \eqref{balancer eqn}  tell  us that  $\xi=0$.  
But then $\lambda=0$ or $\mu=0$ because of the hypothesis $a_{i_x}^{\epsilon_x} \neq  a_{i_x}^{-\epsilon_{x+1}}$ in the instance of the $a_2\inv$ and $a_2$ (which must be principal letters) in $u$ each side of the $a_1^{\xi}$.

Suppose $\mu=0$, which we can  do without loss of generality because what we are setting out to prove is symmetric with respect to inverting  $z$ and $z'$.   
Then
\begin{equation}
u \  = \   a_1^{-(e_p - i_p +2)} a_2\inv   a_1^{-(e_p - i_p +1)} a_2\inv  \cdots a_1^{-(e_p - i_p +3 - \lambda)} a_2\inv. \label{u written out}
\end{equation}
 
  After $u$ has cancelled into $\theta^{e_p}(a_{i_p})$, the word $\theta^{e_p - i_p +3}(a_3)   u  \theta^{-(e_q - i_q +3)}(a_3)$ becomes 
\begin{equation}
a_3a_2 \, a_2a_1 \, a_2a_1^2 \, \cdots a_2 a_1^{e_p - i_p +3-\lambda -1}  \  a_1^{-(e_q - i_q +2)} a_2\inv \, \cdots \, a_1^{-2} a_2\inv \, a_1\inv a_2\inv \, a_2\inv a_3\inv \label{u gone}
\end{equation}
and, as the powers of $a_1$ and $a_1\inv$ must cancel in the middle of this word, 
\begin{equation}
e_p - i_p  -\lambda   \ = \ e_q -i_q.     
\label{x-1}
\end{equation}

There are no $a_2$ among the principal letters in $u$ (expressed as \eqref{u}), and the $a_2\inv$ principal letters are those that occur in \eqref{u written out}.  The final principal letter $a^{\epsilon_{q-1}}_{i_{q-1}}$ must be $a_2\inv$ as that is the final letter in \eqref{u written out}. The remaining principal letters are $a_1$ or $a_1\inv$, and an $a_1$ principal letter is never adjacent to an $a_1\inv$ principal letter.  So  we can encode the sequence $a^{\epsilon_{p+1}}_{i_{p+1}}, \ldots, a^{\epsilon_{q-1}}_{i_{q-1}}$ using  integers $m_1, \ldots, m_{\lambda} \in \Z$, as:

$$\underbrace{a_1^{\text{sign}(m_1)}, \ldots, a_1^{\text{sign}(m_1)}}_{\abs{m_1}}, a_2\inv,  \underbrace{a_1^{\text{sign}(m_2)}, \ldots, a_1^{\text{sign}(m_2)}}_{\abs{m_2}}, a_2\inv, \ \  \ldots, \  \  \underbrace{a_1^{\text{sign}(m_{\lambda})}, \ldots, a_1^{\text{sign}(m_{\lambda})}}_{\abs{m_{\lambda}}}, a_2\inv.$$

But   \eqref{condition on e_i} and the hypothesis that $\epsilon_p =1$ allow  us to determine $e_{p+1}, \ldots, e_{q-1}$ from $e_{p}$ and $m_1, \ldots, m_{\lambda}$, so as to deduce that
\begin{eqnarray}
u & = \  a_1^{m_1} \theta^{e_p -m_1} (a_2\inv) a_1^{m_2} \theta^{e_p -m_1 -m_2 +1 } (a_2\inv) \cdots a_1^{m_{\lambda}}\theta^{e_p -m_1 - \cdots -m_{\lambda}  + \lambda -1} (a_2\inv) \label{u first} \\ 
 & =  \ a_1^{-e_p + 2m_1} a_2\inv  a_1^{-1-e_p + m_1+ 2m_2} a_2\inv \cdots a_1^{-\lambda+1 -e_p +m_1 + \cdots + m_{\lambda -1} + 2 m_{\lambda} }a_2\inv. \label{u revealed}
\end{eqnarray} 
Comparing the powers of $a_1$ here with those in \eqref{u written out}, we   get: 
\begin{eqnarray}
\quad  \left\lbrace \begin{array}{rrrrrrrrrrrr}
-2+ i_p    \ = &   & & 2m_1 \\ 
-1 + i_p    \ = & -1 & +   &   m_1  & +   &  2m_2 \\ 
i_p    \  = & -2  & +    &   m_1 & +   & m_2  & + &  2m_3 \\ 
   \vdots \ & \\
\lambda - 3 + i_p   \ = & \!\!\!  1- \lambda  & +   &   m_1  & + &     m_2 & + &  \cdots & + &  m_{\lambda-1} & + &   2m_{\lambda}, 
\end{array}  \right.    \label{family of eqns}
\end{eqnarray} 
which simplifies to 
\begin{equation}
i_p + 2^{j +1} -6 \ = \ 2^j m_j \qquad \text{for } j=1, \ldots, \lambda.  \label{i vs m}   
\end{equation}

\begin{enumerate}
\item \emph{Case  $\lambda=0$.}  This is  a case we have previously addressed:  $u$ is the empty word.  

So we can assume that $\lambda \geq 1$, and  then the $j=1$ instance of \eqref{i vs m} tells us that $i_p$ is even, and so 
\begin{equation}
i_p  \ \geq  \ 4. \label{ip ge 4}   
\end{equation}

\item \emph{Case  $\lambda=1$.}  By \eqref{x-1},
\begin{equation}
e_p-i_p- 1  \ = \  e_q-i_q. \label{1 case L}
\end{equation}
 Also   $$z  \ = \  \theta^{e_p}(a_{i_p})   \underbrace{\theta^{e_{p+1}}(a_1^{\text{sign}(m_1)}) \cdots  \theta^{e_{p+\abs{m_1}}}(a_1^{\text{sign}(m_1)})}_{\abs{m_1}} \theta^{e_p -m_1} (a_2\inv) \theta^{e_q}(a_{i_q}\inv)$$  by \eqref{u first}, and so  \eqref{condition on e_i} applied to $ \theta^{e_p -m_1} (a_2\inv)$ and $\theta^{e_q}(a_{i_q}\inv)$ tells us that $e_q = e_p -m_1 +1$.
But $i_p -2 = 2m_1$  by the $j=1$ case of  \eqref{i vs m}, and so 
\begin{equation}
e_q  \ = \  e_p - \frac{i_p -2}{2}+1 \label{2 case}. 
\end{equation}
By \eqref{1 case L} and \eqref{2 case},   
$$i_p+1  \ = \ i_q + \frac{i_p -2}{2} - 1,$$ and so 
\begin{equation}
i_p +6 \ = \  2i_q.   \label{ip iq}
\end{equation}
So   \eqref{ip ge 4} implies    $i_q \geq 5$.   And we can assume that it is not the case that $e_p-i_p +3  = e_q-i_q + 3 =0 $, else \eqref{1 case L} would be contradicted.   So $e_p-i_p +3 >0$ or $e_q-i_q +3 >0$.  If $e_p-i_p +3 >0$, there are at least  two $a_3$ in $\theta^{e_p}(a_{i_p})$  (because $i_p \geq 4$) and hence   at least two $a_3\inv$ in $\theta^{e_q}(a_{i_q}\inv)$.  Likewise,  if  $e_q-i_q +3 >0$, then   there are  
at least two $a_3\inv$ in $\theta^{e_q}(a_{i_q}\inv)$ (because $i_q \geq 4$), and so two $a_3$ in $\theta^{e_p}(a_{i_p})$.  In either case,  
using Lemma~\ref{Lem: Theta breakdown} to identify the relevant suffix of $\theta^{e_p}(a_{i_p})$ and prefix of $\theta^{e_q}(a_{i_q}\inv)$, there is a subword
\begin{equation}
\theta^{e_p - i_p +2}(a_3)\theta^{e_p - i_p +3}(a_3)   u  \theta^{e_q - i_q +3}(a_3\inv) \theta^{e_q - i_q +2}(a_3\inv),  \label{disappearing subword2}
\end{equation}
of $z$, which contains exactly two $a_3$ and two $a_3\inv$.  
If \eqref{disappearing subword2} freely reduces to the empty word, then, once the inner $a_3$ and $a_3\inv$ pair have cancelled, it reduces to   $\theta^{e_p - i_p +2}(a_3)  \theta^{e_q - i_q +2}(a_3\inv)$, which must therefore also freely reduce to the empty word.   But then   $e_p-i_p+2 = e_q - i_q+2$, also contradicting \eqref{x-1}.     So \eqref{disappearing subword2} must not freely reduce to the empty word, and its first letter (an $a_3$) and its last letter (an $a_3\inv$) are not cancelled away.  
If $i_p \ne 4$, then the required conclusions about the prefix and suffix of $z'$ follow because the $a_3$ and $a_3\inv$ bookending \eqref{disappearing subword2} do not cancel away and cannot cancel with a prefix $\theta^{e_p-1} (a_{i_p}) a_{i_p -1}$ or first letter $a_p$ or suffix $a_{i_q -1}\inv \theta^{e_q-1} (a_{i_q}\inv)$ or final letter $a_q\inv$, because  $i_p \geq 5$ and $i_q \geq 5$.   If $i_p = 4$, then $i_q =5$ by \eqref{ip iq}.  And by \eqref{1 case L}, $e_p=e_q$.  
Now, by \eqref{u first}, $u \ = \  a_1^{m_1} \theta^{e_p -m_1} (a_2\inv)$.

\item \emph{Case  $\lambda \geq 2$.}   Then \eqref{i vs m} in the case $j=2$ tells us that $i_p  =  4m_2 -2$, and in particular $i_p \neq 4$ as $m_2 \in \Z$.

At this point we know $i_p \geq 3$ (by hypothesis), is even, and is not $4$.  So $i_p \geq 6$.  

If $e_p - i_p +3 = 0$, then there is exactly one $a_3$ in  $\theta^{e_p}(a_{i_p})$, specifically its final letter.  
So the subword $a_3 u \theta^{e_q-i_q +3}(a_3\inv)$ must freely equal the empty word.  But $u = a_1^{-e_p + 2m_1} a_2\inv  a_1^{-1-e_p + m_1+ 2m_2} a_2\inv$ by \eqref{u revealed} and $\theta^{e_q-i_q +3}(a_3\inv)$ is a negative word as $e_q-i_q +3 \geq 0$, so no cancellation is possible: a contradiction. 

So, given that $e_p - i_p +3 \geq 0$, we deduce that  $e_p - i_p +2 \geq 0$, and so (as $i_p \geq 6$) there are at least two letters $a_3$ in  $\theta^{e_p}(a_{i_p})$.  But then, as above, if \eqref{disappearing subword2}  freely reduces to the empty word,    $e_p-i_p+2 = e_q-i_q+2$, but then by \eqref{balancer eqn} and that $\mu = \xi =0$, we find $\lambda=0$,  which is a case we have already addressed.  So the first and last letters ($a_3$ and $a_3\inv$, respectively) of \eqref{disappearing subword2} are not cancelled away, and therefore the first and last letters ($a_{i_p}$ and $a_{i_q}\inv$, respectively) of $z$ are also those of  $z'$, as required.  And, as $i_p \ge 6$, if $e_p >0$, then the prefix  $\theta^{e_p}(a_{i_p})$ of $z$ survives into $z'$ as it ends with a letter of rank at least $5$ which is not cancelled away.  And likewise, if $i_q \geq 5$ and $e_q >0$, then  the suffix  $\theta^{e_q}(a_{i_q}\inv)$ of $z$ survives into $z'$.

Suppose then that $i_q$ is $3$ or 4  and $e_q >0$.  

The exponent sum of the $a_2$ in $z$ between the rightmost $a_3$ of $\theta^{e_p}(a_{i_p})$ and the leftmost $a_3\inv$ of $\theta^{e_q}(a_{i_q}\inv)$ is zero, so $$e_p -i_p+3 \ = \  e_q - i_q +3 +\lambda.$$   
Applying \eqref{condition on e_i} to the suffix $\theta^{e_p -m_1 - \cdots -m_{\lambda}  + \lambda -1} (a_2\inv)$  of $u$ (expressed as per \eqref{u first}) and $\theta^{e_q}(a_{i_q}\inv)$, we get $$e_q \  = \  e_p - m_1 - \cdots - m_{\lambda} + \lambda.$$ 
Adding these two equations together and simplifying yields:
\[-i_p  \ = \  -i_q+2\lambda -m_1-\cdots-m_\lambda.\] 
The final equation of \eqref{family of eqns} is
$$\lambda - 3 + i_p   \ = \   1- \lambda  +      m_1    +       m_2   +    \cdots   +    m_{\lambda-1}   +     2m_{\lambda}.$$ 
Summing the preceding two equations and simplifying gives
\[ -4  \ = \  -i_q + m_\lambda. \]
But $i_q$ is $3$ or $4$, so $m_{\lambda}$ is $-1$ or $0$,   
But, $i_p + 2^{{\lambda} +1} -6 \ = \ 2^{\lambda} m_{\lambda} $  by  \eqref{i vs m}, which implies that $m_\lambda > 0$  because $i_p\ge 6$  and $\lambda \geq 0$---a contradiction. 
\end{enumerate}
\end{enumerate}
\end{enumerate}
\end{enumerate}
\end{proof}

\begin{proof}[Proof of Proposition~\ref{types prop} in type ii]
The result will follow from the   type $ii\inv$ instance of the proposition, proved below,  because $z$ is the inverse  of a word of  type $ii\inv$.   
\end{proof}

\begin{proof}[Proof of Proposition~\ref{types prop} in type ii${}^{-1}$]
The hypotheses dictate that  in type ii${}^{-1}$, $z$ has the form:
$$z  \ = \ \theta^{e_p}(a_{i_p}\inv) \theta^{e_p +1}(a_{i_p+1}\inv) \cdots \theta^{e_q}(a_{i_q}\inv),$$ where $e_q-e_p = i_q-i_p$.  We must show that its freely reduced form is
$$z'  \  =  \   \theta^{e_p}(a_{i_p-1}) \theta^{e_q+1}(a_{i_q}\inv).$$
Well, 
\begin{align*}
 \theta^{e_q+1}(a_{i_q}\inv) \ &   = \   \theta^{e_q}(a_{i_q-1}\inv) \theta^{e_q}(a_{i_q}\inv)  \\ 
 & = \   \theta^{e_q-1}(a_{i_q-2}\inv) \theta^{e_q-1}(a_{i_q-1}\inv) \theta^{e_q}(a_{i_q}\inv) \\ 
 & \ \ \vdots   \\
 & =  \   \theta^{e_p}(a_{i_p-1}\inv) \theta^{e_p}(a_{i_p}\inv) \theta^{e_p +1}(a_{i_p+1}\inv) \cdots \theta^{e_q}(a_{i_q}\inv),
\end{align*}
and so $z'$ and $z$ are freely equal.  

When $e_p < 0$ and $i_p-1 >1$,  Lemma~\ref{Lem: Theta breakdown} tells us that the final letter of  $\theta^{e_p}(a_{i_p-1})$ is $a_{i_p-2}\inv$.  And when $e_q+1 < 0$  and $i_q >1$, it tells us that the first letter of  $ \theta^{e_q+1}(a_{i_q}\inv)$ is  $a_{i_q -1}$.  Our hypotheses include that $e_q <0$, which implies that $e_p<0$ as $e_p <e_q$, and that  $i_q >1$, so in all cases except when $i_p=2$ or $e_q=-1$, we learn that $z'$ is freely reduced as required.  

When $i_p=2$ and $e_q \neq -1$,  
$$z'  \  =  \   a_1 \theta^{e_q+1}(a_{i_q}\inv),$$
which is freely reduced because  the first letter of $\theta^{e_q+1}(a_{i_q}\inv)$ is   $a_{i_q}-1$.  And when $e_q = -1$ and $i_p-1 \neq 1$,   $$z'  \  =  \   \theta^{e_p}(a_{i_p-1}) a_{i_q}\inv,$$
which is freely reduced because the last letter of $\theta^{e_p}(a_{i_p-1})$ is $a_{i_p -2}$.  And when  $e_q = -1$ and $i_p-1 = 1$,   $$z'  \ = \  a_1  a_{i_q}\inv,$$ which is freely reduced because $i_q \geq 3$.  

The first letter of $z$ is $a_{i_p-1}$ by Lemma~\ref{Lem: Theta breakdown} applied to $\theta^{e_p}(a_{i_p-1})$.  The final letter of $z$ is  $a_{i_q}\inv$ because the first letter of   $\theta^{e_q+1}(a_{i_q})$ is $a_{i_q}$
by the same lemma.  
\end{proof}

\begin{proof}[Proof of Proposition~\ref{types prop} in type iii]

We have that 
$$z \ = \  \theta^{e_p}(a_{i_p}) u \theta^{e_{q'}}(a_{i_{q'}}\inv) \cdots\theta^{e_q}(a_{i_q}\inv)$$  
where $i_{p}, i_{q'}, \ldots, i_{q} \geq 3$,     $i_{p+1}, \ldots, i_{q'-1} < 3$,   $e_p \geq 0$, $e_q <0$   (and so $e_{q'}, \ldots, e_{q-1} <0$ by \eqref{condition on e_i}).   Also $i_j = i_{j-1} + 1$   for $j=q'+1,\ldots,q$, so $i_q =i_{q'}+q-q'$.    Like in type \emph{i}, we must show that  the first and last letters of the freely reduced form $z'$ of $z$ are $a_{i_p}$ and $a_{i_q}\inv$, respectively, and that if  $e_p >0$, then  $\theta^{e_{p}-1}(a_{i_p})$ is a prefix of $z'$.

   Proposition~\ref{types prop} for   type  \textit{ii}$\inv$, proved above, applied to the suffix $\theta^{e_{q'}}(a_{i_{q'}}\inv) \cdots\theta^{e_q}(a_{i_q}\inv)$, tells us that $z$ freely equals
\begin{align}
 &   \theta^{e_p}(a_{i_p}) \, u  \, \theta^{e_{q'}}(a_{i_{q'}-1})\theta^{e_{q'}+q-q'+1}(a_{i_{q'}+q-q'}\inv)\label{typeiiieqn1}
\end{align}
and that  the new suffix  $\theta^{e_{q'}}(a_{i_{q'}-1})\theta^{e_{q'}+q-q'+1}(a_{i_{q'}+q-q'}\inv)$ is reduced.

By hypothesis, $i_{q'} \ge 3$.  We again organize our proof by cases.

\renewcommand{\labelenumi}{\arabic{enumi}.  }
\renewcommand{\labelenumii}{\arabic{enumi}.\arabic{enumii}. }
\renewcommand{\labelenumiii}{\arabic{enumi}.\arabic{enumii}.\arabic{enumiii}. }
\begin{enumerate}

\item \emph{Case:  $i_{q'} \ge 4$.} As the suffix  $\theta^{e_{q'}}(a_{i_{q'}-1})\theta^{e_q'+q-q'+1}(a_{i_{q'}+q-q'}\inv)$  of \eqref{typeiiieqn1}  is freely reduced, its first letter is  $a_{i_{q'}-1}$, which has rank at least $3$ by hypothesis and so  cannot cancel any letter in $u$, and is positive and so cannot cancel with a letter in   $\theta^{e_p}(a_{i_p})$.   
Therefore letters in $u$  can only cancel with the $\theta^{e_p}(a_{i_p})$ to its left.    So the final letter of $z'$ is $a_{i_{q'}+q-q'}\inv = a_{i_q}\inv$, as required. 
 As  $\Rank(u) \leq 2$ and $i_p \geq 3$,  the first letter $a_p$ of $z$ is also the first letter of $z'$, as required.  
It remains to show that, assuming $e_p >0$, the prefix    $\theta^{e_{p}-1}(a_{i_p}) $ of $z'$ is also a  prefix of $z'$.
If $i_p>3$, this is immediate because $a_{i_{p}-1}$ has rank at least $3$ and so cannot cancel into $u$.  
If $i_p = 3$, then no $a_2^{\pm{1}}$ in $u$ cancel with $\theta^{e_p}(a_{i_p})$ for otherwise  the first equation of 
 \eqref{family of eqns}  the argument from type \emph{i} would  adapt to this setting to give us the contradiction that $i_p$ is even.

\item \label{1222} \emph{Case: $i_{q'}=3$.} 

\begin{enumerate}

\item \emph{Case: $i_{q} \leq 2$.} This   does not occur because, by hypothesis, $i_{q'}\ge 3$ and $q-q'\ge 0$.  

\item \label{2222} \emph{Case: $i_{q} \geq 4$.} 
Suppose, for a contradiction, that the first or last letter of $z$ cancels away on free reduction, or that $e_p >0$ and the prefix $\theta^{e_p-1}(a_{i_p}) a_{i_p-1}$ (which is one letter longer  than we need) of  $\theta^{e_p}(a_{i_p})$ fails to also be a prefix of $z'$.
 \begin{enumerate}
\item \emph{Case: $e_{q'}+q-q' +1 =0$.} Here, as $ i_{q'}+q-q' = i_q \geq 4$, \eqref{typeiiieqn1} is
\[ \theta^{e_p}(a_{i_p}) \, u  \, \theta^{e_{q'}}(a_{2})a_{i_q}\inv.\]

Then $\theta^{e_p}(a_{i_p})$ can contain no $a_3$   since there is no $a_3\inv$ to cancel with. Therefore, $\theta^{e_p}(a_{i_p})$ ends with a letter of rank greater than $3$ by Lemma~\ref{Lem: Theta breakdown}. For this reason, $u$ cannot cancel to its left, and so  $u \theta^{e_{q'}}(a_2)$ freely equals the empty word.  By Lemma~\ref{Lem: H cancellation}, $u$ cannot contain a rank $2$ subword that freely equals the empty word,  so $u = a_1^\mu\theta^{e_{q'-1}}(a_2\inv)$ for some $\mu \in \Z$.  
But then by \eqref{condition on e_i} $e_{q'-1} = e_{q'}-1$,  and  $u = a_1^\mu\theta^{e_{q'}-1}(a_2\inv)$.
 Counting the exponent sum of the $a_1^{\pm 1}$ in $u\theta^{e_{q'}}(a_2)$, we find
\[\mu-e_{q'}+1 + e_{q'} = 0.\]
So $\mu=-1$, and $u$ must be $\theta^{e_{p+1}}(a_1\inv) \theta^{e_{q'}-1}(a_2\inv)$.   
But then applying \eqref{condition on e_i} to $\theta^{e_p}(a_{i_p}) \theta^{e_{p+1}}(a_1\inv)\theta^{e_{q'}-1}(a_2\inv)$, we find that
$e_{q'} -1 = e_p +1 \ge 1$, contradicting the fact that $e_{q'}<0$.  
 
\item  \emph{Case: $e_{q'} +q-q'+1<0$.}  
 Here,  \eqref{typeiiieqn1} is
\[ \theta^{e_p}(a_{i_p}) \, u  \, \theta^{e_{q'}}(a_{2})\theta^{e_{q'}+q-q'+1}(a_{i_q}\inv).\]

The first letter $a_{i_q-1}$  of the suffix $\theta^{e_{q'}+q-q'+1}(a_{i_{q}}\inv)$   has rank at least $3$, and must cancel to the left, but has exponent $+1$. Every other letter to the left with exponent $-1$ has rank at most $2$, so this letter cannot be canceled to its left or right. 
Thus  $z'$ must end with $a_{i_{q}}\inv$ and start with $a_{i_p}$. 

If  $i_p>3$ and  $e_p>0$, the letter immediately after the prefix $\theta^{e_p-1}(a_{i_p})$ of $z$ is $a_{i_p-1}$, which  is of  rank at least $3$, so the prefix $\theta^{e_p-1}(a_{i_p})$ must be preserved because letters of rank $3$ or higher cannot cancel as there are no letters of rank $3$ or higher between and the first letter  $a_{i_q-1}$ (of rank at least $3$)   of the suffix $\theta^{e_{q'}+q-q'+1}(a_{i_{q}}\inv)$. 

If $i_p=3$, it is conceivable that this prefix is partially canceled away by some following subword $u$ of $z$ of rank $2$ or less.  We will show this leads to a contradiction so does not occur.  
If any letters in $\theta^{e_p}(a_{i_p})u$ of rank $2$ or higher cancel, then $e_p-i_p+2\ge 0$ because otherwise $\theta^{e_p}(a_{i_p})$ ends with a letter of rank greater than $3$. 
However, then $u$ must have a prefix that cancels with $\theta^{e_p-i_p+2}(a_{2})$ and so is $\theta^{e_{p+1}}(a_1) \cdots \theta^{e_{s-1}}(a_1)  \theta^{e_s}(a_2\inv)$ or $\theta^{e_{p+1}}(a_1\inv) \cdots \theta^{e_{s-1}}(a_1\inv)  \theta^{e_s}(a_2\inv)$ for some $s$.  In either case, this simplifies to  $a_1^\mu  \theta^{e_s}(a_2\inv)$ for some $\mu \in \Z$  and, by \eqref{condition on e_i}, $e_p-\mu = e_s$. By summing the exponents of  the $a_1^{\pm 1}$ in $\theta^{e_p-i_p+2}(a_{2})$ and in $a_1^\mu  \theta^{e_s}(a_2\inv)$, we find that: $e_p-i_p+2 - e_s+\mu =0$.  
But combined with $e_p-\mu = e_s$, this tells us that 
 $\mu = (i_p-2)/2$, which is not an integer if $i_p=3$.   so we have the required contradiction.  \label{222 case} \label{3222}
\end{enumerate}

\item \emph{Case: $i_{q} = 3$.}    In this instance,   $q=q'$ because $i_{q'}=3$, and so $i_q=3$.  
 So
\[z \ = \ \theta^{e_p}(a_{i_p}) u \theta^{e_{q'}} (a_3\inv).\]
By Lemma~\ref{Lem: Theta breakdown}, there is one $a_3\inv$  in $\theta^{e_{q'}} (a_3\inv)$, specifically its final letter.   Suppose this $a_3\inv$ cancels with an $a_3$ (necessarily the rightmost) in  $\theta^{e_p}(a_{i_p})$.   Then  the intervening subword (which has rank at most $2$)  freely reduces to the empty word.  

Now $\theta^{e_p}(a_{i_p})$ contains no $a_2\inv$ because $e_p \geq 0$.  The same is true of $\theta^{e_{q'}}(a_3\inv)$   by Lemma~\ref{Lem: Theta breakdown} and the fact that $e_{q'}<0$.
So, if $u$ contains an $a_2$, it must cancel with an $a_2\inv$ from $u$, and so $u$ must contain a  subword which starts and ends with principal letters of rank $2$ and which freely equals the empty word, violating Lemma \ref{Lem: H cancellation}.  Conclude that $u$ contains no $a_2$.  

\begin{enumerate}

\item  \label{231}  \emph{Case: $e_p-i_p+2\ge 0$}.   The rightmost $a_3$ in $\theta^{e_p}(a_{i_p})$ is the first letter of the suffix $a_3 a_2 \theta^1(a_2)\cdots \theta^{e_p-i_p+2}(a_2)$, so some prefix of $u$ freely equals the inverse of $a_2 \theta^1(a_2)\cdots \theta^{e_p-i_p+2}(a_2)$.    
This prefix of $u$ must be 
\begin{equation}
\theta^{e_{p+1}}(a_{i_{p+1}}^{\epsilon_{p+1}}) \cdots \theta^{e_{s}} (a_{i_s}^{\epsilon_s}) \label{part of u}
\end{equation}
for some $s$.  (The prefix does not end in the midst of some $\theta^{e_{s}} (a_{i_s}^{\epsilon_s})$, because it must have final letter $a_2\inv$.)     

Similarly to \eqref{u first} and \eqref{u revealed} in the type \emph{i} case, we can use \eqref{condition on e_i} to re-express \eqref{part of u} as
\begin{align*}
a_1^{\nu_{\chi+1}} & \theta^{e_s+\nu_1+ \cdots + \nu_{\chi}-\chi}(a_2\inv) \cdots a_1^{\nu_2} \theta^{e_s + \nu_1-1}(a_2\inv) a_1^{\nu_1} \theta^{e_s}(a_2\inv) \\
& = \ a_1^{\nu_{\chi+1} - (e_s+\nu_1+ \cdots + \nu_{\chi}-\chi)}    a_2\inv  \cdots a_1^{\nu_2 - (e_s + \nu_1-1)}  a_2\inv  a_1^{\nu_1 - e_s} a_2\inv   
\end{align*}   
 for some $s$ where $\chi :=  e_p-i_p+2$ (so that $\chi+1$ is the number of $a_2$ in $a_2 \theta^1(a_2)\cdots \theta^{e_p-i_p+2}(a_2)$) and $\nu_1, \ldots, \nu_\chi \in \Z$ record the number of and exponents of the $a_1^{\pm 1}$ between the $a_2\inv$. As this freely equals $$(a_2 \theta^1(a_2) \theta^2(a_2) \cdots \theta^{\chi}(a_2) )\inv \ = \  a_1^{-\chi} a_2\inv \cdots  a_1^{-2}a_2\inv a_1^{-1}a_2\inv a_2\inv,$$
we find that 
\begin{align*}
\nu_1 -e_s & \ = \ 0 \\
\nu_2 - (e_s + \nu_1-1) & \ = \  -1 \\
\vdots \qquad \qquad  & \ = \  \vdots \\
\nu_{\chi+1} - (e_s+\nu_1+ \cdots + \nu_{\chi}-\chi) & \ =  \  -\chi.  
\end{align*}

It follows that \begin{equation}\nu_{\chi+1} \ = \ 2^{\chi} e_s - 2^{\chi+1} + 2.\label{e_s marker}\end{equation}

 The suffix $\theta^{e_p-i_p+2}(a_2)$ of $\theta^{e_p}(a_{i_p})$ must be the inverse  of the prefix $a_1^{\nu_{\chi+1}} \theta^{e_s+\nu_1+ \cdots + \nu_{\chi}-\chi}(a_2\inv)$ of $u$, so $\theta^{e_p-i_p+2}(a_{2})a_1^{\nu_{\chi+1}} \theta^{e_s+\nu_1+ \cdots + \nu_{\chi}-\chi}(a_2\inv)$ freely reduces to the empty word. By \eqref{condition on e_i} applied to $\theta^{e_p}(a_{i_p})a_1^{\nu_{\chi+1}} \theta^{e_s+\nu_1+ \cdots + \nu_{\chi}-\chi}(a_2\inv)$, 
\[e_p-\nu_{\chi+1} = e_s+\nu_1+ \cdots + \nu_{\chi}-\chi.\]
By counting the $a_1^{\pm 1}$ in $\theta^{e_p-i_p+2}(a_{2})a_1^{\nu_{\chi+1}} \theta^{e_p-\nu_{\chi+1}}(a_2\inv)$, which freely reduces to the empty word, we find
\[e_p-i_p+2 +\nu_{\chi+1} = e_p-\nu_{\chi+1}, \]
so that $\nu_{\chi+1} =   (i_p-2)/2$.  But then $\nu_{\chi+1}>0$, since $i_p\ge 3$. Further, we conclude that for $u$ to even cancel an $a_2$ from $\theta^{e_p}(a_{i_p})$, $i_p$ must be even.  So $i_p\ge 4$. Thus after rewriting \eqref{e_s marker} as
\begin{equation} \label{above this}
e_s  \ = \  \frac1{2^{\chi}}(\nu_{\chi+1}+2^{\chi+1}-2) 
\end{equation}
and using the fact that $\nu_{\chi+1}>0$ and $\chi \geq 1$, we conclude that $e_s>0$.

The remainder 
\begin{equation}
\theta^{e_{s'}}(a_{i_{s'}}^{\epsilon_{s'}})\cdots \theta^{e_{q'-1}}(a_{i_{q'-1}}^{\epsilon_{q'-1}}),
\label{suffix of u}
\end{equation} 
(where $s'=s+1$) of $u$  cancels with all but the $a_3\inv$ of 
\begin{equation}
\theta^{e_{q'}}(a_3\inv)  \ = \  \theta^{e_{q'}}(a_2) \theta^{e_{q'}+1}(a_2) \cdots \theta^{-1}(a_2) a_3\inv.
\label{what it cancels most of}
\end{equation} 

We claim that, similarly to \eqref{u first}, we can rewrite \eqref{suffix of u} as 
\begin{align*}
a_1^{\eta_{r}} & \theta^{e_{q'} + \eta_1 + \eta_2 + \eta_3 +\cdots + \eta_{r-1} - r}(a_2\inv) \cdots a_1^{\eta_2}\theta^{e_{q'} + \eta_1-2}(a_2\inv) a_1^{\eta_1} \theta^{e_{q'}-1}(a_2\inv)    \\ 
& = \ a_1^{\eta_{r} - (e_{q'} + \eta_1 + \eta_2 + \eta_3 +\cdots + \eta_{r-1} - r )}  a_2\inv \cdots a_1^{\eta_2 - (e_{q'} + \eta_1-2)} a_2\inv  a_1^{\eta_1 -(e_{q'}-1)} a_2\inv  
\end{align*}
 where $r$ is the number of $a_2\inv$ in  \eqref{suffix of u},  and $\eta_1,\ldots,\eta_{r} \in \Z$ record the number of and the signs of the intervening terms   $\theta^{\ast}(a_1^{\ast})$.
There is no power of $a_1$ at the righthand end   because the first letter of \eqref{what it cancels most of} is $a_2$.  The iterates of $\theta$ are identified by using  \eqref{condition on e_i}.    

Now compare with \eqref{what it cancels most of}, with which it cancels (to leave only $a_3\inv$),  to see that $r= |e_{q'}|$ and
\begin{align*}
0 & \ = \ \eta_1 - (e_{q'}-1) + e_{q'} \\
0 & \ = \ \eta_2 - (e_{q'}+\eta_1-2) + e_{q'} +1 \\
\vdots & \ = \ \qquad \qquad  \vdots \\
0 & \ = \ \eta_r - (e_{q'}  + \eta_1+\eta_2 +\cdots + \eta_{r-1}-r) + e_{q'} +(r-1).  
\end{align*}

Next we establish by induction that $\eta_i<0$  and 
\begin{equation} 
e_{q'}  + \eta_1+\eta_2 +\cdots + \eta_{i-1} - i  \ < \ 0 \label{power of theta on leftmost a_2^-1}\end{equation}
  for all $1\le i \le r$.  For the  base case, $e_q-1<0$ because of our hypothesis that $e_q<0$, and $\eta_1 = -1$  by the first of the above family of equations.  For the induction step, suppose  $\eta_1,\ldots,\eta_{i-1} <0$ and $e_{q'}  + \eta_1+\eta_2 +\cdots + \eta_{i-2} -(i-1) <0$.  The family of equations above tells us in particular, that  
$$0   \ = \ \eta_i - (e_{q'}  + \eta_1+\eta_2 +\cdots + \eta_{i-1}-i) + e_{q'} +(i-1)$$  
which rearranges to 
\[ (\eta_1+\eta_2 +\cdots + \eta_{i-1})-2i+1 \ = \ \eta_i.\]
So, $\eta_i <0$ because $1\le i$ and $\eta_{1},\ldots,\eta_{i-1}<0$. 
Moreover, 
\begin{align*}
e_{q'}  + \eta_1+\eta_2 +\cdots + \eta_{i-1} -i  & = \  (e_{q'}  + \eta_1+\eta_2 +\cdots + \eta_{i-2} -(i-1)) + \eta_{i-1} -1  
 \ < \ 0 
 \end{align*}
because  $e_{q'}  + \eta_1+\eta_2 +\cdots + \eta_{i-2} -(i-1)  <0$ and $\eta_{i-1}<0$.

Now \[e_{s'} \ = \  \eta_r + (e_{q'}  + \eta_1+\eta_2 +\cdots + \eta_{r-1} -r ) - 1\] by  \eqref{condition on e_i}.    
Conclude that $e_{s'} <0$.

But 
\[ u \  = \  \theta^{e_{p+1}}(a_{i_{p+1}}^{\epsilon_{p+1}}) \cdots \theta^{e_{s}} (a_{i_s}^{\epsilon_s})\theta^{e_{s'}}(a_{i_{s'}}^{\epsilon_{s'}})\cdots \theta^{e_{q'-1}}(a_{i_{q'-1}}^{\epsilon_{q'-1}}) \]
and  by \eqref{condition on e_i}, $e_{s}$ and $e_{s'}$ differ by at most $1$.  So, as we previously established that $e_{s} >0$, we have a contradiction.

We deduce that no $a_3$ and $a_3\inv$ cancel when $z$ freely reduces.

Since no letters of rank $3$ can cancel, if $i_p\ge 4$, then $z'$ has a prefix $\theta^{e_p-1}(a_{i_p})$, since cancelling any part of this prefix in $\theta^{e_p}(a_{i_p}) = \theta^{e_p-1}(a_{i_p})\theta^{e_p-1}(a_{i_p-1})$ requires cancellation of $a_{i_p-1}$.
 Finally consider the case $i_p=3$.  We showed (immediately above \eqref{above this}) that if $i_p$ is odd, then no letters of rank $2$ can cancel from $\theta^{e_p}(a_{i_p})$. The remainder of the argument is the same as in the case $i_p\ge 4$. 

\item \emph{Case: $e_p-i_p+2<0$.} We have $z = \theta^p(a_{i_p}) u \theta^{e_q}(a_{3}\inv)$ where $i_q=3$, $q=q'$,   $u = \theta^{e_{p+1}}(a_{i_{p+1}}^{\epsilon_{p+1}})\cdots \theta^{e_{q'-1}}(a_{i_{q'-1}}^{\epsilon_{q'-1}})$, and $\theta^{e_p}(a_{i_p})$ ends with a letter of rank at least $3$. Suppose, for a contradiction, some letter of the prefix $\theta^{e_p}(a_{i_p})$ is cancelled when  $z$ is freely reduced to $z'$.  No cancellation is possible between $\theta^{e_p}(a_{i_p})$ and $u$ because every letter of $\theta^{e_p}(a_{i_p})$ is rank $3$ or higher. By the   argument used in Case~\ref{231} to show that $e_{s'} <0$, we find here that $e_{p+1}<0$, and by the argument there (immediately after \eqref{power of theta on leftmost a_2^-1})  to show that $\eta_r <0$, we find here that $\epsilon_{p+1} = -1$.  But then by \eqref{condition on e_i}, $e_p   = e_{p+1}$,  and so $e_{p} <0$, which contradicts $e_p\ge 0$.  So the first letter $a_{i_p}$ of $z$ is also   the first letter of $z'$, and the last letter $a_3\inv$ of $\theta^{e_{i_{q'}}}(a_{3}\inv)$ is also the last letter of $z'$.  Moreover,  if $e_p>0$, then the prefix $\theta^{e_p-1}(a_{i_p})$ of  $\theta^{e_p}(a_{i_p})$ is also a prefix of $z'$.  \qedhere
\end{enumerate}
\end{enumerate}
\end{enumerate}
\end{proof}

\begin{proof}[Proof of Proposition~\ref{types prop} in type \textit{iii}${}^{-1}$]
Inverting a  type \textit{iii}${}^{-1}$ word gives a  type \textit{iii} word, so we can apply the type \textit{iii} of  Proposition~\ref{types prop}   proved above to get the result (as in this case we are only concerned with the first and last letters and not with a longer prefix). 
\end{proof}

\begin{proof}[Proof of Proposition~\ref{types prop} in type \textit{iv}]
We must show that if  $i_p, \ldots, i_{p'},i_{q'}, \ldots, i_q \ge 3$ with  $i_j = i_{j+1} + 1$   for $j=p,\ldots,p'-1$ and $i_j = i_{j-1} + 1$   for $j=q'+1,\ldots,q$, and $e_{p},e_q < 0$, the freely reduced form $z'$ of
\[z \ = \ \theta^{e_p}(a_{i_p})\cdots \theta^{e_{p'}}(a_{i_{p'}})u\theta^{e_{q'}}(a_{i_{q'}}\inv)\cdots\theta^{e_q}(a_{i_q}\inv)\]
 starts with $a_{i_p}$ and ends with $a_{i_q}\inv$.   
 
By Proposition~\ref{types prop} in type \textit{ii}${}^{\pm1}$, proved above,  $z$ freely reduces to 
\begin{equation} \theta^{e_p+1}(a_{i_p}) \theta^{e_{p'}}(a_{i_{p'}-1}\inv) u \theta^{e_{q'}}(a_{i_{q'}-1})\theta^{e_q+1}(a_{i_q}\inv) \label{type iv reduction}\end{equation}
where   $\theta^{e_p+1}(a_{i_p}) \theta^{e_{p'}}(a_{i_{p'}-1}\inv)$ and $\theta^{e_{q'}}(a_{i_{q'}-1})\theta^{e_q+1}(a_{i_q}\inv)$ are freely reduced.   
 
We again organize our proof by cases.
 
\begin{enumerate}   \def\theenumi{\arabic{enumi}.}
\renewcommand{\labelenumi}{\arabic{enumi}. }
\item \emph{Case: $i_p = i_q$}.
 Suppose, for a contradiction, that $z'$ does not start with $a_{i_p}$ and end with $a_{i_q}\inv$.   Then the first and last letter must cancel each other since they are the only maximal rank letters (because $i_p > i_{p+1} > \cdots > i_{p'}$ and $i_q > i_{q-1} > \cdots > i_{q'}$).
So $z$ freely reduces to the empty word, which we will show is impossible.   

It will be convenient (for Case~\ref{not and not}) to assume $e_p,e_q<-1$, which we can do because   applying $\theta\inv$ to $z$ gives a type $iv$ word of the same form which also freely reduces to the empty word.   

\begin{enumerate} 
\renewcommand{\labelenumii}{\arabic{enumi}.\arabic{enumii}. }
\renewcommand{\labelenumiii}{\arabic{enumi}.\arabic{enumii}.\arabic{enumiii}. }
\renewcommand{\labelenumiv}{\arabic{enumi}.\arabic{enumii}.\arabic{enumiii}.\arabic{enumiv}. }
 \def\theenumii{\arabic{enumii}.}

\item  \emph{Case:  $u$ is the empty word.}  This leads to a contradiction because it implies that the last letter $a_{{i_{p'}}-1}$ of   $\theta^{e_p+1}(a_{i_p}) \theta^{e_{p'}}(a_{i_{p'}-1}\inv)$ and the first letter $a_{{i_{q'}}-1}$ of $\theta^{e_{q'}}(a_{i_{q'}-1})\theta^{e_q+1}(a_{i_q}\inv)$ cancel---that is,   $i_{p'} = i_{q'}$, so    $ \theta^{e_{p'}}(a_{i_{p'}})\theta^{e_{q'}}(a_{i_{q'}}\inv)$  is a subword of $z$ contrary to the definition of $z$. 

\item  \emph{Case:  $u$ is not the empty word.}

\begin{enumerate}  
\item \label{not and not} \emph{Case: $p\ne p'$ and $q\ne q'$.}
In this case, $i_p,i_q\ge 4$ because of our hypotheses on $i_p, \ldots, i_{p'}, i_{q'}, \ldots, i_q$.  Since we assumed $e_p,e_q<-1$, the word in \eqref{type iv reduction} has a subword of the form 
\begin{equation}
a_{i_p-1}\inv \theta^{e_{p'}}(a_{i_{p'}-1}\inv) u \theta^{e_{q'}}(a_{i_{q'}-1}) a_{i_q-1},  \label{41 in this case}
\end{equation}
and no cancellation is possible with the prefix of $z$ to its the left or the suffix to its right. The maximal rank letters it contains are its first and last letters, so they must cancel, and therefore 
\begin{equation}\label{inner subword type iv i_p=i_q}\theta^{e_{p'}}(a_{i_{p'}-1}\inv) u \theta^{e_{q'}}(a_{i_{q'}-1})\end{equation}
must freely equal the empty word.  

\begin{enumerate}
\item \emph{Case: $i_{p'-1} \ne 2$ or $i_{q'-1}\ne 2$.}
Then $i_{p'-1}=i_{q'-1}$ because otherwise \eqref{inner subword type iv i_p=i_q}
has a single letter of highest rank which (either the $a_{i_{p'-1}}\inv$ or the $a_{i_{q'-1}}$) and hence cannot freely reduce to the empty word. 
However, then $a_{i_{p'-1}}\inv$ and $a_{i_{q'-1}}$ are the letters of highest rank in \eqref{inner subword type iv i_p=i_q} and so must cancel. Since $u$ is the subword separating them, $u$ must freely reduce to the empty word, which is impossible by Lemma~\ref{Lem: H cancellation}. 

\item \emph{Case: $i_{p'-1} = i_{q'-1} = 2$.} By Lemma \ref{Lem: H cancellation}, $u$ cannot have any rank-2 subwords that freely reduce to the empty word.
Since \eqref{inner subword type iv i_p=i_q} freely reduces to the empty word and $u$ contains no rank-2 subwords that freely reduce to the empty word, by \eqref{condition on e_i} $u$ must be
\[\theta^{e_{p'}-1}(a_2) a_1^{\mu} \theta^{e_{q'}-1}(a_2\inv)\] for some $\mu \in \Z$.
By counting the exponent sum of $a_1$ in \eqref{inner subword type iv i_p=i_q}:
\[e_{p'} - (e_{p'}-1) + \mu + (e_{q'}-1) -e_{q'}  \ = \  0,\]
so that $\mu=0$, contradicting the fact that $u$ does not have consecutive principal letters $a_2$ and $a_2\inv$ (by definition of $z$).  
\end{enumerate} 

\item  \label{p=p'} 
 \emph{Case: $p=p'$.} 
 In this case, the word \eqref{type iv reduction} which $z$ freely reduces to has the form 
\[\theta^{e_p}(a_{i_p}) u \theta^{e_{q'}}(a_{i_{q'}-1})\theta^{e_q+1}(a_{i_q}\inv).\]
Recall that the suffix $\theta^{e_{q'}}(a_{i_{q'}-1})\theta^{e_q+1}(a_{i_q}\inv)$ is freely reduced and so its first letter $a_{i_{q'}-1}$ cannot cancel to its right.  So it must cancel to its left, and therefore either $i_{q'}=3$ or it cancels with the terminal $a_{i_p-1}\inv$ of $\theta^{e_p}(a_{i_p})$. In the latter case:
\[i_q-1 \ = \  i_p-1  \ = \  i_{q'}-1,\]
so $i_{q}= i_{q'}$, and so $q=q'$.  Therefore it suffices to analyze the following two   cases.    

\begin{enumerate}  \def\theenumiii{\arabic{enumiii}}
\item \emph{Case: $i_{q'}=3$ and $q\ne q'$.}
Since $q\ne q'$, $i_q>3$.  So $i_q >3$ also as   $i_p=i_q$. 
 Hence \eqref{type iv reduction} has a subword
\begin{equation}\label{i q' 3}a_{i_p-1}\inv u \theta^{e_{q'}}(a_2)  a_{i_q-1} \end{equation}
whose first letter $a_{i_p-1}\inv$ cannot cancel to the left and whose last letter $a_{i_q-1}$ cannot cancel to the right. They have rank at least $3$, so they must cancel each other.  So $u\theta^{e_{q'}}(a_2)$ freely equals the empty word. But $u$ cannot have any rank $2$ subwords that freely equal the empty word by Lemma~\ref{Lem: H cancellation}, so by  \eqref{condition on e_i}  is
\[a_1^\mu \theta^{e_{q'}-1}(a_2\inv)\] for some $\mu \in \Z$.
 So \eqref{i q' 3} is 
\begin{equation*} a_{i_p-1}\inv a_1^\mu \theta^{e_{q'}-1}(a_2\inv) \theta^{e_{q'}}(a_2)  a_{i_q-1} \ = \ \
 a_{i_p-1}\inv a_1^\mu  \       (a_2 a_1^{e_{q'}-1} )\inv    \  a_2 a_1^{e_{q'}}   \  a_{i_q-1}.
\end{equation*} 
By counting the exponent sum of $a_1$  it contains, we find
\[ \mu -( e_{q'}-1) + e_{q'} = 0.\]  So $\mu=-1$. Now $$u \ = \  a_1^{-1} \theta^{e_{q'}-1}(a_2\inv) \ = \ \theta^e(a_1^{-1})\theta^{e_{q'}-1}(a_2\inv)$$ for some $e\in \Z$. So $\theta^{e_p}(a_{i_p}) \theta^{e}(a_1^{-1})\theta^{e_{q'}-1}(a_2\inv)$ is a prefix of $z$ and 
 \eqref{condition on e_i} tells us that $e=e_p$ and $e+1 = e_{q'}-1$, and so $e_p +2 = e_{q'}$. 
  
Now, as $u\theta^{e_{q'}}(a_2)$ freely equals the empty word and $p=p'$,  \eqref{type iv reduction} freely reduces to   
\begin{equation*} \theta^{e_p+1}(a_{i_p}) \theta^{e_{p'}}(a_{i_{p'}-1}\inv)  \theta^{e_q+1}(a_{i_q}\inv) \ = \  \theta^{e_p}(a_{i_p}) \theta^{e_q+1}(a_{i_q}\inv).
\end{equation*} 
 So, as $i_p =i_q >1$, we find $e_p = e_q+1$.  But $e_q \geq e_{q'}$, so this contradicts  $e_p +2 = e_{q'}$.

\item \emph{Case: $q=q'$.  } 
In this instance, 
\[z  \ = \  \theta^{e_p}(a_{i_p}) u \theta^{e_q}(a_{i_q}) \]
freely reduces to the identity. Hence $\theta^{\max(-e_p,-e_q)}(z)$ is a type $i$ word which also freely reduces to the identity, which is impossible by the type \emph{i} case of Proposition \ref{types prop} proved above.

\end{enumerate}

\item \emph{Case: $q=q'$.  }
Inverting $z$ returns us to Case~\ref{p=p'} above.

\end{enumerate} 
\end{enumerate} 
 
\item \label{prev case} \emph{Case: $i_p>i_q$}. By Proposition~\ref{types prop} in type \emph{ii}${}^{\pm1}$, $w$ freely reduces to a word of the form:
\[ \theta^{e_p+1}(a_{i_p}) \theta^{e_{p'}}(a_{i_p-1}\inv) u \theta^{e_{q'}}(a_{i_{q'}-1})\theta^{e_q+1}(a_{i_q}\inv).\]
Observe that $a_{i_q}$ cannot be cancelled because $a_{i_q}\inv$ does not appear. 
To cancel $a_{i_q}\inv$, since $i_q\ge 3$ and $u$ is rank $2$, $a_{i_q}\inv$ must cancel with a letter to the left of $u$, since it is the only rank $i_q$ letter appearing to the right of $u$.
Also, $a_{i_{p'}-1}\inv$, the final letter of $\theta^{e_{p'}}(a_{i_{p'}})$ is an obstruction to cancelling $a_{i_q}$ with any letter from $\theta^{e_p+1}(a_{i_p})$ and $a_{i_{p'}-1}\inv$ and has rank at least $i_q$. Thus the only letters of rank $i_p-1$ in $w$ come from $\theta^{e_p+1}(a_{i_p})$, so every letter of rank $i_p-1$ has exponent $-1$. 
To cancel $a_{i_q}\inv$ with a letter from $\theta^{e_p+1}(a_{i_p})$ requires cancelling the rightmost $a_{i_p-1}\inv$ from $\theta^{e_p+1}(a_{i_p})$ which is impossible.

Similarly, if $a_{i_q}\inv$ cancels with a letter from $\theta^{e_{p'}}(a_{i_{p'}-1}\inv)$, the rightmost letter of $\theta^{e_{p'}}(a_{i_{p'}-1}\inv)$, which is $a_{i_{p'}-1}\inv$, must cancel too. By Proposition~\ref{types prop} in type \emph{ii}${}^{\pm1}$,   $\theta^{e_p+1}(a_{i_p}) \theta^{e_p}(a_{i_{p'}-1}\inv)$ is freely reduced, so its rightmost $a_{i_{p'}-1}\inv$ must cancel to the right.
However, $a_{i_{p'}-1}\inv$ is the highest rank letter in $\theta^{e_p}(a_{i_p-1})\inv$, so $e_{p'}-1\ge i_q$. 
Also $i_{p'}-1\le i_q$ because $a_{i_{p'}-1}\inv$ can only cancel with an $a_{i_{p'}-1}$. 
We cannot cancel $a_{i_{p'}-1}\inv$ from $\theta^{e_{p'}}(a_{i_{p'}-1}\inv)$ because then $a_{i_q}\inv$ would be the only other letter of the same rank. 
Thus it is impossible to cancel $a_{i_q}\inv$. 

\item \emph{Case: $i_p<i_q$.} Invert $w$ and apply the argument from   Case~\ref{prev case} \qedhere
\end{enumerate} 
\end{proof}

\begin{proof}[Proof of Proposition~\ref{types prop} in type v]   
We have    
$$z \ =  \ \theta^{e_{p}}(a_{i_p}^{\epsilon_{p}}) \cdots \theta^{e_{q}}(a_{i_q}^{\epsilon_{q}})$$ 
and no type \emph{i}--\emph{iv} subword $\hat{z}$ of $w$ overlaps with $z$.  More precisely, there is no   $0 \leq p' <  q' \leq l+1$   with $p \leq q' \leq q$ such that $\theta^{e_{p'}}(a_{i_{p'}}^{\epsilon_{p'}}) \cdots \theta^{e_{q'}}(a_{i_{q'}}^{\epsilon_{q'}})$ is of type   \emph{i}--\emph{iv}.  The claim is that free reduction of $z$ to $z'$ removes no letters of rank $3$ or higher.   Moreover, if   $\epsilon_p=1$, $i_p\ge 3$ and $e_p >0$, then  $z'$ (the reduced form of $z$) has prefix $\theta^{e_{p}-1}(a_{i_p})$.
 
Here is our proof of the first claim.  Suppose, for a contradiction, that some letter $a_\alpha^{\epsilon}$ (not necessarily principal)  in $z$ with $\alpha \geq 3$ and $\epsilon = \pm 1$  cancels with some $a_{\alpha}^{-\epsilon}$ to its right when $z$ is freely reduced.

Then $z$ has a subword $a_{\alpha}^\epsilon v a_{\alpha}^{-\epsilon}$ which freely equals the empty word. Since $\alpha\ge 3$, we know that $a_{\alpha}$ comes from some $\theta^{e_{p'}}(a_{i_{p'}}^{\epsilon_{p'}})$ where $i_{p'}\ge 3$ while $a_\alpha\inv$ comes from some $\theta^{e_{q'}}(a_{i_{q'}}^{\epsilon_{q'}})$ where $i_{q'}\ge 3$. 
Note that $p'\ne q'$ because otherwise $a_{\alpha}^\epsilon v a_{\alpha}^{-\epsilon}$ would be a subword of  $\theta^{e_{p'}}(a_{i_{p'}})$, which is freely reduced.
We may assume that $v$ contains no letter $a_{\beta}^{\delta}$ with $\beta\ge 3$ and $\delta\in\{\pm1\}$ that cancels to its right with an $a_{\beta}^{-\delta}$ in $v$, because otherwise we could replace our original choice of $a_{\alpha}^\epsilon v a_{\alpha}^{-\epsilon}$ with a shorter subword $a_{\beta}^\delta\cdots a_{\beta}^{-\delta}$. 
So $\Rank(v) \leq 2$, and $z$ has a subword 
\begin{equation}\theta^{e_{p'}}(a_{i_{p'}}^{\epsilon_{p'}}) u \theta^{e_{q'}}(a_{i_{q'}}^{\epsilon_{q'}})\label{type v subword}\end{equation}
where $u$ is either empty or $\Rank(u) \leq 2$. 

\renewcommand{\labelenumi}{\arabic{enumi}.  }
\renewcommand{\labelenumii}{\arabic{enumi}.\arabic{enumii}. }
\renewcommand{\labelenumiii}{\arabic{enumi}.\arabic{enumii}.\arabic{enumiii}. }
\begin{enumerate}
\item \textit{Case: $\epsilon_{p'}=1$ and $\epsilon_{q'}=-1$.}
In this case, \eqref{type v subword} is  type either $i$, or  $iii^{\pm 1 }$, or $iv$ contrary to the hypothesis that $z$ is type $v$. 

\item \textit{Case: $\epsilon_{p'}=1$ and $\epsilon_{q'}=1$.}
  For $a_{\alpha}^{-\epsilon}$ is to cancel, the $a_{i_{q'}}$ at the start of  $\theta^{e_{q'}}(a_{i_{q'}}^{\epsilon_{q'}})$  must cancel to its left.   If $e_p\ge 0$, then $\theta^{e_{p'}}(a_{i_{p'}}^{\epsilon_{q'}})$ is a positive word, so the only letters to the left of $a_{i_{q'}}$ with exponent $-1$ have lower rank, and such cancellation is not possible.  If $e_p<0$, then the last letter of  $\theta^{e_{p'}}(a_{i_{p'}})$ is $a_{i_{p'}-1}\inv$, so either $i_{p'}-1=2$ or ($u$ is the empty word and $i_{q'} = i_{p'-1}$).
In the former case: $\alpha=3$, but then $a_{\alpha}^\epsilon v a_{\alpha}^{-\epsilon}$ cannot freely equal the empty word because $a_{\alpha}^{\epsilon} = a_{\alpha}$ cannot cancel with the first letter $a_{i_{q'}}$ of $\theta^{e_{q'}}(a_{i_{q'}})$.  In the latter case: by \eqref{condition on e_i}, $e_{q'} =e_{p'}-1<0$, so we have a type $ii$ subword contained in $z$, contrary to the definition of a type \emph{v} subword.

\item \textit{Case: $\epsilon_{p'}=-1$ and $\epsilon_{q'}=-1$.}
Invert and apply the previous case to obtain a contradiction.

\item \textit{Case: $\epsilon_{p'}=-1$ and $\epsilon_{q'}=1$.}
In this case \eqref{type v subword} has   subword 
\[a_{i_{p'}}\inv u a_{i_{q'}}\]
where $a_{i_{p'}}\inv$ does not cancel to the left and $a_{i_{q'}}$ does not cancel to the right, which makes a contradiction because these letters both have rank higher than $2$. 
\end{enumerate}
So the first claim is proved.

The second claim---if  $\epsilon_p=1$, $i_p\ge 3$ and $e_p >0$, then  $z'$  has prefix $\theta^{e_{p}-1}(a_{i_p})$---is proved exactly as per the final paragraph of 
Case~\ref{222 case} of our proof above Proposition~\ref{types prop} in case \emph{iii}.
\end{proof} 

\subsection{The Piece Criterion} \label{Sec: Piece Criterion}

The  \emph{Piece Criterion} is the main technical result behind the correctness of our algorithm $\alg{Member}_k$.
Before we state it, we establish two preliminary propositions.  The first is used in the proof of the second, and the second provides a key step of our proof of the Piece Criterion.  In both  we refer to  a \emph{reduced word $h$ on}  $(a_1t)^{\pm 1}$, \ldots, $(a_kt)^{\pm 1}$, which is to say that $h$ contains no subwords $(a_it)^{\pm 1}(a_it)^{\mp 1}$.

\begin{prop}\label{cor: first letter of $h$}  
 Suppose  $u=u(a_1,\ldots,a_{m-1})$ is freely reduced and non-empty, $h = h(a_1t, \ldots, a_kt)$  
is freely reduced, $r,s\in \Z$,  and $2 \leq  m \leq k$. In $G_k$,  
\begin{center}
\begin{tabular}{rl}
  ($t^{r} a_m u   = ht^s$ \text{ or } $t^{r} a_m u a_m^{-1} = ht^s$)   \!\!\!\!\! & \quad $\implies$ \quad the first letter of $h$ is $(a_mt)$, \\
($t^{r}   u a_m^{-1} = ht^s$ \text{ or }  $t^{r} a_m u a_m^{-1} = ht^s$)   \!\!\!\!\! & \quad $\implies$ \quad  the final letter of $h$ is $(a_mt)^{-1}$. \\
\end{tabular}
\end{center}
\end{prop}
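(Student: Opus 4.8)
The plan is to reduce everything to the structural results of the previous section (Lemma~\ref{Lem: H cancellation} and Proposition~\ref{types prop}) by rewriting the left-hand side $t^r a_m u$ (or $t^r a_m u a_m\inv$, etc.) as a word of the form $\theta^{e_0}(a_{i_0}^{\epsilon_0})\cdots\theta^{e_{l+1}}(a_{i_{l+1}}^{\epsilon_{l+1}})$ obeying the constraints \eqref{condition on e_i} on exponents, and then analysing which letter of $h$ survives free reduction. Concretely, shuffling the $t^r$ in $t^r a_m u$ rightwards through the word (using $t\inv a_i t = \theta(a_i)$) produces a word $\theta^{-r}(a_m)\,\theta^{-r}(\text{letters of }u)\,t^{s'}$ for the appropriate $s'$, and $h$ is obtained by the reverse process applied to $ht^s$, i.e.\ $h t^s$ shuffled to $\theta^{-s}(\text{letters of }h)\,t^s$. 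Since $H_k$ is free on $a_1t,\dots,a_kt$ (Proposition~4.1 of \cite{DR}) and $H_k\cap\cyc t=\set 1$ (Lemma~6.1 of \cite{DR}), the equality $t^r a_m u = h t^s$ in $G_k$ forces $s=s'$ and an \emph{equality of freely reduced words} in $F(a_1,\dots,a_k)$ between the shuffled forms. So the first letter of $h$ is $(a_jt)^{\pm1}$, where $a_j^{\pm1}$ is the first \emph{principal} letter of the shuffled word — and I need to show $j=m$, $\epsilon=+1$.

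First I would treat the claim about the \emph{first} letter, under the hypothesis $t^r a_m u = h t^s$ (the case $t^r a_m u a_m\inv = h t^s$ is identical for the first letter, since appending $a_m\inv$ on the right does not change the leading principal letter). Write the shuffled word $w = \theta^{e_0}(a_m)\,\theta^{e_1}(a_{i_1}^{\epsilon_1})\cdots$ with $e_0=-r$, $i_0=m$, $\epsilon_0=1$, and the tail coming from $u$, which has rank $\le m-1\le k-1$; in particular every $i_x$ for $x\ge1$ satisfies $i_x\le m-1 < m = i_0$. The point is that $\theta^{e_0}(a_m)$ begins with $a_m$ (Lemma~\ref{Lem: Theta breakdown}, in both signs of $e_0$), and this leading $a_m$ is the unique letter of rank $m$ in the whole prefix up to (and including) any possible cancellation: indeed, to delete it one would need an $a_m\inv$ somewhere to its right, but all subsequent principal letters have rank $\le m-1$, and free reduction of a product cannot create a letter of rank $m$ out of lower-rank pieces. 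Hence the leading $a_m$ survives to the freely reduced form of $w$, which equals the shuffled form of $h$; so the first principal letter of that shuffled form is $a_m$ with exponent $+1$, forcing the first letter of $h$ to be $(a_mt)$. I would make this precise using Proposition~\ref{types prop}: the relevant prefix of $w$ is (a prefix of) a subword of type \emph{v} — because the letters after $\theta^{e_0}(a_m)$ have rank $\le m-1$, no type \emph{i}--\emph{iv} subword can straddle it in a way that removes the rank-$m$ letter — and in type \emph{v} no letter of rank $3$ or higher (in particular, the leading $a_m$, since $m\ge 3$ when $m\ge3$; the cases $m=2$ need the direct argument above since rank $2$ is not covered) is cancelled.

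For the statement about the \emph{final} letter, under $t^r u a_m\inv = h t^s$ or $t^r a_m u a_m\inv = h t^s$, I would argue symmetrically. After shuffling, the word $w$ ends with $\theta^{e_{l+1}}(a_m\inv)$, whose \emph{last} letter — by the second half of Lemma~\ref{Lem: Theta breakdown} — is $a_m\inv$ when $e_{l+1}<0$, or $a_m\inv$ itself when $e_{l+1}\ge0$ (since $\theta^{\ge0}(a_m\inv)$ is the inverse of $\theta^{\ge0}(a_m)$, whose first letter is $a_m$, so its last letter is $a_m\inv$). Again all preceding principal letters coming from $u$ have rank $\le m-1$, so this trailing $a_m\inv$ cannot be cancelled, hence it is the last letter of the freely reduced form, hence the last principal letter of the shuffled form of $h$ is $a_m\inv$, hence the final letter of $h$ is $(a_mt)\inv$. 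The cleanest packaging is again to invoke Proposition~\ref{types prop} in type \emph{v} (after noting the whole of $w$, or the relevant suffix, is type \emph{v} for the same rank reasons), and to dispatch the low-rank edge case $m=2$ by the explicit formula $\theta^e(a_2\inv)=a_1^{-e}a_2\inv$ for $e\le0$ and $\theta^e(a_2)=a_2a_1^{e}$ for $e\ge0$.

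\textbf{Main obstacle.} The delicate point I expect to be the real work is ruling out cancellation of the distinguished rank-$m$ letter when $m$ is small — especially $m=2$, which falls outside the ``rank $\ge3$'' guarantee of Proposition~\ref{types prop} — and, more generally, justifying rigorously that a product of freely reduced words cannot manufacture or destroy a letter of maximal rank except by an explicit adjacent cancellation. The type~\emph{v} analysis handles ranks $\ge3$, but for $m=2$ I would need the hands-on argument: the leading $a_2$ (resp.\ trailing $a_2\inv$) can only die against an $a_2\inv$ (resp.\ $a_2$) to its right (resp.\ left) in $u$, and since $\mathrm{Rank}(u)\le 1$ there is no such letter at all. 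So the crux is really a careful case check at $m=2$ together with a clean statement of the ``maximal-rank letter persists'' principle; everything else is bookkeeping with $\theta$ via Lemma~\ref{Lem: Theta breakdown}.
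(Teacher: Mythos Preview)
Your argument has a genuine gap at the crucial inference. You correctly observe that the freely reduced word on the $F(a_1,\ldots,a_k)$ side equals $a_m u$ (or $\theta^{-r}(a_m u)$, depending on conventions) and therefore begins with $a_m$. But the step ``so the first principal letter of that shuffled form is $a_m$ with exponent $+1$, forcing the first letter of $h$ to be $(a_mt)$'' is exactly the content of the proposition, and you have not justified it. Knowing that the freely reduced form of the $h$-side word $\tilde w = \theta^{e_0}(a_{j_0}^{\epsilon_0})\cdots$ starts with $a_m$ does \emph{not} immediately tell you that $j_0=m$ and $\epsilon_0=+1$: one must rule out that an initial block of $\tilde w$ cancels away, and that the surviving $a_m$ is produced inside some $\theta^{e_x}(a_{j_x}^{\epsilon_x})$ with $j_x>m$ (for instance $\theta^{e}(a_{m+1}^{-1})$ with $e<0$ begins with $a_m$). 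Your invocation of Proposition~\ref{types prop} does not help here: you apply it to the \emph{left}-side word coming from $t^r a_m u$, whose exponents are all equal to $-r$ and so do \emph{not} satisfy condition~\eqref{condition on e_i}; that proposition simply does not apply to that word.

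The paper's proof takes a different and more direct route, working entirely with $h$ rather than with the free-group normal form. It first shows that $h$ contains no $(a_jt)^{\pm1}$ with $j>m$ (using that $H_k$ is free on $a_1t,\ldots,a_kt$ and $H_k\cap\langle t\rangle=\{1\}$: if two such letters had cancelling $a_j^{\pm1}$'s, the subword of $h$ between them would lie in $\langle t\rangle\cap H_k=\{1\}$, contradicting reducedness). It then argues by contradiction: if the first $(a_mt)$ in $h$ were preceded by a nonempty prefix $\alpha=\alpha(a_1t,\ldots,a_{m-1}t)$, one shows either that the $a_m$ from that first $(a_mt)$ must cancel with a later $a_m^{-1}$ (again forcing a trivial subword of $h$), or that $\alpha\in\langle t\rangle$, both impossible. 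Proposition~\ref{types prop} is not used at all. Your approach could in principle be completed---after ruling out rank $>m$ letters, one can show via Lemma~\ref{Lem: H cancellation} that the prefix of $\tilde w$ before the surviving $a_m$ cannot freely reduce to the empty word unless it is empty---but that argument is essentially the paper's, and your write-up does not carry it out.
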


\begin{proof}
The second statement follows from the first as can be seen by inverting both sides  of the equalities and then rearranging so as to interchange the roles of $r$ and $s$.  

We will  prove the first statement in the case $t^{r} a_m u   = ht^s$ only,  as the case $t^{r} a_m u  a_m^{-1} = ht^s$ can be proved in essentially the same way.   

So assume  $t^{r} a_m u   = ht^s$, and so $a_m u   = t^{-r} h t^s$, in $G_k$.  Consider carrying all the $t^{\pm 1}$  in $t^{-r} h t^s$ from left to right through the word, with the effect of applying $\theta^{\mp 1}$ to the intervening letters $a_i^{\pm 1}$, and then freely reducing, so as to arrive at  $a_m u$.  

We will first argue that  $h$ contains no $(a_{m+1}  t)^{\pm 1}, \ldots, (a_{k}  t)^{\pm 1}$.  Suppose otherwise.  Let $i$ be maximal such that $h$ contains an  $(a_i t)^{\pm 1}$.  As carrying all the $t^{\pm 1}$ to the right and cancelling gives  $a_m u$, there must  be  an  $(a_i t)^{\mp 1}$ in    $h$ so that there is an $a_i ^{\mp 1}$ to cancel with the $a_i^{\pm 1}$ in our  $(a_i t)^{\pm 1}$---this is because applying $\theta^{\pm1}$ to $a_1^{\pm 1}, \ldots, a_i^{\pm 1}$, neither creates nor destroys any $a_i^{\pm 1}$.  But then if $h'$ is the subword of $h$ that has first and last (or last and first) letters these $(a_i t)^{\pm 1}$ and $(a_i t)^{\mp 1}$, then  $t^{r'}  h' = t^{s'}$ for some $r' , s' \in \Z$.  That then implies that $h' \in \langle t \rangle$.  But    $H_k \cap \langle t \rangle = \set{1}$ by  Lemma~6.1 of \cite{DR}, so  $h =1$ in $G_k$.   But   
$H_k = F(a_1t, \ldots, a_kt)$ by Proposition~4.1   of \cite{DR}, and so   our assumption that $h$ is freely reduced is contradicted.    

Next notice that there must be an $(a_m t)$ in $h$   because $a_mu$ contains an $a_m$ and applying $\theta^{\pm1}$ to $a_1^{\pm 1}, \ldots, a_m^{\pm 1}$ neither creates nor destroys any $a_m^{\pm 1}$.  
Suppose, for a contradiction, that the first $(a_m t)$ in $h$ is not at the front. 
Express $h$ as  $\alpha (a_mt) \beta$ where $\alpha = \alpha(a_1t, \ldots, a_{m-1}t)$ is non-empty.   

We claim that the  $a_m$ of the first $(a_m t)$ in $h$ must cancel with some subsequent $a_m\inv$.  Suppose otherwise.  We have that  $$t^{-r} h t^s  \ = \  t^{-r} \alpha (a_mt) \beta t^s \ = \  v t^{j} (a_mt) \beta t^s $$  for some $v=v(a_1, \ldots, a_{m-1})$ and some $j \in \Z$.  But then $v =1$ as the first $a_m$ serves as a barrier to cancelling away $v$ when the remaining $t^{\pm 1}$ are carried to the right:  applying $\theta^{\pm 1}$ to $a_m$ only produces  new letters $a_1^{\pm 1}, \ldots, a_{m-1}^{\pm 1}$ (see Lemma~7.1 in \cite{DR}) to its right, and (by assumption) it is not cancelled away by a subsequent $a_m\inv$.    But then $\alpha \in \langle t \rangle$, leading to a contradiction as before.  

Now, if   $a_m$ of the first $(a_m t)$ in $h$ cancel with some subsequent $a_m\inv$, by the same argument as earlier, the subword bookended by that $(a_m t)$ and $(a_m t)\inv$ must freely reduce to the empty word, contradicting the assumption that $h$ is freely reduced. 
\end{proof}

To follow the details of the following proof it will help to have a copy of Definition~\ref{five types} and Proposition~\ref{types prop} to hand.  

\begin{prop}\label{Prop: set prefix}
Suppose $u = u(a_1,\ldots,a_{m-1})$ is freely reduced,   $h = h(a_1t, \ldots, a_kt)$ is  freely reduced, $r,s\in\Z$,  $3 \leq m  \leq k$, and $t^{r} a_m u   = ht^s$ or   $t^{r} a_m u a_m^{-1} = ht^s$ in $G_k$.  If  $r>0$, then $\theta^{r-1}(a_m)$ is a prefix of $a_mu$. 
\end{prop}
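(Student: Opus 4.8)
The plan is to analyze the word $h$ via its ``delooping'': shuffling all the $t^{\pm1}$ in $t^{-r}ht^s$ rightwards through the word, applying $\theta^{\mp1}$ to the $a_i^{\pm1}$ they pass, and then freely reducing, must yield $a_mu$ (respectively $a_mua_m^{-1}$). By Proposition~\ref{cor: first letter of $h$}, the first letter of $h$ is $(a_mt)$, so we may write $h = (a_mt)h'$ with $h' = h'(a_1t,\ldots,a_kt)$ freely reduced and not beginning with $(a_mt)^{-1}$. Carrying the $t$'s rightwards turns $t^{-r}(a_mt)h't^s$ into $a_m \theta^{1-r}(\,\cdot\,)\cdots$, i.e.\ the ``$\theta$-history'' of $h$ when written in the principal-letter form $\theta^{e_0}(a_{i_0}^{\epsilon_0})\cdots\theta^{e_{n}}(a_{i_n}^{\epsilon_n})$ of Section~\ref{Constraining cancellation} has $i_0 = m$, $\epsilon_0 = 1$, and $e_0 = -r+1 = 1-r < 0$ (using $r>0$). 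This is exactly the hypothesis ``$\epsilon_p=1$, $i_p\ge 3$, $e_p>0$'' of Proposition~\ref{types prop} applied to the inverse, or more precisely I want the conclusion ``$\theta^{e_p-1}(a_{i_p})$ is a prefix of the reduced form'' — here the relevant statement is about the prefix $\theta^{e_0}(a_m) = \theta^{1-r}(a_m)$ of the unreduced word $w$ being (essentially) preserved.

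The key move is: since $e_0 = 1-r$ and I want to conclude $\theta^{r-1}(a_m)$ is a prefix of $a_mu$, I invert. Set $\tilde w = w^{-1}$; then $\tilde w$ has principal letters running in reverse, ending with $\theta^{1-r}(a_m^{-1})$, and $\tilde w$ freely reduces to $u^{-1}a_m^{-1}$ (resp.\ $a_mu^{-1}a_m^{-1}$). Equivalently, applying $\theta^{r-1}$ to the whole situation: $\theta^{r-1}(a_mu) = \theta^{r-1}(w)$ and $\theta^{r-1}(a_m)$ is the first principal block of $\theta^{r-1}(w)$, which now has $e_0 = 0\ge 0$ and $\epsilon_0=1$, $i_0=m\ge 3$. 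I then decompose $\theta^{r-1}(w)$ — which freely reduces to $\theta^{r-1}(a_mu)$, a word whose reduced form obviously has $\theta^{r-1}(a_m)$ as an honest prefix only once we know no cancellation eats into it — into maximal subwords of the types in Definition~\ref{five types}, with the leading block $\theta^{e_0}(a_{i_0})$ lying in the first such subword (of type i, ii, iii, iv, or v, all of which, by Proposition~\ref{types prop}, have reduced form beginning with $a_{i_p}$, and of type v with reduced form beginning with the full $\theta^{e_p-1}(a_{i_p})$ when $e_p>0$; the borderline case $e_p=0$ giving prefix $a_{i_p}$ alone suffices since $\theta^{0}(a_m)=a_m$). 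Cancellation between distinct type-blocks removes only rank $\le 2$ letters (Proposition~\ref{types prop}, type v clause, plus Lemma~\ref{Lem: H cancellation} controlling the low-rank glue), so the leading $a_m = a_{i_0}$ (rank $m\ge 3$) survives, and in fact the whole block $\theta^{e_0}(a_m)$ survives. Translating back through $\theta^{-(r-1)}$ then gives that $\theta^{r-1}(a_m)$ is a prefix of $a_mu$.

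The main obstacle I anticipate is the bookkeeping of the type-decomposition: verifying that the leading principal block $\theta^{e_0}(a_{i_0}^{\epsilon_0})$ with $e_0 = 1-r\le 0$, $\epsilon_0 = 1$ genuinely falls inside a single maximal subword of one of the admitted types and that the ``prefix survives'' conclusions of Proposition~\ref{types prop} assemble across adjacent type-blocks without a rank-$\ge 3$ letter from the leading block being consumed at a block boundary. This requires knowing that whenever two consecutive type-blocks abut, the only letters that can cancel between them have rank $\le 2$ — which is precisely what the type~v clause of Proposition~\ref{types prop} and Lemma~\ref{Lem: H cancellation} are set up to give, but stitching them into a clean induction on the number of blocks (or on $\ell(h)$) is the delicate part. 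A secondary nuisance is the two parallel cases $t^ra_mu=ht^s$ and $t^ra_mua_m^{-1}=ht^s$; the second requires also invoking the ``final letter is $(a_mt)^{-1}$'' half of Proposition~\ref{cor: first letter of $h$} so that the trailing $a_m^{-1}$ is accounted for and does not interfere with the leading block, but the prefix argument is otherwise identical.
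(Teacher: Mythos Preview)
Your overall architecture---use Proposition~\ref{cor: first letter of $h$} to pin down the first letter of $h$, write the word in principal-letter form, decompose into maximal type-blocks per Definition~\ref{five types}, apply Proposition~\ref{types prop}, and check that no rank~$\geq 3$ cancellation crosses block boundaries---is exactly the paper's approach. But you have a sign error that derails the execution.

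From $t^{-1}a_it=\theta(a_i)$ we get $t^{-r}a_i=\theta^{r}(a_i)t^{-r}$, so shuffling the $t$'s rightward in $t^{-r}(a_mt)h'\cdots$ gives $\theta^{r}(a_m)t^{-r+1}h'\cdots$. Hence $e_0=r$, not $1-r$. Since $r>0$, you already have $e_0>0$, $\epsilon_0=1$, $i_0=m\geq 3$. This immediately forces the leading type-block $z_l$ to be of type \emph{i}, \emph{iii}, or \emph{v} (types \emph{ii}, \emph{iii}$^{-1}$, \emph{iv} all require $e_p<0$, and type \emph{ii}$^{-1}$ requires $\epsilon_p=-1$), and for each of these Proposition~\ref{types prop} gives the prefix $\theta^{e_0-1}(a_{i_0})=\theta^{r-1}(a_m)$ directly. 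No inversion, no application of $\theta^{r-1}$, no ``translating back'' is needed.

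Your detour through applying $\theta^{r-1}$ does not actually work: after that shift you would have $e_0=0$, so Proposition~\ref{types prop} only yields prefix $a_m$ of the reduced form of $\theta^{r-1}(a_mu)$, and knowing that $\theta^{r-1}(a_mu)$ (reduced) begins with $a_m$ does not let you conclude that $a_mu$ begins with $\theta^{r-1}(a_m)$---applying $\theta^{-(r-1)}$ to a prefix relation does not undo the free reduction that may have happened.

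With the sign corrected, what remains is precisely the case analysis you flag as ``the delicate part'': showing that for each pair of adjacent type-blocks $z_{i+1}z_i$, the reduced forms $z'_{i+1}$ and $z'_i$ admit no cancellation between them. The paper does this by a case split on the types of $z_{i+1}$ and $z_i$, using the first/last-letter information from Proposition~\ref{types prop} together with the maximality of each block and Lemma~\ref{Lem: H cancellation}. This is real work and cannot be waved away, but your instinct about which tools govern it is correct.
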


\begin{proof}
We will prove the case where  $t^{r} a_m u a_m^{-1} = ht^s$ in $G_k$.  The proof for the case 
$t^{r} a_m u   = ht^s$   is  the same.

Proposition~\ref{cor: first letter of $h$}  tells us that  the first and last letters of $h$ are $(a_mt)$ and $(a_mt)\inv$, respectively.  Express $h$ as $(a_{i_0} t)^{\epsilon_0} \cdots (a_{i_{j+1}} t)^{\epsilon_{j+1}}$ where $\epsilon_0=1$, and $\epsilon_1, \ldots,  \epsilon_j = \pm 1$, and $\epsilon_{j+1} = - 1$, and $i_0=i_{j+1}=m$, and $i_1, \ldots, i_j \in \set{1, \ldots, m-1}$.  

If we shuffle all the $t^{\pm 1}$ in $t^{-r} ht^s$ to the right, then the power of $t$ emerging on the right cancels away since $t^{-r} ht^s$ equals  $a_m ua_m\inv$ and $u = u(a_1,\ldots,a_{m-1})$ in $G_k$, and we get  
\[ \pi \ := \  a_mua_m\inv \ = \  \theta^{e_0}(a_{i_0}^{\epsilon_0})  \cdots \theta^{e_j}(a_{i_j}^{\epsilon_j}) \theta^{e_{j+1}}(a_{i_{j+1}}^{\epsilon_{j+1}})\]
where    $e_l$ is, for $0 \leq l \leq j+1$, the exponent sum of the $t^{\pm 1}$ in $h$ that precede  $a_{i_l}$ in $t^{-r} ht^sa_m$  (which  includes the $t^{-1}$ of   $(a_{i_l} t)^{\epsilon_l}$ if  $\epsilon_l = -1$):       
$$e_l \  = \  
\begin{cases}
r  + \epsilon_1 + \cdots + \epsilon_{l-1} & \text{ if }  \ \epsilon_l =1 \\ 
r  + 1  + \epsilon_1 + \cdots + \epsilon_{l-1}  & \text{ if } \ \epsilon_l = - 1.  
\end{cases}
  $$
 Also  $a^{\epsilon_x}_{i_x} \neq a^{-\epsilon_{x+1}}_{i_{x+1}}$ for $x= 0, \ldots, j$ because $h$ is freely reduced as a word on $(a_1t)^{\pm 1}, \ldots, (a_k t)^{\pm 1}$.    So, $\pi$ is of the form in which it appears in Definition~\ref{five types}.

We will work right to left through $z$ choosing subwords  $z_1$, $z_2$, .... until we have $\pi$ expressed as a concatenation $z_l \cdots z_2 z_1$.    Define $\pi_1 := \pi$ and   define $z_1$ to be the maximal length suffix of $\pi_1$ of one of the five types of Definition~\ref{five types}.  (Such a suffix exists if $\pi_1$ is non-empty, as there must be a type  \textit{v}  suffix if no other type.)   Let $\pi_2$ be $\pi_1$ with the suffix  $z_1$ removed, and then define   $z_2$ to be the maximal length suffix of $\pi_2$ of one of the  five types of Definition~\ref{five types}.  Continue likewise until $z$ is exhausted and we have $\pi = z_l \cdots z_2 z_1$.  

Let $\pi', z'_1, \ldots, z'_l$ denote the freely reduced forms of  $\pi, z_1, \ldots, z_l$, respectively. We will use  Proposition~\ref{types prop} to argue that 
$\pi' =  z'_l \cdots z'_2 z'_1$.  In other words, when freely reducing $\pi$, all cancellation is within the $z_i$---none occurs between a $z_{i+1}$ and the neighboring $z_i$.   

Given how Proposition~\ref{types prop} identifies the first and last letters of each $z'_i$ when of type \textit{i}--\textit{iv}, and given that  
$a^{\epsilon_x}_{i_x} \neq a^{-\epsilon_{x+1}}_{i_{x+1}}$ for $x= 0, \ldots, j$,  cancellation between  $z'_{i+1}$ and $z'_i$ is ruled out except in  these four situations:
\begin{itemize}
\item  $z_i$ is of type  \textit{ii}${}\inv$,
\item $z_{i+1}$ is of type \textit{ii},
\item $z_i$ is of type \textit{v},
\item $z_{i+1}$ is of type \textit{v}.  
\end{itemize}
We will explain why these too do not give rise to cancellation.  Express $z_{i+1}$ and $z_i$ as:
$$z_{i+1}  \ = \  \theta^{e_p}(a^{\epsilon_p}_{i_p})   \cdots \theta^{e_q}(a^{\epsilon_q}_{i_q}) \qquad \text{ and } \qquad z_i  \ = \   \theta^{e_{p'}}(a^{\epsilon_{p'}}_{i_{p'}})   \cdots \theta^{e_{q'}}(a^{\epsilon_{q'}}_{i_{q'}}).$$  (So $p' = q+1$.) 

\emph{Case:  $z_{i+1}$ not type v,  $z_i$ type ii${}\inv$.}   The first letter of   $z'_i$ is $a_{i_{p'} -1}$ by Proposition~\ref{types prop} in type \textit{ii}${}\inv$.  
If $z_{i+1}$ is of type \textit{ii}, then the final letter of $z'_{i+1}$  is $a_{i_{{p'}-1} -1}\inv$ (remember $p'-1 =q$) which cannot cancel with the $a_{i_{p'} -1}$ at the start of $z'_i$ since $a_{i_{{p'}-1}}^{\epsilon_{{p'}-1}}$ and $a_{i_{p'}}^{\epsilon_{p'}}$ are not mutual inverses and $\epsilon_{p'-1} = 1$ and $\epsilon_{p'} = -1$.   
If $z_{i+1}$ is of type \textit{i}, \textit{ii}${}\inv$, \textit{iii}${}^{\pm 1}$, or \textit{iv}, then the final letter of $z_{i+1}$ is $a_{i_{p'-1}}\inv$ which cannot be $a_{i_{p'}-1}\inv$ as that would contradict the maximality of $z_i$: prepending $\theta^{e_p'}(a_{i_{p'-1}}^{\epsilon_{p'-1}})$  to $z_i$ would give a longer type \textit{ii}${}\inv$ word.    

\emph{Case:   $z_{i+1}$  type ii, $z_i$  not type v.}    Similarly, there can be no cancellation between $z'_{i+1}$ and $z'_i$.  In the cases where $z_i$ is of type  \textit{i}, \textit{ii}, \textit{iii}${}^{\pm 1}$, or iv appending $\theta^{e_{p+1}}(a_{i_{p+1}}^{\epsilon_{p+1}})$ to $z_{i+1}$ would give a longer type \textit{ii} word, contradicting the definition of $z_i$ as a type~\textit{v} word.  

\emph{Case:  $z_{i+1}$ not type \textit{ii}, $z_i$ type \textit{v}.}  Then $z_{i+1}$ cannot be of type \textit{v}, else $z_{i+1} z_i$ would be of type \textit{v} contrary to maximality of $z_i$.  So $z_{i+1}$ is of type \textit{i}, \textit{ii}${}\inv$, \textit{iii}${}^{\pm 1}$ or \textit{iv}, and therefore $i_{q} \geq 3$ and $\epsilon_q =-1$, and by Proposition~\ref{types prop}, the final letter of $z'_{i+1}$ is $a\inv_{i_{q}}$.      So if there is cancellation between $z'_{i+1}$ and $z'_i$, then the first letter of $z'_i$ must be $a_{i_{q}}$.  But then,   there is a subword 
$$\pi'' \ := \ \theta^{e_q}(a^{-1}_{i_q})\theta^{e_{p'}}(a^{\epsilon_{p'}}_{i_{p'}}) \cdots \theta^{e_{m}}(a^{\epsilon_{m}}_{i_{m}})$$ of $z_{i+1}z_i$ such  that  the    $a^{-1}_{i_q}$ in $\theta^{e_q}(a^{-1}_{i_q})$  cancels with some $a_{i_q}$  in  $\theta^{e_{m}}(a^{\epsilon_{m}}_{i_{m}})$ on free reduction and $i_{p'}, \cdots, i_{m-1} \leq 2$---otherwise there would be some intervening letter of rank at least $3$ which would have to cancel away on freely reducing this subword and hence on freely reducing $z_i$,  contrary to  Proposition~\ref{types prop} in type \textit{v}.  

Suppose $\epsilon_m =1$.  Then  $\theta^{e_{m}}(a^{\epsilon_{m}}_{i_{m}})$ is $a_{i_m}$ times a word on lower rank letters.   So, as the    $a^{-1}_{i_q}$ in $\theta^{e_q}(a^{-1}_{i_q})$  cancels away when $\pi''$ is freely reduced,     $a^{\epsilon_{m}}_{i_{m}} = a_{i_q}$.  
But then the intervening subword $\theta^{e_{p'}}(a^{\epsilon_{p'}}_{i_{p'}}) \cdots \theta^{e_{m-1}}(a^{\epsilon_{m-1}}_{i_{m-1}})$ has rank at most $2$ and freely reduces to the empty word, and so is empty  by Lemma~\ref{Lem: H cancellation}.  
So  $p'=m$ and, as $a^{\epsilon_{m}}_{i_{m}} = a_{i_q}$, that contradicts  the $x = q$ instance of $a^{\epsilon_x}_{i_x} \neq a^{-\epsilon_{x+1}}_{i_{x+1}}$.

Suppose, on the other hand, $\epsilon_m =-1$.  If $e_m \geq 0$, then 
$\theta^{e_m}(a^{-1}_{i_{m}})$ contains no positive letters and so cannot supply a letter to cancel with 
$a\inv_{i_{q}}$.  If  $e_m < 0$ and $i_m =3$, then the only letter in $\theta^{e_m}(a^{-1}_{i_{m}})$  of rank at least three is a single $a^{-1}_{3}$, and that cannot cancel with $a\inv_{i_{q}}$.  
If  $e_m < 0$ and $i_m >3$, then the first letter of $\theta^{e_m}(a^{\epsilon_{m}}_{i_{m}})$  is  $a_{i_m -1}$  (Lemma~\ref{Lem: Theta breakdown}) and  this could only cancel with the  $a\inv_{i_{q}}$ were the intervening subword $\theta^{e_{p'}}(a^{\epsilon_{p'}}_{i_{p'}}) \cdots \theta^{e_{m-1}}(a^{\epsilon_{m-1}}_{i_{m-1}})$ empty (as before) and $p' =  m=q+1$, but in that case $z_i$ has prefix $\theta^{e_{p'}}(a^{\epsilon_{p'}}_{i_{p'}}) = \theta^{e_{q+1}}(a^{-1}_{i_{q}+1})$, violating the definition of a type \textit{v} subword because $\theta^{e_q}(a_{i_q}\inv)\theta^{e_{q+1}}(a^{-1}_{i_{q}+1})$ is type ii$\inv$.
 
\emph{Case:   $z_{i+1}$ type \textit{v}, $z_i$ not type ii${}\inv$.} As in the previous case, $z_i$ cannot be of type \textit{v}, so $z_i$ is type~\textit{i}, \textit{ii}, \textit{iii}${}^{\pm 1}$ or \textit{iv} and $i_{q+1} \geq 3$. 
The same arguments as the previous case apply to tell us that cancellation is impossible.  The final case concludes with the maximality of the type \textit{i}, \textit{ii}${}\inv$, \textit{iii}${}^{\pm 1}$ or \textit{iv} word $z_i$ being contradicted.

\emph{Case:   $z_{i+1}$  type \textit{v}, $z_i$ type \textit{ii}${}\inv$.} We have that $$z_{i+1}  \ = \   \theta^{e_p}(a^{\epsilon_p}_{i_p})   \cdots \theta^{e_q}(a^{\epsilon_q}_{i_q}) \qquad \text{ and } \qquad z'_i \ = \     \theta^{e_{p'}}(a_{i_{p'} -1}) \theta^{e_{q'}+1}(a_{i_{q'}}\inv)$$   by definition and by Proposition~\ref{types prop} in type \textit{i}, respectively, and $e_{q'} <0$, $i_{q'} \geq 3$, and $i_{p'} \geq 2$.   Moreover, the first letter of $z'_i$ is  $a_{i_{p'} -1}$  by Proposition~\ref{types prop} in type~\textit{ii}. Suppose  $i_{p'}$ is $2$ or $3$.  Then  $z_{i+1}$ has suffix    $\theta^{e_q}(a^{\epsilon_q}_{i_q}) = \theta^{e_q} (a_{i_{p'}-1}\inv)$ or something of rank at most $2$ which could be prepended to $z_i$   contradicting its maximality.       Suppose, on the other hand,  $i_{p'} > 3$.  If there is cancellation between $z'_{i+1}$ and $z'_i$, then  a letter of rank at least $3$ in $z_{i+1}$ cancels with the first letter   $a_{i_{p'} -1}$ of $z'_i$. 
As in the preceding cases, conclude that $a_{i_q}^{\epsilon_q}$ must cancel with the first letter of $z_i'$, so $i_q=i_{p-1}$ and $\epsilon_q=-1$, contradicting maximality of $z_i$.

\emph{Case:   $z_{i+1}$  type \textit{ii}, $z_i$  type~v.}  
This case is essentially the same as the preceding one. Follow the steps from the previous case, except instead of appealing to maximality of $z_{i+1}\inv$, observe that the last letter of $z_{i+1}$ and $z_i$ form a type~\textit{ii} subword which is forbidden by the definition of a type~\textit{v} subword.

Having established that there is no cancellation between $z'_{i+1}$ and $z'_i$ for  $i=1, \ldots, l-1$, all that remains is to argue that $a_m z'_l$ has prefix $\theta^{r-1}(a_m)$, for it will then follow that $a_m \pi'$ has the same prefix.  

 But $z_l$ is type~\emph{i}, \emph{iii} or \emph{v}  because $e_0 = r>0$.    It has prefix $\theta^{e_0}(a_{i_0}^{\epsilon_0})= \theta^{r}(a_m)$ and $r > 0$, so as $i_0 =m \geq 3$,   Proposition~\ref{types prop} in types \emph{i}, \emph{iii} and \emph{v},  tells us that $\theta^{r-1}(a_m)$ is a prefix of $z'_l$, and hence of $\pi = a_mu$.
  \end{proof}

We are now ready for the Piece Criterion.  
It concerns only the case where the rank (denoted by $m$) is at least $3$.  In the cases $m=1$ and  $m=2$  our algorithms are straightforward and  the Piece  Criterion is not required to prove correctness.

\begin{prop}[The Piece Criterion]\label{Prop: Piece Criterion}
Suppose $m\ge 3$ and $r\in \Z$, and suppose $\pi = a_m^{\epsilon_1} u a_m^{-\epsilon_2}$ is a freely reduced word such that $u=u(a_1,\ldots,a_{m-1})$ and $\epsilon_1, \epsilon_2\in\{0,1\}$.
Define 
\begin{align*}
 x_l  \ &  
 :=   \  a_m\inv \theta^{l}(a_m)  \quad \text{ for } l\in \Z,  \\
  x \ & := \ \begin{cases} x_r & \text{if }r>0 \text{ and } \epsilon_1 =1 \\ \textit{empty word} & \text{otherwise}, \end{cases} \\
 \delta \ & := \  \begin{cases} r & \text{if} \  \ \epsilon_1 = 0 \\ 
\psi_m(r) &  \text{if} \  \   \epsilon_1=1 \text{ and } \,r\le 0 \\
r-1 &  \text{if} \  \  \epsilon_1=1 \text{ and } r>0. \end{cases} 
\end{align*}
Suppose $s \in \Z$. Let   $\pi'$  be the freely reduced form of $x^{-\epsilon_1} ua_m^{-\epsilon_2}$. Consider the following conditions.    
\begin{enumerate}
\renewcommand{\labelenumi}{\textup{(}\roman{enumi}\textup{)} }
\def\theenumi{\roman{enumi}}
\item $\epsilon_1 = 0$. \label{Condition: Piece 1}
\item $\epsilon_1 =1$ and $r\le 0$. \label{Condition: Piece 2}
\item $\epsilon_1= 1$, $r>0$ and $\theta^{r-1}(a_m)$ is a prefix of $\pi$. \label{Condition: Piece 3} 
\end{enumerate}
\begin{enumerate}
\renewcommand{\labelenumi}{\textup{(}\alph{enumi}\textup{)} }
\def\theenumi{\alph{enumi}}
\item $\epsilon_2=0$ and $t^\delta x^{-\epsilon_1} u \in H_kt^{s}$. \label{Condition: Piece a}
\item $\epsilon_2=1$, $s  \le  0$ and $t^\delta x^{-\epsilon_1} u\in H_kt^{\psi_m(s)}$. \label{Condition: Piece b}
\item $\epsilon_2=1$, $s >0$ and  $t^\delta x^{-\epsilon_1} u x_{s} \in H_kt^{s-1}$ and $\theta^{s-1}(a_m\inv)$ is a suffix of $\pi$. \label{Condition: Piece c} 
\end{enumerate}
\def\theenumi{\roman{enumi}}
\renewcommand{\labelenumi}{\roman{enumi}. }

We have  $t^r \pi\in H_kt^s$ if and only if ((\ref{Condition: Piece 1},  \ref{Condition: Piece 2}  or  \ref{Condition: Piece 3})  and $t^{\delta} \pi' \in H_k t^s$).   
Moreover,  $t^{\delta} \pi' \in H_k t^s$ if and only if    (\ref{Condition: Piece a}, \ref{Condition: Piece b} or \ref{Condition: Piece c}).   
\end{prop}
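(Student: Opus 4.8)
The statement has two parts, and both reduce to the characterisation of ``pushing a power of $t$ past a single generator'' provided by Proposition~\ref{Prop: Hydra Game} and Corollary~\ref{pushing cor}, combined with the cancellation analysis of Proposition~\ref{cor: first letter of $h$} and Proposition~\ref{Prop: set prefix}. I would start by disposing of the prefix-peeling step: I want to show that $t^r\pi \in H_kt^s$ holds if and only if one of (\ref{Condition: Piece 1}), (\ref{Condition: Piece 2}), (\ref{Condition: Piece 3}) holds \emph{and} $t^\delta\pi' \in H_kt^s$. Case (\ref{Condition: Piece 1}) is $\epsilon_1=0$: then $x$ is the empty word, $\delta=r$, $\pi'=ua_m^{-\epsilon_2}$ is the freely reduced form of $\pi$ itself, so the equivalence is a tautology. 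Case (\ref{Condition: Piece 2}) is $\epsilon_1=1$, $r\le 0$: here $x$ is still empty, $\delta=\psi_m(r)$, and by Proposition~\ref{Prop: Hydra Game} we have $t^r a_m\in H_kt^{\psi_m(r)}$ precisely because $r\le 0$ lies in the domain of $\psi_m$; thus $t^r\pi = t^r a_m (ua_m^{-\epsilon_2})\in H_k t^{\psi_m(r)}(ua_m^{-\epsilon_2}) = H_k t^\delta\pi'$, and the ``$H_kt^s$'' membership transfers verbatim in both directions. The interesting case is (\ref{Condition: Piece 3}), $\epsilon_1=1$, $r>0$: here I must show that if $t^r\pi\in H_kt^s$ then $\theta^{r-1}(a_m)$ is a prefix of $\pi$ (this is exactly Proposition~\ref{Prop: set prefix} applied with $u$ there equal to our $ua_m^{-\epsilon_2}$-minus-the-trailing-$a_m\inv$, i.e.\ to the two forms $t^r a_m u = ht^s$ and $t^r a_m u a_m\inv = ht^s$), and conversely that when that prefix condition holds, peeling it off is legitimate. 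Concretely, $\theta^{r-1}(a_m)$ a prefix of $\pi$ means $\pi = \theta^{r-1}(a_m)\,\rho$ for some word $\rho$; since $x_r = a_m\inv\theta^r(a_m) = a_m\inv \theta^{r-1}(a_m)\theta^{r-1}(a_{m-1})$ we get $a_m\inv\pi = x_r\,\theta^{r-1}(a_{m-1})\inv\rho$, hence $x^{-\epsilon_1}ua_m^{-\epsilon_2} = x_r\inv a_m\inv\pi$ is a rearrangement I can track explicitly; and $t^r a_m = t^r\theta^{-r}(a_m)t^r\cdot t^{-r}$… rather, the clean way is: $t^r\theta^{r-1}(a_m) = t\cdot t^{r-1}\theta^{r-1}(a_m) = t\,\theta^0(a_m)\,t^{r-1}\cdot(\text{carry }t^{r-1}\text{ right})$; using $t^{-1}a_m t = \theta(a_m)$ one checks $t^{r-1}\theta^{r-1}(a_m) = a_m t^{r-1}$, so $t^r\theta^{r-1}(a_m) = (a_mt)\,t^{r-1}$, giving $t^r\pi = (a_mt)\,t^{r-1}\rho$ with $t^{r-1}\rho$ to be processed further, i.e.\ $t^r\pi\in H_kt^s \iff t^{r-1}\rho\in H_kt^s$, and $t^{r-1}\rho = t^\delta\pi'$ once one identifies $\pi'$ with $\rho$ via the displayed rearrangement. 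I would write this out carefully as it is the crux of the first half.

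For the second part, $t^\delta\pi'\in H_kt^s$ iff (\ref{Condition: Piece a}), (\ref{Condition: Piece b}) or (\ref{Condition: Piece c}), the reasoning is dual and symmetric. Recall $\pi' $ is the freely reduced form of $x^{-\epsilon_1}u a_m^{-\epsilon_2}$, so $\pi' = (x^{-\epsilon_1}u)\,a_m^{-\epsilon_2}$ up to free reduction. If $\epsilon_2=0$ (condition (\ref{Condition: Piece a})) then $\pi' = x^{-\epsilon_1}u$ and the equivalence is again immediate. If $\epsilon_2=1$: write $w_0 := x^{-\epsilon_1}u$, so $t^\delta\pi' = t^\delta w_0 a_m\inv$, and $t^\delta w_0 a_m\inv\in H_kt^s$ iff $t^\delta w_0\in H_kt^s a_m$ iff $t^s a_m\inv\cdots$—more usefully, rearrange to $t^{\delta}w_0 a_m\inv = h t^s$ iff $t^\delta w_0 = h t^s a_m$ iff $t^{-s}(h\inv t^\delta w_0) = a_m t^{-s}\cdots$; the cleanest formulation is that $t^\delta w_0 a_m\inv\in H_kt^s$ iff $t^s a_m\inv\in H_k\inv t^\delta w_0$… I will instead invoke Corollary~\ref{pushing cor} directly: moving the power of $t$ (which becomes $\delta'$, the power emerging just before the final $a_m\inv$, after processing $w_0$) past $a_m\inv$. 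If $s\le 0$ then $s$ is in the domain of $\psi_m$ and $t^{\psi_m(s)}a_m\inv \in H_kt^s$ by Corollary~\ref{pushing cor}; so $t^\delta\pi'\in H_kt^s \iff t^\delta w_0\in H_kt^{\psi_m(s)}$, which is condition (\ref{Condition: Piece b}). If $s>0$ then $s$ is \emph{not} in the domain of $\psi_m$, so no power of $t$ pushes past the trailing $a_m\inv$ ``on the left''; instead one must use Proposition~\ref{cor: first letter of $h$}/Proposition~\ref{Prop: set prefix} applied to the inverted equation (the final-letter statement), which forces $\theta^{s-1}(a_m\inv)$ to be a suffix of $\pi'$ (equivalently of $\pi$, since the peeled prefix $x^{-\epsilon_1}$ has rank $<m$ when $\epsilon_1=1$ and in fact $\pi$ and $\pi'$ share their maximal-rank suffix structure), and then peeling that suffix off and re-absorbing $x_s = a_m\inv\theta^s(a_m)$ yields condition (\ref{Condition: Piece c}), by the same prefix-peeling computation as before applied on the right. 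Symmetry with the first half (invert everything, swap the roles of $r,s$ and of front/back) is what makes this work, and I would state that reduction explicitly rather than redo the computation.

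The main obstacle, and where I expect to spend most of the effort, is the ``only if'' direction of the prefix/suffix conditions—i.e.\ showing that $t^r\pi\in H_kt^s$ \emph{forces} $\theta^{r-1}(a_m)$ to be a prefix of $\pi$ when $r>0$, and dually for the suffix when $s>0$. This is precisely what Proposition~\ref{Prop: set prefix} supplies, but one has to be careful to set up its hypotheses correctly: Proposition~\ref{Prop: set prefix} is stated for $t^r a_m u = ht^s$ or $t^r a_m u a_m\inv = ht^s$ with $u=u(a_1,\dots,a_{m-1})$ freely reduced and $h$ freely reduced on the $(a_it)^{\pm1}$, and with $m\ge 3$. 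In our situation $\pi = a_m^{\epsilon_1}ua_m^{-\epsilon_2}$ with $\epsilon_1=1$, so $\pi = a_m u a_m^{-\epsilon_2}$ fits one of these two shapes exactly, and the hypothesis $t^r\pi\in H_kt^s$ says $t^r\pi = ht^s$ for some freely reduced $h$ on the generators of $H_k$ (which exists because $H_k$ is free on those generators, by Proposition~4.1 of \cite{DR}). So the only real work is bookkeeping: translating between ``$\theta^{r-1}(a_m)$ is a prefix of $a_mu$'' (the conclusion of Proposition~\ref{Prop: set prefix}) and ``$\theta^{r-1}(a_m)$ is a prefix of $\pi$'' (the statement of condition (\ref{Condition: Piece 3})), which is immediate since $\pi$ begins with $a_m u$. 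The rest—the ``if'' directions and conditions (\ref{Condition: Piece 1}), (\ref{Condition: Piece 2}), (\ref{Condition: Piece a}), (\ref{Condition: Piece b})—are direct applications of Proposition~\ref{Prop: Hydra Game} and Corollary~\ref{pushing cor} plus the elementary identity $t^{r-1}\theta^{r-1}(a_m) = a_m t^{r-1}$ (and its mirror), which I would verify by a one-line induction using $t\inv a_i t = \theta(a_i)$.
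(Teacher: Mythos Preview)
Your plan matches the paper's proof: both halves are organised by the value of $\epsilon_1$ (for the first) and $\epsilon_2$ (for the second), with Proposition~\ref{Prop: set prefix} supplying the only nontrivial direction of the prefix/suffix conditions and Proposition~\ref{Prop: Hydra Game}/Corollary~\ref{pushing cor} handling cases~(\ref{Condition: Piece 2}) and~(\ref{Condition: Piece b}).

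There is, however, a computational slip in your treatment of case~(\ref{Condition: Piece 3}) that would need correcting. You write $t^r\theta^{r-1}(a_m) = (a_mt)\,t^{r-1}$, but in fact $t^r\theta^{r-1}(a_m) = t\cdot\bigl(t^{r-1}\theta^{r-1}(a_m)\bigr) = t\,a_m\,t^{r-1}$, and $ta_m\neq a_mt$. Relatedly, your identification of $\pi'$ with $\rho$ is off: your own rearrangement already shows $x_r^{-1}a_m^{-1}\pi = \theta^{r-1}(a_{m-1}^{-1})\rho$, so $\pi'$ is the free reduction of $\theta^{r-1}(a_{m-1}^{-1})\rho$, not $\rho$ itself. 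The paper avoids all of this by not peeling a prefix in the equivalence step: it simply checks the one-line identity
\[
t^{r-1}\,x_r^{-1}\,a_m^{-1}\,t^{-r} \;=\; t^{r-1}\,\theta^r(a_m^{-1})\,t^{-r} \;=\; (a_mt)^{-1}\in H_k,
\]
from which $t^{\delta}\pi' = (a_mt)^{-1}t^r\pi$ in $G_k$ and hence $t^\delta\pi'\in H_kt^s \Longleftrightarrow t^r\pi\in H_kt^s$, with no reference to the prefix condition at all. Condition~(\ref{Condition: Piece 3}) then enters only through Proposition~\ref{Prop: set prefix} in the forward direction, exactly as you say. The dual identity $t^{s}a_mx_st^{-(s-1)}=a_mt$ handles~(\ref{Condition: Piece c}) in the same way. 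This is the route to take when you ``write this out carefully''.
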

 \begin{proof}  Suppose $s \in \Z$.
First suppose that $t^r\pi\in Ht^s$.  Then  (\emph{\ref{Condition: Piece 1}}, \emph{\ref{Condition: Piece 2}} or \emph{\ref{Condition: Piece 3}}) holds because if   $\epsilon_1= 1$ and $r>0$, then $\theta^{r-1}(a_m)$ is a prefix of $\pi$ by Proposition~\ref{Prop: set prefix}.  So  
$t^\delta x^{-\epsilon_1}u a_m^{-\epsilon_2} \in H_k t^s$ for the same $s\in \Z$.

Next we will prove that     $t^r\pi\in Ht^s$  is equivalent to  $t^\delta \pi' \in H_k t^s$ under the assumption that (\emph{\ref{Condition: Piece 1}}, \emph{\ref{Condition: Piece 2}} or \emph{\ref{Condition: Piece 3}}) holds.

 Under  \emph{\ref{Condition: Piece 1}},   $\epsilon_1=0$,  $x$ is the empty word, and $\delta = r$.  So $\blue{t^\delta \pi' =} t^\delta x^{-\epsilon_1}u a_m^{-\epsilon_2} =  t^r  u a_m^{-\epsilon_2} = t^r \pi$ and the equivalence is immediate.  

Under  \emph{\ref{Condition: Piece 2}}, $\epsilon_1=1$, $r \leq 0$,  $x$ is the empty word, and $\delta =  \psi_m(r)$.  
So $t^\delta \pi' =  t^\delta x^{-\epsilon_1}u a_m^{-\epsilon_2} =  t^{\psi_m(r)}  u a_m^{-\epsilon_2}$, giving the third of the following equivalences.   The first equivalence holds simply because  $\pi = a_m u a_m^{-\epsilon_2}$.  For the second,  $r$  is in the domain of $\psi_m$ because $r\leq 0$,   so $t^ra_m \in H_kt^{\psi_m(r)}$ by Proposition \ref{Prop: Hydra Game}, and so 
$t^{\psi_m(r)} a_m\inv t^{-r} \in H_k$.     
\begin{align*}
 t^r \pi & \in H_k t^s  \\
&  \Leftrightarrow \   t^r a_m u a_m^{-\epsilon_2}   \in H_k t^s \\ 
&   \Leftrightarrow \  t^{\psi_m(r)}  u a_m^{-\epsilon_2} \in H_kt^s \\
&   \Leftrightarrow \  t^{\delta} \pi' \in H_kt^s. 
\end{align*}

Under \emph{\ref{Condition: Piece 3}},   $\epsilon_1=1$, $r > 0$,  $x = x_r$, and $\delta = r-1$.   Observe that 
$$t^\delta \pi' =  t^{r-1} x\inv_r u a_m^{-\epsilon_2}  \in H_kt^s   \  \Leftrightarrow \    t^r \pi =  t^r a_m u  a_m^{-\epsilon_2}  \in H_kt^s$$ 
because $t^{r-1} x_r\inv a_m\inv t^{-r} = t^{r-1} \theta^r(a_m\inv) t^{-r} = (a_m t)\inv \in H_k$.  

So, assuming (\emph{\ref{Condition: Piece 1}}, \emph{\ref{Condition: Piece 2}} or \emph{\ref{Condition: Piece 3}}) holds,  $t^r \pi \in H_kt^s$ if and only if $t^\delta \pi'  \in H_k t^s$, as required.

Next we will prove that $t^{\delta} \pi' \in H_k t^s$ if and only if    (\emph{\ref{Condition: Piece a}}, \emph{\ref{Condition: Piece b}} or \emph{\ref{Condition: Piece c}}) holds.

Suppose $\epsilon_2=0$.  Then  $t^\delta \pi' = t^\delta x^{-\epsilon_1}u a_m^{-\epsilon_2} = t^\delta x^{-\epsilon_1}u$ and   so  $t^{\delta} \pi' \in H_k t^s$ is the same as Condition~\emph{\ref{Condition: Piece a}}.

Suppose, on the other hand, that $\epsilon_2=1$.   Suppose further that $s\le 0$.      Proposition~\ref{Prop: Hydra Game} tells us that $t^s a_m \in H_k t^{\psi_m(s)}$ since $s \leq 0$ and so is in the domain of $\psi_m$.  
So $t^{\delta}\pi' = t^{\delta} x^{-\epsilon_1} u a_m\inv \in  H_k  t^s$ if and only if $ t^{\delta}  x^{-\epsilon_1} u   \in H_kt^{\psi_m(s)}$. So $t^{\delta} \pi' \in H_k t^s$  is equivalent to Condition~\emph{\ref{Condition: Piece b}}.  

Finally, observe that
\begin{align*} 
& t^\delta \pi' = t^\delta x^{-\epsilon_1} u a_m\inv \in H_kt^s \\ 
 & \  \Leftrightarrow \ t^\delta x^{-\epsilon_1} u a_m\inv t^{-s} \in H_k \\      
	  & \  \Leftrightarrow \ t^\delta  x^{-\epsilon_1}  u  a_m\inv t^{-s}   (   t^{s} a_m   x_s t^{-(s-1)})   \in H_k \\ 
	  & \  \Leftrightarrow \ t^\delta  x^{-\epsilon_1}  u     x_s \in H_kt^{s-1} 
 \end{align*}
 because $   t^{s} a_m   x_s t^{-(s-1)}  = a_mt  \in \H_k$.  Suppose now that  $s>0$.  The   part of Condition~\emph{\ref{Condition: Piece c}} concerning the suffix of $\pi$ follows from Proposition~\ref{Prop: set prefix} (applied to $h^{-1}$).     So   $t^{\delta} \pi' \in H_k t^s$  is equivalent to Condition~\emph{\ref{Condition: Piece c}}.

We conclude that $t^r\pi \in Ht^s$ implies  (\emph{\ref{Condition: Piece 1}}, \emph{\ref{Condition: Piece 2}},  or \emph{\ref{Condition: Piece 3}}) and  (\emph{\ref{Condition: Piece a}}, \emph{\ref{Condition: Piece b}}, or  \emph{\ref{Condition: Piece c}}).
\end{proof}

\subsection{Our algorithm in detail}\label{Member construction}

Here we construct  $\alg{Member}_k$, where $k$ is, as usual, any integer greater than or equal to $1$, and is kept fixed.  $\alg{Member}_k$ inputs a word $w = w(a_1, \ldots, a_k, t)$ and    declares whether or not  $w$ represents an element of $H_k$.

Most of the workings of  $\alg{Member}_k$ are contained in a subroutine $\alg{Push}_k$, which   
inputs a valid $\psi$-word $f$ and a reduced word $v=v(a_1, \ldots, a_k)$, and declares whether or not $t^{f(0)}v \in H_k t^s$ for some  $s \in \Z$ and, if so, returns  a $\psi$-word $f'$ with $s= f'(0)$.  (If such an $s$ exists, it is unique by Lemma~6.1 in \cite{DR}.)  The key subroutine for $\alg{Push}_k$  when $k \geq 2$  is $\alg{Piece}_k$ which handles  the special case in which $w$ is a rank-$m$ piece.   $\alg{Piece}_k$  calls a subroutine $\alg{Back}_{k}$, which in turn calls a subroutine $\alg{Push}_{k-1}$.  So the construction of these three families of subroutines is inductive.     

Additionally, subroutines  $\alg{Prefix}_m$, and $\alg{Front}_m$  (where $3 \leq m \leq k$) are used.   These do not require an inductive construction, so we will give them first.  The designs of $\alg{Prefix}_m$,  $\alg{Front}_m$ (and also $\alg{Back}_m$) are motivated by  the Piece Criterion  (Proposition~\ref{Prop: Piece Criterion}).

\begin{algorithm*}[h]
\caption{ --- \alg{Prefix}$_m$, $m \geq 3$. \newline
$\circ$ \ Input  a rank-$m$ piece $\pi = a_m u a_m^{-\epsilon_2}$ (so,  $u=u(a_1, \ldots, a_{m-1})$ is reduced and $\epsilon_2 \in \set{0,1}$).  \newline 
$\circ$ \ Return the largest   integer  $i > 0$ (if any) such that $\theta^{i-1}(a_m)$ is a prefix of $\pi$.  \newline
$\circ$ \  Halt  in time in $O(\ell(\pi)^{2})$. 
}
\label{Alg: CheckPrefix}
\begin{algorithmic}[3]
\State \textbf{construct} $\theta^{i-1}(a_m)$ for $i=1, 2, \ldots$ until $\ell(\theta^{i-1}(a_m)) > \ell(\pi)$, and \textbf{compare} to $\pi$   
\State \textbf{return} the maximum $i$ encountered (if any) such that   $\theta^{i-1}(a_m)$ is a prefix of $\pi$ 
\end{algorithmic}
\end{algorithm*}

\begin{proof}[Correctness of $\alg{Prefix}_m$.]  
As  $\ell(\theta^{i-1}(a_m)) \geq i$  for $i  =1, 2, \ldots$,  the algorithm returns the appropriate $i$ in time  $O(\ell(\pi)^{2})$.
\end{proof}

$\alg{Front}_m$ takes a rank-$m$ piece $\pi$ and $\psi$-word $f$ and reduces the task of determining  whether $t^{f(0)} \pi \in Ht^s$   to performing a similar determination: specifically whether  $t^{f'(0)} \pi' \in Ht^s$  where $f'(0) = \delta$ and   $\pi'$ and $\delta$  are as per the Piece Criterion.   This will represent progress because $\pi'$ is a piece of rank-$m$  that does not begin with $a_m$, and because we are able to give good bounds on $\ell(\pi')$ and $\ell(f')$. 
   \begin{algorithm}[h]
    \caption{ --- \alg{Front}$_m$, $m \geq 3$.
     \newline
$\circ$ \  Input a rank-$m$ piece $\pi = a_m^{\epsilon_1} u a_m^{-\epsilon_2}$ with ${\epsilon_1},\epsilon_2\in \{0,1\}$, and a valid $\psi$-word $f = f(\psi_1, \ldots, \psi_k)$.   Let $r := f(0)$.  \newline 
$\circ$ \ Declare whether or not  (\emph{\ref{Condition: Piece 1}}, \emph{\ref{Condition: Piece 2}} or \emph{\ref{Condition: Piece 3}}) of the Piece Criterion holds. 
If so, output   $\pi'$ of the Criterion and a valid $\psi$-word $f'=f'(\psi_1, \ldots, \psi_k)$ such that $f'(0)$ equals $\delta$ of the Criterion.   These satisfy $\ell(\pi') \leq \ell(\pi)$  and  $\ell(f') \leq \ell(f) +1$, and  $t^r \pi \in H_k t^s$ if and only if $t^{f'(0)} \pi' \in H_k t^s$. \newline
$\circ$ \  Halt in  time  $O((\ell(w)+\ell(f))^{k+4} )$.}
    \label{Alg: Front_m}
    \begin{algorithmic}[3]
    \State \textbf{if} $\epsilon_1=0$ (so \emph{\ref{Condition: Piece 1}} holds), \textbf{output} $\pi' := u a_m^{-\epsilon_2}$ and $f' := f$, and \textbf{halt} \label{first line of front}
    \State \textbf{run} $\alg{Psi}(f)$ to determine whether or not $r \leq 0$  \label{Psi run here}
    \State \textbf{if} $\epsilon_1=1$ and $r \leq 0$ (so \emph{\ref{Condition: Piece 2}} holds),  \textbf{output} $\pi' := u a_m^{-\epsilon_2}$ and $f' := \psi_m f$, and \textbf{halt} \label{bounds are obvious}
\State
   \State we now have that   $\epsilon_1=1$ and $r > 0$  (so \emph{\ref{Condition: Piece 1}} and  \emph{\ref{Condition: Piece 2}} both fail, and it remains to test \emph{\ref{Condition: Piece 3}})   \label{take stock}
    \State \textbf{run} $\alg{Prefix}_m$ on $\pi$  \label{use checkprefix}
 \State \qquad \textbf{if} it fails to return an $i$  \textbf{declare} that  \emph{\ref{Condition: Piece 1}}, \emph{\ref{Condition: Piece 2}} and \emph{\ref{Condition: Piece 3}} all fail and \textbf{halt}
\State \qquad \textbf{else}  it returns some  some $i$  
\State \textbf{run}  \alg{Psi} on input $\psi_1^{i}f$ to check whether  $i < r$    \label{run sign}
\State \qquad \textbf{if}  $i < r$,  \textbf{then} \textbf{declare} that   \emph{\ref{Condition: Piece 1}}, \emph{\ref{Condition: Piece 2}} and \emph{\ref{Condition: Piece 3}} all fail 
 \State \qquad \textbf{else}  \emph{\ref{Condition: Piece 3}} holds, so   \textbf{return} the reduced form  $\pi'$ of  $\theta^r(a_m\inv)\pi$ and $f' := \psi_1f$   \label{return the answer} 
       \end{algorithmic}
  \end{algorithm}

	\begin{proof}[Correctness of $\alg{Front}_m$.] \quad \\ \vspace*{-6mm}
	\begin{itemize}
	\item[\ref{Psi run here}:] In was established in Section~\ref{Psi in detail} that  $\alg{Psi}$ on input $f$ halts in time $O(  \ell(f)^{k+4})$. 
	\item[\ref{take stock}:]  Whether \emph{\ref{Condition: Piece 3}}  holds depends on whether $\theta^{r-1}(a_m)$ is   a prefix of $\pi$, so that is what the remainder of the algorithm examines.  
\item[\ref{use checkprefix}:]  $\alg{Prefix}_m$ halts in time $O(\ell(\pi)^2)$.
\item[\ref{run sign}:]  At this point we know that  $\theta^{i-1}(a_m)$ is a prefix of $\pi$, and so $i \leq \ell(\pi)$.  Therefore,  $\ell(\psi_1^{i}f) \leq \ell(\pi)  + \ell(f)$, and so, by the bounds established in Section~\ref{Psi in detail},   \alg{Psi} halts  in time $O( (\ell(\pi)  + \ell(f))^{k+4})$.  
\item[\ref{return the answer}:]
For all $0 \leq p \leq q$,   $\theta^{p}(a_m)$ is a prefix of $\theta^{q}(a_m)$: after all, for $q \geq 0$,  $\theta^{q+1}(a_m) =  \theta^{q}(a_m) \theta^{q}(a_{m-1})$.  So, given that we know at this point that   $\theta^{i-1}(a_m)$ is a prefix of $\pi$ and  $r \leq i$, it is the case that $\theta^{r-1}(a_m)$ is also a prefix of $\pi$.   Note that $\theta^r(a_m\inv)\pi$ is $\theta^{r-1}(a_{m-1}\inv) ua_m^{-\epsilon_2}$  of the Criterion when  \emph{\ref{Condition: Piece 3}} holds. 
\end{itemize}
In  lines \ref{first line of front}, \ref{bounds are obvious}  and \ref{return the answer}, the claimed bound  $\ell(f') \leq \ell(f) +1$ is immediate, as is $\ell(\pi') \leq \ell(\pi)$   in lines \ref{first line of front}  and \ref{bounds are obvious}.   In line~\ref{return the answer}, $\pi'$ is the reduced form    of  $\theta^r(a_m\inv)\pi$
and   $\theta^{r-1}(a_m)$ is a prefix of $\pi$.   Now $\theta^r(a_m\inv)  = \theta^{r-1}(a_{m-1}\inv)  \theta^{r-1}(a_m\inv)$ and the length of $\theta^{r-1}(a_m\inv)$ is at least half  that  of $\theta^r(a_m\inv)$ (as $r>0$), and the last letter of  $\theta^{r-1}(a_{m-1}\inv)$  is $a_{m-1}\inv$.   So all of the prefix $\theta^{r-1}(a_m)$  of $\pi$ is cancelled away when $\theta^r(a_m\inv)\pi$ is freely reduced to give $\pi'$, and  $\ell(\pi') \leq \ell(\pi) $, as claimed.

The algorithm halts in   time   $O( (\ell(\pi)  + \ell(f))^{k+4})$  by our comments on lines \ref{take stock}, \ref{use checkprefix} and \ref{run sign} and the fact that  $\theta^r(a_m\inv)\pi$  in the final line has length at most $3\ell(\pi)$: after all, 
$\theta^r(a_m\inv) = \theta^{r-1}(a_{m-1}\inv)\theta^{r-1}(a_{m}\inv)$ and $\ell(\theta^{r-1}(a_{m-1}\inv))$ is at most $\ell(\theta^{r-1}(a_{m}\inv))$, and $\theta^{r-1}(a_{m}\inv)$ is the inverse of a prefix of  $\pi$.  	
\end{proof}

Next we construct $\alg{Back}_m$, $\alg{Piece}_m$ and    $\alg{Push}_{m}$.

For a  rank-$m$ piece $\pi$ which does not start with the letter  $a_m$,  $\alg{Back}_m$  determines  whether $t^{f(0)} \pi \in Ht^s$ for some $s\in \Z$, and if so it outputs a $\psi$-word $f'$ with  $f'(0) =s$.  Initially, it works  similarly to $\alg{Front}_m$ in that it reduces its task to performing a similar determination without the final letter   $a_m\inv$.  But then it calls $\alg{Push}_{m-1}$ to find out whether the $s$ exists, and, if so, to output a $\psi$-word $f'$ with  $f'(0) =s$.   A crucial feature of this algorithm is that  the lengths of the input data  to $\alg{Push}_{m-1}$ (specifically  $u'$ and $f$)  is carefully bounded in terms of the length of the inputs to $\alg{Back}_m$, and so does not blow up course of the induction.

\begin{algorithm}[h]
\label{Alg: Back_m}
\caption{ --- $\alg{Back}_m$, $m \geq 3$.     \newline
$\circ$ \ Input  a rank-$m$ piece $\pi  = u a_m^{-\epsilon_2}$ (so $u=u(a_1, \ldots, a_{m-1})$ is reduced and $\epsilon_2 \in \set{0,1}$)  and a valid $\psi$-word $f=f(\psi_1, \ldots, \psi_k)$. Let $r := f(0)$. \newline 
$\circ$ \ Declare whether or not $t^{r} \pi \in \bigcup_{s \in \Z}  H_k t^s$.  And, if it is,  
return a valid $\psi$-word $f'$ such that $t^{f(0)}\pi\in H_kt^{f'(0)}$,   $\ell(f')\le\ell(f) + 2(m-1)\ell(\pi) + 1$   and $\rank(f')\le \max\{\rank(f),m\}$. \newline
$\circ$ \ Halt  in   time   $O((\ell(\pi)+\ell(f))^{2m+k})$.} 
	\begin{algorithmic}[3]
		\State   \textbf{run} $\alg{Push}_{m-1}(u,f)$     to test whether or not $t^r u \in \bigcup_{s \in \Z} H_k t^s$ \label{first line}
		\State if it is, let $g$ be the valid $\psi$-word it outputs such that $t^r u \in  H_k t^{g(0)}$ \label{second line}
		\State
		\State \textbf{if} $\epsilon_2=0$,    \label{Line: Back trivial case1}
		\State \qquad \textbf{if}  $t^r u \in  H_k t^{g(0)}$  (so, (a) of the Criterion holds with $s=g(0)$), \textbf{return}  $f' := g$   \label{Line: Back trivial case2}
		\State \qquad \textbf{else} \textbf{declare}    $t^{r} \pi \notin \bigcup_{s \in \Z}  H_k t^s$  \label{Line: Back trivial case3}
		\State \qquad \textbf{halt} 
		\State 
		\State we now have that $\epsilon_2=1$
		\State  \textbf{run} $\alg{Psi}(\psi_m\inv g)$ to check   validity of  $\psi_m\inv g$ (so whether $g(0) \in \textup{Img} \, \psi_m$)  \label{start b}
		\State and, if so, to check  $\psi_m\inv g(0) \leq 0$ 	 (so, whether (\textit{\ref{Condition: Piece b}}) of the Criterion holds with $s=\psi_m\inv g(0)$) 
		 \State \qquad \textbf{if} so,  \textbf{halt} and \textbf{return} $f' := \psi_m\inv g$    \label{Line: Back inner word check} \label{Line: Back Option 2 Determination}
		\State
		 
		\State \textbf{run} $\alg{Prefix}_m(\pi\inv)$ to determine the maximum $i$ (if any) such that $a_{m-1}\inv \theta^{i-1}(a_m\inv)$ is a suffix of $\pi$  \label{run prefix} 
		\State \qquad if there is no such $i$ \textbf{halt} and \textbf{declare}  $t^{r} \pi \notin \bigcup_{s \in \Z}  H_k t^s$ \label{exhausted pos}
		\For{$s=1$ to $i$} \label{Line: Back pos check loop}
 			\State \textbf{run}  $\alg{Push}_{m-1}(u',f)$  where $u'$ is the freely reduced word representing $u a_m\inv\theta^s(a_m)$
			\State \qquad \textbf{if} it  outputs a $\psi$-word $h$, \textbf{run} $\alg{Psi}(\psi_1^{s-1} h)$  to check if $h(0) =s-1$   
				\State \qquad \qquad \textbf{if} so \textbf{halt} and \textbf{return} $f' := \psi_1  h$  \label{Line: Back 3 output}
 		\EndFor \label{end for loop}
		\State
		\State \textbf{declare} that $t^{f(0)} w \notin \bigcup_{s \in \Z}  H_k t^s$  \label{Line: Back reject}
	\end{algorithmic}
\end{algorithm}

\begin{proof}[For $m \geq 3$, correctness of $\alg{Push}_{m-1}$ (as specified below) implies correctness of \alg{Back}$_m$.] 

The idea is to employ the Piece Criterion in the instance  when $\epsilon_1 =0$, and therefore $\delta =r$, $\pi'=\pi$ and Condition~\emph{i} holds.   In this circumstance, the Criterion tells us that $t^r \pi\in H_kt^s$ (that is, $t^{\delta} \pi' \in H_k t^s$) if and only if (\emph{\ref{Condition: Piece a}}, \emph{\ref{Condition: Piece b}} or \emph{\ref{Condition: Piece c}}) holds.  

\begin{itemize}
\item[\ref{second line}:] Referring to the specifications of \alg{Push}$_{m-1}$, we see that $\ell(g) \leq  \ell(u) + \ell(f)$ and $\rank(g)\le \max\{\rank(f),m\}$.  

\item[\ref{Line: Back trivial case1}--\ref{Line: Back trivial case3}:]     \alg{Push}$_{m-1}$ in line  lines~\ref{first line}--\ref{second line}  tests whether or not $t^{\delta}x^{-\epsilon_1}u$ (that is, $t^r u$) is in $\bigcup_{s \in \Z} H_k t^s$ and, if so, it identifies the $s$ such that  $t^{\delta}x^{-\epsilon_1}u \in H_k t^s$.  The Piece Criterion  then tells us  that the answer to whether  $t^r \pi \in \bigcup_{s \in \Z} H_k t^s$  is the same, and if affirmative the $s$ agrees. (This instance of the   Criterion has no real content because $t^{\delta}x^{-\epsilon_1}u = t^r \pi$.  The other two instances that follow are more substantial but will follow the same pattern of reasoning.)  By our comment on line~\ref{second line},  $\ell(f') \le  \ell(f) + \ell(u) = \ell(f) + \ell(\pi)$, and $\rank(f')\le \max\{\rank(f),m\}$, as required.

\item[\ref{start b}--\ref{Line: Back Option 2 Determination}:]  
Again, we refer back to lines~\ref{first line}--\ref{second line} for whether or not $t^{\delta}x^{-\epsilon_1}u$ (that is, $t^r u$) is in $\bigcup_{s_0 \in \Z} H_k t^{s_0}$.  Assuming that it is, in fact, it is in $H_k t^{g(0)}$, and then Condition~\emph{\ref{Condition: Piece b}},  is satisfied if and only if $g(0) = \psi_m(s)$ for some $s \leq 0$.  And that is checked in line  \ref{start b}.  The Piece Criterion  then tells us  that the answer to this   is the same as the answer to whether $t^r \pi \in \bigcup_{s \in \Z} H_k t^s$, and, if affirmative, the $s$ agrees.  
By our comment on line~\ref{second line},  $\ell(f') = \ell(g) +1 \le \ell(f)+\ell(u)+1 = \ell(f)+\ell(\pi)$ and $\rank(f')\le \max\{\rank(f),m\}$, as required.

\item[\ref{run prefix}--\ref{end for loop}:] The aim here is to determine whether Condition~\emph{\ref{Condition: Piece c}} holds---that is, whether $$t^r u a_m\inv \theta^s(a_m) \in H_k t^{s-1}$$ and $a_{m-1}\inv \theta^{s-1}(a_m\inv)$ is a suffix of $\pi$ for some $s >0$---and, if so, output a $\psi$-word $f'$ such that $f'(0) = s$. (This $s$ must be unique, if it exists, because, by the Criterion, it is the $s$ such that $t^r \pi \in H_k t^s$, and we know that is unique.)   

The possibilities for $s$ are limited to the range $1, \ldots, i$ by the suffix condition and the requirement that $s >0$, where $i$ is as found in line~\ref{run prefix} and must be at most $\ell(\pi)$.    If there is such a suffix  $a_{m-1}\inv \theta^{i-1}(a_m\inv)$   of $\pi$, then  $a_{m-1}\inv \theta^{s-1}(a_m\inv)$ is a suffix of $\pi$ for all $s \in \set{1, \ldots, i}$.   If there is no such suffix, then Condition~\emph{\ref{Condition: Piece c}} fails, and, as we know at this point that Conditions~\emph{\ref{Condition: Piece a}} and \emph{\ref{Condition: Piece b}} also fail, we declare in line~\ref{exhausted pos} that (by the Criterion),  $t^{r} \pi \notin \bigcup_{s \in \Z}  H_k t^s$.   

For each $s$ in the range $1, \ldots, i$, lines \ref{Line: Back pos check loop}--\ref{end for loop}  address the question of whether or not $t^r u a_m\inv \theta^s(a_m) \in H_k t^{s-1}$.  First   $\alg{Push}_{m-1}$ is called, which can be done because  on freely reducing  $u a_m\inv \theta^s(a_m)$, the $a_m\inv$ cancels with the $a_m$ at the start of $ \theta^s(a_m)$ to give a word of rank at most $m-1$.  $\alg{Push}_{m-1}$ either tells us that $t^r u a_m\inv \theta^s(a_m) \notin \bigcup_{s' \in \Z} H_k t^{s'}$, or it gives a $\psi$-word $h$ such that  
$t^r u a_m\inv \theta^s(a_m) \in  H_k t^{h(0)}$.  In the latter case, $\alg{Psi}$ is then used to test whether or not $h(0) = s-1$.  

By  the  specifications of \alg{Push}$_{m-1}$,  $\ell(h) \le  \ell(f)+2(m-1)\ell(u')$.  And, as $\pi = u a_m\inv$ has suffix 
$\theta^{s-1}(a_m\inv)$, when we form $u'$ by freely reducing $u a_m\inv\theta^s(a_m)$,  at least half of $\theta^s(a_m) = \theta^{s-1}(a_m)\theta^{s-1}(a_{m-1})$ cancels into $\pi$.  So $\ell(u') \le \ell(\pi)$, and  
 $$\ell(f')  \ = \    \ell(h) +1  \ \le \    \ell(f)+2(m-1)\ell(u') +1 \ \leq \     \ell(f) + 2(m-1)\ell(\pi)+1$$   
as required.  Also, it is immediate that $\rank(f')\le \max\{\rank(f),m\}$, as required.

\item[\ref{Line: Back reject}: ] At this point, we  know \emph{\ref{Condition: Piece a}}, \emph{\ref{Condition: Piece b}} and \emph{\ref{Condition: Piece c}}   fail for all $s \in \Z$, so $t^r \pi \notin \bigcup_{s \in \Z} H_kt^s$. 
\end{itemize}

\alg{Back}$_m$ runs $\alg{Push}_{m-1}(u,f)$ once (with $\ell(u) \leq \ell(\pi)$),  $\alg{Psi}(\psi_m\inv g)$ at most once (with $\ell(g) \leq \ell(\pi) + \ell(f)$),   $\alg{Prefix}_m(\pi\inv)$ at most once,  $\alg{Push}_{m-1}(u',f)$ at most $i \leq \ell(\pi)$ times (with $\ell(u') < \ell(\pi)$), and  $\alg{Psi}(\psi_1^{s-1} h)$ at most $i \leq \ell(\pi)$ times (with $1 \leq s \leq \ell(\pi)$ and $\ell(h) < \ell(f) + \ell(\pi)$).  Other operations such as free reductions of words etc.\ do not contribute significantly to the running time.  
Referring to the specifications of  
$\alg{Push}_{m-1}$, $\alg{Psi}$, and $\alg{Prefix}_m$, we see that they (respectively) contribute:
\begin{align*}
\ell(\pi) O((\ell(\pi)+\ell(f))^{2(m-1) +k +1})  & +  \ell(\pi) O((\ell(f)+2\ell(\pi))^{4+k})  + O(\ell(\pi)^2)  \\
 & \ = \  O((\ell(\pi)+\ell(f))^{2m+k}) 
\end{align*}
which is the claimed bound on the halting time of   $\alg{Back}_m$.  
\end{proof}

   \begin{algorithm}[h]
    \caption{ --- \alg{Piece}$_m$, $k \geq m\ge 2$.      \newline
$\circ$ \ Input  a rank-$m$ piece $\pi$ and a valid $\psi$-word $f=f(\psi_1, \ldots, \psi_k)$. \newline 
$\circ$ \ Declare  whether or not $t^{f(0)} \pi \in \bigcup_{s \in \Z} H_kt^s$ and, if it is, return  a valid $\psi$-word $g$ such that   $t^{f(0)} \pi \in H_kt^{g(0)}$, $\rank(g)\le \max \set{m,\,\rank(f)}$, and $\ell(g)\le  \ell(f) +2(m-1)\ell(\pi)+2$.  \newline
$\circ$ \ Halt in time $O((\ell(\pi)+\ell(f))^{2m+k})$. }
    \label{Alg: Piece_m}
    \begin{algorithmic}[3]
     \State \textbf{if} $m=2$
     \State \qquad $\pi$ is $a_2^{\epsilon_1} a_1^l a_2^{-\epsilon_2}$ for some  $l\in\Z$ and some $\epsilon_1, \epsilon_2 \in \{0,1\}$
    \State  \qquad set $g = \psi_2^{-\epsilon_2} \psi_1^l \psi_2^{\epsilon_1} f$  
    \State  \qquad \textbf{run} $\alg{Psi}(g)$
			\State  \qquad \textbf{if}  it declares that $g$ is invalid, \textbf{then} \textbf{declare} that     $t^{f(0)} \pi \notin \bigcup_{s \in \Z} H_kt^s$
			\State  \qquad \textbf{else return} $g$
			\State  \qquad \textbf{halt}
    \State
    \State  \textbf{if}  $m >2$
		\State \qquad \textbf{run}    $\alg{Front}_m(\pi,f)$
		\State \qquad \textbf{if} it declares that  \emph{\ref{Condition: Piece 1}},  \emph{\ref{Condition: Piece 2}}  and  \emph{\ref{Condition: Piece 3}}   of the Piece Criterion all fail
		\State \qquad \qquad  \textbf{declare} that $t^{f(0)} \pi \notin \bigcup_{s \in \Z} H_kt^s$ and \textbf{halt} 
		\State \qquad \textbf{else} \textbf{run} $\alg{Back}_m$ on the output $(\pi',f')$ of $\alg{Front}_m$ and \textbf{return} the result    
    \end{algorithmic}
  \end{algorithm}

\begin{proof}[The correctness of \alg{Piece}$_2$.]
By applying Proposition \ref{Prop: Hydra Game} repeatedly, we see that    $t^{f(0)} \pi \in H_kt^s$   if and only if  
$t^{\psi_1^l \psi_2^{\epsilon_1} f(0)} a_2^{-\epsilon_2} \in H_k t^s$, since $\psi_1^l \psi_2^{\epsilon_1} f$ is valid as  the domains of $\psi_1$ and $\psi_2$ are $\Z$.   So, by Corollary~\ref{pushing cor},   $t^{f(0)} \pi \in H_kt^s$   if and only if  $g = \psi_2\inv \psi_1^l \psi_2^{\epsilon_1} f$ is  valid and $s= \psi_2\inv\psi_1^l \psi_2^{\epsilon_1} f(0)$.  

 It halts in   time   $O(\ell(w) + \ell(f)^{6})$ because  \alg{Psi} halts in time  $O(\ell(f)^{6})$ on input  $\psi_2\inv f$ by the bounds established in Section~\ref{Psi in detail}, given that $f$ is of rank $2$.  
\end{proof}

	\begin{proof}[For   $k \geq m \geq 3$, correctness of $\alg{Back}_{m}$ implies correctness of $\alg{Piece}_m$.]  It   follows from the specifications of $\alg{Front}_m$ and $\alg{Back}_m$, that they combine in the manner of  $\alg{Piece}_m$ to declare whether or not $t^{f(0)} \pi \in \bigcup_{s \in \Z} H_kt^s$, and if it is to return a $g$ with the claimed properties.

	Using  that  $\ell(\pi') \leq \ell(\pi)$ and $\ell(f')\le \ell(f) +1$, we can add the halting-time estimates for 
	 $\alg{Front}_m$ and $\alg{Back}_m$, to deduce that   $\alg{Piece}_m$ halts in time 
	 \begin{equation*}
	 O((\ell(w)+\ell(f))^{\max\set{k+4, 2m+k}}) \ = \ O((\ell(w)+\ell(f))^{2m+k}).  \qedhere
	 \end{equation*}
	 	\end{proof}

  \begin{algorithm}[h]
    \caption{ --- $\alg{Push}_m$, $k \geq m\ge 1$.      \newline
$\circ$ \   Input  a reduced word $v = v(a_1, \ldots, a_m)$  and a valid $\psi$-word $f=f(\psi_1, \ldots, \psi_k)$. \newline 
$\circ$ \ Declare whether or not $t^{f(0)}v\in \bigcup_{s \in \Z} H_kt^s$.  If it is, return  a valid $\psi$-word $g$ with $\ell(g)\le \ell(f)+2m\ell(v)$, $\rank(g)\le \max\set{m,\,\rank(f)}$ and $t^{f(0)}v\in H_kt^{g(0)}$. \newline
$\circ$ \ Halt  time $O((\ell(v)+\ell(f))^{2m+k+1})$. }
    \label{Alg: rank m word}
    \begin{algorithmic}[3]
    			\State \textbf{if} $m=1$ (and so $v = a_1^{l}$ for some $l\in\Z$) \label{first line came m=1}
			\State \qquad \textbf{declare} \textbf{yes}, \textbf{output}  $g := \psi_1^l f$ and \textbf{halt} \label{line came m=1}
			\State 
			\State \textbf{if} $m>1$
			\State \qquad let $\pi_1\cdots \pi_p$ be the rank-$m$ decomposition of $v$ into pieces as per Section~\ref{MP problem intro} 
			\State \qquad \textbf{set} $f_0:= f$
			\State \qquad \textbf{for}  $i=1$ to $p$ 
				\State \qquad \qquad \textbf{run} $\alg{Piece}_m(\pi_i,f_{i-1})$
				\State \qquad \qquad \textbf{if}  it declares   $t^{f_{i-1}(0)} \pi_i \notin \bigcup_{s \in \Z} H_kt^s$, \textbf{declare}   $t^{f(0)}w \notin \bigcup_{s \in \Z} H_kt^s$ and \textbf{halt}		
						\State \qquad \qquad \textbf{else}  set $f_i$ to be its output  \label{Line: Word for check}	
      \State \qquad \textbf{end for}
      \State \qquad   \textbf{return} $g := f_p$
    \end{algorithmic}
  \end{algorithm}

\begin{proof}[The correctness of $\alg{Push}_1$.]  The case $m=1$ is handled in lines~\ref{first line came m=1}--\ref{line came m=1}.  The point is that in $G_k$ we have  $t^{f(0)}a_1^l = (a_1t)^l t^{f(0)-l} \in H_k t^{g(0)}$ since $g(0) =  \psi_1^l f = f(0)-l$.   That it halts within the time bound is clear.   
\end{proof}

\begin{proof}[For $k \geq m\ge 2$, correctness of $\alg{Piece}_m$ implies correctness of $\alg{Push}_m$.]

This algorithm runs in accordance with Lemma~6.2 of \cite{DR} as we described in Section~\ref{MP problem intro}.

By the specifications of $\alg{Piece}_m$, after the $i$th iteration of the \textbf{for} loop, $$\ell(f_i) \  \le \  \ell(f) + \sum_{j=1}^i (2(m-1)\ell(\pi_j) +2 ) \ \le \ \ell(f)+2(m-1)\ell(v)+2i \ \leq \ \ell(f) +2m\ell(v),$$ as $i \leq \ell(v)$, and $\rank(f_i)\le \max\set{m,\,\rank(f)}$.  In particular,   $\rank(g)\le \max\set{m,\,\rank(f)}$, as claimed. 

 $\alg{Piece}_m(\pi_i,f_{i-1})$ halts in time $O( (\ell(\pi_i) + \ell(f_{i-1}))^{2m+k})$  and $p \leq \ell(\pi)$, so   for $1 \leq i \leq p$ , $$\ell(\pi_i) + \ell(f_{i-1}) \  \leq \ \ell(\pi_i) +  \ell(\pi_1) + \cdots + \ell(\pi_{i-1}) + \ell(f) + i -1 \ = \  O( (\ell(v) + \ell(f))).$$
    So  $\alg{Push}_m$ halts in time $O( (\ell(v) + \ell(f))^{2m+k +1})$. 
\end{proof}

\begin{proof}[Correctness of $\alg{Piece}_m$ for $2 \leq m \leq k$, of $\alg{Push}_m$ for $1 \leq m \leq k$, and of $\alg{Back}_m$ for $3 \leq m \leq k$.]  We established the correctness of  $\alg{Push}_1$ and   $\alg{Piece}_2$ individually. The implications proved above give the correctness of the others by induction in the order:
\begin{equation*}
\alg{Piece}_2 \implies \alg{Push}_2 \implies \alg{Back}_3 \implies \alg{Piece}_3 \implies \alg{Push}_3 \implies  \alg{Back}_4 \implies \cdots. \qedhere
\end{equation*}
\end{proof}

Finally, we are ready for:

  \begin{algorithm}[h]
    \caption{ --- $\alg{Member}_k$, $k\ge 1$.      \newline
$\circ$ \ Input    a  word $w=w(a_1, \ldots, a_k, t)$. \newline 
$\circ$ \  Declare whether or not $w \in H_k$. \newline
$\circ$ \  Halt in   time $O(  \ell(w)^{3k^2 +k})$.}
    \label{Alg: rank m member}
    \begin{algorithmic}[3]
    			\State convert $w$ to normal form $t^r v$ where $v=v(a_1, \ldots, a_k)$ is reduced, $r \in \Z$, and $t^rv=w$ in $G_k$,  
 as described at the start of Section~\ref{MP problem intro}
			\State \textbf{set} $f = \psi_1^{-r}$
			\State \textbf{run} $\alg{Push}_k(v,f)$ 
			\State{\textbf{if} it outputs a (necessarily valid) $\psi$-word $g$}
			\State \qquad   \textbf{then run} $\alg{Psi}(g)$ to test whether $g(0) =0$
			\State \qquad  \qquad \textbf{if} so, \textbf{declare}  $w \in H_k$ and \textbf{halt}
				\State  \textbf{declare}  $w \notin H_k$
    \end{algorithmic}
  \end{algorithm}

\begin{proof}[Correctness of \alg{Member}$_k$.]
The process set out at the start of Section~\ref{MP problem intro} produces  $t^r v$  in time $O(\ell(w)^k)$.  Moreover, $\ell(f) = \abs{r} \leq  \ell(w)$ and
$\ell(v)  \leq   \ell(w) (\ell(w) +1)^{k-1}$.

The algorithm calls $\alg{Push}_k(v,f)$, which halts in time $$O((\ell(v)+\ell(f))^{2k+k+1}) \ = \ O((\ell(w)^k+\ell(w))^{2k+k+1}) \ = \ O(\ell(w)^{3k^2 +k}).$$ 
It either declares that $t^rv\notin \bigcup_{s \in \Z} H_kt^s$, and so $w\notin H_k$, or it  
returns a valid $\psi$-word $g$ such that $w\in H_kt^{g(0)}$ and $\ell(g) \leq \ell(f)+ 2 k \ell(v) \leq \ell(w) +  2 k \ell(w) (\ell(w) +1)^{k-1} = O(\ell(w)^{k})$.   But then
  $w\in H_k$ if and only if $g(0) = 0$ (by Lemma~6.1  of \cite{DR}), which is precisely what the algorithm uses $\alg{Psi}(g)$  to check.  This call on  $\alg{Psi}$ halts in time $O( (\ell(w)^{k})^{k+4} ) = O(\ell(w)^{k^2+4k})$ when $k>1$ and in time $O(\ell(w))$ when $k=1$.  So, as $\max \set{k^2 +4k, 3k^2 +k} = 3k^2+k$ for all $k >1$,  \alg{Member}$_k$ halts in time $O(\ell(w)^{3k^2 +k})$, as required.  
\end{proof}

\section{Conclusion} \label{MP to WP} \label{Conclusion}

The construction and analysis of  $\alg{Member}_k$ in the last section solves the membership problem for $H_k$ in $G_k$ in polynomial time, indeed in $O(n^{3k^2+k})$ time, where $n$ is the length of the input word, and so proves Theorem~\ref{MP theorem}.  

Here is why a polynomial time (indeed $O(n^{3k^2+k+2})$ time)  solution to the word problem for $\Gamma_k$   follows, giving Theorem~\ref{WP theorem}.

Suppose we have a word $x = x(a_1, \ldots, a_k, p, t)$ of length $n$ on the generators of $\Gamma_k$. Recall that $\Gamma_k$ is the HNN-extension of $G_k$ with stable letter $p$ commuting with all elements of $H_k$.   Britton's Lemma (see, for example, \cite{BrH, Lyndon, Stillwell}) tells us that if $x=1$ in $\Gamma_k$, then it has a subword $p^{\pm 1} w p^{\mp 1}$ such that $w = w(a_1, \ldots, a_k, t)$ and represents an element of $H_k$.  

There are fewer than $n$  subwords $p^{\pm 1} w p^{\mp 1}$ in $x$ such that $w = w(a_1, \ldots, a_k, t)$.  As discussed above, $\alg{Member}_k$ checks whether  such a $w \in H_k$  in time in $O(n^{3k^2+k})$.  If none  represents an element of $H_k$, we conclude that $x \neq 1$ in $G_k$.  If,   for some such subword $p^{\pm 1} w p^{\mp 1}$, we find $w \in H_k$, then we can remove the $p^{\pm 1}$ and $p^{\mp 1}$ to give a word of length $n-2$ representing the same element of $G_k$. 

 This repeats at most $n/2$ times until we have either determined that $x \neq 1$ in $\Gamma_k$, or no $p^{\pm 1}$ remain.  In the latter case, we then have a word on $a^{\pm 1}_1, \ldots, a^{\pm 1}_k, t^{\pm 1}$ of length at most $n$, which represents an element of the subgroup $G_k$.  But $G_k$ is automatic (Theorem 1.3 of \cite{DR}) and so there is an algorithm  solving its word problem in  $O(n^2)$ time (Theorem~2.3.10 of \cite{Epstein}).  
 
In all, we have called $\alg{Member}_k$ at most $n^2/2$ times and an algorithm solving the word problem in $G_k$ once, in every case with input of length at most $n$.  It follows that the whole process can be completed in time $O(n^{3k^2+k+2})$.

\bibliographystyle{plain}
\bibliography{$HOME/Dropbox/Bibliographies/bibli}

\def\cprime{$'$}
\begin{thebibliography}{10}

\bibitem{Baumslag2}
G.~Baumslag.
\newblock A non--cyclic one--relator group all of whose finite quotients are
  cyclic.
\newblock {\em J. Austral. Math. Soc.}, 10:497--498, 1969.

\bibitem{Bernasconi}
A.~A. Bernasconi.
\newblock {\em On {H}{N}{N}--extensions and the complexity of the word problem
  for one-relator groups}.
\newblock PhD thesis, University of Utah, 1994.
\newblock \\
  \href{http://www.math.utah.edu/~sg/Papers/bernasconi-thesis.pdf}{\texttt{http://www.math.utah.edu/$\sim$sg/Papers/bernasconi-thesis.pdf}}.

\bibitem{Boone}
W.~W. Boone.
\newblock Certain simple unsolvable problems in group theory {I}, {II}, {III},
  {IV}, {V}, {VI}.
\newblock {\em Nederl. Akad. Wetensch Proc. Ser. A}.
\newblock {\bf 57}, 231--236, 492--497 (1954), {\bf 58}, 252--256, 571--577
  (1955), {\bf 60}, 22-26, 227-232 (1957).

\bibitem{BRS}
N.~Brady, T.~R. Riley, and H.~Short.
\newblock {\em The geometry of the word problem for finitely generated groups}.
\newblock Advanced Courses in Mathematics CRM Barcelona. Birkh\"auser--Verlag,
  2007.

\bibitem{Bridson6}
M.~R. Bridson.
\newblock The geometry of the word problem.
\newblock In M.~R. Bridson and S.~M. Salamon, editors, {\em Invitations to
  Geometry and Topology}, pages 33--94. O.U.P., 2002.

\bibitem{BrH}
M.~R. Bridson and A.~Haefliger.
\newblock {\em Metric Spaces of Non-positive Curvature}.
\newblock Number 319 in Grundlehren der mathematischen Wissenschaften. Springer
  Verlag, 1999.

\bibitem{CohenWisdom}
D.~E. Cohen.
\newblock The mathematician who had little wisdom: a story and some
  mathematics.
\newblock In {\em Combinatorial and geometric group theory (Edinburgh, 1993)},
  volume 204 of {\em London Math. Soc. Lecture Note Ser.}, pages 56--62.
  Cambridge Univ. Press, Cambridge, 1995.

\bibitem{CMO}
D.~E. Cohen, K.~Madlener, and F.~Otto.
\newblock Separating the intrinsic complexity and the derivational complexity
  of the word problem for finitely presented groups.
\newblock {\em Math. Logic Quart.}, 39(2):143--157, 1993.

\bibitem{Dehn2}
M.~Dehn.
\newblock \"{U}ber unendliche diskontunuierliche {G}ruppen.
\newblock {\em Math. Ann.}, 71:116--144, 1912.

\bibitem{Dehn}
M.~Dehn.
\newblock {\em Papers on group theory and topology}.
\newblock Springer-Verlag, New York, 1987.
\newblock Translated from the German and with introductions and an appendix by
  J. Stillwell, With an appendix by O. Schreier.

\bibitem{DLU}
V.~Diekert, J.~Laun, and A.~Ushakov.
\newblock {Efficient algorithms for highly compressed data: The Word Problem in
  Higman's group is in P}.
\newblock In Christoph D{\"u}rr and Thomas Wilke, editors, {\em 29th
  International Symposium on Theoretical Aspects of Computer Science (STACS
  2012)}, volume~14 of {\em Leibniz International Proceedings in Informatics
  (LIPIcs)}, pages 218--229, Dagstuhl, Germany, 2012. Schloss
  Dagstuhl--Leibniz-Zentrum fuer Informatik.
\newblock \href{http://arxiv.org/abs/1103.1232}{\texttt{arXiv:1103.1232}}.

\bibitem{DR}
W.~Dison and T.~R. Riley.
\newblock Hydra groups.
\newblock {\em Comment. Math. Helv.}, 88(3):507--540, 2013.

\bibitem{Epstein}
D.~B.~A. Epstein, J.~W. Cannon, D.~F. Holt, S.~V.~F. Levy, M.~S. Paterson, and
  W.~P. Thurston.
\newblock {\em Word Processing in Groups}.
\newblock Jones and Bartlett, 1992.

\bibitem{Gersten6}
S.~M. Gersten.
\newblock Isodiametric and isoperimetric inequalities in group extensions.
\newblock Preprint, University of Utah, 1991.

\bibitem{Gersten}
S.~M. Gersten.
\newblock Isoperimetric and isodiametric functions of finite presentations.
\newblock In G.~Niblo and M.~Roller, editors, {\em Geometric group theory I},
  number 181 in LMS lecture notes. Camb. Univ. Press, 1993.

\bibitem{Gromov}
M.~Gromov.
\newblock Asymptotic invariants of infinite groups.
\newblock In G.~Niblo and M.~Roller, editors, {\em Geometric group theory II},
  number 182 in LMS lecture notes. Camb. Univ. Press, 1993.

\bibitem{HL}
N.~Haubold and M.~Lohrey.
\newblock Compressed word problems in {HNN}-extensions and amalgamated
  products.
\newblock {\em Theory Comput. Syst.}, 49(2):283--305, 2011.

\bibitem{HLM}
N.~Haubold, M.~Lohrey, and C.~Mathissen.
\newblock Compressed decision problems for graph products and applications to
  (outer) automorphism groups.
\newblock {\em Internat. J. Algebra Comput.}, 22(8):1240007, 53, 2012.

\bibitem{Higman}
G.~Higman.
\newblock A finitely generated infinite simple group.
\newblock {\em J. London Math. Soc.}, 26:61--64, 1951.

\bibitem{KMS}
O.~Kharlampovich, A.~Miasnikov, and M.~Sapir.
\newblock Algorithmically complex residually finite groups.
\newblock
  \href{http://front.math.ucdavis.edu/1204.6506}{\texttt{arXiv:1204.6506}}.

\bibitem{Lohrey2}
M.~Lohrey.
\newblock Word problems and membership problems on compressed words.
\newblock {\em SIAM J. Comput.}, 35(5):1210--1240, 2006.

\bibitem{Lohrey}
M.~Lohrey.
\newblock Compressed word problems for inverse monoids.
\newblock In {\em Mathematical foundations of {C}omputer {S}cience 2011},
  volume 6907 of {\em Lecture Notes in Comput. Sci.}, pages 448--459. Springer,
  Heidelberg, 2011.

\bibitem{LohreyMono}
M.~Lohrey.
\newblock {\em The compressed word problem for groups}.
\newblock Springer Briefs in Mathematics. Springer, New York, 2014.

\bibitem{LS}
M.~Lohrey and S.~Schleimer.
\newblock Efficient computation in groups via compression.
\newblock In {\em Proc. Computer Science in Russia (CSR 2007)}, volume 4649 of
  {\em Lecture Notes in Computer Science}, pages 249--258. Springer, 2007.

\bibitem{Lyndon}
R.~C. Lyndon and P.~E. Schupp.
\newblock {\em Combinatorial Group Theory}.
\newblock Springer-Verlag, 1977.

\bibitem{MO}
K.~Madlener and F.~Otto.
\newblock Pseudonatural algorithms for the word problem for finitely presented
  monoids and groups.
\newblock {\em J. Symbolic Comput.}, 1(4):383--418, 1985.

\bibitem{MUW1}
A.~G. Miasnikov, A.~Ushakov, and D.~W. Won.
\newblock Power circuits, exponential algebra, and time complexity.
\newblock {\em Internat. J. Algebra Comput.}, 22(6):1250047, 51, 2012.

\bibitem{Novikov}
P.~S. Novikov.
\newblock On the algorithmic unsolvability of the word problem in group theory.
\newblock {\em Trudt Mat. Inst. Stkelov}, 44:1--143, 1955.

\bibitem{Platonov}
A.~N. Platonov.
\newblock An isoperimetric function of the {B}aumslag--{G}ersten group.
\newblock {\em Vestnik Moskov. Univ. Ser. I Mat. Mekh.}, 3:12--17, 70, 2004.
\newblock Translation in Moscow Univ. Math. Bull. 59 (2004).

\bibitem{RileyDehn}
T.~R. Riley.
\newblock
  \href{http://www.math.cornell.edu/~riley/papers/What_is_a_Dehn_function/What_is_a_Dehn_function.pdf}{What
  is a {D}ehn function?}
\newblock Chapter for the forthcoming \emph{Office hours with a geometric group
  theorist}, M. Clay and D. Magalit, eds.

\bibitem{Sapir}
Mark Sapir.
\newblock Asymptotic invariants, complexity of groups and related problems.
\newblock {\em Bull. Math. Sci.}, 1(2):277--364, 2011.

\bibitem{Schleimer}
S.~Schleimer.
\newblock Polynomial-time word problems.
\newblock {\em Comment. Math. Helv.}, 83(4):741--765, 2008.

\bibitem{SchuppPersonal}
P.~Schupp.
\newblock personal communication.

\bibitem{Stillwell}
J.~Stillwell.
\newblock {\em Classical Topology and Combinatorial Group Theory}.
\newblock Graduate Texts in Mathematics. Springer-Verlag, second edition, 1993.

\end{thebibliography}

\end{document}